\theoremstyle{plain} 
\newtheorem{theo}{Theorem}[section]
\newtheorem{lemm}[theo]{Lemma}
\newtheorem{prop}[theo]{Proposition}
\newtheorem{coro}[theo]{Corollary}
\theoremstyle{definition}
\newtheorem{defi}[theo]{Definition} 
\theoremstyle{remark}
\newtheorem{exam}[theo]{Example}
\newtheorem{rema}[theo]{Remark}
\DeclareMathOperator{\biC}{\mathfrak{C}}
\DeclareMathOperator{\IScorr}{\mathfrak{IC}} 
\DeclareMathOperator{\Gr}{\mathfrak{Gr}} 
\DeclareMathOperator{\Corr}{\mathfrak{Corr}} 
\newcommand{\adL}{L}
\newcommand{\cpK}{K}
\newcommand{\U}{\mathcal{U}}
\newcommand{\V}{\mathcal{V}}
\newcommand{\W}{\mathcal{W}}
\newcommand{\X}{\mathcal{X}}
\newcommand{\Y}{\mathcal{Y}}
\newcommand{\E}{\mathcal{E}}
\newcommand{\F}{\mathcal{F}}
\renewcommand{\:}{\colon}
\newcommand{\hi}{\,\mathchar`-\,}
\newcommand{\inv}{^{-1}}
\renewcommand{\d}{^\dag}
\DeclareMathOperator{\id}{id}
\newcommand{\Ccorr}{$C^*$-cor\-re\-spon\-dence\xspace}
\newcommand{\Ccorrs}{$C^*$-cor\-re\-spon\-dences\xspace}
\newcommand{\shom}{$*$-ho\-mo\-mor\-phism\xspace}
\newcommand{\shoms}{$*$-ho\-mo\-mor\-phisms\xspace}
\DeclarePairedDelimiterX{\ip}[2]{\langle}{\rangle}{#1\,\delimsize\vert\,#2}
\newcommand{\rin}[3]{\ip*{#1}{#2}_{#3}}
\newcommand{\lin}[3]{\prescript{}{#3}{\ip*{#1}{#2}}}
\begin{document}

\title[Inverse Sets and Inverse Correspondences]{
	Inverse Sets and Inverse Correspondences Over Inverse Semigroups
}
\author{Tomoki Uchimura}

\maketitle

\begin{abstract}
	In this paper, we introduce notions called inverse set and inverse correspondence over inverse semigroups. 
	These are analogies of Hilbert $C^*$-modules and \Ccorrs in the $C^*$-algebra theory.
	We show that inverse semigroups and inverse correspondences form a bicategory.
	In this bicategory, two inverse semigroups are equivalent if and only if they are Morita equivalent.
\end{abstract}

\setcounter{section}{-1}
\section{Introduction}
\label{Section: Introduction}
The constructions of $C^*$-algebras from inverse semigroups through \'etale groupoids are studied well (see \cite{Pat99}, \cite{Exe08}, or \cite{FKU24} for examples);
\[
\begin{tikzcd}
\text{inverse semigroup} \arrow[r,mapsto] & \text{\'etale groupoid} \arrow[r,mapsto] & \text{$C^*$-algebra}.
\end{tikzcd}
\]
Many researches import notions in the theory of \'etale groupoids and inverse semigroups from the $C^*$-algebra theory. 
Our result is a part of this direction.

The notions of Morita equivalence have been already introduced in the theory of $C^*$-algebras, groupoids and inverse semigroups respectively.
Rieffel introduced and studied the notion of strong Morita equivalence between $C^*$-algebras in \cite{Rie74-1, Rie74-2, Rie76}.
Muhly, Renault, and Williams introduced Morita equivalence for a certain types of groupoids.
They showed that Morita equivalent groupoids produce strong Morita equivalent groupoid $C^*$-algebras in \cite{MRW87, Ren87}.
Steinberg introduced the notion of strong Morita equivalence between inverse semigroups, and showed that strong Morita equivalent inverse semigroups produce Morita equivalent universal groupoids in \cite{Ste11}.

In the $C^*$-algebra theory, there exists the notion called \emph{\Ccorr}.
This is a kind of generalization of both \shoms and Morita equivalences.
Buss, Meyer, and Zhu studied the bicategory $\Corr$ consisting of $C^*$-algebras and non-degenerate \Ccorrs in \cite{BMZ13}.
In the bicategory $\Corr$, two $C^*$-algebras are equivalent if and only if they are Morita equivalent.
Albandik introduced \emph{groupoid correspondences} between \'etale groupoids and the bicategory $\Gr$ consisting of \'etale groupoids and groupoid correspondences in \cite{Alb15}. 
In the bicategory $\Gr$, two groupoids are equivalent if and only if they are Morita equivalent.

However, any notion similar to C*-correspondences or groupoid correspondences in the inverse semigroup theory have not been introduced in our knowledge.
In this paper, we introduce the notion called \emph{inverse correspondence}, which corresponds to $C^*$-correspondence or groupoid correspondence.
This is a kind of generalization of both semigroup homomorphisms and Morita equivalences between inverse semigroups.
More precisely, we define inverse correspondences as follows:
We first introduce \emph{inverse set} $\U$ and \emph{adjointable maps} on $\U$ with the Hilbert $C^*$-module theory in mind.
We show that the set $\adL(\U)$ of all adjointable maps becomes an inverse semigroup (Theorem \ref{Theorem: L(U) is inverse}).
In order to prove this fact, we show that all ``one-rank operators'' $\cpK(\U)$ on $\U$ becomes an inverse subsemigroup of $\adL(\U)$.
For inverse semigroups $S$ and $T$, we define an inverse correspondence as a couple of a right inverse $T$-set $\U$ and a semigroup homomorphism from $S$ to the inverse semigroup $\adL(\U)$.

In the $C^*$-algebra theory, it is well-known that all adjointable operators $\adL(\E)$ on a Hilbert $C^*$-module becomes a $C^*$-algebra, and all compact operators $\cpK(\E)$ becomes a $C^*$-subalgebra of $\adL(\E)$. 
For $C^*$-algebras $A$ and $B$, the $C^*$-correspondence from $A$ to $B$ consists of a right Hilbert $B$-module $\E$ and a $*$-homomorphism from $A$ to the $C^*$-algebra $\adL(\E)$ of adjointable operators on $\E$.
Our result are analogies of these facts.

We show that all inverse semigroups and all non-degenerate inverse correspondences form a bicategory $\IScorr$ in Theorem \ref{Theorem: IScorr}.
Two inverse semigroups are equivalent in our bicategory $\IScorr$ if and only if they are Morita equivalent by Theorem \ref{Theorem: Morita iff equiv}.

In the forthcoming paper \cite{Uch24}, we will construct a bifunctor from $\IScorr$ to $\Gr$.
This bifunctor will generalize the construction from Morita equivalence between inverse semigroups to one between \'etale groupoids in \cite{Ste11}.
We will also construct a bifunctor from $\IScorr$ to $\Corr$, and investigate these bifunctors.

The inverse semigroups $\cpK(\U)$ and $\adL(\U)$ have applications to the inverse semigroup theory in addition to defining the inverse correspondences.
In Section \ref{Section: Multiplier semigroups}, we introduce the \emph{multiplier semigroups} of inverse semigroups, which are analogue to the multiplier algebras of $C^*$-algebras.
We show the existence of multiplier semigroups for all inverse semigroups by using the inverse semigroup $\adL(\U)$.
The inverse semigroup $\cpK(\U)$ can be used to describe the inverse Rees matrix semigroups (Section \ref{Section: Relation to inverse Rees matrix semigroups}).

This paper consists of follows:
In Section \ref{Section: Preliminaries}, we recall the basics of the theory of inverse semigroups.
In Section \ref{Section: Inverse sets and Partial Morita equivalences}, we introduce the notions called inverse sets and partial Morita equivalences. 
These correspond to Hilbert $C^*$-modules and Hilbert bimodules in the $C^*$-algebra theory.
In Section \ref{Section: Adjointable maps between inverse sets}, we introduce adjointable maps on an inverse set and show that all adjointable maps becomes an inverse semigroup.
In Section \ref{Section: Inverse correspondences and their tensor products}, we define tensor product of inverse correspondences. 
In Section \ref{Section: A bicategory IScorr of inverse semigroups}, we introduce the bicategory $\IScorr$ of inverse semigroups and non-degenerate inverse correspondences, and characterize equivalences in $\IScorr$.
In Section \ref{Section: Multiplier semigroups}, we introduce the multiplier semigroups, and show that they exist for all inverse semigroups.
In Section \ref{Section: Relation to inverse Rees matrix semigroups}, we investigate the inverse Rees matrix semigroups in terms of inverse sets.

\section{Preliminaries}
\label{Section: Preliminaries}

We recall definitions and propositions in the inverse semigroup theory.
See \cite{Law98,Pat99}, or \cite{Law23} for more details.

A semigroup $S$ is \emph{regular} if for every $s \in S$ there exists an element $t \in S$ with $sts =s$ and $tst = t$.
Such an element $t$ is called a \emph{generalized inverse} of $s$.
A regular semigroup $S$ is said to be \emph{inverse} if each element has a unique generalized inverse.
For an inverse semigroup $S$, we denote the generalized inverse of $s \in S$ as $s^*$.
It is clear that $s^{**} = s$ for $s\in S$.
We have $(st)^* = t^*s^*$ for $s,t\in S$ by using Proposition \ref{Proposition: e and f commute}.

Let $S$ and $T$ be semigroups.
A map $\theta\: S \rightarrow T$ is a \emph{semigroup homomorphism} if $\theta(ss') = \theta(s)\theta(s')$ for $s,s'\in S$.
If $S$ and $T$ are inverse, $\theta(s^*) = \theta(s)^*$ holds for $s \in S$.

\begin{exam}
	A discrete group is an inverse semigroup which has the unit as a unique idempotent.
\end{exam}

\begin{exam}
	For topological spaces $X$ and $Y$, a \emph{partial homeomorphism} $u$ from $X$ to $Y$ is a homeomorphism from an open subset $D_u$ of $X$ to an open subset $R_u$ of $Y$.
	For a partial homeomorphism $u$ from $X$ to $Y$, we define a partial homeomorphism from $Y$ to $X$ called an inverse of $u$ as the homeomorphism $u\inv$ from $R_u$ to $D_u$.
	We denote the partial homeomorphism by the same symbol $u\inv$.
	For topological spaces $X_1$, $X_2$, $X_3$, and partial homeomorphisms $u_1$ from $X_1$ to $X_2$ and $u_2$ from $X_2$ to $X_3$, we define a composition $u_2u_1$ of $u_1$ and $u_2$ as the partial homeomorphism from $X_1$ to $X_3$ defined by $u_2u_1(x) := u_2(u_1(x))$ for every $x \in D_{u_2u_1} := u_1\inv(D_{u_2})$.
	We denote the set of all partial homeomorphisms from $X$ to $Y$ as $I(X,Y)$ and $I(X,X)$ as $I(X)$.
	The set $I(X)$ becomes an inverse semigroup with respect to the composition of partial homeomorphisms.
\end{exam}

A subset $I$ of a semigroup $S$ is a \emph{left (resp.\ right) ideal} if $st\in I$ (resp.\ $ts \in I$) holds for every $s\in S$ and $t \in I$. 
An \emph{ideal} of $S$ is a left or right ideal of $S$.
A \emph{two-sided ideal} of $S$ is a subset of $S$ which is a left ideal and a right ideal.
An ideal of a semigroup becomes a subsemigroup.
A two-sided ideal of an inverse semigroup becomes an inverse subsemigroup.
We use the following lemma in Lemma \ref{Lemma: left pairing unique}.
\begin{lemm}\label{Lemma: two-sided ideals coincide}
	Let $S$, $T$ be inverse semigroups, and $\theta\: S \rightarrow T$ be a semigroup homomorphism.
	If two-sided ideals $I_1$ and $I_2$ of $S$ satisfy that the restrictions $\theta|_{I_1}$ and $\theta|_{I_2}$ of $\theta$ are injective and $\theta(I_1) = \theta(I_2)$ holds, then we have $I_1 = I_2$.
\end{lemm}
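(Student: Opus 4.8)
The plan is to prove the two inclusions $I_1 \subseteq I_2$ and $I_2 \subseteq I_1$ separately; since the hypotheses are symmetric in $I_1$ and $I_2$, it suffices to establish $I_1 \subseteq I_2$. So I would fix $a \in I_1$. Because $\theta(a) \in \theta(I_1) = \theta(I_2)$, there is some $b \in I_2$ with $\theta(b) = \theta(a)$, and the goal becomes to show $a = b$, which will immediately give $a \in I_2$. The difficulty is that $a$ and $b$ a priori live in different ideals, so neither injectivity hypothesis lets me compare them directly; I need an auxiliary element that lies simultaneously in both $I_1$ and $I_2$ and shares the $\theta$-image of $a$.

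The key step is to consider $d := a b^* b$. Here $b^* b$ is an idempotent of $S$ that belongs to $I_2$, since $I_2$ is a two-sided ideal and hence an inverse subsemigroup. Then $d \in I_1$ because $I_1$ is a right ideal and $a \in I_1$, while $d \in I_2$ because $I_2$ is a left ideal and $b^* b \in I_2$. Computing the image, $\theta(d) = \theta(a)\,\theta(b)^*\theta(b) = \theta(a)\,\theta(a)^*\theta(a) = \theta(a)$, where I use $\theta(b) = \theta(a)$, the identity $\theta(b^* b) = \theta(b)^*\theta(b)$ for homomorphisms of inverse semigroups, and the defining relation $\theta(a)\,\theta(a)^*\theta(a) = \theta(a)$ of the generalized inverse in $T$.

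Finally, since $d$ and $a$ both lie in $I_1$ with $\theta(d) = \theta(a)$, injectivity of $\theta|_{I_1}$ forces $d = a$; and since $d$ and $b$ both lie in $I_2$ with $\theta(d) = \theta(b)$, injectivity of $\theta|_{I_2}$ forces $d = b$. Hence $a = d = b \in I_2$, which proves $I_1 \subseteq I_2$, and the symmetric argument (using $d' := b\,a^* a$) gives the reverse inclusion, so $I_1 = I_2$. The only genuine obstacle is discovering the element $d = a b^* b$: everything else is a routine manipulation. The point of the construction is precisely that multiplying $a$ by the idempotent $b^* b$ \emph{transports} $a$ into the intersection $I_1 \cap I_2$ without changing its $\theta$-image, which is exactly what is needed to play the two injectivity hypotheses against each other.
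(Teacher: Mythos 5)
Your proof is correct and follows essentially the same strategy as the paper: produce an element of $I_1\cap I_2$ with the same $\theta$-image by multiplying by an idempotent, then invoke injectivity of the restriction. The paper uses the element $s_1s_1^*s_2$ (left-multiplying $b$ by the idempotent $aa^*\in I_1$) where you use $ab^*b$; this is a mirror image of the same argument, and your extra observation that $d=b$ as well is harmless.
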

\begin{proof}
	Take $s_1 \in I_1$.
	By assumption, there exists $s_2 \in I_2$ with $\theta(s_1) = \theta(s_2)$.
	Thus we have 
	\[
	\theta(s_1) = \theta(s_1s_1^*s_1) = \theta(s_1s_1^*)\theta(s_1) = \theta(s_1s_1^*)\theta(s_2) = \theta(s_1s_1^*s_2).
	\] 
	Since $I_1$ is a two-sided ideal, we get $s_1s_1^*s_2\in I_1$.
	Thus $s_1 = s_1s_1^*s_2$ because the restriction $\theta|_{I_1}$ of $\theta$ is injective.
	Since $I_2$ is a two-sided ideal, we get $s_1 = s_1s_1^*s_2 \in I_2$.
	Thus $I_1 \subset I_2$.
	We can obtain the reverse inclusion in a similar way.
\end{proof}

An element $s$ of a semigroup $S$ is an \emph{idempotent} if $ss = s$ holds.
The set of all idempotents of $S$ is denoted as $E(S)$.
We can prove the next proposition in a similar way to \cite[Proposition 2.1.1]{Pat99} or \cite[Theorem 3]{Law98}.
\begin{prop}\label{Proposition: e and f commute}
	Let $S$ be a semigroup and $I$ be a two-sided ideal of $S$.
	If $I$ is an inverse subsemigroup of $S$, then for every $e \in E(S)$ and $f \in E(I)$, $ef = fe$ holds.
\end{prop}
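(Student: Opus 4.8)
The plan is to reduce the statement to the classical fact that the idempotents of an inverse semigroup commute, applied to the inverse subsemigroup $I$. The only points requiring care are that $e$ need not lie in $I$ and that $S$ itself need not be regular, so every step must be arranged to take place inside $I$, where generalized inverses exist and are unique. The observation that makes this possible is that $I$ is a two-sided ideal: since $f \in I$, both products $ef$ and $fe$ again lie in $I$, and likewise for the auxiliary elements below.

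The heart of the proof is to show that $ef$ is an idempotent of $I$. Since $ef \in I$ and $I$ is inverse, the element $ef$ has a unique generalized inverse $x := (ef)^* \in I$, characterized by $(ef)x(ef) = ef$ and $x(ef)x = x$. First I would check that $fxe$, which again lies in $I$ because $I$ is an ideal, is also a generalized inverse of $ef$: using only $ee = e$, $ff = f$ and the two defining relations of $x$, one computes $(ef)(fxe)(ef) = ef$ and $(fxe)(ef)(fxe) = fxe$. By uniqueness of generalized inverses in $I$, this forces $x = fxe$. Substituting this back, a short computation gives $x^2 = (fxe)(fxe) = f\bigl(x(ef)x\bigr)e = fxe = x$, so $x$ is idempotent; as an idempotent of an inverse semigroup it is its own generalized inverse, whence $ef = (ef)^{**} = x^* = x$. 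Thus $ef \in E(I)$. Running the same argument with the order of multiplication reversed (equivalently, passing to the opposite semigroup, in which $I$ remains an inverse two-sided ideal) shows likewise that $fe \in E(I)$.

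It then remains to conclude $ef = fe$. Since $ef$, $fe$ and $f$ all lie in $E(I)$, and the idempotents of the inverse semigroup $I$ commute (this is exactly the cited classical result \cite[Proposition 2.1.1]{Pat99}, \cite[Theorem 3]{Law98}, now applied to the honest inverse semigroup $I$), I would compute
\[
ef = (ef)f = f(ef) = fef = (fe)f = f(fe) = fe,
\]
which is the desired identity.

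I expect the main obstacle to be the verification that $fxe$ is a generalized inverse of $ef$ and the resulting identity $x = fxe$: this is the step where the idempotency of $e$ and $f$ and the ideal property of $I$ must be combined correctly, and it is the precise analogue, in this relative setting, of the key manipulation in the standard proof that idempotents commute. Everything else is bookkeeping to ensure the relevant products stay within $I$ so that uniqueness of inverses may be invoked.
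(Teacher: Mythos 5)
Your proof is correct, and its core is identical to the paper's: both show that $ef\in E(I)$ by checking that $f(ef)^*e$ is a generalized inverse of $ef$ in $I$, invoking uniqueness to get $(ef)^* = f(ef)^*e$, deducing that $(ef)^*$ (hence $ef$) is idempotent, and treating $fe$ symmetrically. The only divergence is the concluding step. You finish by citing the classical theorem that idempotents of an inverse semigroup commute, applied to $I$, to get $ef = (ef)f = f(ef) = \dots = fe$. The paper instead stays self-contained: having established $ef, fe \in E(I)$, it verifies directly that $(ef)(fe)(ef) = ef$ and $(fe)(ef)(fe) = fe$, so that $fe$ is the generalized inverse of the idempotent (hence self-inverse) $ef$, giving $ef = (ef)^* = fe$ from uniqueness alone. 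This matters for the paper's internal architecture: the statement "a regular semigroup is inverse only if its idempotents commute" (Theorem \ref{Theorem: regular and inverse}) is itself derived \emph{from} this proposition by taking $I = S$, so the paper deliberately avoids appealing to that classical fact here. Your argument is not genuinely circular, since the commutativity of idempotents in an inverse semigroup has independent proofs in the cited references, but if you want the proof to fit the paper's logical ordering you should replace your final chain of equalities with the two-line verification that $fe$ is a generalized inverse of $ef$.
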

\begin{proof}
	Fix $e \in E(S)$ and $f \in E(I)$ arbitrarily.
	Since $I$ is an ideal, $ef$ is an element of $I$.
	This element has a generalized inverse $(ef)^* \in I$ because $I$ is inverse.
	Since $f(ef)^*e \in I$ satisfies
	\begin{align*}
	ef f(ef)^*e ef &= ef(ef)^*ef = ef, \text{ and }\\
	f(ef)^*e  ef f(ef)^*e &= f(ef)^*ef(ef)^*e = f(ef)^*e,
	\end{align*}
	we get $f(ef)^*e = (ef)^*$.
	By simple calculations, we get $f(ef)^*e$ is an idempotent of $I$.
	Thus $(ef)^*$ is an idempotent of $I$.
	Because an idempotent is self-inverse, this implies that $(ef)^* = (ef)^{**} = ef$.
	Thus $ef$ is an idempotent of $I$.
	In a similar way, we get $fe$ is also an idempotent of $I$.
	
	The fact that $ef$ and $fe$ are idempotents of $I$ follows
	\begin{align*}
	(ef)(fe)(ef) &= efef = ef \text{ and }\\
	(fe)(ef)(fe) &= fefe = fe.
	\end{align*} 
	This implies that $(ef)^* = fe$.
	Thus we get $ef = (ef)^* = fe$.
\end{proof}

\begin{theo}\label{Theorem: regular and inverse}
	A regular semigroup $S$ is inverse if and only if all idempotents of $S$ commute.
\end{theo}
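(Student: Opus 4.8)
The forward implication is essentially already contained in the preceding proposition, so I would dispose of it in a single line. If $S$ is inverse, then $S$ is a two-sided ideal of itself and is an inverse subsemigroup, so Proposition \ref{Proposition: e and f commute} applied with $I = S$ gives $ef = fe$ for all $e, f \in E(S)$. Hence all idempotents of $S$ commute.

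For the substantive converse, I would assume $S$ is regular and all idempotents commute, and prove that generalized inverses are unique. Fix $s \in S$ and suppose $t$ and $t'$ are both generalized inverses of $s$, so that $sts = s$, $tst = t$, $st's = s$, and $t'st' = t'$. The first step is to note that the four products $st$, $ts$, $st'$, and $t's$ are all idempotents; for instance $(st)(st) = s(tst) = st$, and the remaining three are checked identically.

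The heart of the argument is the pair of equalities $st = st'$ and $ts = t's$. For the first, I would use the two relations $s = st's$ and $s = sts$ to express each idempotent as a product of the other: from $s = st's$ one gets $st = (st's)t = (st')(st)$, and from $s = sts$ one gets $st' = (sts)t' = (st)(st')$. Since idempotents commute by hypothesis, $(st')(st) = (st)(st')$, and therefore $st = (st')(st) = (st)(st') = st'$. The equality $ts = t's$ follows from the same pattern: from $s = st's$ one writes $ts = (ts)(t's)$, from $s = sts$ one writes $t's = (t's)(ts)$, and commutativity forces $ts = t's$. I would then conclude by chaining these two equalities with the defining relations:
\[
t = tst = t'st = t'st' = t',
\]
where the second equality uses $ts = t's$, the third uses $st = st'$, and the last uses $t'st' = t'$. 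This shows $t = t'$, so $S$ is inverse.

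I expect no serious obstacle: the forward direction is immediate from Proposition \ref{Proposition: e and f commute}, and the converse is purely a matter of organizing the computation so that the relations $sts = s$ and $st's = s$ are used to present the relevant products as commuting idempotents. The only points requiring care are verifying that the four products are genuinely idempotent and grouping the substitutions correctly.
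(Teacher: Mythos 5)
Your proof is correct. The forward direction is exactly the paper's argument (apply Proposition \ref{Proposition: e and f commute} with $I=S$); for the converse the paper merely cites \cite[Theorem 3]{Law98}, and the argument you supply --- checking that $st$, $ts$, $st'$, $t's$ are idempotents, deducing $st=st'$ and $ts=t's$ from the commutativity hypothesis, and chaining $t=tst=t'st=t'st'=t'$ --- is precisely the standard proof of that cited result, with every computation checking out.
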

\begin{proof}
	The only if part follows from Proposition \ref{Proposition: e and f commute}.
	See \cite[Theorem 3]{Law98} for a proof of the if part.
\end{proof}

Let $S$ be an inverse semigroup.
The following lemma is well-known:
\begin{lemm}\label{Lemma: order}
	Let $s,t$ be elements of $S$. 
	The following are equivalent;
	\begin{enumerate}[(i)]
		\item $s = ts^*s$,
		\item there exists $e \in E(S)$ with $s = te$,
		\item $s = ss^*t$,
		\item there exists $f \in E(S)$ with $s = ft$.
	\end{enumerate}
\end{lemm}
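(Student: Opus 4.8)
The plan is to prove the four statements equivalent by establishing the cyclic chain of implications (i) $\Rightarrow$ (ii) $\Rightarrow$ (iii) $\Rightarrow$ (iv) $\Rightarrow$ (i). The whole argument rests on a few basic facts recalled above: for any $s \in S$ the elements $s^*s$ and $ss^*$ are idempotents, every idempotent is self-inverse so that $e^* = e$ for $e \in E(S)$, and, crucially, all idempotents of $S$ commute by Theorem \ref{Theorem: regular and inverse}. I will also use $ss^*s = s$ and $(st)^* = t^*s^*$ repeatedly.

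The two implications (i) $\Rightarrow$ (ii) and (iii) $\Rightarrow$ (iv) are immediate: taking $e := s^*s$ and $f := ss^*$ respectively produces the required idempotents. The content therefore lies in the two remaining implications, each of which follows the same pattern. For (ii) $\Rightarrow$ (iii), starting from $s = te$ I would first compute $ss^* = te(te)^* = t(ee)t^* = tet^*$, using $e^* = e$; then $ss^*t = tet^*t$, and after commuting the idempotents $e$ and $t^*t$ past one another this collapses to $t(t^*t)e = te = s$ via $tt^*t = t$. For (iv) $\Rightarrow$ (i), starting from $s = ft$ I would dually compute $s^*s = (ft)^*(ft) = t^*(ff)t = t^*ft$ and then $ts^*s = tt^*ft = f(tt^*t) = ft = s$, again by commuting the idempotents $f$ and $tt^*$ and using $tt^*t = t$.

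I expect no serious obstacle here; the only point requiring care is invoking Theorem \ref{Theorem: regular and inverse} at exactly the right moment to interchange the two commuting idempotents, since this is precisely what makes each expression telescope. It is worth noting that the symmetry between the pair (i),(ii) and the pair (iii),(iv) is the left-right duality $s \mapsto s^*$, so the computations in the two nontrivial implications are mirror images of one another and could alternatively be deduced from each other by applying one of them to $s^*$.
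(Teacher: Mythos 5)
Your proposal is correct and follows essentially the same route as the paper: the cyclic chain (i) $\Rightarrow$ (ii) $\Rightarrow$ (iii) $\Rightarrow$ (iv) $\Rightarrow$ (i), with the trivial implications handled by taking $e = s^*s$ and $f = ss^*$, and the substantive ones by the computation $ss^*t = (te)(te)^*t = tet^*t = tt^*te = te = s$, which commutes the idempotents $e$ and $t^*t$ exactly as the paper does via Theorem \ref{Theorem: regular and inverse}. No issues.
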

\begin{proof}
	We see that (i) implies (ii) and that (iii) implies (iv) trivially.
	Let us show that (ii) implies (iii). 
	For $s,t\in S$ and $e \in E(S)$ with $s = te$, we get 
	\begin{align*}
	ss^*t = (te)(te)^*t = tet^*t = tt^*te = te = s.
	\end{align*}
	The third equal follows from Theorem \ref{Theorem: regular and inverse}.
	We can see that (iv) implies (i) similarly.
\end{proof}

We define a binary relation on $S$ by declaring that $s \leq t$ if and only if $s$ and $t$ satisfies one (and hence all) of the conditions in Lemma \ref{Lemma: order}.
We can check easily that this relation $\leq$ becomes a partial order on $S$.
We can show that $s \leq t$ implies $s's\leq s't$ and $ss' \leq ts'$ for every $s' \in S$, and that $s \leq t$ implies $s^* \leq t^*$.
We prove an analogy of Lemma \ref{Lemma: order} in Proposition \ref{Proposition: order}.

\begin{lemm}\label{Lemma: right cancel}
	Let  $s_1,s_2$ be elements of $S$.
	\begin{enumerate}[(i)]
		\item If $s_1s_1^* = s_1s_2^* =s_2s_2^*$, then $s_1 = s_2$.
		\item  If $s_1s = s_2s$ for all $s \in S$, then $s_1 = s_2$.
	\end{enumerate}
\end{lemm}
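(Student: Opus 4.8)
The plan is to prove (i) by establishing both $s_1 \le s_2$ and $s_2 \le s_1$ in the natural partial order and then invoking antisymmetry, and to derive (ii) from (i) by specializing the quantified element $s$ to two well-chosen values. The key enabling facts are that the elements $s_is_i^*$ and $s_i^*s_i$ are idempotents, that $(st)^* = t^*s^*$ and $s^{**}=s$, and the characterization of $\le$ in Lemma \ref{Lemma: order}.

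For (i), I read each of the two given equalities as an order statement. Starting from $s_1s_2^* = s_2s_2^*$ and multiplying on the right by $s_2$ gives $s_1(s_2^*s_2) = s_2s_2^*s_2 = s_2$; since $s_2^*s_2 \in E(S)$, condition (ii) of Lemma \ref{Lemma: order} (with $t = s_1$) yields $s_2 \le s_1$. For the reverse inequality, I first take adjoints in $s_1s_2^* = s_1s_1^*$, using $(st)^* = t^*s^*$ and $s^{**}=s$, to obtain $s_2s_1^* = s_1s_1^*$; multiplying this on the right by $s_1$ gives $s_2(s_1^*s_1) = s_1s_1^*s_1 = s_1$, and since $s_1^*s_1 \in E(S)$, condition (ii) of Lemma \ref{Lemma: order} (with $t = s_2$) gives $s_1 \le s_2$. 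Antisymmetry of the partial order $\le$ then forces $s_1 = s_2$.

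For (ii), the idea is to feed two special values of $s$ into the hypothesis $s_1s = s_2s$ so as to recover exactly the hypotheses of (i). Taking $s = s_2^*$ gives $s_1s_2^* = s_2s_2^*$ immediately. Taking $s = s_1^*$ gives $s_1s_1^* = s_2s_1^*$; applying the adjoint to both sides and simplifying via $(st)^* = t^*s^*$ and $s^{**}=s$ turns this into $s_1s_1^* = s_1s_2^*$. Combining the two displays gives $s_1s_1^* = s_1s_2^* = s_2s_2^*$, which is precisely the hypothesis of (i), and so (i) concludes the argument.

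I do not expect a serious obstacle here: the conceptual content of (i) is entirely carried by antisymmetry of the natural partial order, and (ii) is a direct reduction to (i). The only points demanding care are the adjoint bookkeeping, so that each equality is converted into the correct inequality, and the repeated use of the fact that $s_is_i^*$ and $s_i^*s_i$ are idempotents, which is exactly what allows Lemma \ref{Lemma: order} to be applied.
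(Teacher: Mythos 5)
Your proof is correct, and part (ii) coincides with the paper's argument (specialize $s$ to $s_1^*$ and $s_2^*$, take generalized inverses, reduce to (i)). Part (i), however, takes a genuinely different route. The paper's proof is a one-line appeal to the defining property of an inverse semigroup: it checks that $s_2^*s_1s_2^* = s_2^*$ and $s_1s_2^*s_1 = s_1$, so that $s_1$ and $s_2$ are both generalized inverses of $s_2^*$, and uniqueness forces $s_1 = s_2$. You instead convert each hypothesis into an order statement --- $s_2 = s_1(s_2^*s_2)$ gives $s_2 \le s_1$, and (after taking generalized inverses of $s_1s_2^* = s_1s_1^*$) $s_1 = s_2(s_1^*s_1)$ gives $s_1 \le s_2$ --- and conclude by antisymmetry of $\le$. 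Both are valid; your adjoint bookkeeping checks out, and the partial-order properties you invoke are asserted in the paper just before this lemma. The trade-off is that the paper's version is more self-contained (it uses only the uniqueness of generalized inverses, with no reliance on the order structure), whereas yours leans on antisymmetry of $\le$, which the paper states without proof; on the other hand, your version makes the conceptual content --- that the two hypotheses are exactly the two inequalities $s_1 \le s_2$ and $s_2 \le s_1$ --- more transparent.
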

\begin{proof}
	\begin{enumerate}[(i)]
		\item Take $s_1,s_2 \in S$ with $s_1s_1^* = s_1s_2^* = s_2s_2^*$.
		We have
		\begin{align*}
		s_2^*s_1s_2^* &= s_2^*s_2s_2^* = s_2^*,\\
		s_1s_2^*s_1 &= s_1s_1^*s_1 = s_1.
		\end{align*}
		Since both $s_1$ and $s_2$ are the generalized inverse of $s_2^*$, we obtain $s_1 = s_2$.
		
		\item Let $s_1,s_2$ be elements of $S$ such that $s_1s = s_2s$ for all $s\in S$.
		Taking $s_1^*$ and $s_2^*$ as $s$, we get $s_1s_1^* = s_2s_1^*$ and $s_1s_2^* = s_2s_2^*$.
		The first equation implies $s_1s_1^* = s_1s_2^*$ by taking generalized inverses.
		Thus $s_1 = s_2$ holds by (i). \qedhere
	\end{enumerate}
\end{proof}

\section{Inverse sets and Partial Morita equivalences}
\label{Section: Inverse sets and Partial Morita equivalences}

In this section, we introduce notions of \emph{inverse set} and \emph{partial Morita equivalence}. 
These correspond to Hilbert $C^*$-modules and Hilbert bimodules in the $C^*$-algebra theory.
Steinberg introduced the notion of Morita contexts in \cite{Ste11}.
This corresponds to imprimitivity bimodule in the theory of $C^*$-algebras.
Partial Morita equivalences are generalization of Morita contexts.

\subsection{Inverse sets}

Let $S$ be an inverse semigroup.
\begin{defi}
	A \emph{left $S$-set} $\U$ is a set $\U$ equipped with a left $S$-action, that is, a map $S\times \U \rightarrow \U; (s,u)\mapsto su$ such that $s'(su) = (s's)u$ for every $s,s'\in S$ and $u \in \U$.
	A \emph{right $S$-set} $\U$ is defined in a similar way.
\end{defi}

\begin{defi}\label{Definition: inverse set}
	A \emph{left regular $S$-set} is a left $S$-set equipped with a map $\lin{\cdot}{\cdot}{\U} \: \U \times \U \rightarrow S$ called a \emph{left regular pairing} on $\U$ which satisfies that 
	\begin{enumerate}[(L-i)]
		\item $\lin{ su }{ u' }{\U} = s\lin{ u }{ u' }{\U}$,
		\item $\lin{ u }{ u' }{\U}^* = \lin{ u' }{ u }{\U}$,
		\item $\lin{ u }{ u }{\U} u = u$,
	\end{enumerate}
	for every $u,u' \in \U$ and $s \in S$. 
	A \emph{left inverse $S$-set} is a left regular $S$-set whose left regular pairing satisfies that
	\begin{enumerate}
		\item[(L-iv)] $\lin{ u }{ u' }{\U} u = u$ and $\lin{ u' }{ u }{\U} u' = u'$ imply $u = u'$
	\end{enumerate}
	for every $u,u'\in\U$.
	We call a left regular pairing with (L-iv) as a \emph{left inverse pairing}, or just a left pairing.
	
	A \emph{right regular $S$-set} $\U$ is a right $S$-set with a map $\rin{\cdot}{\cdot}{\U} \: \U \times \U \rightarrow S$ called a \emph{right regular pairing} on $\U$ which satisfies that 
	\begin{enumerate}[(R-i)]
		\item $\rin{ u }{ u's }{\U} = \rin{ u }{ u' }{\U} s$,
		\item $\rin{ u }{ u' }{\U}^* = \rin{ u' }{ u }{\U}$,
		\item $u \rin{ u }{ u }{\U} = u$,
	\end{enumerate}
	for every $u, u' \in \U$ and $s \in S$. 
	A \emph{right inverse $S$-set} $\U$ is a right regular $S$-set whose right regular pairing satisfies that
	\begin{enumerate}
		\item[(R-iv)] $u \rin{ u' }{ u }{\U} = u$ and $u' \rin{ u }{ u' }{\U} = u'$ imply $u = u'$
	\end{enumerate}
	for every $u,u'\in\U$.
	We call a right regular pairing with (R-iv) as a \emph{right inverse pairing}, or just a right pairing.
\end{defi}

As a first example, we regard an inverse semigroup $S$ as a left (and right) inverse $S$-set.
\begin{exam}\label{Example: inverse semigroup}
	We define a left action of $S$ on $S$ as the multiplication from the left side.
	We set a map $\lin{\cdot}{\cdot}{S}\: S \times S \rightarrow S$ as $\lin{s'}{s}{S} := s's^*$ for every $s,s'\in S$.
	It is clear that this map satisfies (L-i) and (L-ii).
	The map $\lin{\cdot}{\cdot}{S}$ satisfies (L-iii) by the definition of the generalized inverse and satisfies (L-iv) since $S$ is inverse.
	Thus $S$ is a left inverse $S$-set with respect to the above structures.
	We can regard $S$ as also a right inverse $S$-set as follows:
	We set a right action of $S$ on $S$ as the multiplication from the right side and define a map $\rin{\cdot}{\cdot}{S}\: S \times S \rightarrow S$ by $\rin{s}{s'}{S} := s^*s'$ for every $s,s'\in S$. 
\end{exam}

\begin{rema}
	Steinberg imposed (L-iii) and (R-iii) when defining Morita contexts in \cite{Ste11}.
	With Example \ref{Example: inverse semigroup} in mind, we receive (L-iii) and (R-iii) as kinds of regularity.
	There we impose (L-iv) and (R-iv) as corresponding to the uniqueness of the generalized inverse.
	These conditions imply many important properties as seen in Lemma \ref{Lemma: non-degenerate} and Proposition \ref{Proposition: R-iv}.
\end{rema}

Let $S$ be an inverse semigroup.

\begin{lemm}\label{Lemma: left pairing basics}
	For a left regular $S$-set $\U$,
	\begin{enumerate}[(i)]
		\item $\lin{ u }{ u }{\U} \in E(S)$
		\item $\lin{ u }{ su' }{\U} = \lin{ u }{ u' }{\U} s^*$
	\end{enumerate}
	hold for every $u,u'\in \U$, $s \in S$.
\end{lemm}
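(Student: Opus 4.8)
The plan is to derive both identities directly from the three axioms (L-i)--(L-iii) of a left regular pairing; notably, neither part needs the inverse condition (L-iv), so this is really a statement about regular pairings.

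For part (i), I would abbreviate $p := \lin{u}{u}{\U}$ and aim to show $pp = p$. The key observation is that (L-iii) says exactly $pu = u$, so $u$ is fixed by the action of $p$. Feeding this into the first slot of the pairing and applying the linearity axiom (L-i) gives
\[
pp = p\lin{u}{u}{\U} = \lin{pu}{u}{\U} = \lin{u}{u}{\U} = p,
\]
where the second equality is (L-i) read from right to left and the third uses $pu = u$. Hence $p \in E(S)$, which is (i).

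For part (ii), the idea is to move the scalar $s$ out of the second slot, where there is no axiom governing it, into the first slot, where (L-i) applies, by flipping the pairing with the symmetry axiom (L-ii). Concretely, (L-ii) gives $\lin{u}{su'}{\U} = \lin{su'}{u}{\U}^*$, then (L-i) gives $\lin{su'}{u}{\U} = s\lin{u'}{u}{\U}$, and taking adjoints while using $(st)^* = t^*s^*$ together with (L-ii) once more yields
\[
\lin{u}{su'}{\U} = \bigl(s\lin{u'}{u}{\U}\bigr)^* = \lin{u'}{u}{\U}^* s^* = \lin{u}{u'}{\U} s^*,
\]
which is (ii).

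I do not expect a genuine obstacle here; both identities are short consequences of the defining axioms. The only points demanding care are bookkeeping the directions in which (L-i) and (L-ii) are invoked and remembering that the involution reverses products, as recorded in the Preliminaries. The single nontrivial idea is the substitution $pu = u$ coming from (L-iii), and it is precisely what makes the idempotency in part (i) go through.
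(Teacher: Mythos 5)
Your proposal is correct and matches the paper's proof essentially verbatim: part (i) uses (L-i) and (L-iii) via the substitution $\lin{u}{u}{\U}u = u$, and part (ii) flips the pairing with (L-ii), applies (L-i), and takes generalized inverses using $(st)^* = t^*s^*$. No gaps.
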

\begin{proof}
	For every $u \in \U$,
	\[
	\lin{u}{u}{\U} \lin{u}{u}{\U} = \lin{ \lin{u}{u}{\U} u }{ u }{\U} = \lin{u}{u}{\U}
	\]
	holds by (L-i) and (L-iii). 
	For $u,u'\in\U$, 
	\[
	\lin{ u }{ su' }{\U} = \lin{ su' }{ u }{\U}^* = \big(s\lin{ u' }{ u }{\U}\big)^* = \lin{ u' }{ u }{\U}^*s^* = \lin{ u }{ u' }{\U}s^*
	\]
	holds by (L-ii) and (L-i).
\end{proof}

\begin{lemm}\label{Lemma: non-degenerate}
	Let $\U$ be a left inverse $S$-set.
	For $u,u'\in\U$, the following hold:
	\begin{enumerate}[(i)]
		\item If $\lin{ u }{ u }{\U} = \lin{ u }{ u' }{\U} = \lin{ u' }{ u' }{\U}$, then $u = u'$.
		\item If $\lin{ u }{ u'' }{\U} = \lin{ u' }{ u'' }{\U}$ holds for all $u'' \in \U$, then $u = u'$.
	\end{enumerate}
\end{lemm}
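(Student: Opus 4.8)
The plan is to derive both statements from axiom (L-iv) together with the basic facts recorded in Lemma \ref{Lemma: left pairing basics}. Throughout I will use repeatedly that an idempotent $e \in E(S)$ is self-adjoint, i.e.\ $e^* = e$; this holds because $e$ is its own generalized inverse ($eee = e$) and generalized inverses are unique.

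For part (i), I would set $e := \lin{u}{u}{\U} = \lin{u}{u'}{\U} = \lin{u'}{u'}{\U}$. By Lemma \ref{Lemma: left pairing basics}(i), both $\lin{u}{u}{\U}$ and $\lin{u'}{u'}{\U}$ lie in $E(S)$, so $e$ is an idempotent. I then verify the two hypotheses of (L-iv). Using the first equality and (L-iii), we get $\lin{u}{u'}{\U} u = e u = \lin{u}{u}{\U} u = u$. For the other hypothesis, (L-ii) gives $\lin{u'}{u}{\U} = \lin{u}{u'}{\U}^* = e^* = e$, and then the third equality with (L-iii) yields $\lin{u'}{u}{\U} u' = e u' = \lin{u'}{u'}{\U} u' = u'$. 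Hence (L-iv) applies and forces $u = u'$.

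For part (ii), I would specialize the hypothesis to $u'' = u$ and to $u'' = u'$, obtaining $\lin{u}{u}{\U} = \lin{u'}{u}{\U}$ and $\lin{u}{u'}{\U} = \lin{u'}{u'}{\U}$. Taking adjoints in the first equation and using (L-ii) together with the self-adjointness of the idempotent $\lin{u}{u}{\U}$ rewrites it as $\lin{u}{u}{\U} = \lin{u}{u'}{\U}$. Chaining this with the second equation produces $\lin{u}{u}{\U} = \lin{u}{u'}{\U} = \lin{u'}{u'}{\U}$, which is precisely the hypothesis of part (i); applying (i) then gives $u = u'$.

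I do not expect a genuine obstacle here, since the whole content is an application of the axioms in the right order. The only point requiring care is the bookkeeping with adjoints, specifically remembering that $\lin{u}{u}{\U}$ is idempotent and therefore self-adjoint, so that (L-ii) can be used to pass between $\lin{u'}{u}{\U}$ and $\lin{u}{u'}{\U}$. Once this is observed, both parts are short, and part (ii) collapses to part (i) via the two substitutions above.
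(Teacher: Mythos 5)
Your proof is correct and follows essentially the same route as the paper: verify the two hypotheses of (L-iv) using the idempotency (hence self-adjointness) of $\lin{u}{u}{\U}$ and (L-ii), then obtain (ii) by specializing $u''$ to $u$ and $u'$. The only cosmetic difference is that you formally reduce (ii) to (i) by taking adjoints, whereas the paper reapplies the (L-iv) argument directly to the two specialized equations; this is not a genuinely different approach.
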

\begin{proof}
	Take $u,u' \in \U$ with $\lin{ u }{ u }{\U} = \lin{ u' }{ u' }{\U} = \lin{ u }{ u' }{\U}$.
	By taking the generalized inverse, we obtain $\lin{ u }{ u }{\U} = \lin{ u' }{ u }{\U}$.
	We see that
	\begin{align*}
	u \lin{ u' }{ u }{\U} = u \lin{ u }{ u }{\U} = u \text{ and } u' \lin{ u }{ u' }{\U} = u' \lin{ u' }{ u' }{\U} = u'.
	\end{align*}
	These imply $u = u'$ by (L-iv) in Definition \ref{Definition: inverse set}.
	
	Assume $u,u' \in \U$ satisfy $\lin{ u }{ u'' }{\U} = \lin{ u' }{ u'' }{\U}$ for all $u'' \in \U$.
	Taking $u$ and $u'$ as $u''$, we get $\lin{u}{u}{\U} = \lin{u'}{u}{\U}$ and $\lin{u}{u'}{\U} = \lin{u'}{u'}{\U}$ respectively.
	As wee see in the above argument, these imply $u = u'$.
\end{proof}

We will show that for a left regular $S$-set $\U$, (i) in the lemma above implies that $\U$ is inverse in Proposition \ref{Proposition: R-iv}.

\begin{defi}
	For every regular $S$-set $\U$, we set the subset $\lin{\U}{\U}{\U}$ of $S$ as $\{ \lin{u}{u'}{\U} \mid u,u'\in\U \}$.
\end{defi}

For a regular $S$-set $\U$, the subset $\lin{\U}{\U}{\U}$ is a two-sided ideal (and especially an inverse subsemigroup) of $S$ by (L-i) in Definition \ref{Definition: inverse set} and Lemma \ref{Lemma: left pairing basics} (ii).

\begin{defi}
	For a left regular $S$-set $\U$, the left pairing on $\U$ is said to be \emph{left full} if $\lin{\U}{\U}{\U} = S$.
	In this case, we also say that $\U$ is left full.
	We define a term \emph{right full} for a right regular $S$-set in a similar way.
\end{defi}

Let $\U$ and $\V$ be left regular $S$-sets.
\begin{defi}
	Let $\sigma$ be a map from $\U$ to $\V$.
	\begin{enumerate}[(i)]
		\item A map $\sigma$ is a \emph{left $S$-map} if $\sigma(su) = s\sigma(u)$ holds for every $u \in \U$ and $s \in S$.
		\item A map $\sigma$ is \emph{left pairing preserving} if $\lin{ \sigma(u) }{ \sigma(u') }{\V} = \lin{ u }{ u' }{\U}$ holds for every $u,u'\in \U$.
	\end{enumerate}
	For a map between right regular $S$-sets, we similarly define that it is a \emph{right $S$-map} and is \emph{right pairing preserving}.
\end{defi}

\begin{defi}
	A left pairing preserving left $S$-map $\sigma \: \U \rightarrow \V$ is an \emph{isomorphism} if there exists a left pairing preserving left $S$-map $\tau \: \V \rightarrow \U$ such that $\tau\circ\sigma$ is the identity map $1_\U$ for $\U$ and $\sigma\circ\tau$ is the identity map $1_\V$ for $\V$.
	Two left regular $S$-sets are \emph{isomorphic} if there exists an isomorphism between them.
	For right regular $S$-sets $\U$, $\V$ and a map $\sigma\: \U \rightarrow \V$, we similarly define that $\sigma$ is an isomorphism and that $\U$ and $\V$ are isomorphic.
\end{defi}

\begin{lemm}\label{Lemma: isom btwn regular set}
	Let $\sigma\: \U \rightarrow \V$ be a left pairing preserving left $S$-map.
	A map $\tau\: \V \rightarrow \U$ which satisfies $\sigma\circ\tau = 1_\V$ and $\tau\circ\sigma = 1_\U$ becomes a left pairing left $S$-map.
\end{lemm}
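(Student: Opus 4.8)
The plan is to exploit that $\tau$ is a genuine two-sided inverse of $\sigma$, so $\sigma$ is a bijection and in particular injective; this injectivity is the only real leverage available for transporting the two defining properties from $\sigma$ across to $\tau$. I would establish the two required properties of $\tau$ separately: that it is left pairing preserving, and that it is a left $S$-map.

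For left pairing preservation, no work beyond substitution should be needed. Given $v,v' \in \V$, I apply the hypothesis that $\sigma$ is left pairing preserving to the elements $\tau(v),\tau(v') \in \U$, which yields $\lin{\sigma(\tau(v))}{\sigma(\tau(v'))}{\V} = \lin{\tau(v)}{\tau(v')}{\U}$. Since $\sigma\circ\tau = 1_\V$, the left-hand side is $\lin{v}{v'}{\V}$, and therefore $\lin{\tau(v)}{\tau(v')}{\U} = \lin{v}{v'}{\V}$, which is exactly the claim.

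For the left $S$-map property I expect the only slightly less automatic step, since it is here that injectivity of $\sigma$ enters. Fix $s \in S$ and $v \in \V$. Applying $\sigma$ to $\tau(sv)$ gives $sv$ because $\sigma\circ\tau = 1_\V$, while applying $\sigma$ to $s\tau(v)$ gives $s\sigma(\tau(v)) = sv$ because $\sigma$ is a left $S$-map and again $\sigma\circ\tau = 1_\V$. Hence $\sigma(\tau(sv)) = \sigma(s\tau(v))$, and cancelling the injective map $\sigma$ gives $\tau(sv) = s\tau(v)$. Taken together, the two paragraphs show that $\tau$ is a left pairing preserving left $S$-map. I do not anticipate any genuine obstacle here, as the content is entirely formal; the one point worth flagging is that one should conclude the equivariance by invoking injectivity of $\sigma$ rather than trying to compute it directly from the pairing axioms.
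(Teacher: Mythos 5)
Your proposal is correct and is essentially the paper's own argument: the pairing-preservation step is identical, and for equivariance the paper's chain $s\tau(v) = \tau(\sigma(s\tau(v))) = \tau(s\sigma(\tau(v))) = \tau(sv)$ is exactly your ``cancel $\sigma$'' step, with the cancellation realized by applying $\tau$ via $\tau\circ\sigma = 1_\U$ rather than by naming injectivity explicitly.
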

\begin{proof}
	For every $s \in S$ and $v,v' \in \V$, we have
	\begin{align*}
	s\tau(v) &= 
	\tau(\sigma(s\tau(v))) = 
	\tau(s\sigma(\tau(v))) = 
	\tau(sv),\\
	\lin{ \tau(v) } { \tau(v') }{\U} &= 
	\lin{ \sigma(\tau(v)) }{ \sigma(\tau(v')) }{\V} = 
	\lin{ v }{ v' }{\V}.
	\end{align*}
	Thus $\tau$ is a left pairing preserving left $S$-map.
\end{proof}

\begin{lemm}\label{Lemma: pairing preserving S-map}
	If $\V$ is inverse, then a left pairing preserving map $\sigma\: \U \rightarrow \V$ is a left $S$-map.
\end{lemm}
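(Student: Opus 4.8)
The plan is to invoke Lemma \ref{Lemma: non-degenerate}(i), which is exactly the tool designed to certify equality of two elements of an inverse set by comparing a triple of left pairings. Fixing $s \in S$ and $u \in \U$, I set $v := \sigma(su)$ and $v' := s\sigma(u)$, both elements of $\V$. Since $\V$ is inverse, it suffices to show that the three left pairings $\lin{\sigma(su)}{\sigma(su)}{\V}$, $\lin{\sigma(su)}{s\sigma(u)}{\V}$, and $\lin{s\sigma(u)}{s\sigma(u)}{\V}$ coincide; concretely, I will show each of them equals $s\lin{u}{u}{\U}s^*$, after which Lemma \ref{Lemma: non-degenerate}(i) gives $\sigma(su) = s\sigma(u)$.

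The two ``diagonal'' terms are routine. For $\lin{\sigma(su)}{\sigma(su)}{\V}$ I would first apply left pairing preservation to get $\lin{su}{su}{\U}$, and then rewrite it as $s\lin{u}{u}{\U}s^*$ using axiom (L-i) on the first slot together with Lemma \ref{Lemma: left pairing basics}(ii) on the second. The term $\lin{s\sigma(u)}{s\sigma(u)}{\V}$ is handled in the reverse order: I first strip the action $s$ out of both slots by (L-i) and Lemma \ref{Lemma: left pairing basics}(ii), reaching $s\lin{\sigma(u)}{\sigma(u)}{\V}s^*$, and only then use pairing preservation to replace $\lin{\sigma(u)}{\sigma(u)}{\V}$ by $\lin{u}{u}{\U}$.

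The cross term $\lin{\sigma(su)}{s\sigma(u)}{\V}$ is the one mildly delicate point, and I expect it to be the only real obstacle. The issue is that $s\sigma(u)$ is in general \emph{not} in the image of $\sigma$, so pairing preservation cannot be applied to it as written. The trick is to first extract the $s$ from the second slot via Lemma \ref{Lemma: left pairing basics}(ii), obtaining $\lin{\sigma(su)}{\sigma(u)}{\V}s^*$; now both arguments genuinely lie in the image of $\sigma$, so pairing preservation yields $\lin{su}{u}{\U}s^*$, which is $s\lin{u}{u}{\U}s^*$ by (L-i). With all three pairings shown equal to $s\lin{u}{u}{\U}s^*$, Lemma \ref{Lemma: non-degenerate}(i) concludes $\sigma(su) = s\sigma(u)$, and since $s$ and $u$ were arbitrary this is precisely the statement that $\sigma$ is a left $S$-map.
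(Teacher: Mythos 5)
Your proposal is correct and follows essentially the same route as the paper: show that the three left pairings of $\sigma(su)$ and $s\sigma(u)$ coincide and invoke Lemma \ref{Lemma: non-degenerate}(i). The only cosmetic differences are that you normalize all three pairings to $s\lin{u}{u}{\U}s^*$ whereas the paper normalizes them to $\lin{su}{su}{\U}$ (the same element, by (L-i) and Lemma \ref{Lemma: left pairing basics}(ii)), and you take the cross term in the opposite slot order, which is equivalent via (L-ii).
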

\begin{proof}
	For $u \in \U$ and $s \in S$,
	\begin{align*}
	\lin{ \sigma(su) }{ \sigma(su) }{\V} &= \lin{ su }{ su }{\U},\\
	\lin{ s\sigma(u) }{ \sigma(su) }{\V} &= s\lin{ \sigma(u) }{ \sigma(su) }{\V} = s \lin{ u }{ su }{\U} = \lin{ su }{ su }{\U}, \text{ and } \\
	\lin{ s\sigma(u) }{ s\sigma(u) }{\V} &= s\lin{ \sigma(u) }{ \sigma(u) }{\V} s^* = s \lin{ u }{ u }{\U} s^* = \lin{ su }{ su }{\U}
	\end{align*}
	hold. 
	This implies $\sigma(su) = s\sigma(u)$ by Lemma \ref{Lemma: non-degenerate} (i).
\end{proof}

\begin{lemm}\label{Lemma: pairing preserving injective}
	If $\U$ is inverse, then a left pairing preserving map $\sigma\: \U \rightarrow \V$ is injective.
\end{lemm}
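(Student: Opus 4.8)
The plan is to reduce the injectivity of $\sigma$ to the non-degeneracy already recorded in Lemma \ref{Lemma: non-degenerate} (ii), which is available precisely because $\U$ is inverse. So I would begin by assuming $\sigma(u) = \sigma(u')$ for some $u, u' \in \U$, with the goal of concluding $u = u'$.

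The key step is to transport the equality $\sigma(u) = \sigma(u')$, which a priori lives in $\V$, back into a statement about left pairings in $\U$. For an arbitrary $w \in \U$, the pairing-preserving hypothesis gives
\[
\lin{u}{w}{\U} = \lin{\sigma(u)}{\sigma(w)}{\V} = \lin{\sigma(u')}{\sigma(w)}{\V} = \lin{u'}{w}{\U},
\]
where the middle equality uses $\sigma(u) = \sigma(u')$ and the two outer equalities are just pairing preservation applied to the pairs $(u,w)$ and $(u',w)$. Since $w$ was arbitrary, this shows that $u$ and $u'$ have the same left pairing against every element of $\U$.

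Finally, I would invoke Lemma \ref{Lemma: non-degenerate} (ii) to conclude $u = u'$, finishing the argument. I do not expect a genuine obstacle here: the only point worth noticing is that the proof uses no hypothesis on $\V$ (it need be neither inverse nor full) and relies solely on $\U$ being inverse, which is exactly the condition (L-iv) of Definition \ref{Definition: inverse set} that powers Lemma \ref{Lemma: non-degenerate}.
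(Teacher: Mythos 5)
Your proof is correct and essentially matches the paper's: the paper applies Lemma \ref{Lemma: non-degenerate} (i) directly to the three elements $\lin{u}{u}{\U}$, $\lin{u}{u'}{\U}$, $\lin{u'}{u'}{\U}$, while you route through part (ii); since part (ii) is itself deduced from part (i) by specializing the test element to $u$ and $u'$, the two arguments coincide in substance. Your remark that no hypothesis on $\V$ is needed is also accurate.
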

\begin{proof}
	Let $\sigma\: \U \rightarrow \V$ be a left pairing preserving map.
	For every $u,u' \in \U$ with $\sigma(u) = \sigma(u')$, 
	$\lin{ u }{ u }{\U} = \lin{ \sigma(u) }{ \sigma(u) }{\V}$, $\lin{ u }{ u' }{\U} = \lin{ \sigma(u) }{ \sigma(u') }{\V}$, and $\lin{ u' }{ u' }{\U} = \lin{ \sigma(u') }{ \sigma(u') }{\V}$ are the same element of $S$.
	This implies $u = u'$ by Lemma \ref{Lemma: non-degenerate} (i).
\end{proof}

\begin{coro}\label{Corollary: isom}
	Let $\U$ and $\V$ be left inverse $S$-set and $\sigma\: \U \rightarrow \V$ be a map.
	The following are equivalent:
	\begin{enumerate}[(i)]
		\item $\sigma$ is an isomorphism.
		\item $\sigma$ is a left pairing preserving left $S$-map, and there exists a map $\tau\:\V\rightarrow\U$ which satisfies $\sigma\circ\tau = 1_\V$ and $\tau\circ\sigma = 1_\U$.
		\item $\sigma$ is left pairing preserving and surjective.
	\end{enumerate}
\end{coro}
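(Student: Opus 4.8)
The plan is to prove the chain of implications (i) $\Rightarrow$ (ii) $\Rightarrow$ (iii) $\Rightarrow$ (i), drawing on the three preceding lemmas together with the hypothesis that both $\U$ and $\V$ are genuinely \emph{inverse} (not merely regular) $S$-sets. The three conditions differ only in how much structure is demanded of the would-be inverse $\tau$ and of $\sigma$ itself, so the whole point is that the inverse hypotheses let one recover the missing structure for free.

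For (i) $\Rightarrow$ (ii), I would simply unwind the definition of isomorphism: it already supplies a left pairing preserving left $S$-map $\sigma$ together with a (left pairing preserving left $S$-)map $\tau$ satisfying $\sigma\circ\tau = 1_\V$ and $\tau\circ\sigma = 1_\U$. Forgetting the extra structure on $\tau$ yields exactly (ii). For (ii) $\Rightarrow$ (iii), the relation $\sigma\circ\tau = 1_\V$ forces $\sigma$ to be surjective, and $\sigma$ is left pairing preserving by the hypothesis of (ii); this is (iii).

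The substantive direction is (iii) $\Rightarrow$ (i), where a bare left pairing preserving surjective map must be promoted to a full isomorphism. Here I would exploit each inverse hypothesis exactly once. Since $\V$ is inverse, Lemma \ref{Lemma: pairing preserving S-map} upgrades $\sigma$ from a left pairing preserving map to a left $S$-map. Since $\U$ is inverse, Lemma \ref{Lemma: pairing preserving injective} shows $\sigma$ is injective; together with the assumed surjectivity, $\sigma$ is a bijection and hence admits a set-theoretic inverse $\tau := \sigma\inv$, which automatically satisfies $\sigma\circ\tau = 1_\V$ and $\tau\circ\sigma = 1_\U$. Finally, Lemma \ref{Lemma: isom btwn regular set} guarantees that this $\tau$ is itself a left pairing preserving left $S$-map, so $\sigma$ meets the definition of an isomorphism.

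I expect no serious obstacle, since every ingredient is already in place; the only thing to be careful about is invoking the two inverse hypotheses in their correct roles---$\V$ inverse for the $S$-map property and $\U$ inverse for injectivity---rather than conflating them. One could equally run the argument (ii) $\Rightarrow$ (i) directly via Lemma \ref{Lemma: isom btwn regular set}, but the cyclic route above is the most economical and makes the role of each lemma transparent.
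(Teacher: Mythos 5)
Your proposal is correct and uses exactly the same ingredients as the paper's proof (Lemmas \ref{Lemma: isom btwn regular set}, \ref{Lemma: pairing preserving S-map}, and \ref{Lemma: pairing preserving injective}); the only difference is that you close the cycle via (iii) $\Rightarrow$ (i) directly, whereas the paper factors this as (iii) $\Rightarrow$ (ii) $\Rightarrow$ (i), which is a purely cosmetic reorganization.
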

\begin{proof}
	It is clear that (i) $\Rightarrow$ (ii) $\Rightarrow$ (iii).
	By Lemma \ref{Lemma: isom btwn regular set}, we get (ii) implies (i).
	It follows from Lemma \ref{Lemma: pairing preserving S-map} and \ref{Lemma: pairing preserving injective} that (iii) implies (ii).
\end{proof}

We can similarly show right versions of Lemma \ref{Lemma: isom btwn regular set}, \ref{Lemma: pairing preserving S-map}, \ref{Lemma: pairing preserving injective} and Corollary \ref{Corollary: isom}.

\subsection{Partial Morita equivalences}
Let $S$ and $T$ be inverse semigroups.

\begin{defi}
	A \emph{$S \hi T$ biset} $\U$ is a set $\U$ equipped with a left action of $S$ and a right action of $T$ which satisfy $s(ut) = (su)t$ for $s \in S$, $t \in T$ and $u \in \U$.
\end{defi}

\begin{defi}\label{Definition: partial Morita context}
	A \emph{partial Morita equivalence} from $S$ to $T$ is a $S \hi T$ biset $\U$ equipped with a left regular pairing $\lin{\cdot}{\cdot}{\U}\: \U \times \U \rightarrow S$ and a right regular pairing $\rin{\cdot}{\cdot}{\U}\: \U \times \U \rightarrow T$ which satisfy $\lin{ u }{ u'}{\U} u'' = u \rin{ u' }{ u'' }{\U}$ for all $u,u',u'' \in \U$.
\end{defi}

\begin{lemm}\label{Lemma: partial Morita context is a left and right inverse set}
	A partial Morita equivalence is a left inverse $S$-set and a right inverse $T$-set.
\end{lemm}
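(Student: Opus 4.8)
The plan is to notice that a partial Morita equivalence already comes equipped with a left regular pairing and a right regular pairing, so the axioms (L-i)--(L-iii) and (R-i)--(R-iii) hold by definition; the whole content of the lemma is to verify the uniqueness axioms (L-iv) and (R-iv). I would prove (L-iv) in detail and then obtain (R-iv) by the evident left--right symmetric argument, interchanging $S$ with $T$ and $\lin{\cdot}{\cdot}{\U}$ with $\rin{\cdot}{\cdot}{\U}$ and invoking the right analogue of Lemma \ref{Lemma: left pairing basics}, which holds by the same computations.

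For (L-iv), suppose $\lin{u}{u'}{\U}u = u$ and $\lin{u'}{u}{\U}u' = u'$; the goal is $u = u'$. First I would show $a := \lin{u}{u'}{\U}$ is idempotent: since $\lin{u}{u'}{\U}u=u$, we get $a = \lin{u}{u'}{\U} = \lin{au}{u'}{\U} = a^2$, the last equality by (L-i). Hence $a = a^*$, and by (L-ii) also $\lin{u'}{u}{\U} = a^* = a$, so the second hypothesis reads $au' = u'$.

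The key move is to push these left-pairing identities through the compatibility relation $\lin{x}{y}{\U}z = x\rin{y}{z}{\U}$ to obtain statements about the right action. Taking $(x,y,z)=(u,u',u')$ gives $u' = au' = \lin{u}{u'}{\U}u' = u\rin{u'}{u'}{\U} = uh$ with $h := \rin{u'}{u'}{\U}$, and taking $(x,y,z)=(u',u,u)$ gives $u = au = \lin{u'}{u}{\U}u = u'\rin{u}{u}{\U} = u'g$ with $g := \rin{u}{u}{\U}$; here $g,h \in E(T)$ by the right analogue of Lemma \ref{Lemma: left pairing basics}(i). Substituting $u = u'g$ into $g = \rin{u}{u}{\U}$ and simplifying with the right pairing rules and the commutativity of idempotents of $T$ yields $g = gh$, and substituting $u' = uh$ into $h = \rin{u'}{u'}{\U}$ similarly yields $h = gh$. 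Therefore $g = h$, whence $u' = uh = ug = u\rin{u}{u}{\U} = u$ by (R-iii).

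The main obstacle, and the reason this is not a triviality, is the threat of circularity: through the compatibility relation the hypotheses of (L-iv) and (R-iv) become interchangeable, so one cannot prove (L-iv) by simply quoting (R-iv). The argument above sidesteps this by converting the left data into right data and then closing the loop using only (R-i)--(R-iii) together with the fact that $T$ is inverse (so its idempotents commute), never appealing to (R-iv) itself. The computational heart is the pair of identities $g = gh$ and $h = gh$; the rest is routine bookkeeping with (L-i)--(L-iii), (R-i)--(R-iii), and their immediate consequences.
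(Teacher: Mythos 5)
Your proposal is correct and follows essentially the same route as the paper's proof: show $\lin{u}{u'}{\U}$ is idempotent hence self-adjoint, use the compatibility relation to convert the hypotheses into the right-action identities $u=u'\rin{u}{u}{\U}$ and $u'=u\rin{u'}{u'}{\U}$, and close the argument with commutativity of idempotents in $T$. The only cosmetic difference is that you establish $g=gh=h$ explicitly, whereas the paper chains $u=u'e=u'e'e=u'ee'=ue'=u'$ directly; both are the same computation.
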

\begin{proof}
	We show that $\lin{\cdot}{\cdot}{\U}$ satisfies the condition (L-iv).
	We can check that $\rin{\cdot}{\cdot}{\U}$ satisfies the condition (R-iv) in a similar way.
	Fix elements $u,u' \in \U$ with $\lin{ u }{ u' }{\U} u = u$ and $\lin{ u' }{ u }{\U} u' = u'$.
	Since
	\[
	\lin{ u }{ u' }{\U} \lin{ u }{ u' }{\U} = \lin{ \lin{ u }{ u' }{\U} u }{ u' }{\U} u = \lin{ u }{ u' }{\U}
	\]
	holds, we get $\lin{ u }{ u' }{\U} \in E(S)$.
	Hence $\lin{ u }{ u' }{\U} = \lin{ u' }{ u }{\U}$ holds.
	This implies that 
	\begin{align*}
	u 
	&= \lin{ u }{ u' }{\U} u 
	= \lin{ u' }{ u }{\U} u
	= u' \rin{ u }{ u }{\U},{ and }\\
	u' 
	&= \lin{ u' }{ u }{\U} u'
	= \lin{ u }{ u' }{\U} u'
	= u \rin{ u' }{ u' }{\U}
	\end{align*}
	hold.
	We set $e := \rin{ u }{ u }{\U}$ and $e':= \rin{ u' }{ u' }{\U}$.
	By Lemma \ref{Lemma: left pairing basics}, we get $e,e'\in E(T)$.
	We remark that $u' = u'\rin{ u' }{ u' }{\U} = u'e'$ by (R-iii).
	Thus we get 
	\[
	u 
	= u' e
	= u' e' e
	= u' e e'
	= u e'
	= u'. \qedhere
	\]
\end{proof}

\begin{defi}\label{Definition: Morita equiv}
	A \emph{Morita equivalence} from $S$ to $T$ is a partial Morita equivalence $\U$ from $S$ to $T$ which is full as both a left regular $S$-set and a right regular $T$-set.
	Two inverse semigroup $S$ and $T$ are \emph{Morita equivalent} if there exists a Morita equivalence from $S$ to $T$.
\end{defi}

\begin{rema}
	In \cite[Definition 2.1]{Ste11}, Steinberg called a Morita equivalence as an \emph{equivalence bimodule}, and a tuple of two inverse semigroups and a Morita equivalence between them as a \emph{Morita context}.
	Two Morita equivalent inverse semigroups in our term are said to be \emph{strongly Morita equivalent} in \cite{Ste11,FLS11}.
	According to \cite{FLS11}, strong Morita equivalence is equivalent to the three notions; topos Morita equivalence, semigroup Morita equivalence, and enlargement Morita equivalence between inverse semigroups. 
\end{rema}

\begin{rema}
	Steinberg showed that strong Morita equivalence is an equivalence relation.
	In \cite[Proposition 2.5]{Ste11}, the transitivity is proved by introducing the tensor products of equivalence bimodules.
	We will introduce the notion of tensor product for inverse sets in Section \ref{Section: Inverse correspondences and their tensor products}.
	This is a generalization of the one introduced by Steinberg.
	We check that being Morita equivalent is reflexive, transitive, and symmetry in Example \ref{Example: enlargement}, \ref{Example: tensor Morita equiv}, and p.27 respectively.
\end{rema}

For a partial Morita equivalence $\U$ from $S$ to $T$, we get a Morita equivalence $\U$ from the subsemigroup $\lin{\U}{\U}{\U}$ of $S$ to the subsemigroup $\rin{\U}{\U}{\U}$ of $T$.
Our term ``partial Morita equivalence'' is derived from this fact.

\begin{exam}\label{Example: enlargement}
	Let $S$ be an inverse semigroup and $T$ be an inverse subsemigroup of $S$.
	If $T$ satisfies $TST = T$, then $TS$ is a left full partial Morita equivalence from $T$ to $S$ with respect to the left $T$-action defined as the multiplication from the left side, the left pairing $\lin{ u_1 }{ u_2 }{TS} := u_1u_2^* \in T$, the right $S$-action defined as the multiplication from the right side, and the right pairing $\rin{ u_1 }{ u_2 }{TS} := u_1^*u_2 \in S$ for $u_1,u_2\in TS$.
	Moreover, if the subsemigroup $T$ satisfies $STS = S$, then $TS$ is a Morita equivalence from $T$ to $S$.
	Especially, an inverse semigroup $S$ can be regarded as a Morita equivalence from $S$ to $S$.
\end{exam}

\begin{rema}
	For an inverse subsemigroup $T$ of $S$ with $TST=T$ and $STS=S$, Lawson called $S$ an \emph{enlargement} of $T$ in \cite{Law96}.
	Steinberg gave an enlargement as a first example of a Morita context in \cite[Proposition 2.2]{Ste11}.
\end{rema}

\subsection{Examples of inverse sets and partial Morita equivalences}

\begin{exam}\label{Example: group}
	Let $G$ be a group.
	We can check that the empty set is an inverse $G$-set trivially.
	The left inverse $G$-set $G$ in Example \ref{Example: inverse semigroup} is another example.
	We show that every left inverse $G$-set $\U$ is either the empty set or isomorphic to the left inverse $G$-set $G$.
	
	Let $\U$ be a non-empty left inverse $G$-set $\U$.
	Fix $u_0 \in \U$ arbitrarily.
	We define a map $\sigma \: G \rightarrow \U\: g \mapsto gu_0$.
	Let us show that this map is surjective.
	For every $u \in \U$, we set $g_u:= \lin{ u }{ u_0 }{\U}$.
	The elements $\lin{ g_uu_0 }{ g_uu_0 }{\U}$, $\lin{ g_uu_0 }{ u }{\U}$, and $\lin{ u }{ u }{\U}$ are idempotents of $G$.
	Hence these are the identity of $G$.
	By Lemma \ref{Lemma: non-degenerate} (i), we get $g_uu_0 = u$.
	The map $\sigma$ is a left pairing preserving because we can see
	\[
	\lin{ g'u_0 }{ gu_0 }{\U} = g'\lin{ u_0 }{ u_0 }{\U}g\inv = g' g\inv = \lin{g'}{g}{G}
	\]
	for every $g,g'\in G$.
	By Lemma \ref{Corollary: isom}, $\sigma$ is an isomorphism between left inverse $G$-sets $G$ and $\U$.
\end{exam}

\begin{exam}\label{Example: idempotent}
	Let $E$ be an inverse semigroup that consists of only idempotents.
	We analyze left inverse $E$-sets.
	We first show that the following pair produces a left inverse $E$-set:
	A pair $(\{ \U_e \}_{e\in E}, \{\sigma_{e,f}\}_{e,f\in E})$ consists of 
	\begin{itemize}
		\item a family of sets $\{\U_e\}_{e\in E}$, and 
		\item a family of maps $\{\sigma_{e,f}\: \U_f \rightarrow \U_e \mid e,f\in E \text{ with } e \leq f \}$.
	\end{itemize}
	We assume that this pair satisfies the following conditions:
	\begin{enumerate}[(i)]
		\item $\sigma_{e,e}$ is the identity map on $\U_e$ for every $e\in E$.
		\item $\sigma_{e_1,e_2}\sigma_{e_2,e_3} = \sigma_{e_1,e_3}$ for every $e_1,e_2,e_3\in E$ with $e_1\leq e_2$ and $e_2\leq e_3$.
		\item For every $e_1,e_2\in E$, $u_1 \in \U_{e_1}$, and $u_2 \in \U_{e_2}$, there exists the largest element $e \in E$ which satisfies $e \leq e_1, e_2$ and $\sigma_{e,e_1}(u_1) = \sigma_{e,e_2}(u_2)$, where ``largest'' means that every $e' \in E$ with $e' \leq e_1,e_2$ and $\sigma_{e',e_1}(u_1) = \sigma_{e',e_2}(u_2)$ satisfies $e' \leq e$. 
	\end{enumerate}
	For such a pair $(\{ \U_e \}_{e\in E}, \{\sigma_{e,f}\}_{e,f\in E})$, the set $\U := \bigsqcup_{e \in E} \U_e$ becomes a left inverse $E$-set with respect to
	\begin{itemize}
		\item a left action of $E$ on $\U$ defined as $fu_1 := \sigma_{fe_1,e_1}(u_1)$, and
		\item a left pairing on $\U$ defined as $\lin{ u_1 }{ u_2 }{\U} := e$ appeared in (iii)
	\end{itemize}
	for $e_1,e_2,f\in E$, $u_1\in \U_{e_1}$, and $u_2 \in \U_{e_2}$.
	
	Let us show that $\U$ is a left inverse $E$-set.
	The map $E \times \U \rightarrow \U; (f,u) \mapsto fu$ is a left action by the condition (ii).
	Since the condition (iii) is symmetry for $u_1$ and $u_2$, the map $\lin{\cdot}{\cdot}{\U}\: \U\times\U\rightarrow E$ satisfies (L-ii).
	The condition (L-iii) follows from the condition (i).
	We check the condition (L-i).
	Take $e_1,e_2,f \in E$, $u_1 \in \U_{e_1}$ and $u_2 \in \U_{e_2}$.
	Put $e := \lin{fu_1}{u_2}{\U}$ and $e':=\lin{u_1}{u_2}{\U}$.
	These satisfy that $e \leq fe_1,e_2$, $\sigma_{e,fe_1}(fu_1) = \sigma_{e,e_2}(u_2)$, $e' \leq e_1,e_2$, and $\sigma_{e',e_1}(u_1) = \sigma_{e',e_2}(u_2)$.	
	We get $fe' \leq fe_1$, $fe' \leq e' \leq e_2$, and $fe'fu_1 = fe'u_1 = fe'u_2$.
	By the maximality of $e$, we have $fe' \leq e$. 
	We get $e \leq e_2$, $e\leq fe_1 \leq e_1$, and $eu_1 = efu_1 = eu_2$.
	The last equation follows from $e\leq fe_1 \leq f$.
	By the maximality of $e'$, we have $e \leq e'$.
	This shows $e = fe \leq fe'$.
	Thus we conclude $e = fe'$. 
	This means that $\lin{fu_1}{u_2}{\U} = f\lin{u_1}{u_2}{\U}$.
	We check the condition (L-iv).
	Take $e_1,e_2\in E$, $u_1 \in \U_{e_1}$, and $u_2 \in \U_{e_2}$ with $\lin{u_1}{u_2}{\U}u_1 = u_1$ and $\lin{u_2}{u_1}{\U}u_2 = u_2$.
	Put $e := \lin{u_1}{u_2}{\U}$. 
	The assumptions mean $eu_1 = u_1$ and $eu_2 = u_2$.
	By the definition of $e$, we get $eu_1 = eu_2$.
	Thus $u_1 = u_2$ holds.
	We have shown that $\U$ is a left inverse $E$-set.
	
	Conversely, we can show that every inverse $E$-set $\U$ is constructed from such a pair $(\{ \U_e \}_{e\in E}, \{\sigma_{e,f}\}_{e,f\in E})$.
	Let $\U$ be a left inverse $E$-set.
	For every $e \in E$, we define
	\begin{itemize}
		\item $\U_e := \{ u\in\U \mid \lin{u}{u}{\U} = e \}$, and 
		\item $\sigma_{e,f}\: \U_f \rightarrow \U_e; u \mapsto eu$ for every $e,f \in E$ with $e \leq f$.
	\end{itemize}
	We can easily verify that these satisfy the conditions (i)-(iii), and this pair produces the given inverse $E$-set $\U$.
\end{exam}

\begin{exam}\label{Example: inverse subset}
	Let $S$ be an inverse semigroup and $\U$ be a left inverse $S$-set.
	For an inverse subsemigroup $T$ of $S$ with $TST = T$, we set $T\U:=\{ tu \in \U \mid t\in T, u \in \U \}$.
	The set $T\U$ becomes a left inverse $T$-set.
	If $\U$ is left full, then so is $T\U$.
\end{exam}

\begin{exam}\label{Example: direct sum}
	Let $S$ be an inverse semigroup with $0$, where $0 \in S$ is the element such that $0s = s0 = 0$ for every $s \in S$.
	For a left inverse $S$-set $\U$ and every $u_1,u_2,u_3\in \U$, we get 
	\[
	\lin{ 0u_1 }{ u_3 }{\U} = 0 \lin{ u_1 }{ u_3 }{\U} = 0 = 0 \lin{ u_2 }{ u_3 }{\U} = \lin{ 0u_2 }{ u_3 }{\U}.
	\]
	Thus we get $0u_1 = 0u_2$ by Lemma \ref{Lemma: non-degenerate} (ii).
	This implies that $0u$ is the same element for all $u\in \U$.
	We denote the element $0u$ as $0_{\U}$.
	
	For two non-empty left inverse $S$-sets $\U$ and $\U'$, we define a set $\U \oplus \U'$ as the disjoint union of $\U$ and $\U'$ identified $0_{\U}$ and $0_{\U'}$.
	We can define a left action $S$ on $\U \oplus \U'$ by using left actions on $\U$ and $\U'$.
	A left pairing on $\U \oplus \U'$ is defined as 
	\[
	\lin{u_1}{u_2}{\U \oplus \U'} :=
	\begin{cases}
	\lin{ u_1 }{u_2}{\U}& u_1,u_2\in \U,\\
	\lin{ u_1 }{u_2}{\U'}& u_1,u_2\in \U',\\
	0 & otherwise.\\
	\end{cases}
	\]
	The set $\U \oplus \U'$ becomes a left inverse $S$-set with respect to the above structures.
\end{exam}

\begin{exam}\label{Example: partial homeo}
	We define a left action of $I(Y)$ on $I(X,Y)$ by the composition from the left side and a left pairing on $I(X,Y)$ by 
	\[
	\lin{u_1}{u_2}{I(X,Y)} := u_1u_2\inv
	\]
	for $u_1,u_2 \in I(X,Y)$.
	We define a right action of $I(X)$ on $I(X,Y)$ by the composition from the right side and a right pairing on $I(X,Y)$ by 
	\[
	\rin{ u_1 }{ u_2 }{I(X,Y)} := u_1\inv u_2
	\]
	$u_1,u_2 \in I(X,Y)$.
	We can check that the set $I(X,Y)$ becomes a partial Morita equivalence from $I(Y)$ to $I(X)$ with respect to the above structures.
\end{exam}

\section{Adjointable maps}
\label{Section: Adjointable maps between inverse sets}

\subsection{Adjointable maps}
In this subsection, we introduce \emph{adjointable maps} on regular sets.
This notion is an analogy of the adjointable operators on right Hilbert $C^*$-modules (see \cite[p.8]{Lan95}).
From now on, we mainly consider right regular sets and right inverse sets with the right Hilbert $C^*$-module theory in mind.
We can give a similar theory for left regular sets and left inverse sets.
Let $T$ be an inverse semigroup and $\U$, $\V$, $\W$ be right regular $T$-sets.

\begin{defi}\label{Definition: adjointable}
	A map $\varphi\: \U \rightarrow \V$ is said to be \emph{adjointable} if there exists a map $\psi\: \V \rightarrow \U$ such that 
	\[
	\rin{ \psi(v) }{ u }{\U} = \rin{ v }{ \varphi(u) }{\V}
	\]
	holds for every $u \in \U$ and $v \in \V$.
	Such a map $\psi$ is said to be an \emph{adjoint} of $\varphi$.
	We denote the set of all adjointable maps from $\U$ to $\V$ as $\adL(\U, \V)$.
	We abbreviate $\adL(\U, \U)$ as $\adL(\U)$.
\end{defi}

\begin{lemm}\label{Lemma: adjointable composition}
	For every $\varphi_1\in\adL(\U,\V)$ and $\varphi_2\in \adL(\V,\W)$, $\varphi_2\varphi_1\in\adL(\U,\W)$ holds.
\end{lemm}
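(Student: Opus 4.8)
The plan is to exhibit an explicit adjoint for the composite $\varphi_2\varphi_1$, mirroring the $C^*$-algebraic fact that the adjoint of a product is the product of the adjoints taken in reverse order. Concretely, choose an adjoint $\psi_1\:\V\rightarrow\U$ of $\varphi_1$ and an adjoint $\psi_2\:\W\rightarrow\V$ of $\varphi_2$, so that $\rin{\psi_1(v)}{u}{\U} = \rin{v}{\varphi_1(u)}{\V}$ for all $u\in\U$, $v\in\V$ and $\rin{\psi_2(w)}{v}{\V} = \rin{w}{\varphi_2(v)}{\W}$ for all $v\in\V$, $w\in\W$. I claim that the composite $\psi_1\circ\psi_2\:\W\rightarrow\U$ is an adjoint of $\varphi_2\varphi_1\:\U\rightarrow\W$, which immediately yields $\varphi_2\varphi_1\in\adL(\U,\W)$.

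To verify the claim I would fix $u\in\U$ and $w\in\W$ and rewrite $\rin{(\psi_1\psi_2)(w)}{u}{\U}$ by applying the two defining identities in succession. Substituting $v=\psi_2(w)$ into the adjoint relation for $\varphi_1$ turns this into $\rin{\psi_2(w)}{\varphi_1(u)}{\V}$; then substituting $v=\varphi_1(u)$ into the adjoint relation for $\varphi_2$ turns it into $\rin{w}{\varphi_2(\varphi_1(u))}{\W} = \rin{w}{(\varphi_2\varphi_1)(u)}{\W}$. Since $u$ and $w$ are arbitrary, this is exactly the condition required of an adjoint of $\varphi_2\varphi_1$, establishing the lemma.

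There is essentially no obstacle here beyond guessing the correct candidate: the reversed composite $\psi_1\psi_2$ is forced, and the verification is a two-step substitution into Definition \ref{Definition: adjointable}. The only points to keep straight are the typing of the maps—$\psi_2$ lands in $\V$, where $\psi_1$ is defined, so that $\psi_1\psi_2$ is a well-defined map $\W\rightarrow\U$—and the order in which the two pairing identities are applied.
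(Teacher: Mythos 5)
Your proposal is correct and is essentially identical to the paper's own proof: both exhibit $\psi_1\psi_2$ as the adjoint of $\varphi_2\varphi_1$ and verify the defining identity by applying the two adjoint relations in succession. No issues.
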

\begin{proof}
	Let $\psi_i$ be an adjoint of $\varphi_i$ for $i = 1,2$.
	For every $u \in \U$ and $w \in \W$,
	\begin{align*}
	\rin{ w }{ \varphi_2\varphi_1(u) }{\W} = \rin{ \psi_2(w) }{ \varphi_1(u) }{\V} = \rin{ \psi_1\psi_2(w) }{ u }{\U}
	\end{align*}
	holds.
	Thus $\psi_1\psi_2$ is an adjoint of $\varphi_2\varphi_1$.
\end{proof}
\begin{coro}
	The set $\adL(\U)$ becomes a semigroup with respect to the composition of maps.
\end{coro}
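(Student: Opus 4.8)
The plan is to verify the two defining properties of a semigroup for the pair $(\adL(\U), \circ)$: that composition of maps is a well-defined binary operation on the set $\adL(\U)$, and that this operation is associative.

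First I would establish closure. Given two adjointable maps $\varphi_1, \varphi_2 \in \adL(\U)$, I must check that their composite $\varphi_2\varphi_1$ is again adjointable, that is, lies in $\adL(\U)$. This is precisely the content of Lemma \ref{Lemma: adjointable composition} specialized to the case $\V = \W = \U$: if $\psi_i$ denotes an adjoint of $\varphi_i$ for $i = 1,2$, then the lemma exhibits $\psi_1\psi_2$ as an adjoint of $\varphi_2\varphi_1$, so that $\varphi_2\varphi_1 \in \adL(\U)$. Hence composition restricts to a well-defined map $\adL(\U) \times \adL(\U) \rightarrow \adL(\U)$.

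Second, associativity is immediate, since composition of maps between sets is always associative: for any three maps $\varphi_1, \varphi_2, \varphi_3$ on $\U$ one has $(\varphi_3\varphi_2)\varphi_1 = \varphi_3(\varphi_2\varphi_1)$ as functions. No adjointability is needed for this step, so it requires nothing beyond the general theory of set maps.

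There is essentially no obstacle here; the only substantive input is the closure property, which has already been secured by the preceding lemma. In particular, a semigroup requires neither an identity element nor inverses, so once closure and associativity are in hand the claim follows. The genuinely harder statement—that $\adL(\U)$ is in fact an \emph{inverse} semigroup, carrying an involution sending each adjointable map to its adjoint—is the deeper result established later as Theorem \ref{Theorem: L(U) is inverse}, and is not needed for the present corollary.
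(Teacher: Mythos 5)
Your proof is correct and matches the paper's (implicit) argument exactly: the corollary is stated immediately after Lemma \ref{Lemma: adjointable composition}, which supplies closure, and associativity is automatic for composition of maps. Nothing further is needed.
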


The next definition is an analogy of the rank-one operator on a right Hilbert $C^*$-module (see \cite[p.9]{Lan95}).
\begin{defi}
	For $u\in \U$, $v \in \V$, we define a map $\omega_{v,u} \: \U \rightarrow \V$ as 
	\[
	\omega_{v,u}(u') := v\rin{u}{u'}{\U}
	\]
	for every $u' \in U$.
	We set $\cpK(\U,\V) := \{ \omega_{v,u} \mid u\in \U, v \in \V \}$.
	We abbreviate $\cpK(\U,\U) $ as $\cpK(\U)$.
\end{defi}

\begin{lemm}\label{Lemma: omega_{vt,u}}
	For $t \in T$, $u\in \U$, and $v \in \V$, $\omega_{vt,u} = \omega_{v,ut^*}$ holds.
\end{lemm}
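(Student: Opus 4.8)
The plan is to verify the equality of the two maps $\omega_{vt,u}$ and $\omega_{v,ut^*}$ pointwise, since both are maps from $\U$ to $\V$. Fixing an arbitrary $u' \in \U$, I would unwind the definitions to get $\omega_{vt,u}(u') = (vt)\rin{u}{u'}{\U}$ and $\omega_{v,ut^*}(u') = v\rin{ut^*}{u'}{\U}$, so the whole lemma reduces to matching these two elements of $\V$.

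The key step is the right-handed analog of Lemma \ref{Lemma: left pairing basics}(ii), namely the identity $\rin{us}{u'}{\U} = s^*\rin{u}{u'}{\U}$ for all $s \in T$ and $u, u' \in \U$. This follows from the pairing axioms: applying (R-ii), then (R-i), then the anti-homomorphism property $(ab)^* = b^*a^*$ of the involution on $T$ together with $s^{**} = s$, and finally (R-ii) again, one computes
\[
\rin{us}{u'}{\U} = \rin{u'}{us}{\U}^* = \big(\rin{u'}{u}{\U}s\big)^* = s^*\rin{u'}{u}{\U}^* = s^*\rin{u}{u'}{\U}.
\]

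Applying this identity with $s = t^*$ gives $\rin{ut^*}{u'}{\U} = t\rin{u}{u'}{\U}$, using $t^{**} = t$. Substituting back and invoking associativity of the right $T$-action, one obtains
\[
\omega_{v,ut^*}(u') = v\rin{ut^*}{u'}{\U} = v\big(t\rin{u}{u'}{\U}\big) = (vt)\rin{u}{u'}{\U} = \omega_{vt,u}(u').
\]
Since $u' \in \U$ was arbitrary, the two maps coincide. There is no real obstacle here: the entire argument is a direct computation, and the only ingredient beyond the definitions is the right-pairing identity above, which is the mirror image of a fact already established for left pairings.
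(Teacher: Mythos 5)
Your proposal is correct and follows essentially the same route as the paper: both reduce the claim to the pointwise identity $vt\rin{u}{u'}{\U} = v\rin{ut^*}{u'}{\U}$ via the right-handed analogue of Lemma \ref{Lemma: left pairing basics}(ii). The only difference is that you spell out the derivation of that right-pairing identity from (R-i) and (R-ii), which the paper leaves implicit.
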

\begin{proof}
	For $t \in T$, $u,u'\in \U$, and $v \in \V$, 
	\[
	\omega_{vt,u}(u') = vt \rin{ u }{ u' }{\U} = v \rin{ ut^* }{ u' }{\U} = \omega_{ v, ut^*}(u')
	\]
	holds. 
	Thus $\omega_{vt,u} = \omega_{v,ut^*}$ holds.
\end{proof}

\begin{lemm}\label{Lemma: omega is adjointable}
	For every $u \in \U$ and $v\in \V$, the map $\omega_{u,v}\:\V\rightarrow\U$ is an adjoint of $\omega_{v,u}\:\U\rightarrow\V$.
\end{lemm}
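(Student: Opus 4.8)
The plan is to verify directly that $\omega_{u,v}$ satisfies the defining adjointability relation from Definition \ref{Definition: adjointable} with respect to $\omega_{v,u}$. Concretely, I must show that for every $u' \in \U$ and $v' \in \V$ the equation
\[
\rin{ \omega_{u,v}(v') }{ u' }{\U} = \rin{ v' }{ \omega_{v,u}(u') }{\V}
\]
holds. Both sides are to be computed by unfolding the definition $\omega_{v,u}(u') = v\rin{u}{u'}{\U}$ and $\omega_{u,v}(v') = u\rin{v}{v'}{\V}$, and then applying the elementary properties (R-i), (R-ii) of the right regular pairings recorded in Definition \ref{Definition: inverse set}.

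First I would expand the left-hand side: $\rin{ \omega_{u,v}(v') }{ u' }{\U} = \rin{ u\rin{v}{v'}{\V} }{ u' }{\U}$. Using (R-ii) to flip the pairing and then (R-i) to absorb the semigroup element, this becomes $\rin{v}{v'}{\V}^{*}\rin{u}{u'}{\U}$, and applying (R-ii) once more turns $\rin{v}{v'}{\V}^{*}$ into $\rin{v'}{v}{\V}$. So the left-hand side equals $\rin{v'}{v}{\V}\rin{u}{u'}{\U}$. Next I would expand the right-hand side: $\rin{ v' }{ \omega_{v,u}(u') }{\V} = \rin{ v' }{ v\rin{u}{u'}{\U} }{\V}$, and a single application of (R-i) gives $\rin{v'}{v}{\V}\rin{u}{u'}{\U}$. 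The two expressions coincide, which is exactly the required identity.

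This computation is entirely routine and presents no genuine obstacle; the only point requiring minor care is tracking which pairing takes values in $T$ via $\U$ versus via $\V$, and applying (R-i) on the correct side (the scalar $\rin{u}{u'}{\U}$ gets multiplied on the right, the scalar $\rin{v}{v'}{\V}$ must first be moved out of the left slot using (R-ii) before it can be extracted by (R-i)). Since both sides reduce to the common product $\rin{v'}{v}{\V}\rin{u}{u'}{\U}$ in $T$, the conclusion follows immediately, establishing that $\omega_{u,v}$ is an adjoint of $\omega_{v,u}$ and hence that every $\omega_{v,u}$ lies in $\adL(\U,\V)$.
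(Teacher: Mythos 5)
Your computation is correct and is essentially the paper's own proof: the paper's single chain of equalities from $\rin{v'}{\omega_{v,u}(u')}{\V}$ to $\rin{\omega_{u,v}(v')}{u'}{\U}$ passes through exactly the common expression $\rin{v'}{v}{\V}\rin{u}{u'}{\U}$ that you derive from both sides, using the same applications of (R-i) and (R-ii) (together with the anti-multiplicativity of $*$, which you use implicitly when moving the star across the product).
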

\begin{proof}
	For every $u,u'\in \U$, $v,v'\in \V$,
	\begin{align*}
		\rin{ v' }{ \omega_{v,u} u' }{\V} 
		&= \rin{ v' }{ v \rin{u}{u'}{U} }{\V} \\
		&= \rin{ v' }{ v }{\V}\rin{ u }{ u' }{\U} \\
		&= \rin{ u \rin{v}{v'}{\V} }{ u' }{\U} \\
		&= \rin{ \omega_{u,v}(v') }{ u' }{\U}
	\end{align*}
	holds.
\end{proof}

\begin{lemm}\label{Lemma: omega_{w,v} phi}
	Let $\varphi\: \V \rightarrow \W$ be a right $T$-map.
	For every $u \in \U$ and  $v \in \V$, $\varphi\omega_{v,u} = \omega_{\varphi(v),u}$ holds.
\end{lemm}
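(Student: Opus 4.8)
The plan is to verify the identity pointwise, by evaluating both maps $\varphi\omega_{v,u}$ and $\omega_{\varphi(v),u}$ at an arbitrary element $u' \in \U$ and checking that they produce the same value in $\W$. Since both are maps $\U \to \W$, equality of the maps reduces to equality of all their values.

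First I would unfold the definition of $\omega_{v,u}$: for $u' \in \U$ we have $\omega_{v,u}(u') = v\rin{u}{u'}{\U}$, so that $\varphi\omega_{v,u}(u') = \varphi\big(v\rin{u}{u'}{\U}\big)$. The key observation is that $\rin{u}{u'}{\U}$ is an element of $T$, so the expression $v\rin{u}{u'}{\U}$ is simply the right $T$-action of this element on $v \in \V$.

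The crucial step is to invoke the hypothesis that $\varphi$ is a right $T$-map, which by definition means $\varphi(vt) = \varphi(v)t$ for every $v \in \V$ and $t \in T$. Applying this with $t = \rin{u}{u'}{\U}$ gives $\varphi\big(v\rin{u}{u'}{\U}\big) = \varphi(v)\rin{u}{u'}{\U}$, and the right-hand side is precisely $\omega_{\varphi(v),u}(u')$ by the definition of $\omega_{\varphi(v),u}$. Since $u' \in \U$ was arbitrary, this establishes the desired equality $\varphi\omega_{v,u} = \omega_{\varphi(v),u}$ of maps.

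There is essentially no obstacle here: the statement is a direct consequence of the compatibility of $\varphi$ with the right $T$-action, and none of the pairing axioms (R-i)–(R-iv) are needed beyond the bare definition of $\omega$. The only subtlety worth flagging is recognizing that the pairing value $\rin{u}{u'}{\U}$ is playing the role of the scalar $t \in T$ through which the $T$-map property is applied; once this is seen, the computation is immediate.
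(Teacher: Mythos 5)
Your proof is correct and is essentially identical to the paper's: both evaluate at an arbitrary $u'\in\U$, unfold $\omega_{v,u}(u') = v\rin{u}{u'}{\U}$, and apply the right $T$-map property of $\varphi$ to the scalar $\rin{u}{u'}{\U}\in T$. No differences worth noting.
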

\begin{proof}
	For every $u' \in \U$, we get
	\[
	\varphi\omega_{v,u}(u') = \varphi\big(v\rin{ u }{ u' }{\U}\big) = \varphi(v)\rin{ u }{ u' }{\U} = \omega_{\varphi(v),u}(u'). \qedhere
	\]
\end{proof}

\begin{lemm}\label{Lemma: phi omega_{v,u}}
	Let $\varphi\:\U\rightarrow\V$ be an adjointable map, and $\psi\:\V\rightarrow\U$ be an adjoint of $\varphi$.
	For every $v \in \V$ and $w \in \W$, $\omega_{w,v}\varphi = \omega_{w,\psi(v)}$ holds.
\end{lemm}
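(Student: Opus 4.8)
The plan is to verify the equality of maps $\omega_{w,v}\varphi$ and $\omega_{w,\psi(v)}$ pointwise, by evaluating both sides on an arbitrary element $u \in \U$ and reducing to the defining relation of the adjoint $\psi$. Both sides are maps from $\U$ to $\W$, so it suffices to check that they agree on every such $u$.

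First I would unwind the left-hand side: by the definition of composition together with the definition of $\omega_{w,v}\:\V\rightarrow\W$, we get $\omega_{w,v}\varphi(u) = \omega_{w,v}(\varphi(u)) = w\rin{v}{\varphi(u)}{\V}$. Next I would unwind the right-hand side directly from the definition of $\omega_{w,\psi(v)}\:\U\rightarrow\W$, obtaining $\omega_{w,\psi(v)}(u) = w\rin{\psi(v)}{u}{\U}$.

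The key step, and essentially the only nontrivial ingredient, is the defining property of the adjoint from Definition \ref{Definition: adjointable}, namely $\rin{\psi(v)}{u}{\U} = \rin{v}{\varphi(u)}{\V}$ for all $u \in \U$ and $v \in \V$. Substituting this relation into the expression for the left-hand side converts $w\rin{v}{\varphi(u)}{\V}$ into $w\rin{\psi(v)}{u}{\U}$, which is exactly the right-hand side, completing the proof.

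I do not expect a genuine obstacle here: the statement is the mirror image of Lemma \ref{Lemma: omega_{w,v} phi}, with the adjoint relation playing the role that the right $T$-map property played there. The only points requiring care are the bookkeeping of which pairing (the one valued via $\U$ or the one via $\V$) each bracket refers to, and the order of the arguments inside each pairing, since by (R-ii) these pairings are not symmetric but only related by the generalized inverse.
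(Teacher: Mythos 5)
Your proof is correct and is essentially identical to the paper's: both evaluate the two maps at an arbitrary $u\in\U$, unwind the definition of $\omega$, and apply the defining relation $\rin{\psi(v)}{u}{\U} = \rin{v}{\varphi(u)}{\V}$ of the adjoint. No issues.
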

\begin{proof}
	For every $u \in \U$, we get
	\[
	\omega_{w,v}\varphi(u) = w \rin{v}{\varphi(u)}{\V} = w \rin{\psi(v)}{u}{\U} = \omega_{w,\psi(v)}(u). \qedhere
	\]
\end{proof}

\begin{coro}\label{Corollary: K(U) ideal}
	The set $\cpK(\U)$ is a two-sided ideal of the semigroup $\adL(\U)$.
\end{coro}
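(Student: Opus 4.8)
The plan is to show that $\cpK(\U)$ is closed under the semigroup operation (making it a subsemigroup) and then verify the two absorption properties — that composing a compact map with an adjointable map on either side again lands in $\cpK(\U)$. First I would observe that $\cpK(\U)$ is nonempty and consists of well-defined elements of $\adL(\U)$: indeed, each $\omega_{v,u}$ is adjointable by Lemma~\ref{Lemma: omega is adjointable}, so $\cpK(\U) \subset \adL(\U)$ as a set.

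The key computational input is already packaged in the lemmas immediately preceding the statement. To see that $\cpK(\U)$ absorbs multiplication on the left by an arbitrary $\varphi \in \adL(\U)$, I would apply Lemma~\ref{Lemma: omega_{w,v} phi}: since any adjointable map is in particular a right $T$-map (this follows from the defining identity $\rin{\psi(v)}{u}{\U} = \rin{v}{\varphi(u)}{\V}$ together with (R-i)), we get $\varphi\,\omega_{v,u} = \omega_{\varphi(v),u} \in \cpK(\U)$ for every $\omega_{v,u} \in \cpK(\U)$. Symmetrically, to handle multiplication on the right, I would apply Lemma~\ref{Lemma: phi omega_{v,u}}: for $\varphi \in \adL(\U)$ with adjoint $\psi$, we have $\omega_{w,v}\,\varphi = \omega_{w,\psi(v)} \in \cpK(\U)$. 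Taking $\varphi$ to range over all of $\adL(\U)$ in these two identities gives both $\adL(\U)\cpK(\U) \subset \cpK(\U)$ and $\cpK(\U)\adL(\U) \subset \cpK(\U)$, which is exactly the two-sided ideal condition.

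Closure of $\cpK(\U)$ under composition then comes for free: given two one-rank maps $\omega_{v_1,u_1}$ and $\omega_{v_2,u_2}$, their composite is the product of an element of $\cpK(\U)$ with an element of $\adL(\U)$, hence lies in $\cpK(\U)$ by the ideal property just established. Thus $\cpK(\U)$ is a subsemigroup, and being two-sided-ideal in $\adL(\U)$ is immediate. The main point requiring a little care — the only genuine obstacle — is the implicit claim that an adjointable map is automatically a right $T$-map, which Lemma~\ref{Lemma: omega_{w,v} phi} presupposes; I would verify that $\varphi(u t) = \varphi(u) t$ by computing $\rin{v}{\varphi(ut)}{\V}$ via the adjoint and comparing with $\rin{v}{\varphi(u)t}{\V} = \rin{v}{\varphi(u)}{\V}\,t$ using (R-i), then invoking the right version of Lemma~\ref{Lemma: non-degenerate}~(ii). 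Everything else is a direct appeal to the preceding lemmas.
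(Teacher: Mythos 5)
Your proof is correct and takes essentially the same route as the paper, whose own proof simply cites Lemmas \ref{Lemma: omega is adjointable}, \ref{Lemma: omega_{w,v} phi} and \ref{Lemma: phi omega_{v,u}} for exactly the two absorption identities you use. The one point you single out for care --- that every adjointable map is a right $T$-map, so that Lemma \ref{Lemma: omega_{w,v} phi} applies --- is precisely the paper's Lemma \ref{Lemma: adj T-map}, proved there by the same computation you sketch (note that this step uses the inverse condition via Lemma \ref{Lemma: non-degenerate}(ii), which is why the paper records it only after restricting to right inverse $T$-sets).
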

\begin{proof}
	This follow from Lemma \ref{Lemma: omega is adjointable}, \ref{Lemma: omega_{w,v} phi} and \ref{Lemma: phi omega_{v,u}}.
\end{proof}

\begin{lemm}\label{Lemma: generalized inverses of compact operators}
	For every $u \in \U$ and $v \in \V$, $\omega_{v,u}\,\omega_{u,v}\,\omega_{v,u} = \omega_{v,u}$ holds.
\end{lemm}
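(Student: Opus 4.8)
The plan is to evaluate both sides of the claimed identity on an arbitrary $u' \in \U$ and thereby reduce it to an equality of two elements of $\V$, which follows from a short computation inside $T$. Since $\omega_{v,u}\colon \U \to \V$ and $\omega_{u,v}\colon \V \to \U$, the triple composite again maps $\U$ to $\V$, so it suffices to show that $\omega_{v,u}\,\omega_{u,v}\,\omega_{v,u}$ agrees with $\omega_{v,u}$ pointwise.

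First I would expand the composite. Applying $\omega_{v,u}$ to $u'$ gives $v\rin{u}{u'}{\U}$; feeding this into $\omega_{u,v}$ and using (R-i) to pull $\rin{u}{u'}{\U}$ out of the pairing yields $u\rin{v}{v}{\V}\rin{u}{u'}{\U}$; applying $\omega_{v,u}$ once more and using (R-i) again produces
\[
\omega_{v,u}\,\omega_{u,v}\,\omega_{v,u}(u') = v\,\rin{u}{u}{\U}\,\rin{v}{v}{\V}\,\rin{u}{u'}{\U}.
\]
Thus the whole problem reduces to verifying that $v\,\rin{u}{u}{\U}\,\rin{v}{v}{\V}\,\rin{u}{u'}{\U} = v\rin{u}{u'}{\U}$.

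The key algebraic input is that $e := \rin{u}{u}{\U}$ and $f := \rin{v}{v}{\V}$ are idempotents of $T$ (by the right-handed analogue of Lemma \ref{Lemma: left pairing basics}(i), or directly from (R-i) and (R-iii)); and since $T$ is an inverse semigroup, its idempotents commute by Theorem \ref{Theorem: regular and inverse}. Hence the middle factors may be swapped, $vef\rin{u}{u'}{\U} = vfe\rin{u}{u'}{\U}$, and (R-iii) gives $vf = v\rin{v}{v}{\V} = v$, so the expression collapses to $ve\rin{u}{u'}{\U}$. Finally I would absorb the remaining idempotent $e$: since $ve\rin{u}{u'}{\U} = \omega_{ve,u}(u')$, and Lemma \ref{Lemma: omega_{vt,u}} applied with $t = e$ gives $\omega_{ve,u} = \omega_{v,ue} = \omega_{v,u}$ (using $e^* = e$ and $ue = u$ by (R-iii)), the value equals $v\rin{u}{u'}{\U} = \omega_{v,u}(u')$, as required.

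I expect the main obstacle to be bookkeeping rather than depth: one must notice that both middle factors $\rin{u}{u}{\U}$ and $\rin{v}{v}{\V}$ are idempotent, and that it is precisely the commutativity of idempotents in the inverse semigroup $T$ (unavailable for a general regular semigroup) that lets the projection attached to $v$ slide into position to be absorbed by (R-iii). Everything else is a routine application of the sesquilinearity axiom (R-i), the normalization (R-iii), and the already established Lemma \ref{Lemma: omega_{vt,u}}.
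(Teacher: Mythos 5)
Your proof is correct and follows essentially the same route as the paper's: both reduce the composite to the element $v\,\rin{u}{u}{\U}\,\rin{v}{v}{\V}$ paired against $u$, commute the two idempotents of $T$, and then absorb them using Lemma \ref{Lemma: omega_{vt,u}} and (R-iii). The only cosmetic difference is that you evaluate pointwise on $u'$ while the paper manipulates the $\omega$ symbols directly.
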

\begin{proof}
	For every $u \in \U$ and $v \in \V$, we get 
	\begin{align*}
	\omega_{v,u}\,\omega_{u,v}\,\omega_{v,u} 
	&= \omega_{ v \rin{u}{u}{\U} \rin{v}{v}{\V} , u }\\
	&= \omega_{ v \rin{v}{v}{\V} \rin{u}{u}{\U} , u }\\
	&= \omega_{ v \rin{v}{v}{\V} , u \rin{u}{u}{\U} }\\
	&= \omega_{v,u}.
	\end{align*}
	The third equal follows from Lemma \ref{Lemma: omega_{vt,u}}.
\end{proof}

\begin{coro}\label{Corollary: K(U) is regular}
	The subsemigroup $\cpK(\U)$ of $\adL(\U)$ is regular.
\end{coro}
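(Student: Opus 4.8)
The plan is to exhibit, for each element of $\cpK(\U)$, an explicit generalized inverse that again lies in $\cpK(\U)$. Recall that a semigroup is regular if every element $s$ admits a $t$ with $sts = s$ and $tst = t$. Every element of $\cpK(\U)$ has the form $\omega_{v,u}$ for some $u,v\in\U$, and the natural candidate for its generalized inverse is $\omega_{u,v}$, which again belongs to $\cpK(\U)$ by definition. So the whole proof reduces to checking the two defining relations for this pair.

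First I would verify $\omega_{v,u}\,\omega_{u,v}\,\omega_{v,u} = \omega_{v,u}$: this is precisely the content of Lemma \ref{Lemma: generalized inverses of compact operators}, applied with $\V = \U$, so nothing new is needed. For the reverse relation $\omega_{u,v}\,\omega_{v,u}\,\omega_{u,v} = \omega_{u,v}$, I would simply invoke the same lemma with the roles of $u$ and $v$ interchanged; this is legitimate because in $\cpK(\U)$ both arguments range over the single set $\U$, so the lemma applies symmetrically. Together these two identities say exactly that $\omega_{u,v}$ is a generalized inverse of $\omega_{v,u}$.

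Since $\omega_{v,u}$ was an arbitrary element of $\cpK(\U)$ and its chosen generalized inverse $\omega_{u,v}$ lies in $\cpK(\U)$ as well, this establishes that $\cpK(\U)$ is regular. There is essentially no obstacle here: the argument is immediate from the preceding lemma and the symmetry in $u$ and $v$. The only point deserving a moment's attention is that the generalized inverse must be produced inside the subsemigroup $\cpK(\U)$ itself rather than merely in the ambient semigroup $\adL(\U)$, and this is automatic from the form of $\omega_{u,v}$. (I note in passing that, combined with Theorem \ref{Theorem: regular and inverse}, this corollary is the natural stepping stone toward showing that $\cpK(\U)$, and ultimately $\adL(\U)$, is inverse, which is presumably the aim of the subsequent results.)
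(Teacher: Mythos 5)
Your proof is correct and follows exactly the paper's argument: the paper's own proof of this corollary likewise cites Lemma \ref{Lemma: generalized inverses of compact operators} to conclude that $\omega_{u,v}$ is a generalized inverse of $\omega_{v,u}$ inside $\cpK(\U)$. Your explicit note that the second identity follows by swapping the roles of $u$ and $v$ (valid since both range over the single set $\U$) just makes explicit what the paper leaves implicit.
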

\begin{proof}
	By Lemma \ref{Lemma: generalized inverses of compact operators}, an adjoint $\omega_{v,u}$ of $\omega_{u,v}$ is a generalized inverse of $\omega_{u,v}$ for every $u,v \in \U$.
\end{proof}

The following proposition is an analogy of Theorem \ref{Theorem: regular and inverse}.

\begin{prop}\label{Proposition: R-iv}
	For a right regular $T$-set $\U$, the following are equivalent: 
	\begin{enumerate}[(i)]
		\item $u \rin{ u' }{ u }{\U} = u$ and $u' \rin{ u }{ u' }{\U} = u'$ imply $u = u'$ for every $u,u' \in \U$ (that is, $\U$ is inverse),
		\item $\rin{ u }{ u }{\U} = \rin{u'}{u'}{\U} = \rin{ u }{ u' }{\U}$ implies $u = u'$ for every $u,u' \in \U$,
		\item $u \rin{ u }{ u' }{\U} = u' \rin{ u' }{ u }{\U} \rin{ u }{ u' }{\U}$ for every $u,u' \in \U$,
		\item $\omega_{u,u}$ and $\omega_{u',u'}$ commutes for every $u,u' \in \U$,
	\end{enumerate}
\end{prop}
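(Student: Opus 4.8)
The plan is to establish the cyclic chain (i)$\Rightarrow$(ii)$\Rightarrow$(iii)$\Rightarrow$(i) together with the separate equivalence (iii)$\Leftrightarrow$(iv). This exhibits (iv)---the commutativity of the idempotents $\omega_{u,u}$ of $\cpK(\U)$---as the working reformulation of ``$\U$ is inverse'', exactly paralleling Theorem \ref{Theorem: regular and inverse} applied to the regular semigroup $\cpK(\U)$ (Corollary \ref{Corollary: K(U) is regular}). The implication (i)$\Rightarrow$(ii) is the right-handed version of Lemma \ref{Lemma: non-degenerate}(i): assuming (i), i.e.\ (R-iv), from $\rin{u}{u}{\U}=\rin{u'}{u'}{\U}=\rin{u}{u'}{\U}$ one takes adjoints (via (R-ii) and self-adjointness of the diagonal pairing) to get $\rin{u'}{u}{\U}=\rin{u}{u}{\U}$, whence $u\rin{u'}{u}{\U}=u\rin{u}{u}{\U}=u$ and $u'\rin{u}{u'}{\U}=u'\rin{u'}{u'}{\U}=u'$ by (R-iii), so that (R-iv) forces $u=u'$.

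For (ii)$\Rightarrow$(iii) I would set $p:=\rin{u}{u'}{\U}$, $a:=up$ and $b:=u'\rin{u'}{u}{\U}\rin{u}{u'}{\U}=u'(p^*p)$, and prove the single identity $\rin{a}{a}{\U}=\rin{b}{b}{\U}=\rin{a}{b}{\U}=p^*p$, after which (ii) yields $a=b$, which is precisely (iii). The computation rests on the simplifications $\rin{u}{u}{\U}\,p=p$ and $p\,\rin{u'}{u'}{\U}=p$, valid for any right regular $T$-set (they follow from (R-iii) together with the right analogue of Lemma \ref{Lemma: left pairing basics}(ii), namely $\rin{us}{u''}{\U}=s^*\rin{u}{u''}{\U}$, itself immediate from (R-i) and (R-ii)); taking adjoints gives $p^*\rin{u}{u}{\U}=p^*$, and with the inverse-semigroup laws $p^*pp^*=p^*$, $pp^*p=p$ each of the three pairings collapses to $p^*p$.

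For (iii)$\Rightarrow$(i), given $u\rin{u'}{u}{\U}=u$ and $u'\rin{u}{u'}{\U}=u'$, the first equation makes $\rin{u'}{u}{\U}=\rin{u'}{u\rin{u'}{u}{\U}}{\U}=\rin{u'}{u}{\U}^2$ an idempotent of $T$, and likewise $\rin{u}{u'}{\U}$; since idempotents of $T$ are self-adjoint and $\rin{u}{u'}{\U}^*=\rin{u'}{u}{\U}$, the two coincide, say $e:=\rin{u}{u'}{\U}=\rin{u'}{u}{\U}\in E(T)$. The hypotheses now read $u=ue$ and $u'=u'e$, while (iii) gives $ue=u'(ee)=u'e$; hence $u=ue=u'e=u'$.

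Finally (iii)$\Leftrightarrow$(iv) is the step I expect to be the most delicate. Using the composition rule $\omega_{u,u}\omega_{u',u'}=\omega_{u\rin{u}{u'}{\U},\,u'}$ and $\omega_{u',u'}\omega_{u,u}=\omega_{u'\rin{u'}{u}{\U},\,u}$ (from Lemma \ref{Lemma: omega_{w,v} phi}), the direction (iv)$\Rightarrow$(iii) drops out by evaluating both operators at $u'$ and using $\rin{u}{u'}{\U}\rin{u'}{u'}{\U}=\rin{u}{u'}{\U}$. For (iii)$\Rightarrow$(iv) a naive attempt---evaluating the operator identity on a test element and reducing to an equation in $T$---stalls; instead I would argue symbolically on the rank-one operators. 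Substituting (iii) into $\omega_{u\rin{u}{u'}{\U},u'}$ and applying Lemma \ref{Lemma: omega_{vt,u}} rewrites it as $\omega_{w,w}$ with $w:=u'\rin{u'}{u}{\U}$; substituting (iii) with $u,u'$ interchanged into $\omega_{u'\rin{u'}{u}{\U},u}$ rewrites it as $\omega_{w',w'}$ with $w':=u\rin{u}{u'}{\U}=wp$; and a final application of Lemma \ref{Lemma: omega_{vt,u}} using $wpp^*=w$ gives $\omega_{w',w'}=\omega_{w,w}$, so the two composites agree and (iv) holds. The main obstacle is thus recognizing that (iii)$\Rightarrow$(iv) should be handled by the symbolic identities for $\omega$ rather than by expanding into products of pairings.
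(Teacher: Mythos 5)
Your proposal is correct and, in all the substantive computations, coincides with the paper's proof: (i)$\Rightarrow$(ii) is the right-handed Lemma \ref{Lemma: non-degenerate}(i), your (ii)$\Rightarrow$(iii) via $\rin{a}{a}{\U}=\rin{b}{b}{\U}=\rin{a}{b}{\U}=p^*p$ is exactly the paper's argument with $x=up$, $y=u'p^*p$, and your (iii)$\Rightarrow$(iv) is the same chain of rewrites using Lemmas \ref{Lemma: omega_{w,v} phi} and \ref{Lemma: omega_{vt,u}}. The only organizational difference is that you close the cycle with a direct (iii)$\Rightarrow$(i) (extracting the common idempotent $e=\rin{u}{u'}{\U}=\rin{u'}{u}{\U}$ and applying (iii) to get $ue=u'e$), whereas the paper proves (iv)$\Rightarrow$(i) by the same idempotent observation followed by commuting $\omega_{u,u}$ and $\omega_{u',u'}$; both routes are valid and of comparable length.
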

\begin{proof}
	(i) $\Rightarrow$ (ii) is proved in Lemma \ref{Lemma: non-degenerate}.
	\begin{enumerate}
		\item[(ii) $\Rightarrow$ (iii)] 
		We set $x := u \rin{ u }{ u' }{\U}$ and $y := u' \rin{ u' }{ u }{\U} \rin{ u }{ u' }{\U}$. 
		We can easily check that $\rin{ x }{ x }{\U} = \rin{ y }{ y }{\U} = \rin{x}{y}{\U} = \rin{ u' }{ u }{\U}\rin{ u }{ u' }{\U}$ holds.
		(ii) implies $x = y$ holds.
		
		\item[(iii) $\Rightarrow$ (iv)] 
		By (iii) and Lemma \ref{Lemma: omega_{w,v} phi}, we have
		\begin{align*}
		\omega_{u,u} \omega_{u',u'} 
		&= \omega_{u \rin{ u }{ u' }{\U}, u'} \\
		&= \omega_{u' \rin{ u' }{ u }{\U} \rin{ u }{ u' }{\U}, u'}\\
		&= \omega_{u' \rin{ u' }{ u }{\U}, u' \rin{ u' }{ u }{\U}}\\
		&= \omega_{u' \rin{ u' }{ u }{\U}, u \rin{ u }{ u' }{\U} \rin{ u' }{ u }{\U}}\\
		&= \omega_{u' \rin{ u' }{ u }{\U} \rin{ u }{ u' }{\U} \rin{ u' }{ u }{\U}, u}\\
		&= \omega_{u' \rin{ u' }{ u }{\U}, u}\\
		&= \omega_{u', u'}\,\omega_{u, u}.
		\end{align*}
		
		\item[(iv) $\Rightarrow$ (i)] 
		Fix elements $u, u' \in \U$ such that $u \rin{ u' }{ u }{\U} = u$ and $u' \rin{ u }{ u' }{\U} = u'$.
		We get $\rin{ u }{ u' }{\U} \in E(S)$ by the same way as in Lemma \ref{Lemma: partial Morita context is a left and right inverse set}.
		Hence $\rin{ u }{ u' }{\U} = \rin{ u' }{ u }{\U}$.
		This implies that 
		\begin{align*}
		u = u \rin{ u }{ u' }{\U} = \omega_{u,u}(u') \text{ and } u' = u' \rin{ u' }{ u }{\U} = \omega_{u',u'}(u).
		\end{align*}
		Thus we have
		\begin{align*}
		u 
		&=  \omega_{u,u}(u')\\
		&= \omega_{u,u}\,\omega_{u',u'}(u')\\
		&= \omega_{u',u'}\,\omega_{u,u}(u')\\
		&= \omega_{u',u'}(u)\\
		&= u'.\qedhere
		\end{align*}
	\end{enumerate}
\end{proof}

From now on, we assume that $\U$, $\V$ and $\W$ are right inverse $T$-sets.

\begin{lemm}
	For $\varphi \in \adL(\U,\V)$, an adjoint of $\varphi$ is unique.
\end{lemm}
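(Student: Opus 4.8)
The plan is to prove uniqueness by showing that any two adjoints of $\varphi$ coincide pointwise, invoking the right-hand analogue of the non-degeneracy Lemma \ref{Lemma: non-degenerate}(ii), which is available because $\U$ is a right inverse $T$-set. Suppose $\psi_1,\psi_2\:\V\rightarrow\U$ are both adjoints of $\varphi$. Unwinding Definition \ref{Definition: adjointable} for each of them gives
\[
\rin{\psi_1(v)}{u}{\U} = \rin{v}{\varphi(u)}{\V} = \rin{\psi_2(v)}{u}{\U}
\]
for every $u\in\U$ and $v\in\V$; thus $\psi_1(v)$ and $\psi_2(v)$ pair identically against every $u$ in the first slot.

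First I would fix $v\in\V$ and transport this equality into the second slot. Applying (R-ii), that is $\rin{u}{u'}{\U}^{*}=\rin{u'}{u}{\U}$, and taking generalized inverses on both sides yields
\[
\rin{u}{\psi_1(v)}{\U} = \rin{u}{\psi_2(v)}{\U}
\]
for every $u\in\U$. This is precisely the hypothesis of the right-hand version of Lemma \ref{Lemma: non-degenerate}(ii), whose proof mirrors the left-hand one with Proposition \ref{Proposition: R-iv}(ii) used in place of Lemma \ref{Lemma: non-degenerate}(i). It delivers $\psi_1(v)=\psi_2(v)$, and since $v$ was arbitrary, $\psi_1=\psi_2$.

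The only point requiring care, and it is a minor one, is the bookkeeping about which argument of the pairing varies: the adjoint relation naturally constrains the first slot, whereas the non-degeneracy lemma I want to apply is phrased for the second slot, so the single use of (R-ii) to interchange the slots is the step I would write out carefully. Beyond that the argument is pure substitution, and it relies on no properties of right inverse sets other than those already established.
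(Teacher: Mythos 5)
Your proof is correct and follows essentially the same route as the paper: both derive $\rin{\psi_1(v)}{u}{\U} = \rin{v}{\varphi(u)}{\V} = \rin{\psi_2(v)}{u}{\U}$ for all $u$ and then invoke the right-hand analogue of Lemma \ref{Lemma: non-degenerate}(ii) to conclude $\psi_1(v)=\psi_2(v)$. The only difference is that you spell out the slot interchange via (R-ii), which the paper leaves implicit; this is a harmless, indeed welcome, clarification.
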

\begin{proof}
	Let $\psi_1,\psi_2\: \V \rightarrow \U$ be adjoints of $\varphi\: \U \rightarrow \V$.
	For every $u \in \U$ and $v \in \V$,
	\[
	\rin{ \psi_1(v) }{ u }{\U} = \rin{ v }{ \varphi(u) }{\V} = \rin{ \psi_2(v) }{ u }{\U}
	\]
	holds.
	By Lemma \ref{Lemma: non-degenerate} (ii), we get $\psi_1(v) = \psi_2(v)$ for every $v \in \V$.
	Thus we have $\psi_1 = \psi_2$.
\end{proof}

\begin{defi}
	We denote the adjoint of $\varphi\: \U \rightarrow \V$ as $\varphi\d \: \V \rightarrow \U$.
\end{defi}

\begin{lemm}\label{Lemma: adj T-map}
	For every $\varphi\in \adL(\U,\V)$, $\varphi$ is a right $T$-map.
\end{lemm}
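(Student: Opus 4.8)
The goal is to show that every $\varphi \in \adL(\U,\V)$ satisfies $\varphi(ut) = \varphi(u)t$ for all $u \in \U$ and $t \in T$. Since $\U$, $\V$ are now assumed to be right inverse $T$-sets, the adjoint $\varphi\d$ exists and is unique by the preceding lemma, and I have the defining relation $\rin{\varphi\d(v)}{u}{\U} = \rin{v}{\varphi(u)}{\V}$ at my disposal. The plan is to reduce the desired equality of two elements of $\V$ to an equality of right pairings against an arbitrary test element, exactly as one does for adjointable operators on Hilbert $C^*$-modules.

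First I would invoke the right-hand analogue of Lemma \ref{Lemma: non-degenerate} (ii), which the excerpt guarantees holds: to prove $\varphi(ut) = \varphi(u)t$ it suffices to show $\rin{v}{\varphi(ut)}{\V} = \rin{v}{\varphi(u)t}{\V}$ for every $v \in \V$. Then I would compute both sides using only the adjoint relation and axiom (R-i) (namely $\rin{u}{u's}{\U} = \rin{u}{u'}{\U}s$). For the left-hand side,
\[
\rin{v}{\varphi(ut)}{\V} = \rin{\varphi\d(v)}{ut}{\U} = \rin{\varphi\d(v)}{u}{\U}\,t,
\]
and for the right-hand side,
\[
\rin{v}{\varphi(u)t}{\V} = \rin{v}{\varphi(u)}{\V}\,t = \rin{\varphi\d(v)}{u}{\U}\,t.
\]
Both reduce to the same element $\rin{\varphi\d(v)}{u}{\U}\,t$, so the two pairings coincide for all $v$, and the right version of Lemma \ref{Lemma: non-degenerate} (ii) yields $\varphi(ut) = \varphi(u)t$.

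The argument is essentially a two-line computation, so I do not expect a serious obstacle; the only point requiring care is that it genuinely uses the hypothesis that $\V$ is inverse (not merely regular), since the separation result Lemma \ref{Lemma: non-degenerate} (ii) is exactly the consequence of (R-iv) via Proposition \ref{Proposition: R-iv}. It is worth flagging explicitly that I am applying the right-hand versions of these lemmas, whose validity was asserted at the end of the previous subsection; apart from that bookkeeping the proof is immediate from the definition of the adjoint together with axiom (R-i).
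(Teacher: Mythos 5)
Your proof is correct and is essentially identical to the paper's: both compute $\rin{v}{\varphi(ut)}{\V} = \rin{\varphi\d(v)}{u}{\U}\,t = \rin{v}{\varphi(u)t}{\V}$ using the adjoint relation and (R-i), then conclude by the right-hand version of Lemma \ref{Lemma: non-degenerate} (ii). The only cosmetic difference is that you split the chain of equalities into two halves meeting in the middle, and you are more explicit than the paper about invoking the right-hand analogues of the left-set lemmas.
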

\begin{proof}
	For every $u\in \U, v \in \V$ and $t \in T$,
	\begin{equation*}
	\rin{v}{\varphi(ut)}{\V}
	= \rin{\varphi\d(v)}{ut}{\U}
	= \rin{\varphi\d(v)}{u}{\U} t
	= \rin{v}{\varphi(u)}{\V} t
	= \rin{v}{\varphi(u)t}{\V}
	\end{equation*}
	holds. 
	By Lemma \ref{Lemma: non-degenerate} (ii), $\varphi(ut) = \varphi(u)t$ holds.
\end{proof}

\begin{prop}\label{Proposition: (psi phi)d = phid psid}
	For every $\varphi\in\adL(\U,\V)$ and $\psi\in \adL(\V,\W)$, $(\psi\varphi)\d = \varphi\d\psi\d$ holds.
\end{prop}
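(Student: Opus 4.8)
The plan is to deduce the identity from the uniqueness of adjoints together with Lemma \ref{Lemma: adjointable composition}. Recall that the latter already shows $\psi\varphi$ to be adjointable; in fact, tracing through its proof, what is exhibited there is precisely that $\varphi\d\psi\d$ serves as an adjoint of $\psi\varphi$. Since we are now in the setting where $\U$, $\V$, and $\W$ are right inverse $T$-sets, the preceding lemma guarantees that the adjoint of any adjointable map is unique. Consequently $(\psi\varphi)\d$ and $\varphi\d\psi\d$, both being adjoints of $\psi\varphi$, must coincide, which is exactly the claim.

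If one prefers a self-contained argument rather than a reference to the proof of Lemma \ref{Lemma: adjointable composition}, I would simply record the one-line verification: for every $u \in \U$ and $w \in \W$, applying the defining property of $\psi\d$ and then of $\varphi\d$ gives
\[
\rin{\varphi\d\psi\d(w)}{u}{\U} = \rin{\psi\d(w)}{\varphi(u)}{\V} = \rin{w}{\psi\varphi(u)}{\W}.
\]
This says precisely that $\varphi\d\psi\d$ is an adjoint of $\psi\varphi$, and uniqueness of adjoints then forces $(\psi\varphi)\d = \varphi\d\psi\d$.

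There is no genuine obstacle here: the substantive content—that adjoints compose contravariantly, and that adjoints are unique—has already been carried out in Lemma \ref{Lemma: adjointable composition} and in the uniqueness lemma immediately preceding this proposition. The only point deserving care is that the uniqueness of adjoints rests on the standing assumption that $\U$, $\V$, $\W$ are right inverse $T$-sets, entering through Lemma \ref{Lemma: non-degenerate} (ii); this is exactly why the statement is placed after that hypothesis has been imposed, and it is what allows the two candidate adjoints to be identified.
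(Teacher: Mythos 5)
Your proposal is correct and is exactly the paper's argument: the paper's proof of this proposition consists of the single line ``See the proof of Lemma \ref{Lemma: adjointable composition},'' which is precisely your observation that the computation there exhibits $\varphi\d\psi\d$ as an adjoint of $\psi\varphi$, after which the uniqueness lemma (valid because $\U$, $\V$, $\W$ are now assumed to be right inverse $T$-sets) forces the equality. Your self-contained verification reproduces the same chain of equalities as the paper's Lemma \ref{Lemma: adjointable composition}, so there is nothing to add.
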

\begin{proof}
	See the proof of Lemma \ref{Lemma: adjointable composition}.
\end{proof}

\begin{prop}\label{Proposition: phidd = phi}
	For every $\varphi\in \adL(\U, \V)$, we have $\varphi^{\dag\dag} = \varphi$.
\end{prop}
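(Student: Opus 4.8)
The plan is to reduce the claim to the uniqueness of adjoints established immediately above, so that it suffices to exhibit $\varphi$ itself as an adjoint of $\varphi\d$. First I would record precisely what must be checked: since $\varphi\d\: \V \rightarrow \U$, any map serving as its adjoint is a map $\U \rightarrow \V$, and the defining condition (Definition \ref{Definition: adjointable}, now with the roles of $\U$ and $\V$ interchanged) requires $\rin{\varphi(u)}{v}{\V} = \rin{u}{\varphi\d(v)}{\U}$ for all $u \in \U$ and $v \in \V$. The only point needing attention here is keeping track of which pairing subscript ($\U$ or $\V$) appears and which argument occupies each slot once the two sets are swapped.

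The key step is then a single application of the involution. Starting from the defining identity of $\varphi\d$, namely $\rin{\varphi\d(v)}{u}{\U} = \rin{v}{\varphi(u)}{\V}$ for all $u \in \U$ and $v \in \V$, I apply $*$ to both sides and invoke (R-ii), which converts $\rin{\varphi\d(v)}{u}{\U}$ into $\rin{u}{\varphi\d(v)}{\U}$ and $\rin{v}{\varphi(u)}{\V}$ into $\rin{\varphi(u)}{v}{\V}$. This produces exactly $\rin{\varphi(u)}{v}{\V} = \rin{u}{\varphi\d(v)}{\U}$, showing that $\varphi$ is an adjoint of $\varphi\d$; in particular $\varphi\d$ is itself adjointable, so $\varphi^{\dag\dag}$ is defined.

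Finally, because $\U$ and $\V$ are right inverse $T$-sets, the uniqueness of the adjoint proved just before this proposition forces the adjoint of $\varphi\d$ to coincide with $\varphi$, i.e.\ $\varphi^{\dag\dag} = \varphi$. I do not anticipate any genuine obstacle: the mathematical content is carried entirely by the symmetry condition (R-ii), and the argument is essentially a relabelling followed by one application of the involution, so the only care required is the bookkeeping of the swapped slots.
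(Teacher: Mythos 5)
Your proposal is correct and follows essentially the same route as the paper: the paper likewise applies the involution (via (R-ii)) to the defining identity of $\varphi\d$ to obtain $\rin{u}{\varphi\d(v)}{\U} = \rin{\varphi(u)}{v}{\V}$, and then concludes via Lemma \ref{Lemma: non-degenerate} (ii), which is exactly the content of the uniqueness-of-adjoints lemma you invoke. Your explicit remark that this also shows $\varphi\d$ is adjointable (so that $\varphi^{\dag\dag}$ is defined) is a small point the paper leaves implicit.
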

\begin{proof}
	For every $u \in \U$ and $v \in \V$,
	\[
	\rin{ \varphi^{\dagger\dagger}(u) }{ v }{\V} = \rin{ u }{ \varphi\d(v) }{\U} = \rin{ \varphi(u) }{ v }{\V}
	\]
	holds, where the second equal follows by taking generalized inverses of the two sides of the equation in Definition \ref{Definition: adjointable}.
	By Lemma \ref{Lemma: non-degenerate} (ii), we get $\varphi^{\dagger\dagger}(u) = \varphi(u)$ for every $u \in \U$.
	Thus $\varphi^{\dagger\dagger} = \varphi$ holds.
\end{proof}

We will see that
\begin{itemize}
	\item $\cpK(\U)$ is an inverse subsemigroup of $\adL(\U)$ in Theorem \ref{Theorem: K(U) is inverse},
	\item $\adL(\U)$ is an inverse semigroup in Theorem \ref{Theorem: L(U) is inverse}, and
	\item $\adL(\U,\V)$ is a partial Morita equivalence from $\adL(\V)$ to $\adL(\U)$ in Theorem \ref{Theorem: L(U,V) inverse set}.
\end{itemize}

We first prove that $\cpK(\U)$ is an inverse semigroup.
We can easily see that $\omega_{u,u}$ is an idempotent of $\cpK(\U)$ for $u \in \U$.

\begin{lemm}\label{Lemma: E(K(U))}
	All idempotents of $\cpK(\U)$ is in the form of $\omega_{u, u}$ for some $u \in \U$.
\end{lemm}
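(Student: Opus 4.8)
The plan is to prove the statement by first showing that \emph{every} idempotent of $\cpK(\U)$ is self-adjoint for the involution $\d$, and only then reading off the desired form. Write a given idempotent as $e = \omega_{v,u}$ for some $u,v\in\U$. The structural facts I would lean on are that on $\cpK(\U)$ the operation $\d$ sends $\omega_{v,u}$ to $\omega_{u,v}$ (Lemma \ref{Lemma: omega is adjointable}), is an involutive anti-homomorphism (Propositions \ref{Proposition: (psi phi)d = phid psid} and \ref{Proposition: phidd = phi}), and satisfies $a\,a\d\,a = a$ for every $a\in\cpK(\U)$ (Lemma \ref{Lemma: generalized inverses of compact operators}). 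In other words $\cpK(\U)$ is a \emph{regular $*$-semigroup}, and the content of the lemma is exactly that the inverse-set hypothesis on $\U$ forces its idempotents to be projections.

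Since $e^2=e$, applying $\d$ gives $(e\d)^2=(e^2)\d=e\d$, so $e\d=\omega_{u,v}$ is idempotent as well. I would then examine the two products $f:=e\d e$ and $g:=ee\d$. A direct computation using axiom (R-i) and Lemma \ref{Lemma: omega_{vt,u}} shows $f=\omega_{u,v}\omega_{v,u}=\omega_{w,w}$ with $w:=u\rin{v}{v}{\U}$, and symmetrically $g=\omega_{w',w'}$ with $w':=v\rin{u}{u}{\U}$. In particular both $f$ and $g$ are operators of the form $\omega_{x,x}$. Because $\U$ is a right inverse $T$-set, Proposition \ref{Proposition: R-iv} (implication (i)$\Rightarrow$(iv)) says any two such operators commute, hence $fg=gf$. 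This is the single place where the inverse-set axiom actually enters.

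The key step is to recognize $f$ and $g$ as $e\d$ and $e$ themselves. Using $a\,a\d\,a=a$ for $a=e$ and for $a=e\d$, together with $e^2=e$ and $(e\d)^2=e\d$, one computes $gf = ee\d\,e\d e = ee\d e = e$ (collapsing $e\d e\d=e\d$) and $fg = e\d e\,ee\d = e\d e e\d = e\d$ (collapsing $ee=e$). Comparing with $fg=gf$ yields $e=e\d$. Finally self-adjointness trivializes the rest: $e = e^2 = e\d e = f = \omega_{w,w}$, which is the claim.

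I expect the only genuine obstacle to be the self-adjointness $e=e\d$, i.e. upgrading a bare idempotent to a projection; the remaining manipulations are bookkeeping with the rank-one multiplication rule. The care needed is to make sure the two reductions $gf=e$ and $fg=e\d$ invoke only $a\,a\d\,a=a$ and idempotency, so that the argument does not circularly presuppose that $\cpK(\U)$ is inverse (which is established only afterwards in Theorem \ref{Theorem: K(U) is inverse}).
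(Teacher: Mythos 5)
Your proof is correct, and it takes a genuinely different route from the paper's. The paper argues entirely at the level of the pairings: for an idempotent $\omega_{u,u'}$ it sets $e:=\rin{u}{u}{\U}\rin{u'}{u'}{\U}\in E(T)$, verifies $\rin{ue}{ue}{\U}=\rin{u'e}{ue}{\U}=\rin{u'e}{u'e}{\U}=e$ (idempotency of $\omega_{u,u'}$ enters in the middle computation), deduces $ue=u'e$ from Lemma \ref{Lemma: non-degenerate}~(i), and rewrites $\omega_{u,u'}=\omega_{ue,u'e}=\omega_{ue,ue}$. You instead work abstractly in the regular $*$-semigroup $(\cpK(\U),\d)$: you show $e\d e$ and $ee\d$ are both of the diagonal form $\omega_{w,w}$, invoke Proposition \ref{Proposition: R-iv} ((i)$\Rightarrow$(iv)) to make them commute, and recover $e=gf=fg=e\d$, so that $e=e\d e=\omega_{w,w}$. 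Each step checks out: the adjoint is unique on right inverse $T$-sets, so $\d$ is well defined on $\cpK(\U)$ independently of the chosen representation $\omega_{v,u}$; the identities $ee\d e=e$ and $e\d ee\d=e\d$ are exactly Lemma \ref{Lemma: generalized inverses of compact operators}; and your commutativity input comes from the inverse-set axiom on $\U$, not from $\cpK(\U)$ being inverse, so there is no circularity with Theorem \ref{Theorem: K(U) is inverse}. The trade-off: the paper's computation produces the explicit witness $ue$ with minimal machinery, essentially using only Lemma \ref{Lemma: non-degenerate}, whereas your argument isolates \emph{where} the inverse-set hypothesis is used (commutativity of the diagonal idempotents $\omega_{x,x}$) and is the standard "idempotents are projections" argument for regular $*$-semigroups with commuting projections, which makes the structural content of the lemma more transparent.
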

\begin{proof}
	Fix $u,u' \in \U$ such that $\omega_{u,u'} \in E(\cpK(\U))$ holds.
	We put an idempotent $e$ of $T$ as $e := \rin{ u }{ u }{\U} \rin{ u' }{ u' }{\U}$.
	We get 
	\begin{align*}
	\rin{ ue }{ ue }{\U} &= \rin{ u'e }{ u'e }{\U} = e \text{ and }\\
	\rin{ u'e }{ ue }{\U}
	&= \rin{ u' \rin{ u }{ u }{\U} }{ u\rin{ u' }{ u' }{\U} }{\U}\\
	&= \rin{ u }{ u }{\U} \rin{ u' }{ u\rin{ u' }{ u' }{\U} }{\U}\\
	&= \rin{ u }{ u \rin{ u' }{ u\rin{ u' }{ u' }{\U} }{\U} }{\U} \\
	&= \rin{ u }{ \omega_{u, u'}\,\omega_{u, u'} u' }{\U}\\
	&= \rin{ u }{ \omega_{u, u'} u' }{\U}\\
	&= \rin{ u }{ u \rin{ u' }{ u' }{\U} }{\U} \\
	&= \rin{ u }{ u }{\U} \rin{ u' }{ u' }{\U}\\
	& = e.
	\end{align*}
	By Lemma \ref{Lemma: non-degenerate} (i), $ue = u'e$ holds.
	Thus we get
	\begin{align*}
	\omega_{u,u'} 
	&= \omega_{u \rin{ u }{ u }{\U}, u'\rin{ u' }{ u' }{\U}} \\
	&= \omega_{u \rin{ u }{ u }{\U}\rin{ u' }{ u' }{\U}, u' \rin{ u }{ u }{\U} \rin{ u' }{ u' }{\U}} \\
	&= \omega_{ue, u'e}. \qedhere
	\end{align*}
\end{proof}
 
\begin{theo}\label{Theorem: K(U) is inverse}
	For a right inverse $T$-set $\U$, the semigroup $\cpK(\U)$ is inverse.
	The generalize inverse $k^*$ of $k \in \cpK(\U)$ is its adjoint $k\d$.
\end{theo}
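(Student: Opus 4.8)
The plan is to assemble results already in hand rather than to compute anything new. By Corollary \ref{Corollary: K(U) is regular} the semigroup $\cpK(\U)$ is regular, so by Theorem \ref{Theorem: regular and inverse} it suffices to verify that all idempotents of $\cpK(\U)$ commute. This reduces the whole statement to a commutation check that has essentially already been packaged for us.

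First I would recall, via Lemma \ref{Lemma: E(K(U))}, that every idempotent of $\cpK(\U)$ is of the form $\omega_{u,u}$ for some $u \in \U$. Then, since $\U$ is by hypothesis a right inverse $T$-set, condition (i) of Proposition \ref{Proposition: R-iv} holds, and therefore so does the equivalent condition (iv): the operators $\omega_{u,u}$ and $\omega_{u',u'}$ commute for all $u,u' \in \U$. Putting these two observations together, any two idempotents of $\cpK(\U)$ commute, and Theorem \ref{Theorem: regular and inverse} then yields that $\cpK(\U)$ is inverse.

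For the identification of the generalized inverse with the adjoint, I would invoke uniqueness of generalized inverses in an inverse semigroup. Writing an arbitrary $k \in \cpK(\U)$ as $k = \omega_{v,u}$ with $u,v \in \U$, Lemma \ref{Lemma: omega is adjointable} tells us that the adjoint is $k\d = \omega_{u,v}$, while Lemma \ref{Lemma: generalized inverses of compact operators} (equivalently Corollary \ref{Corollary: K(U) is regular}) exhibits this same $\omega_{u,v}$ as a generalized inverse of $\omega_{v,u} = k$. As $\cpK(\U)$ is now known to be inverse, its generalized inverse is unique, so $k^* = \omega_{u,v} = k\d$, as claimed.

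I do not expect a genuine obstacle here: the real content was already spent in establishing Proposition \ref{Proposition: R-iv} and Lemma \ref{Lemma: E(K(U))}, and the only subtlety is bookkeeping the direction of the adjoint so that $\omega_{v,u}$ and $\omega_{u,v}$ are matched correctly as generalized inverse and adjoint of one another. The proof is thus a short synthesis of the preceding lemmas.
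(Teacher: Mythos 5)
Your proposal is correct and follows essentially the same route as the paper: regularity from Corollary \ref{Corollary: K(U) is regular}, commuting idempotents via Lemma \ref{Lemma: E(K(U))} and Proposition \ref{Proposition: R-iv}, and then Theorem \ref{Theorem: regular and inverse}. Your explicit treatment of the identification $k^* = k\d$ via uniqueness of generalized inverses is a slight elaboration of what the paper leaves implicit in Corollary \ref{Corollary: K(U) is regular}, but it is the same argument.
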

\begin{proof}
	By Corollary \ref{Corollary: K(U) is regular}, Lemma \ref{Lemma: E(K(U))}, and Proposition \ref{Proposition: R-iv}, $\cpK(\U)$ becomes a regular semigroup whose idempotents commute. 
	Theorem \ref{Theorem: regular and inverse} implies that $\cpK(\U)$ is an inverse semigroup.
\end{proof}

\begin{prop}
	Every right inverse $T$-set $\U$ becomes a Morita equivalence from $\cpK(\U)$ to $\rin{\U}{\U}{\U}$ with respect to the left action and the left pairing defined as $ku := k(u)$ and $\lin{ u }{ u' }{\U} := \omega_{ u, u' }$ for $k \in \cpK(\U)$, $u,u' \in \U$ respectively.
\end{prop}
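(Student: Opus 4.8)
The plan is to verify directly that $\U$, equipped with the given left $\cpK(\U)$-structure together with its ambient right $T$-structure restricted to $\rin{\U}{\U}{\U}$, satisfies Definition \ref{Definition: partial Morita context} and is full on both sides, so that Definition \ref{Definition: Morita equiv} applies. Before doing so I would record that $\cpK(\U)$ is an inverse semigroup by Theorem \ref{Theorem: K(U) is inverse} and that $\rin{\U}{\U}{\U}$ is an inverse subsemigroup of $T$ (being a two-sided ideal), so that both sides are legitimate inverse semigroups.

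First I would check the biset structure. The left action $ku := k(u)$ is an action because composition of maps is associative, giving $(k'k)u = k'(ku)$. The right action of $\rin{\U}{\U}{\U}$ is the restriction of the right $T$-action on $\U$. These two actions commute, i.e.\ $k(ut) = (ku)t$, precisely because every $k \in \cpK(\U) \subset \adL(\U)$ is a right $T$-map by Lemma \ref{Lemma: adj T-map}.

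Next I would verify the axioms of a left regular pairing for $\lin{u}{u'}{\U} := \omega_{u,u'}$. Axiom (L-i), namely $\omega_{ku,u'} = k\,\omega_{u,u'}$, follows from Lemma \ref{Lemma: omega_{w,v} phi} applied to the right $T$-map $k$. Axiom (L-ii), namely $\omega_{u,u'}^* = \omega_{u',u}$, follows by combining Lemma \ref{Lemma: omega is adjointable} (which identifies $\omega_{u',u}$ as the adjoint of $\omega_{u,u'}$) with Theorem \ref{Theorem: K(U) is inverse} (which says the generalized inverse in $\cpK(\U)$ is the adjoint). Axiom (L-iii), namely $\omega_{u,u}(u) = u$, is immediate from (R-iii), since $\omega_{u,u}(u) = u\rin{u}{u}{\U} = u$. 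The right regular pairing axioms (R-i)--(R-iii) hold by hypothesis, as $\U$ is already a right inverse $T$-set and $\rin{\cdot}{\cdot}{\U}$ takes values in $\rin{\U}{\U}{\U}$.

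The key compatibility condition $\lin{u}{u'}{\U} u'' = u\rin{u'}{u''}{\U}$ is, after substitution, exactly $\omega_{u,u'}(u'') = u\rin{u'}{u''}{\U}$, which is nothing but the defining formula for $\omega_{u,u'}$; hence it holds tautologically. This makes $\U$ a partial Morita equivalence from $\cpK(\U)$ to $\rin{\U}{\U}{\U}$, and Lemma \ref{Lemma: partial Morita context is a left and right inverse set} then upgrades it to a left inverse and right inverse set. Finally, fullness is built into the definitions: left fullness $\lin{\U}{\U}{\U} = \cpK(\U)$ is the very definition of $\cpK(\U) = \{\omega_{u,u'}\}$, and right fullness is immediate since the codomain semigroup was chosen to be $\rin{\U}{\U}{\U}$. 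I do not expect a genuine obstacle here; the only points requiring care are invoking Lemma \ref{Lemma: adj T-map} for the biset condition and Theorem \ref{Theorem: K(U) is inverse} to interpret the involution on $\cpK(\U)$ as the adjoint in axiom (L-ii).
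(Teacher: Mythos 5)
Your proposal is correct and follows essentially the same route as the paper: verify (L-i) via Lemma \ref{Lemma: omega_{w,v} phi}, (L-ii) via Lemma \ref{Lemma: omega is adjointable} (together with Theorem \ref{Theorem: K(U) is inverse} identifying the involution with the adjoint), (L-iii) via (R-iii), and observe that the compatibility $\omega_{u,u'}(u'') = u\rin{u'}{u''}{\U}$ is the defining formula for $\omega_{u,u'}$. Your write-up is merely more explicit than the paper's two-line proof about the biset condition, fullness on both sides, and the upgrade from regular to inverse via Lemma \ref{Lemma: partial Morita context is a left and right inverse set}, all of which the paper leaves implicit.
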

\begin{proof}
	These structures satisfy (L-i)-(L-iii) by Lemma \ref{Lemma: omega_{w,v} phi}, \ref{Lemma: omega is adjointable}, and (R-iii).
	It is clear that the left and right pairings are compatible.
\end{proof}

We gave an analogy of Lemma \ref{Lemma: order}:
\begin{prop}\label{Proposition: order}
	For every $u,u' \in \U$, the following are equivalent;
	\begin{enumerate}[(i)]
		\item $u = u'\rin{ u }{ u }{\U}$,
		\item there exists $e \in E(T)$ with $u = u'e$,
		\item $u = \omega_{u,u}u'$,
		\item there exists $k \in E(\cpK(\U))$ with $u = k(u')$.
	\end{enumerate}
\end{prop}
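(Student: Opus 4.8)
The plan is to mirror the proof of Lemma \ref{Lemma: order} and establish the cycle (i) $\Rightarrow$ (ii) $\Rightarrow$ (iii) $\Rightarrow$ (iv) $\Rightarrow$ (i). The two implications (i) $\Rightarrow$ (ii) and (iii) $\Rightarrow$ (iv) are immediate: in the first, $\rin{u}{u}{\U}$ is an idempotent of $T$ (the right analogue of Lemma \ref{Lemma: left pairing basics} (i)), so it serves as the required $e$; in the second, $\omega_{u,u}$ is an idempotent of $\cpK(\U)$, so it serves as the required $k$.

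For (ii) $\Rightarrow$ (iii), I would start from $u = u'e$ with $e \in E(T)$ and compute $\omega_{u,u}(u') = u\rin{u}{u'}{\U}$ directly. First I rewrite $\rin{u}{u'}{\U} = \rin{u'e}{u'}{\U} = e\rin{u'}{u'}{\U}$ using (R-i), (R-ii) and $e = e^*$. Then $u\rin{u}{u'}{\U} = u'e\,e\rin{u'}{u'}{\U} = u'\rin{u'}{u'}{\U}e = u'e = u$, where I use $e^2 = e$, the commutativity of the idempotents $e$ and $\rin{u'}{u'}{\U}$ in $T$, and (R-iii). This is the close analogue of the computation $ss^*t = (te)(te)^*t = \dots = te = s$ in Lemma \ref{Lemma: order}.

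The implication (iv) $\Rightarrow$ (i) is the one requiring a genuinely new idea, because, unlike in Lemma \ref{Lemma: order}, the two sides are no longer symmetric: condition (iv) lives on the $\cpK(\U)$-side (the left action), whereas (i) lives on the $T$-side (the right action). I would first invoke Lemma \ref{Lemma: E(K(U))} to write $k = \omega_{w,w}$ for some $w \in \U$, so that $u = w\rin{w}{u'}{\U}$. Setting $p := \rin{w}{u'}{\U}$, a computation with (R-i)--(R-iii) gives $\rin{u}{u}{\U} = p^*\rin{w}{w}{\U}p = p^*p = \rin{u'}{w}{\U}\rin{w}{u'}{\U}$, where the middle simplification uses $\rin{w}{w}{\U}p = p$ (a consequence of (R-iii)). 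The crucial step is then to apply Proposition \ref{Proposition: R-iv} (iii) to the pair $(w,u')$ --- valid precisely because $\U$ is inverse --- which yields $w\rin{w}{u'}{\U} = u'\rin{u'}{w}{\U}\rin{w}{u'}{\U}$. The left-hand side is $u$ and the right-hand side is $u'\rin{u}{u}{\U}$, giving (i).

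I expect the main obstacle to be exactly this last implication: recognising that the broken left-right symmetry is repaired by Proposition \ref{Proposition: R-iv} (iii), which is the condition encoding that $\U$ is inverse and which links the $T$-pairing to the commuting idempotents $\omega_{u,u}$ of $\cpK(\U)$. The supporting identity $\rin{u}{u}{\U} = \rin{u'}{w}{\U}\rin{w}{u'}{\U}$ is routine but must be arranged carefully so that the output of Proposition \ref{Proposition: R-iv} (iii) matches $u'\rin{u}{u}{\U}$ exactly.
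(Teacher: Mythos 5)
Your proof is correct, and three of the four implications --- (i) $\Rightarrow$ (ii), (ii) $\Rightarrow$ (iii) via the computation $u'e\rin{u'e}{u'}{\U} = u'e\rin{u'}{u'}{\U} = u'\rin{u'}{u'}{\U}e = u'e = u$, and (iii) $\Rightarrow$ (iv) --- coincide with the paper's. Where you genuinely diverge is (iv) $\Rightarrow$ (i). You normalize $k = \omega_{w,w}$ using Lemma \ref{Lemma: E(K(U))}, verify $\rin{u}{u}{\U} = \rin{u'}{w}{\U}\rin{w}{u'}{\U}$, and then apply condition (iii) of Proposition \ref{Proposition: R-iv} to the pair $(w,u')$ to obtain $u = w\rin{w}{u'}{\U} = u'\rin{u'}{w}{\U}\rin{w}{u'}{\U} = u'\rin{u}{u}{\U}$; all the supporting identities check out, so this is a valid argument. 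The paper instead keeps $k$ abstract and computes
\[
u'\rin{u}{u}{\U} = u'\rin{k(u')}{k(u')}{\U} = u'\rin{u'}{k(u')}{\U} = \omega_{u',u'}(k(u')) = k(\omega_{u',u'}(u')) = k(u') = u,
\]
using that $k\d = k^* = k$ (Theorem \ref{Theorem: K(U) is inverse}) and that $k$ commutes with the idempotent $\omega_{u',u'}$. Both routes exploit the inverseness of $\U$, but through different downstream consequences: yours through the classification of idempotents of $\cpK(\U)$ together with the pairing identity of Proposition \ref{Proposition: R-iv} (iii), the paper's through self-adjointness and commutativity of idempotents of $\cpK(\U)$. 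The paper's version is marginally shorter and does not need Lemma \ref{Lemma: E(K(U))}; yours makes more visible exactly where the axiom (R-iv) repairs the left--right asymmetry you correctly identified as the crux.
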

\begin{proof}
	We can see that (i) implies (ii), and that (iii) implies (iv).
	If we assume (ii),
	\begin{align*}
	\omega_{u,u}u' &= u \rin{ u }{ u' }{\U} = u'e \rin{ u'e }{ u' }{\U} \\
	&= u'e \rin{ u' }{ u' }{\U} = u' \rin{ u' }{ u' }{\U} e = u'e = u.
	\end{align*}
	holds. 
	Thus we obtain (iii).
	If we assume (iv),
	\begin{align*}
	u'\rin{ u }{ u }{\U} &= u' \rin{ k(u') }{ k(u') }{\U} = u' \rin{ u' }{ k(u') }{\U} \\
	&= \omega_{u',u'}k(u') = k\omega_{u',u'}(u') = k(u') = u
	\end{align*}
	holds, where the second equal holds since we have $k\d = k^*$ by Theorem $\ref{Theorem: K(U) is inverse}$ and $k^* = k$ by $k\in E(\cpK(\U))$. 
	Thus we obtain (i).
\end{proof}

\begin{defi}\label{Definition: order}
	For $u, u' \in \U$, $u\leq u'$ if and only if $u$ and $u'$ satisfies one (and hence all) of the conditions in Proposition \ref{Proposition: order}.
\end{defi}
We can check that this binary relation $\leq$ on $\U$ becomes a partial order on $\U$.
Steinberg introduced order on Morita equivalences in \cite[Proposition 3.2, 3.5]{Ste11}.
For a right inverse $T$-set $\U$, Steinberg's order defined on the Morita equivalence $\U$ from $\cpK(\U)$ to $\rin{\U}{\U}{\U}$ coincides with our order on $\U$ in Definition \ref{Definition: order}.
We can see easily the following:
For every $k \in \cpK(\U)$, $u,u'\in \U$, and $t \in T$, $u\leq u'$ implies $ut \leq u't$ and $k(u) \leq k(u')$.
Let $u_i,u_i'\in \U$ with $u_i \leq u_i'$ for $i = 1,2$.
We obtain $\rin{ u_1 }{ u_2 }{\U} \leq \rin{ u_1' }{ u_2' }{\U}$ and $\omega_{u_1,u_2} \leq \omega_{u_1',u_2'}$.

We proceed to show that $\adL(\U)$ is an inverse semigroup and $\adL(\U,\V)$ is a right inverse $\adL(\U)$-set.
By virtue of Corollary \ref{Corollary: K(U) ideal} and Theorem \ref{Theorem: K(U) is inverse}, we have the following key lemma:
\begin{lemm}\label{Lemma: phi and omega commute}
	For every idempotent $\varphi$ of $\adL(\U)$ and $u \in \U$, $\varphi$ and $\omega_{u,u}$ commute.
\end{lemm}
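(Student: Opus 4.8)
The plan is to obtain the lemma as an immediate instance of the general commutation principle recorded in Proposition \ref{Proposition: e and f commute}, which says that in any semigroup an arbitrary idempotent commutes with every idempotent lying in an inverse two-sided ideal. The preceding results have been arranged precisely so that $\cpK(\U)$ plays the role of such an ideal inside $\adL(\U)$.

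First I would set $S := \adL(\U)$ and $I := \cpK(\U)$ and check the hypotheses of Proposition \ref{Proposition: e and f commute}. The corollary following Lemma \ref{Lemma: adjointable composition} makes $\adL(\U)$ a semigroup, Corollary \ref{Corollary: K(U) ideal} shows that $\cpK(\U)$ is a two-sided ideal of it, and Theorem \ref{Theorem: K(U) is inverse} shows that $\cpK(\U)$ is an inverse semigroup, hence an inverse subsemigroup of $\adL(\U)$. Thus $I$ is an inverse two-sided ideal of $S$, exactly as the proposition requires.

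Next I would exhibit the two idempotents to which the proposition is applied. By hypothesis $\varphi$ is an idempotent of $\adL(\U)$, so $\varphi \in E(S)$. For the element $\omega_{u,u}$, Lemma \ref{Lemma: omega_{w,v} phi} together with (R-iii) gives $\omega_{u,u}\,\omega_{u,u} = \omega_{\omega_{u,u}(u),u} = \omega_{u\rin{u}{u}{\U},u} = \omega_{u,u}$, so $\omega_{u,u}$ is an idempotent of $\cpK(\U)$, i.e.\ $\omega_{u,u} \in E(I)$. Applying Proposition \ref{Proposition: e and f commute} with $e = \varphi$ and $f = \omega_{u,u}$ then yields $\varphi\,\omega_{u,u} = \omega_{u,u}\,\varphi$, which is the assertion.

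I do not expect a genuine obstacle here: the whole difficulty has been front-loaded into establishing that $\cpK(\U)$ is an inverse ideal (Theorem \ref{Theorem: K(U) is inverse} and Corollary \ref{Corollary: K(U) ideal}), after which the commutation is purely formal. The only point demanding any care is confirming that $\omega_{u,u}$ is an honest idempotent of $\cpK(\U)$ rather than merely of $\adL(\U)$, but this is immediate from its defining formula.
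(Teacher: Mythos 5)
Your proposal is correct and follows exactly the paper's own route: the paper's proof is the one-line instruction to apply Proposition \ref{Proposition: e and f commute} with $S = \adL(\U)$ and $I = \cpK(\U)$, relying on Corollary \ref{Corollary: K(U) ideal} and Theorem \ref{Theorem: K(U) is inverse} just as you do. Your explicit check that $\omega_{u,u}$ is an idempotent of $\cpK(\U)$ is the same observation the paper records immediately before Lemma \ref{Lemma: E(K(U))}.
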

\begin{proof}
	Apply Theorem \ref{Proposition: e and f commute} to $S = \adL(\U)$ and $I = \cpK(\U)$.
\end{proof}

\begin{lemm}\label{Lemma: phi omega_{u,u} idempotent}
	For every $\varphi\in \adL(\U,\V)$ and $u \in \U$, $\varphi\d\varphi\,\omega_{u,u}$ and $\omega_{u,u}\,\varphi\d\varphi$ are idempotents of $\adL(\U)$.
\end{lemm}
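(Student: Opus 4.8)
The plan is to collapse each of the two products into a single rank-one map $\omega_{\cdot,\cdot}$ and then check idempotency there. First I would record that both $\varphi$ and $\varphi\d$ are right $T$-maps (Lemma \ref{Lemma: adj T-map}) and that $\varphi\d\varphi$ is self-adjoint, i.e.\ $(\varphi\d\varphi)\d = \varphi\d\varphi$, which follows from Propositions \ref{Proposition: (psi phi)d = phid psid} and \ref{Proposition: phidd = phi}. Applying Lemma \ref{Lemma: omega_{w,v} phi} twice then gives
\[
\varphi\d\varphi\,\omega_{u,u} = \varphi\d\,\omega_{\varphi(u),u} = \omega_{\varphi\d\varphi(u),\,u},
\]
while Lemma \ref{Lemma: phi omega_{v,u}} applied to the self-adjoint map $\varphi\d\varphi$ gives
\[
\omega_{u,u}\,\varphi\d\varphi = \omega_{u,\,\varphi\d\varphi(u)}.
\]
Writing $w := \varphi\d\varphi(u)\in\U$, it remains to prove that $\omega_{w,u}$ and $\omega_{u,w}$ are idempotents of $\adL(\U)$; both lie in $\cpK(\U)\subseteq\adL(\U)$, so it suffices to verify $x^2 = x$.

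The crucial point is the identity $\rin{u}{w}{\U} = \rin{\varphi(u)}{\varphi(u)}{\V}$. Taking the generalized inverse of the defining equation in Definition \ref{Definition: adjointable} gives $\rin{u}{\varphi\d(v)}{\U} = \rin{\varphi(u)}{v}{\V}$, and specializing $v = \varphi(u)$ yields the identity. Using that $\varphi\d$ is a right $T$-map together with (R-iii), I would then compute
\[
w\,\rin{u}{w}{\U} = \varphi\d(\varphi(u))\,\rin{\varphi(u)}{\varphi(u)}{\V} = \varphi\d\big(\varphi(u)\,\rin{\varphi(u)}{\varphi(u)}{\V}\big) = \varphi\d(\varphi(u)) = w.
\]
Since a short computation using (R-i) shows $\omega_{w,u}\,\omega_{w,u} = \omega_{w\rin{u}{w}{\U},\,u}$, the previous display gives $\omega_{w,u}\,\omega_{w,u} = \omega_{w,u}$, so $\varphi\d\varphi\,\omega_{u,u}$ is idempotent.

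For the second product I would exploit the inverse semigroup structure of $\cpK(\U)$. By Lemma \ref{Lemma: omega is adjointable} we have $\omega_{u,w} = \omega_{w,u}\d$; and by Theorem \ref{Theorem: K(U) is inverse} the adjoint coincides with the generalized inverse in $\cpK(\U)$, which fixes idempotents because every idempotent of an inverse semigroup is self-inverse. Hence $\omega_{u,w} = \omega_{w,u}\d = \omega_{w,u}$, which is idempotent by the previous paragraph. As a byproduct this even shows $\varphi\d\varphi\,\omega_{u,u} = \omega_{u,u}\,\varphi\d\varphi$.

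I expect the only real obstacle to be the identity $w\,\rin{u}{w}{\U} = w$, whose proof hinges on rewriting $\rin{u}{\varphi\d\varphi(u)}{\U}$ in $\V$ via the adjoint relation and then invoking (R-iii) through the right $T$-linearity of $\varphi\d$; once that is in hand, everything else is a matter of applying the already-established rank-one calculus of Section \ref{Section: Adjointable maps between inverse sets}.
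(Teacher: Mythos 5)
Your proof is correct, and it takes a partly different route from the paper's. The paper proves idempotency of $\varphi\d\varphi\,\omega_{u,u}$ by squaring it directly and simplifying the four-fold composite via the rank-one calculus (reducing to $\varphi\d\,\omega_{\varphi(u)\rin{\varphi(u)}{\varphi(u)}{\V},u}$ and applying (R-iii)), and then disposes of $\omega_{u,u}\,\varphi\d\varphi$ with ``in a similar way.'' You instead first collapse each product entirely into a single rank-one operator, $\varphi\d\varphi\,\omega_{u,u}=\omega_{w,u}$ and $\omega_{u,u}\,\varphi\d\varphi=\omega_{u,w}$ with $w=\varphi\d\varphi(u)$, verify $\omega_{w,u}^2=\omega_{w,u}$ via the identity $w\rin{u}{w}{\U}=w$ (which is the same underlying use of (R-iii) at the element $\varphi(u)$ as in the paper), and then handle the second operator by invoking Theorem \ref{Theorem: K(U) is inverse}: since $\omega_{w,u}$ is an idempotent of the inverse semigroup $\cpK(\U)$ it is self-inverse, so $\omega_{u,w}=\omega_{w,u}\d=\omega_{w,u}$. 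This last step is legitimate (that theorem is proved before this lemma, so there is no circularity) and buys you something the paper's proof does not: the equality $\varphi\d\varphi\,\omega_{u,u}=\omega_{u,u}\,\varphi\d\varphi$ falls out for free, which is precisely the content of the subsequent Lemma \ref{Lemma: phid phi and omega commute}; the paper instead derives that commutation afterwards from Lemma \ref{Lemma: phi and omega commute} together with the present lemma. The trade-off is that your argument leans on the full strength of $\cpK(\U)$ being inverse, whereas the paper's computation is self-contained within the rank-one calculus.
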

\begin{proof}
	By Lemma \ref{Lemma: omega_{w,v} phi} (i), (ii), and Corollary \ref{Proposition: phidd = phi},
	\begin{align*}
	\varphi\d\varphi\,\omega_{u,u}\,\varphi\d\varphi\,\omega_{u,u} 
	&= \varphi\d\,\omega_{\varphi(u),\varphi(u)}\,\omega_{\varphi(u),u}\\
	&= \varphi\d\,\omega_{\varphi(u)\rin{ \varphi(u) }{ \varphi(u) }{\V}, u}\\
	&= \varphi\d\,\omega_{\varphi(u),u}\\
	&= \varphi\d\varphi\,\omega_{u,u}
	\end{align*}
	holds for every $\varphi\in \adL(\U,\V)$ and $u \in \U$.
	In a similar way, $\omega_{u,u}\,\varphi\d\varphi$ is an idempotent for every $\varphi\in \adL(\U,\V)$ and $u \in \U$.
\end{proof}

\begin{lemm}\label{Lemma: phid phi and omega commute}
	For every $\varphi\in \adL(\U,\V)$ and $u \in \U$, $\varphi\d\varphi$ and $\omega_{u,u}$ commute.
\end{lemm}
\begin{proof}
	For every $\varphi\in \adL(\U,\V)$ and $u \in \U$, $\varphi\d\varphi\omega_{u,u}$ and $\omega_{u,u}\varphi\d\varphi$ commute with $\omega_{u,u}$ by Lemma \ref{Lemma: phi and omega commute} and \ref{Lemma: phi omega_{u,u} idempotent}.
	Thus we get 
	\begin{align*}
	\varphi\d\varphi\,\omega_{u,u} 
	&= \varphi\d\varphi \, (\omega_{u,u} \, \omega_{u,u})\\
	&= (\varphi\d\varphi \, \omega_{u,u}) \, \omega_{u,u}\\
	&= \omega_{u,u} \, (\varphi\d\varphi \, \omega_{u,u})\\
	&= (\omega_{u,u} \, \varphi\d\varphi) \, \omega_{u,u}\\
	&= \omega_{u,u} \, (\omega_{u,u} \, \varphi\d\varphi)\\
	&= (\omega_{u,u} \, \omega_{u,u}) \, \varphi\d\varphi\\
	&= \omega_{u,u} \, \varphi\d\varphi.\qedhere
	\end{align*}
\end{proof}

\begin{prop}\label{Proposition: phi phid phi = phi}
	For every $\varphi\in \adL(\U,\V)$, $\varphi\varphi\d\varphi = \varphi$ and $\varphi\d\varphi\varphi\d = \varphi\d$ hold.
\end{prop}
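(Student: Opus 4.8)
The plan is to reduce everything to the single pointwise identity $\varphi\varphi\d\varphi(u) = \varphi(u)$ for every $u \in \U$. Once $\varphi\varphi\d\varphi = \varphi$ is known for all adjointable maps, the companion identity $\varphi\d\varphi\varphi\d = \varphi\d$ comes for free: applying the first identity to the adjointable map $\varphi\d \in \adL(\V,\U)$ gives $\varphi\d(\varphi\d)\d\varphi\d = \varphi\d$, and since $(\varphi\d)\d = \varphi$ by Proposition \ref{Proposition: phidd = phi}, this is exactly $\varphi\d\varphi\varphi\d = \varphi\d$. So it suffices to prove $\varphi\varphi\d\varphi = \varphi$.

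First I would translate the operator equation into the rank-one calculus so as to exploit that $\cpK(\U)$ is inverse. Set $q := \varphi\d\varphi\,\omega_{u,u}$; applying Lemma \ref{Lemma: omega_{w,v} phi} twice (first $\varphi\,\omega_{u,u} = \omega_{\varphi(u),u}$, then $\varphi\d\,\omega_{\varphi(u),u} = \omega_{\varphi\d\varphi(u),u}$) gives $q = \omega_{\varphi\d\varphi(u),\,u}$. The crucial observation is that $q$ is an idempotent of $\cpK(\U)$ by Lemma \ref{Lemma: phi omega_{u,u} idempotent}, and $\cpK(\U)$ is inverse by Theorem \ref{Theorem: K(U) is inverse}. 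Hence $q$ is self-inverse, so it coincides with its generalized inverse, which by Theorem \ref{Theorem: K(U) is inverse} is the adjoint $q\d = \omega_{u,\,\varphi\d\varphi(u)}$ (Lemma \ref{Lemma: omega is adjointable}). This lets me swap the two legs of the rank-one operator: $\varphi\d\varphi\,\omega_{u,u} = \omega_{u,\,\varphi\d\varphi(u)}$.

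With the swap in hand I push $\varphi$ back through from the left, again via Lemma \ref{Lemma: omega_{w,v} phi}: since $q = \varphi\d\varphi\,\omega_{u,u}$ we have $\varphi\varphi\d\varphi\,\omega_{u,u} = \varphi\,q = \varphi\,\omega_{u,\varphi\d\varphi(u)} = \omega_{\varphi(u),\,\varphi\d\varphi(u)}$, and by Lemma \ref{Lemma: phi omega_{v,u}} this equals $\omega_{\varphi(u),\varphi(u)}\,\varphi$. Evaluating the resulting identity $\varphi\varphi\d\varphi\,\omega_{u,u} = \omega_{\varphi(u),\varphi(u)}\,\varphi$ at the element $u$ and using (R-iii) on both sides — $\omega_{u,u}(u) = u\rin{u}{u}{\U} = u$ on the left, and $\omega_{\varphi(u),\varphi(u)}(\varphi(u)) = \varphi(u)\rin{\varphi(u)}{\varphi(u)}{\V} = \varphi(u)$ on the right — yields precisely $\varphi\varphi\d\varphi(u) = \varphi(u)$, as desired. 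I expect the swap step to be the main obstacle: it is the only point where the inverse (rather than merely regular) structure of $\cpK(\U)$ is genuinely used, and the remaining manipulations are routine bookkeeping in the rank-one calculus of Lemmas \ref{Lemma: omega_{w,v} phi}, \ref{Lemma: phi omega_{v,u}}, and \ref{Lemma: omega is adjointable}.
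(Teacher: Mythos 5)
Your argument is correct, and it reaches the key identity by a genuinely different mechanism than the paper, even though the overall strategy (reduce to the pointwise statement $\varphi\varphi\d\varphi(u)=\varphi(u)$ and exploit the idempotency of $\varphi\d\varphi\,\omega_{u,u}$) is the same. The paper's proof \emph{commutes} $\varphi\d\varphi$ past $\omega_{u,u}$ via Lemma \ref{Lemma: phid phi and omega commute}, which in turn rests on Lemma \ref{Lemma: phi and omega commute}, i.e.\ on Proposition \ref{Proposition: e and f commute} applied to the two-sided ideal $\cpK(\U)$ of $\adL(\U)$; it then unwinds $\varphi\bigl(\varphi\d\varphi\,\omega_{u,u}(u)\bigr)=\varphi\varphi\d\varphi(u)$. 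You instead identify $q:=\varphi\d\varphi\,\omega_{u,u}=\omega_{\varphi\d\varphi(u),u}$ as an idempotent that lies in $\cpK(\U)$, so that Theorem \ref{Theorem: K(U) is inverse} (idempotents of an inverse semigroup are self-inverse, and generalized inverses in $\cpK(\U)$ are adjoints) forces $q=q^{*}=q\d=\omega_{u,\varphi\d\varphi(u)}$, swapping the legs of the rank-one operator. This bypasses both commutation lemmas and uses only Lemma \ref{Lemma: phi omega_{u,u} idempotent}, Theorem \ref{Theorem: K(U) is inverse}, and the rank-one calculus of Lemmas \ref{Lemma: omega_{w,v} phi}, \ref{Lemma: phi omega_{v,u}}, and \ref{Lemma: omega is adjointable}; the trade-off is only organizational, since the paper needs its commutation lemmas elsewhere (e.g.\ in Lemma \ref{Lemma: u phi(u) u}) and so gets this proposition nearly for free once they are in place. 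Your deduction of $\varphi\d\varphi\varphi\d=\varphi\d$ by applying the first identity to $\varphi\d$ and citing $\varphi^{\dag\dag}=\varphi$ is equivalent to the paper's one-line argument via Proposition \ref{Proposition: (psi phi)d = phid psid}. One small streamlining: after the swap you can evaluate $\varphi\,q=\omega_{\varphi(u),\varphi\d\varphi(u)}$ directly at $u$, since $\omega_{\varphi(u),\varphi\d\varphi(u)}(u)=\varphi(u)\rin{\varphi\d\varphi(u)}{u}{\U}=\varphi(u)\rin{\varphi(u)}{\varphi(u)}{\V}=\varphi(u)$; the detour through $\omega_{\varphi(u),\varphi(u)}\,\varphi$ is not needed.
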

\begin{proof}
	For every $\varphi \in \adL(\U)$ and $u \in \U$, we get
	\begin{align*}
	\varphi(u) 
	&= \varphi(u)\rin{ \varphi(u) }{ \varphi(u) }{\V}\\
	&= \varphi\big(u\rin{ \varphi(u) }{ \varphi(u) }{\V}\big)\\
	&= \varphi\left( u \rin{ u }{ \varphi\d\varphi(u) }{\U} \right)\\
	&= \varphi\big( \omega_{u,u}\,\varphi\d\varphi(u) \big)\\
	&= \varphi\big( \varphi\d\varphi\,\omega_{u,u}(u) \big)\\
	&= \varphi\varphi\d\varphi(u),
	\end{align*}
	where the second equal follows from Lemma \ref{Lemma: adj T-map}, and the fifth equal follows from Lemma \ref{Lemma: phid phi and omega commute}.
	Thus $\varphi\varphi\d\varphi = \varphi$ holds.
	This implies that $\varphi\d = (\varphi\varphi\d\varphi)\d = \varphi\d\varphi\varphi\d$.
\end{proof}

\begin{lemm}\label{Lemma: u phi(u) u}
	For every idempotent $\varphi$ of $\adL(\U)$ and $u \in \U$, the following hold:
	\begin{enumerate}[(i)]
		\item $\varphi(u) = u \rin{ u }{ \varphi(u) }{\U}$.
		\item $\rin{ u }{ \varphi(u) }{\U} = \rin{ \varphi(u) }{ u }{\U} = \rin{ \varphi(u) }{ \varphi(u) }{\U}$.
	\end{enumerate}
	By (i) and (ii), $\varphi(u) = u \rin{ u }{ \varphi(u) }{\U} = u \rin{ \varphi(u) }{ u }{\U} = u \rin{ \varphi(u) }{ \varphi(u) }{\U}$ holds.
\end{lemm}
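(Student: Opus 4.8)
The plan is to prove (i) first and then read off both equalities of (ii) from it, using that $\varphi$ is a right $T$-map (Lemma \ref{Lemma: adj T-map}) and that $\varphi$ is idempotent. The whole argument is powered by the key commutation Lemma \ref{Lemma: phi and omega commute}, which says that an idempotent $\varphi$ of $\adL(\U)$ commutes with every $\omega_{u,u}$.

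For (i), I would rephrase the target as the operator identity $\omega_{u,u}\bigl(\varphi(u)\bigr) = \varphi(u)$, since $\omega_{u,u}(\varphi(u)) = u\rin{u}{\varphi(u)}{\U}$ by definition of $\omega_{u,u}$. Now (R-iii) gives $\omega_{u,u}(u) = u\rin{u}{u}{\U} = u$, hence $\varphi(u) = \varphi\bigl(\omega_{u,u}(u)\bigr) = (\varphi\,\omega_{u,u})(u)$. Swapping $\varphi$ and $\omega_{u,u}$ by Lemma \ref{Lemma: phi and omega commute} turns this into $(\omega_{u,u}\,\varphi)(u) = \omega_{u,u}\bigl(\varphi(u)\bigr) = u\rin{u}{\varphi(u)}{\U}$, which is precisely (i).

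For (ii), put $t := \rin{u}{\varphi(u)}{\U}$, so that (i) reads $\varphi(u) = ut$. The first step is to show $t$ is an idempotent of $T$. Expanding $t^2$ by (R-i) gives $t^2 = \rin{u}{\varphi(u)\,t}{\U}$, and the crucial observation is that $\varphi(u)\,t = \varphi(ut) = \varphi(\varphi(u)) = \varphi(u)$, where the first equality uses that $\varphi$ is a right $T$-map, the second uses (i), and the third uses idempotency; hence $t^2 = t$. Since idempotents of the inverse semigroup $T$ are self-adjoint, $t = t^* = \rin{u}{\varphi(u)}{\U}^* = \rin{\varphi(u)}{u}{\U}$, which is the first equality of (ii). For the second equality, substitute $\varphi(u) = ut$ and use (R-i), (R-ii) to write $\rin{\varphi(u)}{\varphi(u)}{\U} = \rin{ut}{ut}{\U} = t\,\rin{u}{u}{\U}\,t$; then (R-i) together with (i) gives $\rin{u}{u}{\U}\,t = \rin{u}{ut}{\U} = \rin{u}{\varphi(u)}{\U} = t$, so the expression collapses to $t^2 = t$. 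Finally, the displayed chain $\varphi(u) = u\rin{u}{\varphi(u)}{\U} = u\rin{\varphi(u)}{u}{\U} = u\rin{\varphi(u)}{\varphi(u)}{\U}$ is immediate by substituting (ii) into (i).

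I expect the main obstacle to be the idempotency of $t$: this is not a formal consequence of the pairing axioms alone, but requires feeding (i) back through the right $T$-map property to establish the identity $\varphi(u)\,t = \varphi(u)$. Once that is in hand, both the self-adjointness of $t$ and the collapse of $\rin{\varphi(u)}{\varphi(u)}{\U}$ follow routinely from (R-i)--(R-iii).
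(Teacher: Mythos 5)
Your proof is correct. Part (i) is exactly the paper's argument: both of you rewrite $u = \omega_{u,u}(u)$ via (R-iii) and then commute $\varphi$ past $\omega_{u,u}$ using Lemma \ref{Lemma: phi and omega commute}. For part (ii), however, you take a genuinely different route. The paper invokes the commutation lemma a second time, now with $\omega_{\varphi(u),\varphi(u)}$, to derive $\varphi(u) = \varphi(u)\rin{\varphi(u)}{u}{\U}$ and then extracts $\rin{\varphi(u)}{\varphi(u)}{\U} = \rin{\varphi(u)}{u}{\U}$ from that identity (the remaining equality $\rin{u}{\varphi(u)}{\U} = \rin{\varphi(u)}{\varphi(u)}{\U}$ coming from taking generalized inverses of an idempotent). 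You instead stay at the level of the semigroup element $t = \rin{u}{\varphi(u)}{\U}$: you feed (i) back through the right $T$-map property (Lemma \ref{Lemma: adj T-map}) and the idempotency of $\varphi$ to get $\varphi(u)t = \varphi(u)$, hence $t^2 = t$, and then both equalities of (ii) drop out of the self-adjointness of idempotents in $T$ and routine (R-i)--(R-iii) manipulations. Both arguments are valid and of comparable length; the paper's is more uniform (the same operator-commutation trick twice), while yours localizes all of part (ii) inside the inverse semigroup $T$ and uses the commutation lemma only once, which makes it slightly clearer where the idempotency of $\varphi$ is actually consumed. Your closing remark correctly identifies the one non-formal step, namely that $t \in E(T)$ is not a consequence of the pairing axioms alone.
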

\begin{proof}
	For every idempotent $\varphi$ of $\adL(\U)$ and $u \in \U$, Lemma \ref{Lemma: phi and omega commute} implies (i) as 
	\[
	\varphi(u) = \varphi\left( \omega_{u,u}(u) \right) = \omega_{u,u}(\varphi(u)) = u \rin{ u }{ \varphi(u) }{\U}.
	\]
	Lemma \ref{Lemma: phi and omega commute} also implies that
	\begin{align*}
	\varphi(u) 
	&= \varphi(u) \rin{ \varphi(u) }{ \varphi(u) }{\U}\\
	&= \omega_{\varphi(u),\varphi(u)}(\varphi(u))\\
	&= \varphi\left( \omega_{\varphi(u),\varphi(u)}(u) \right)\\
	&= \varphi( \varphi(u) \rin{ \varphi(u) }{ u }{\U} )\\
	&= \varphi( \varphi(u) ) \rin{ \varphi(u) }{ u }{\U}\\
	&= \varphi(u) \rin{ \varphi(u) }{ u }{\U}.
	\end{align*}
	This implies that 
	\begin{align*}
	\rin{ \varphi(u) }{ \varphi(u) }{\U} 
	&= \rin{ \varphi(u) }{ \varphi(u) \rin{ \varphi(u) }{ u }{\U} }{\U}\\
	&= \rin{ \varphi(u) }{ \varphi(u) }{\U} \rin{ \varphi(u) }{ u }{\U}\\
	&= \rin{ \varphi(u) }{ u }{\U}
	\end{align*}
	holds.
	Thus we have (ii).
\end{proof}

\begin{lemm}\label{Lemma: phi d = phi}
	For every idempotent $\varphi$ of $\adL(\U)$, $\varphi\d = \varphi$ holds.
\end{lemm}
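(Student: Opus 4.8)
The plan is to prove the pointwise equality $\varphi\d(u) = \varphi(u)$ for every $u \in \U$, which yields $\varphi\d = \varphi$ immediately. The starting observation is that $\varphi\d$ is itself an idempotent of $\adL(\U)$: applying Proposition \ref{Proposition: (psi phi)d = phid psid} to $\varphi\varphi = \varphi$ gives $\varphi\d = (\varphi\varphi)\d = \varphi\d\varphi\d$. Consequently Lemma \ref{Lemma: u phi(u) u} is available for both idempotents $\varphi$ and $\varphi\d$.

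Applying part (i) of Lemma \ref{Lemma: u phi(u) u} to each of them, I obtain
\[
\varphi(u) = u\rin{u}{\varphi(u)}{\U} \quad\text{and}\quad \varphi\d(u) = u\rin{u}{\varphi\d(u)}{\U}.
\]
Therefore it suffices to prove the single identity $\rin{u}{\varphi\d(u)}{\U} = \rin{u}{\varphi(u)}{\U}$ in $T$, since substituting it into the two displayed formulas forces $\varphi\d(u) = \varphi(u)$.

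To establish that identity I bridge the two pairings through the adjoint relation. Taking generalized inverses in the defining equation of the adjoint (Definition \ref{Definition: adjointable}) and then setting its two arguments equal gives $\rin{u}{\varphi\d(u)}{\U} = \rin{\varphi(u)}{u}{\U}$. On the other hand, part (ii) of Lemma \ref{Lemma: u phi(u) u} applied to the idempotent $\varphi$ supplies the symmetry $\rin{\varphi(u)}{u}{\U} = \rin{u}{\varphi(u)}{\U}$. Chaining these two equalities produces exactly $\rin{u}{\varphi\d(u)}{\U} = \rin{u}{\varphi(u)}{\U}$, completing the argument.

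There is no serious obstacle once the right objects are in view; the only genuine idea is to recognize that $\varphi\d$ is again idempotent, so that Lemma \ref{Lemma: u phi(u) u} applies to it as well, and then to exploit the self-adjoint-like symmetry of part (ii) of that lemma to convert the adjoint identity $\rin{u}{\varphi\d(u)}{\U} = \rin{\varphi(u)}{u}{\U}$ into a comparison with $\rin{u}{\varphi(u)}{\U}$. Everything else is direct substitution. (One could alternatively note that Proposition \ref{Proposition: phi phid phi = phi} already exhibits $\varphi\d$ as a generalized inverse of $\varphi$, so morally this is the inverse-semigroup fact that idempotents are self-inverse; but since $\adL(\U)$ is not yet known to be inverse at this point, the direct argument above is the cleaner route.)
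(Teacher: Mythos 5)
Your proof is correct, but it takes a slightly different route from the paper's. The paper verifies directly that $\varphi$ satisfies the defining identity of an adjoint of itself, namely $\rin{u'}{\varphi(u)}{\U} = \rin{\varphi(u')}{u}{\U}$ for \emph{all} pairs $u,u'$, via an eight-step chain of equalities, and then concludes $\varphi\d=\varphi$ by uniqueness of adjoints. You instead prove the pointwise equality $\varphi\d(u)=\varphi(u)$, and the extra idea that makes this work --- which the paper does not use --- is that $\varphi\d$ is itself an idempotent of $\adL(\U)$, since $\varphi\d=(\varphi\varphi)\d=\varphi\d\varphi\d$ by Proposition \ref{Proposition: (psi phi)d = phid psid}. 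That observation lets you apply Lemma \ref{Lemma: u phi(u) u}~(i) to both $\varphi$ and $\varphi\d$, reducing everything to the single scalar identity $\rin{u}{\varphi\d(u)}{\U}=\rin{u}{\varphi(u)}{\U}$, which follows from the diagonal case of the adjoint relation together with the symmetry in Lemma \ref{Lemma: u phi(u) u}~(ii). Your argument is shorter and only needs the diagonal $u'=u$ of the adjoint identity, at the cost of one additional structural fact about $\varphi\d$; the paper's computation is more mechanical but treats $\varphi\d$ as a black box defined only by its adjoint property. Your closing remark is also well taken: one cannot shortcut via ``idempotents are self-inverse'' since $\adL(\U)$ is not yet known to be inverse at this stage, and your argument correctly avoids that circularity.
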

\begin{proof}
	For every $u,u' \in \U$ and an idempotent $\varphi$ of $\adL(\U)$, we get
	\begin{align*}
	\rin{ u' }{ \varphi(u) }{\U}
	&= \rin{ u' }{ u\rin{ \varphi(u) }{ u }{\U}}{\U}\\
	&= \rin{ u' }{ u }{\U}\rin{ \varphi(u) }{ u }{\U}\\
	&= \rin{ \varphi( u ) \rin{ u }{ u' }{\U}  }{ u }{\U}\\
	&= \rin{ \varphi( u \rin{ u }{ u' }{\U})  }{ u }{\U}\\
	&= \rin{ \varphi(\omega_{u,u} u' ) }{ u }{\U}\\
	&= \rin{ \omega_{u,u}\varphi( u' ) }{ u }{\U}\\
	&= \rin{ \varphi( u' ) }{ \omega_{u,u}u }{\U}\\
	&= \rin{ \varphi(u') }{ u }{\U},
	\end{align*}
	where the first equal follows from Lemma \ref{Lemma: u phi(u) u}, the third equal follows from Lemma \ref{Lemma: adj T-map}, the fifth equal follows from Lemma \ref{Lemma: phi and omega commute}, and the sixth equal follows from Lemma \ref{Lemma: omega is adjointable}.
\end{proof}

\begin{prop}\label{Proposition: phi psi phi = phi}
	Let $\varphi$ be an element of $\adL(\U,\V)$.
	If $\psi_1,\psi_2 \in \adL(\V,\U)$ satisfies $\varphi\psi_i\varphi = \varphi$ and $\psi_i\varphi\psi_i = \psi_i$ for $i = 1,2$, then $\psi_1 = \psi_2$ holds.
\end{prop}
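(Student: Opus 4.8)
The plan is to prove the sharper statement that \emph{every} generalized inverse of $\varphi$ equals the adjoint $\varphi\d$; since $\varphi\d$ is itself a generalized inverse of $\varphi$ by Proposition \ref{Proposition: phi phid phi = phi}, applying this to $\psi_1$ and $\psi_2$ gives $\psi_1 = \varphi\d = \psi_2$ at once. So I would fix a single $\psi \in \adL(\V,\U)$ with $\varphi\psi\varphi = \varphi$ and $\psi\varphi\psi = \psi$ and aim to show $\psi = \varphi\d$.

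First I would record the relevant idempotents. By Lemma \ref{Lemma: adjointable composition} the compositions below are adjointable, and the generalized-inverse relations together with Proposition \ref{Proposition: phi phid phi = phi} show that $p := \psi\varphi$ and $p_0 := \varphi\d\varphi$ are idempotents of $\adL(\U)$, while $q := \varphi\psi$ and $q_0 := \varphi\varphi\d$ are idempotents of $\adL(\V)$. Using $\varphi\varphi\d\varphi = \varphi$ and $\varphi\psi\varphi = \varphi$, a one-line computation for each gives the four one-sided relations
\[
pp_0 = p, \qquad p_0 p = p_0, \qquad q q_0 = q_0, \qquad q_0 q = q.
\]

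The hard part will be upgrading these one-sided relations to the equalities $p = p_0$ and $q = q_0$, because at this point in the development we may not yet assume that idempotents of $\adL(\U)$ or $\adL(\V)$ commute---indeed, establishing exactly this kind of uniqueness is what is needed to prove that those semigroups are inverse. The device that circumvents the missing commutativity is self-adjointness of idempotents: by Lemma \ref{Lemma: phi d = phi} we have $p\d = p$ and $p_0\d = p_0$, so taking the adjoint of $pp_0 = p$ and using the anti-multiplicativity of $\d$ from Proposition \ref{Proposition: (psi phi)d = phid psid} yields $(pp_0)\d = p_0 p$; but the left-hand side also equals $p\d = p$, so $p_0 p = p$, and comparing with $p_0 p = p_0$ forces $p = p_0$. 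The same manoeuvre applied to $q q_0 = q_0$ gives $q = q_0$.

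Finally I would assemble the conclusion: from $\psi = \psi\varphi\psi = \psi q$ and $q = q_0 = \varphi\varphi\d$ we obtain $\psi = (\psi\varphi)\varphi\d = p\,\varphi\d$, and then substituting $p = p_0 = \varphi\d\varphi$ and invoking $\varphi\d\varphi\varphi\d = \varphi\d$ from Proposition \ref{Proposition: phi phid phi = phi} gives $\psi = \varphi\d$, as desired. It is worth noting that, specialized to $\V = \U$, this proposition together with Proposition \ref{Proposition: phi phid phi = phi} is precisely what lets one conclude that $\adL(\U)$ is an inverse semigroup.
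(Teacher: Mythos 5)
Your proof is correct. The crucial input is the same as in the paper's argument---namely Lemma \ref{Lemma: phi d = phi} (idempotents of $\adL(\U)$ and $\adL(\V)$ are self-adjoint) combined with the anti-multiplicativity of $\d$ from Proposition \ref{Proposition: (psi phi)d = phid psid}---but the route is genuinely different. The paper never mentions $\varphi\d$ in this proof: it takes the two given generalized inverses $\psi_1,\psi_2$, substitutes $\varphi = \varphi\psi_2\varphi$ (resp.\ $\varphi = \varphi\psi_1\varphi$) inside an adjoint, and shows by direct expansion that both $\psi_1$ and $\psi_2$ equal $\psi_2\varphi\psi_1$; in particular it proves uniqueness without using Proposition \ref{Proposition: phi phid phi = phi} as an input. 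You instead prove the stronger statement that any generalized inverse equals $\varphi\d$, by upgrading the one-sided identities $pp_0=p$, $p_0p=p_0$ (and their $q$-counterparts) to $p=p_0$, $q=q_0$ via self-adjointness, and then assembling $\psi = p\,\varphi\d = p_0\,\varphi\d = \varphi\d$. All the intermediate computations check out. What your version buys is the explicit identification of the generalized inverse in $\adL(\U,\V)$ with the adjoint, which parallels the clause ``$k^* = k\d$'' in Theorem \ref{Theorem: K(U) is inverse} and which the paper leaves implicit for $\adL(\U)$; the cost is that your proof depends on Proposition \ref{Proposition: phi phid phi = phi}, whereas the paper's uniqueness argument is independent of it (the two are only combined afterwards in Theorem \ref{Theorem: L(U) is inverse}, so this dependence is harmless here).
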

\begin{proof}
	Notice that $\varphi\psi_i$ is an idempotent of $\adL(\V)$ and $\psi_i\varphi$ is an idempotent of $\adL(\U)$ for $i = 1,2$.
	By Lemma \ref{Lemma: phi d = phi}, we get $(\varphi\psi_i)\d = \varphi\psi_i $ and $(\psi_i\varphi)\d = \psi_i\varphi$ for $ i = 1,2$.
	Thus we have 
	\begin{align*}
	\psi_1 &= \psi_1\varphi\psi_1
	\\
	&= (\psi_1\varphi)\d\psi_1
	\\
	&= (\psi_1\varphi\psi_2\varphi)\d\psi_1
	\\
	&= (\psi_2\varphi)\d(\psi_1\varphi)\d\psi_1
	\\
	&= \psi_2\varphi\psi_1\varphi\psi_1
	\\
	&= \psi_2\varphi\psi_1
	\end{align*}
	and 
	\begin{align*}
	\psi_2 &= \psi_2\varphi\psi_2\\
	&= \psi_2(\varphi\psi_2)\d\\
	&= \psi_2(\varphi\psi_1\varphi\psi_2)\d\\
	&= \psi_2(\varphi\psi_2)\d(\varphi\psi_1)\d\\
	&= \psi_2\varphi\psi_2\varphi\psi_1\\
	&= \psi_2\varphi\psi_1.
	\end{align*}
	Hence $\psi_1 = \psi_2$ holds.
\end{proof}

Now we obtain our desired theorems:

\begin{theo}\label{Theorem: L(U) is inverse}
	For a right inverse $T$-set $\U$, the semigroup $\adL(\U)$ is inverse.
\end{theo}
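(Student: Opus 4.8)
The plan is to check, directly from the definition of an inverse semigroup given in Section \ref{Section: Preliminaries}, the two conditions that $\adL(\U)$ is regular and that generalized inverses are unique. Both have essentially been prepared by the two preceding propositions, so the work here is to assemble them rather than to produce anything new.

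First I would establish regularity. Fix $\varphi \in \adL(\U)$. By Proposition \ref{Proposition: phidd = phi} the adjoint $\varphi\d$ is again an element of $\adL(\U)$, with adjoint $\varphi$, so it is a legitimate candidate for a generalized inverse. Proposition \ref{Proposition: phi phid phi = phi}, applied with $\V = \U$, yields $\varphi\varphi\d\varphi = \varphi$ and $\varphi\d\varphi\varphi\d = \varphi\d$, which is exactly the statement that $\varphi\d$ is a generalized inverse of $\varphi$. Hence every element of $\adL(\U)$ admits a generalized inverse, so $\adL(\U)$ is regular.

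Next I would deduce uniqueness of generalized inverses, which is precisely Proposition \ref{Proposition: phi psi phi = phi} specialized to $\V = \U$: if $\psi_1,\psi_2 \in \adL(\U)$ both satisfy $\varphi\psi_i\varphi = \varphi$ and $\psi_i\varphi\psi_i = \psi_i$, then $\psi_1 = \psi_2$. Combining regularity with this uniqueness gives, by definition, that $\adL(\U)$ is an inverse semigroup; moreover the generalized inverse $\varphi^*$ coincides with the adjoint $\varphi\d$, in parallel with Theorem \ref{Theorem: K(U) is inverse}.

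The main point is that no genuine obstacle remains inside this theorem: the hard work has been front-loaded into Propositions \ref{Proposition: phi phid phi = phi} and \ref{Proposition: phi psi phi = phi}, whose proofs rest on the commutation of $\omega_{u,u}$ with idempotents (Lemma \ref{Lemma: phi and omega commute}) inherited from the ideal $\cpK(\U)$. An alternative would be to invoke Theorem \ref{Theorem: regular and inverse} by proving that all idempotents of $\adL(\U)$ commute; but establishing commutativity of two \emph{arbitrary} idempotents of $\adL(\U)$ directly is more delicate than the uniqueness argument already available, so I would prefer the definitional route above.
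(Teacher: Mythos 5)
Your proof is correct and is essentially identical to the paper's: the paper likewise deduces the theorem directly from Proposition \ref{Proposition: phi phid phi = phi} (regularity, with $\varphi\d$ as a generalized inverse) and Proposition \ref{Proposition: phi psi phi = phi} (uniqueness of generalized inverses).
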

\begin{proof}
	This follows from Proposition \ref{Proposition: phi phid phi = phi} and \ref{Proposition: phi psi phi = phi}.
\end{proof}

\begin{theo}\label{Theorem: L(U,V) inverse set}
	Let $\U$ and $\V$ be right inverse $T$-sets.
	The set $\adL(\U,\V)$ becomes a partial Morita equivalence from $\adL(\V)$ to $\adL(\U)$ with respect to the following structures: 
	The left and right actions are defined by composing from the left side and the right side respectively.
	The left and right pairings are defined as $\lin{ \varphi_1 }{ \varphi_2 }{\adL(\U,\V)}:= \varphi_1\varphi_2\d$ and $\rin{ \varphi_1 }{ \varphi_2 }{\adL(\U,\V)}:= \varphi_1\d\varphi_2$ for every $\varphi_1,\varphi_2\in\adL(\U,\V)$ respectively.
	With the same structures, $\cpK(\U,\V)$ becomes a partial Morita equivalence from $\cpK(\V)$ to $\cpK(\U)$.
\end{theo}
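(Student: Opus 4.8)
The plan is to verify directly that the prescribed data satisfy the defining conditions of a partial Morita equivalence from $\adL(\V)$ to $\adL(\U)$ in the sense of Definition \ref{Definition: partial Morita context}. By Theorem \ref{Theorem: L(U) is inverse} applied to $\V$ and to $\U$, both $\adL(\V)$ and $\adL(\U)$ are inverse semigroups, so the generalized inverses $*$ appearing in (L-ii) and (R-ii) make sense. First I would record that the compositions defining the two actions and the two pairings are well-defined: for $\varphi_1,\varphi_2\in\adL(\U,\V)$ the map $\varphi_1\varphi_2\d$ lies in $\adL(\V)$ and $\varphi_1\d\varphi_2$ lies in $\adL(\U)$ by Lemma \ref{Lemma: adjointable composition}, and the biset axiom $\psi(\varphi\chi)=(\psi\varphi)\chi$ for $\psi\in\adL(\V)$, $\chi\in\adL(\U)$ is just associativity of composition.

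The one genuinely necessary input beyond associativity is the identification $\varphi^*=\varphi\d$ for every $\varphi$ in $\adL(\V)$ and in $\adL(\U)$. This holds because Proposition \ref{Proposition: phi phid phi = phi} gives $\varphi\varphi\d\varphi=\varphi$ and $\varphi\d\varphi\varphi\d=\varphi\d$, so $\varphi\d$ is a generalized inverse of $\varphi$, and in an inverse semigroup the generalized inverse is unique. Granting this, the left-pairing axioms become immediate: (L-i) reads $\langle\psi\varphi_1,\varphi_2\rangle=(\psi\varphi_1)\varphi_2\d=\psi\langle\varphi_1,\varphi_2\rangle$ by associativity; (L-iii) reads $\langle\varphi,\varphi\rangle\varphi=\varphi\varphi\d\varphi=\varphi$ by Proposition \ref{Proposition: phi phid phi = phi}; and (L-ii) reads $\langle\varphi_1,\varphi_2\rangle^*=(\varphi_1\varphi_2\d)\d=(\varphi_2\d)\d\varphi_1\d=\varphi_2\varphi_1\d=\langle\varphi_2,\varphi_1\rangle$, using $*=\dag$ together with Propositions \ref{Proposition: (psi phi)d = phid psid} and \ref{Proposition: phidd = phi}.

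Then I would treat the right pairing in exactly the same fashion: (R-i) and (R-iii) again reduce to associativity and Proposition \ref{Proposition: phi phid phi = phi}, while (R-ii) reads $\langle\varphi_1,\varphi_2\rangle^*=(\varphi_1\d\varphi_2)\d=\varphi_2\d\varphi_1=\langle\varphi_2,\varphi_1\rangle$ and uses the same three facts. The compatibility condition $\langle\varphi_1,\varphi_2\rangle\varphi_3=\varphi_1\langle\varphi_2,\varphi_3\rangle$ is nothing but $(\varphi_1\varphi_2\d)\varphi_3=\varphi_1(\varphi_2\d\varphi_3)$, i.e.\ pure associativity, so there is really no deep obstacle here; the only point requiring care is keeping the two distinct stars (the one in $\adL(\V)$ and the one in $\adL(\U)$) separate and invoking the uniqueness-of-generalized-inverse argument above for each of them.

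Finally, for the $\cpK$-version the assertions are formally identical, so I would deduce them by restriction once I check that $\cpK(\U,\V)$ is closed under the four operations. Closure of the actions and pairings on rank-one maps follows from the identities $\psi\,\omega_{v,u}=\omega_{\psi(v),u}$ and $\omega_{v,u}\,\chi=\omega_{v,\chi\d(u)}$ (Lemmas \ref{Lemma: omega_{w,v} phi} and \ref{Lemma: phi omega_{v,u}}) together with $\omega_{v,u}\d=\omega_{u,v}$ (Lemma \ref{Lemma: omega is adjointable}); for instance $\langle\omega_{v_1,u_1},\omega_{v_2,u_2}\rangle=\omega_{v_1,u_1}\,\omega_{u_2,v_2}=\omega_{v_1,\,v_2\rin{u_2}{u_1}{\U}}\in\cpK(\V)$, and the analogous computation shows the right pairing lands in $\cpK(\U)$. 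Since every axiom already holds in $\adL(\U,\V)$ and every element occurring stays inside $\cpK$, the restriction inherits the structure of a partial Morita equivalence from $\cpK(\V)$ to $\cpK(\U)$.
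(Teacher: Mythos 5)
Your proposal is correct and follows essentially the same route as the paper: a direct verification of the regular-pairing axioms and the compatibility condition from associativity of composition together with Propositions \ref{Proposition: phi phid phi = phi}, \ref{Proposition: (psi phi)d = phid psid}, and \ref{Proposition: phidd = phi}. You are in fact somewhat more thorough than the printed proof, since you make explicit the identification $\varphi^*=\varphi\d$ (via uniqueness of generalized inverses in the inverse semigroups $\adL(\U)$ and $\adL(\V)$) that underlies (L-ii) and (R-ii), and you supply the closure computation for $\cpK(\U,\V)$ that the paper leaves to the reader.
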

\begin{proof}	
	The associative law of composition of maps implies that $\adL(\U,\V)$ becomes a right $\adL(\U)$-set and $\rin{ \cdot }{ \cdot }{\adL(\U,\V)}$ satisfies the condition (R-i) in Definition \ref{Definition: inverse set}.
	Proposition \ref{Proposition: (psi phi)d = phid psid} implies (R-ii).
	Proposition \ref{Proposition: phi phid phi = phi} implies (R-iii).
	Corollary \ref{Proposition: phidd = phi} and Proposition \ref{Proposition: phi psi phi = phi} imply (R-iv).
	We can also check that $\cpK(\U,\V)$ becomes a partial Morita equivalence from $\cpK(\V)$ to $\cpK(\U)$.
\end{proof}

\section{Inverse correspondences and their tensor products}
\label{Section: Inverse correspondences and their tensor products}

In this section, we introduce a notion of \emph{inverse correspondence} between inverse semigroups and their tensor product.
In the $C^*$-algebra theory, a \emph{\Ccorr} from a $C^*$-algebra $A$ to $B$ consists of a right Hilbert $B$-module $\E$ and a \shom from $A$ to the $C^*$-algebra $\adL(\E)$ of all adjointable maps on $\E$.
By virtue of Theorem \ref{Theorem: L(U) is inverse}, we can define inverse correspondences in a similar way to $C^*$-correspondences.
Let $S$ and $T$ be inverse semigroups.
\begin{defi}\label{Definition: inv corr}
	An \emph{inverse correspondence} $\U$ from $S$ to $T$, denoted as $\U\: S \rightarrow T$, is a right inverse $T$-set $\U$ equipped with a semigroup homomorphism $\theta_{\U}\: S\rightarrow\adL(\U)$.
\end{defi}

We denote $\theta_{\U}(s)(u)$ as $su$ for every $s \in S$ and $u \in \U$.

\begin{lemm}\label{Lemma: left action basics}
	Let $\U \: S \rightarrow T$ be an inverse correspondence.
	The following hold:
	\begin{enumerate}[(i)]
		\item $S \times \U \rightarrow \U; (s,u)\mapsto su$ is a left action of $S$ on $\U$.
		\item $s(ut) = (su)t$ for every $s \in S$, $u \in \U$ and $t \in T$.
		\item $\rin{ u' }{ su }{\U} = \rin{ s^*u' }{ u }{\U}$ for every $s \in S$, $u,u' \in \U$.
	\end{enumerate}
\end{lemm}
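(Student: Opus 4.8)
The plan is to obtain each of the three assertions directly from the fact that $\theta_{\U}$ is a semigroup homomorphism, together with the properties of $\adL(\U)$ established earlier in this section.

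For (i), the multiplication of $\adL(\U)$ is composition of maps, so for $s,s'\in S$ and $u\in\U$ the homomorphism property of $\theta_{\U}$ gives
\[
(s's)u = \theta_{\U}(s's)(u) = \big(\theta_{\U}(s')\theta_{\U}(s)\big)(u) = s'(su),
\]
which is precisely the left-action axiom. For (ii), I would use that $\theta_{\U}(s)$ lies in $\adL(\U)$ and is therefore a right $T$-map by Lemma \ref{Lemma: adj T-map}; hence $s(ut) = \theta_{\U}(s)(ut) = \theta_{\U}(s)(u)\,t = (su)t$.

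The substantive point is (iii), and the key step is to identify $\theta_{\U}(s^*)$ with the adjoint $\theta_{\U}(s)\d$. By Theorem \ref{Theorem: L(U) is inverse} the semigroup $\adL(\U)$ is inverse, and Proposition \ref{Proposition: phi phid phi = phi} shows that $\theta_{\U}(s)\d$ is a generalized inverse of $\theta_{\U}(s)$; uniqueness of generalized inverses in an inverse semigroup then forces $\theta_{\U}(s)^* = \theta_{\U}(s)\d$. Since a semigroup homomorphism between inverse semigroups preserves generalized inverses, we obtain $\theta_{\U}(s^*) = \theta_{\U}(s)^* = \theta_{\U}(s)\d$. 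With this identification the defining relation of the adjoint (Definition \ref{Definition: adjointable}) applied to $\varphi = \theta_{\U}(s)$ yields, for $u,u'\in\U$,
\[
\rin{s^*u'}{u}{\U} = \rin{\theta_{\U}(s)\d(u')}{u}{\U} = \rin{u'}{\theta_{\U}(s)(u)}{\U} = \rin{u'}{su}{\U}.
\]

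I expect the identification $\theta_{\U}(s^*) = \theta_{\U}(s)\d$ to be the only non-formal ingredient; once it is in place, all three statements reduce to a single substitution. The remaining care is purely bookkeeping, namely ensuring that the arguments of each pairing sit on the correct side and that the adjoint relation is invoked in the right direction.
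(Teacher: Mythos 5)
Your proof is correct and follows essentially the same route as the paper's (much terser) proof: (i) from the homomorphism property, (ii) from Lemma \ref{Lemma: adj T-map}, and (iii) from the identification $\theta_{\U}(s^*) = \theta_{\U}(s)^* = \theta_{\U}(s)\d$, which is exactly what the paper means by ``$\theta_{\U}$ keeps generalized inverses.'' Your elaboration of why $\varphi^* = \varphi\d$ in $\adL(\U)$ (via Proposition \ref{Proposition: phi phid phi = phi} and uniqueness of generalized inverses) is the right justification and is consistent with Theorem \ref{Theorem: K(U) is inverse}'s analogous statement for $\cpK(\U)$.
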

\begin{proof}
	(i) is clear since $\theta_\U$ is a semigroup homomorphism.
	(ii) follows from Lemma \ref{Lemma: adj T-map}.
	(iii) holds because $\theta_{\U}$ keeps generalized inverses.
\end{proof}

\begin{lemm}\label{Lemma: left action basics 1.5}
	A right inverse $T$-set $\U$ equipped with a left action of $S$ on $\U$ such that $\rin{ u' }{ su }{\U} = \rin{ s^*u' }{ u }{\U}$ holds for every $u,u' \in \U$, $s \in S$ becomes an inverse correspondence from $S$ to $T$.
\end{lemm}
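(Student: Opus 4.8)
The plan is to equip $\U$ with the structure of an inverse correspondence by exhibiting a semigroup homomorphism $\theta_{\U}\: S \to \adL(\U)$, and the only natural candidate is to let $\theta_{\U}(s)$ be the map $u \mapsto su$ determined by the given left action. The entire content of the lemma is then to check that this assignment takes values in $\adL(\U)$ and respects multiplication; this is precisely the converse of Lemma \ref{Lemma: left action basics}.

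First I would verify that each $\theta_{\U}(s)$ is adjointable. The natural candidate for its adjoint is the map $u \mapsto s^*u$, that is, $\theta_{\U}(s^*)$. To confirm this, I would check the defining identity of Definition \ref{Definition: adjointable}: for all $u,v \in \U$ one needs $\rin{s^*v}{u}{\U} = \rin{v}{su}{\U}$. But this is exactly the hypothesized compatibility condition $\rin{u'}{su}{\U} = \rin{s^*u'}{u}{\U}$ read with $u' = v$. Hence $\theta_{\U}(s) \in \adL(\U)$, with $\theta_{\U}(s)\d = \theta_{\U}(s^*)$.

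Next I would verify that $\theta_{\U}$ is a semigroup homomorphism. For $s,s' \in S$ and $u \in \U$, the left-action axiom $s(s'u) = (ss')u$ gives
\[
\theta_{\U}(s)\theta_{\U}(s')(u) = s(s'u) = (ss')u = \theta_{\U}(ss')(u),
\]
so $\theta_{\U}(s)\theta_{\U}(s') = \theta_{\U}(ss')$. Combined with the first step, $\theta_{\U}$ is a well-defined semigroup homomorphism from $S$ into the inverse semigroup $\adL(\U)$ (which is inverse by Theorem \ref{Theorem: L(U) is inverse}), and therefore $(\U,\theta_{\U})$ is an inverse correspondence from $S$ to $T$.

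There is no genuine obstacle here: both verifications are immediate, with the adjointability step being essentially a rereading of the hypothesis and the homomorphism step being the left-action associativity axiom. The only point requiring any care is to identify the adjoint of $\theta_{\U}(s)$ correctly as $\theta_{\U}(s^*)$ and to match the variables in the compatibility condition so that it coincides with the adjoint equation.
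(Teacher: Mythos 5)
Your proof is correct and follows exactly the route the paper intends: the paper's own proof just asserts that $\theta(s)\colon u\mapsto su$ is adjointable and that $\theta$ is a semigroup homomorphism, and you have correctly identified $\theta(s^*)$ as the adjoint via the hypothesized compatibility condition and verified multiplicativity from the action axiom. Nothing is missing.
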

\begin{proof}
	We can check easily that $\theta(s) \:\U \rightarrow \U; u\mapsto su$ is an adjointable map on $\U$ for every $s\in S$, and $\theta\: S \rightarrow \adL(\U);s \mapsto \theta(s)$ is a semigroup homomorphism.
\end{proof}

\begin{lemm}\label{Lemma: left action basics2}
	Let $\U \: S \rightarrow T$ be an inverse correspondence.
	For $e \in E(S)$, $\varphi\in E(\adL(\U))$ and $u \in \U$, $e\varphi(u) = \varphi(eu)$ holds.
\end{lemm}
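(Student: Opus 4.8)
The plan is to reduce the asserted identity to the commutation of two idempotents in the inverse semigroup $\adL(\U)$. Unwinding the convention $su = \theta_\U(s)(u)$, the claim $e\varphi(u) = \varphi(eu)$ for all $u \in \U$ is precisely the statement that the maps $\theta_\U(e)$ and $\varphi$ commute as elements of $\adL(\U)$: indeed $\theta_\U(e)(\varphi(u))$ and $\varphi(\theta_\U(e)(u))$ are exactly the left-hand and right-hand sides. So it suffices to prove $\theta_\U(e)\varphi = \varphi\theta_\U(e)$.

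First I would record that $\theta_\U(e)$ is itself an idempotent of $\adL(\U)$. Since $\theta_\U$ is a semigroup homomorphism and $e = ee$, we have $\theta_\U(e)\theta_\U(e) = \theta_\U(ee) = \theta_\U(e)$, so $\theta_\U(e) \in E(\adL(\U))$. By hypothesis $\varphi \in E(\adL(\U))$ as well, so both of the maps whose commutation we want are idempotents.

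The key input is Theorem \ref{Theorem: L(U) is inverse}, which asserts that $\adL(\U)$ is an inverse semigroup. Theorem \ref{Theorem: regular and inverse} then guarantees that all idempotents of an inverse semigroup commute, so in particular $\theta_\U(e)\varphi = \varphi\theta_\U(e)$. Evaluating both sides at $u$ gives $e\varphi(u) = \varphi(eu)$, as desired.

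There is essentially no obstacle once Theorem \ref{Theorem: L(U) is inverse} is in hand: the whole content of the lemma is packaged into the fact that $\adL(\U)$ is inverse and hence has commuting idempotents. The one point worth emphasizing is that, unlike the earlier Lemma \ref{Lemma: phi and omega commute}, which handled only the special idempotents $\omega_{u,u}$ by passing to the ideal $\cpK(\U)$, here $\theta_\U(e)$ need not be a one-rank operator; the argument therefore invokes the global inverse-semigroup structure of $\adL(\U)$ rather than the ideal $\cpK(\U)$.
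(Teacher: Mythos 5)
Your argument is correct and coincides with the paper's own proof: both reduce the claim to the commutation of the idempotents $\theta_\U(e)$ and $\varphi$ in $\adL(\U)$, using that $\theta_\U$ preserves idempotents together with Theorem \ref{Theorem: L(U) is inverse} (and Theorem \ref{Theorem: regular and inverse}). Nothing is missing.
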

\begin{proof}
	The semigroup homomorphism $\theta_{\U}$ keeps idempotents.
	Two idempotents $\theta_{\U}(e)$ and $\varphi$ in $\adL(\U)$ commutes by Theorem \ref{Theorem: L(U) is inverse}.
\end{proof}

For an inverse correspondence $\U$ from $S$ to $T$, 
\[
S\U := \{ su\in \U \mid s \in S, u \in \U \}
\]
becomes an inverse correspondence from $S$ to $T$.
\begin{defi}\label{Definition: non-deg corr}
	An inverse correspondence $\U\: S \rightarrow T$ is \emph{non-degenerate} if  $S\U = \U$.
\end{defi}

\begin{exam}
	A partial Morita equivalence $\U$ from $S$ to $T$ becomes an inverse correspondence by forgetting the left pairing.
	We can check this fact as follows:
	Let $\U$ be a partial Morita equivalence from $S$ to $T$. 
	For $s\in S, u,u'\in\U$, we have 
	\[
	u \rin{ su }{ u' }{\U} = \lin{ u }{ su }{\U} u' = \lin{ u }{ u }{\U} s^* u' = u \rin{ u }{ s^*u' }{\U},
	\]
	where the second equal follows from Lemma \ref{Lemma: left pairing basics}.
	Thus we obtain
	\begin{align*}
		\rin{ u' }{ su }{\U} 
		&= \rin{ u' }{ su\rin{ u }{ u }{\U} }{\U}\\
		&= \rin{ u' }{ su }{\U} \rin{ u }{ u }{\U}\\
		&= \rin{ u \rin{ su }{ u' }{\U} }{ u }{\U}\\
		&= \rin{ u \rin{ u }{ s^*u' }{\U} }{ u }{\U}\\
		&= \rin{ s^*u' }{ u }{\U} \rin{ u }{ u }{\U}\\
		&= \rin{ s^*u' }{ u \rin{ u }{ u }{\U} }{\U}\\
		&= \rin{ s^*u' }{ u }{\U},
	\end{align*}
	where the third and fifth equals follow from Lemma \ref{Lemma: left pairing basics} (this is an another proof of \cite[Proposition 2.3 (10)]{Ste11}).
	Thus $\U$ is an inverse correspondence from $S$ to $T$ by Lemma \ref{Lemma: left action basics 1.5}.
	For every element $u\in\U$, $u = \lin{u}{u}{\U}u \in S\U$ holds.
	Thus $\U$ is non-degenerate.
\end{exam}

\begin{lemm}\label{Lemma: partial Morita iff...}
	An inverse correspondence $\U\:S \rightarrow T$ comes from a partial Morita equivalence if and only if there exists a two-sided ideal $I$ of $S$ such that $\theta_{\U}|_I\: I \rightarrow \adL(\U)$ is an isomorphism onto $\cpK(\U)$.
\end{lemm}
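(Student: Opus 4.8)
The plan is to read ``comes from a partial Morita equivalence'' as the existence of a left regular pairing $\lin{\cdot}{\cdot}{\U}\:\U\times\U\rightarrow S$ which, together with the given right $T$-action, right pairing, and left $S$-action $\theta_{\U}$, makes $\U$ a partial Morita equivalence from $S$ to $T$. The bridge between the two sides is the identity $\theta_{\U}(\lin{u}{u'}{\U}) = \omega_{u,u'}$, which is forced by the compatibility condition, since $\lin{u}{u'}{\U}u'' = u\rin{u'}{u''}{\U} = \omega_{u,u'}(u'')$ for all $u''$.

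For the forward direction I would take $I := \lin{\U}{\U}{\U}$, a two-sided ideal of $S$. The displayed identity shows at once that $\theta_{\U}(I) = \cpK(\U)$, so $\theta_{\U}|_I$ is surjective onto $\cpK(\U)$. For injectivity, suppose $\theta_{\U}(s_1) = \theta_{\U}(s_2)$ with $s_1,s_2\in I$; evaluating at an arbitrary $u''\in\U$ and using (L-i) gives $s_1 x = s_2 x$ for every $x\in\lin{\U}{\U}{\U} = I$. Feeding in the idempotents $x = s_1^*s_1$ and $x = s_2^*s_2$ (both in $I$ since $I$ is a left ideal) and simplifying with the commutation of idempotents yields $s_1 s_1^* = s_1 s_2^* = s_2 s_2^*$, whence $s_1 = s_2$ by Lemma \ref{Lemma: right cancel} (i). Thus $\theta_{\U}|_I$ is an isomorphism onto $\cpK(\U)$.

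For the converse, let $I$ be the given ideal and set $\iota := (\theta_{\U}|_I)^{-1}\:\cpK(\U)\rightarrow I$, an isomorphism of inverse semigroups. I would define $\lin{u}{u'}{\U} := \iota(\omega_{u,u'})\in I\subseteq S$ and verify the axioms. The key auxiliary fact is that $\iota$ intertwines the left actions, i.e.\ $\iota(\theta_{\U}(s)k) = s\iota(k)$ for all $s\in S$ and $k\in\cpK(\U)$: both sides lie in $I$ (here one uses that $I$ is a left ideal, so $s\iota(k)\in I$, and that $\cpK(\U)$ is an ideal of $\adL(\U)$, so $\theta_{\U}(s)k\in\cpK(\U)$), and both have image $\theta_{\U}(s)k$ under the injective map $\theta_{\U}|_I$. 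Granting this, (L-i) follows from $\omega_{su,u'} = \theta_{\U}(s)\omega_{u,u'}$, (L-ii) from $\iota$ preserving generalized inverses together with $\omega_{u,u'}^* = \omega_{u',u}$, while (L-iii) and the compatibility condition $\lin{u}{u'}{\U}u'' = u\rin{u'}{u''}{\U}$ both reduce to $\theta_{\U}(\iota(\omega_{u,u'})) = \omega_{u,u'}$ applied appropriately. By Lemma \ref{Lemma: partial Morita context is a left and right inverse set}, (L-iv) is then automatic, so $\U$ is a partial Morita equivalence.

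I expect the main obstacle to be the converse, specifically the verification that $\iota$ intertwines the $S$-action; this is precisely where the hypothesis that $I$ is a \emph{two-sided} (rather than merely a sub-) ideal is essential, since it guarantees that $s\iota(k)$ stays inside $I$, where $\theta_{\U}$ can be inverted. The injectivity step in the forward direction is the other delicate point, but it is a routine application of Lemma \ref{Lemma: right cancel} once the relation $s_1 x = s_2 x$ on $I$ has been established.
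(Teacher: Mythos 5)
Your proposal is correct and follows essentially the same route as the paper: the forward direction uses the ideal $I=\lin{\U}{\U}{\U}$ with injectivity via Lemma \ref{Lemma: right cancel}, and the converse defines $\lin{u_1}{u_2}{\U}:=(\theta_\U|_I)^{-1}(\omega_{u_1,u_2})$. You merely spell out the axiom checks (in particular the intertwining identity $\iota(\theta_\U(s)k)=s\iota(k)$ and the appeal to Lemma \ref{Lemma: partial Morita context is a left and right inverse set} for (L-iv)) that the paper dismisses as easy.
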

\begin{proof}
	For a partial Morita equivalence $\U$ from $S$ to $T$, the subset $I = \lin{\U}{\U}{\U}$ is a two-sided ideal of $S$.
	By the compatibility of left and right pairings, we have $\theta_\U(\lin{u_1}{u_2}{\U}) = \omega_{u_1,u_2} \in \cpK(\U)$ for every $u_1,u_2\in\U$.
	Thus $\theta_\U|_I$ is a semigroup homomorphism onto $\cpK(\U)$.
	For $s_1,s_2\in \lin{\U}{\U}{\U}$ with $\theta_\U(s_1) = \theta_\U(s_2)$, we have 
	\[
	s_1\lin{u}{u'}{\U} = \lin{s_1u}{u'}{\U} = \lin{s_2u}{u'}{\U} = s_2\lin{u}{u'}{\U}
	\]
	for every $u,u'\in\U$.
	This implies that $s_1 = s_2$ by applying Lemma \ref{Lemma: right cancel} for the inverse semigroup $\lin{\U}{\U}{\U}$.
	Thus $\theta_\U|_I$ is an isomorphism onto $\cpK(\U)$.
	
	We assume that $\theta_\U|_I$ is an isomorphism onto $\cpK(\U)$ with some two-sided ideal $I$ of $S$.
	For every $u_1,u_2 \in \U$, we set $\lin{u_1}{u_2}{\U}:= (\theta_\U|_I)\inv\big(\omega_{ u_1, u_2 }\big) \in I \subset S$.
	We can check easily that $\U$ and $\lin{\cdot}{\cdot}{\U}$ form a regular left $S$-set.
	The compatibility of left and right pairings in Definition \ref{Definition: partial Morita context} is clear by the definition of the left pairing.
\end{proof}

The following lemma means that we can reconstruct a left pairing of a partial Morita equivalence $\U$ from the other structures of $\U$.
This fact is nothing but an analogy of \cite[Lemma 2.4]{Kat03}.

\begin{lemm}\label{Lemma: left pairing unique}
	If two partial Morita equivalence produces the same inverse correspondence by forgetting the left pairings, then they are same as partial Morita equivalences.
\end{lemm}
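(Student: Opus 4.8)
The plan is to recover each left pairing from the data of the inverse correspondence, namely from $\theta_{\U}$ and the right pairing $\rin{\cdot}{\cdot}{\U}$, and then to invoke Lemma \ref{Lemma: two-sided ideals coincide}. Since the two partial Morita equivalences yield the same inverse correspondence, they share the underlying right inverse $T$-set $\U$ (hence the same right action and the same right pairing $\rin{\cdot}{\cdot}{\U}$) and the same homomorphism $\theta_{\U}\colon S \rightarrow \adL(\U)$; only the left pairings, which I denote $\lin{\cdot}{\cdot}{\U}$ and $\lin{\cdot}{\cdot}{\U}'$, are a priori different.

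First I would observe that each left pairing is completely determined by $\theta_{\U}$ and $\rin{\cdot}{\cdot}{\U}$. Applying the compatibility condition of Definition \ref{Definition: partial Morita context} to an arbitrary $u' \in \U$ gives
\[
\theta_{\U}\big(\lin{u_1}{u_2}{\U}\big)(u') = \lin{u_1}{u_2}{\U}\, u' = u_1 \rin{u_2}{u'}{\U} = \omega_{u_1,u_2}(u'),
\]
so that $\theta_{\U}\big(\lin{u_1}{u_2}{\U}\big) = \omega_{u_1,u_2}$, and the identical computation for the other structure yields $\theta_{\U}\big(\lin{u_1}{u_2}{\U}'\big) = \omega_{u_1,u_2}$. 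Thus $\theta_{\U}$ sends both candidate values $\lin{u_1}{u_2}{\U}$ and $\lin{u_1}{u_2}{\U}'$ to the same element $\omega_{u_1,u_2}$ of $\cpK(\U)$, for all $u_1,u_2 \in \U$.

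The remaining issue is that $\theta_{\U}$ need not be injective on all of $S$, so equality of the images under $\theta_{\U}$ does not immediately force equality of the pairings. To resolve this I would set $I := \lin{\U}{\U}{\U}$ and $I' := \lin{\U}{\U}{\U}'$, both of which are two-sided ideals of $S$. By Lemma \ref{Lemma: partial Morita iff...}, the restrictions $\theta_{\U}|_{I}$ and $\theta_{\U}|_{I'}$ are injective and each has image $\cpK(\U)$; in particular $\theta_{\U}(I) = \theta_{\U}(I') = \cpK(\U)$. Applying Lemma \ref{Lemma: two-sided ideals coincide} to $\theta_{\U}$ and the two-sided ideals $I$ and $I'$ then forces $I = I'$. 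Now both $\lin{u_1}{u_2}{\U}$ and $\lin{u_1}{u_2}{\U}'$ lie in the common ideal $I$, on which $\theta_{\U}$ is injective, and they have the same image $\omega_{u_1,u_2}$; hence $\lin{u_1}{u_2}{\U} = \lin{u_1}{u_2}{\U}'$ for all $u_1,u_2 \in \U$, and the two partial Morita equivalences coincide.

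The main obstacle is precisely the possible non-injectivity of $\theta_{\U}$ on $S$: one cannot compare the two left pairings pointwise until one knows that their ranges $I$ and $I'$ are literally the same ideal of $S$, and this is exactly the role played by Lemma \ref{Lemma: two-sided ideals coincide}. Everything else reduces to the bookkeeping already packaged in Lemma \ref{Lemma: partial Morita iff...}, so I expect the verification of that lemma's hypotheses (injectivity of the restrictions and equality of their images) to be the only substantive point.
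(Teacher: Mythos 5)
Your proof is correct and follows exactly the route the paper intends: the paper's own proof is the one-line remark that the lemma ``follows from Lemma \ref{Lemma: two-sided ideals coincide}'', and your argument supplies precisely the missing details (identifying $\theta_{\U}(\lin{u_1}{u_2}{\U}) = \omega_{u_1,u_2}$, verifying the hypotheses of that lemma via Lemma \ref{Lemma: partial Morita iff...}, and concluding from injectivity on the common ideal).
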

\begin{proof}
	It follows from Lemma \ref{Lemma: two-sided ideals coincide}.
\end{proof}

The following corollary follows from Lemma \ref{Lemma: partial Morita iff...}:
\begin{coro}\label{Corollary: partial Morita iff...}
	Let $\U$ be an inverse correspondence from $S$ to $T$.
	\begin{enumerate}[(i)]
		\item $\U$ comes from a partial Morita equivalence which is left full if and only if $\theta_{\U} \: S \rightarrow \adL(\U)$ is an isomorphism onto $\cpK(\U)$.
		\item$\U$ comes from a Morita equivalence if and only if $\theta_{\U} \: S \rightarrow \adL(\U)$ is an isomorphism onto $\cpK(\U)$ and $\U$ is right full.
	\end{enumerate}
\end{coro}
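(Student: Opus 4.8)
The plan is to deduce both parts directly from Lemma~\ref{Lemma: partial Morita iff...} by pinning down the two-sided ideal $I$ that appears there; once the correct $I$ is identified, everything reduces to tracking the definition of left (and right) fullness.

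For part (i), I would first recall that in the proof of Lemma~\ref{Lemma: partial Morita iff...} the ideal attached to a partial Morita equivalence $\U$ is exactly $I = \lin{\U}{\U}{\U}$, and that $\theta_{\U}|_I$ is an isomorphism onto $\cpK(\U)$. Since a partial Morita equivalence is left full precisely when $\lin{\U}{\U}{\U} = S$, the forward implication is immediate: if $\U$ comes from a left full partial Morita equivalence, then $I = S$, whence $\theta_{\U}|_I = \theta_{\U}$ is an isomorphism onto $\cpK(\U)$. For the converse, if $\theta_{\U}\:S\rightarrow\adL(\U)$ is an isomorphism onto $\cpK(\U)$, then $I = S$ satisfies the hypothesis of Lemma~\ref{Lemma: partial Morita iff...}, so $\U$ underlies a partial Morita equivalence whose left pairing is $\lin{u_1}{u_2}{\U} = \theta_{\U}\inv(\omega_{u_1,u_2})$. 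It then remains only to check left fullness, which I would obtain from the chain $\lin{\U}{\U}{\U} = \theta_{\U}\inv(\cpK(\U)) = S$, using surjectivity of $\theta_{\U}$ onto $\cpK(\U)$.

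For part (ii), I would note that a Morita equivalence is, by Definition~\ref{Definition: Morita equiv}, a partial Morita equivalence that is full both as a left $S$-set and as a right $T$-set. Since forgetting the left pairing leaves the right $T$-set structure untouched, right fullness is already a meaningful condition on the inverse correspondence $\U$ itself. Part (ii) then follows by combining the characterisation of left full partial Morita equivalences from part (i) with the extra requirement that $\U$ be right full.

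The only point needing genuine verification, rather than bookkeeping, is the left fullness in the converse of part (i), and this reduces to the single identity $\lin{\U}{\U}{\U} = \theta_{\U}\inv(\cpK(\U))$, which is immediate from the defining formula for the reconstructed left pairing. I do not anticipate any serious obstacle beyond keeping careful track of which structures are determined by the inverse correspondence and which are the extra data supplied by the (partial) Morita equivalence.
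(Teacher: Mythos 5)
Your proposal is correct and takes essentially the same route the paper intends: the paper gives no explicit proof beyond ``follows from Lemma~\ref{Lemma: partial Morita iff...}'', and your argument is exactly the intended unpacking, identifying the ideal $I$ as $\lin{\U}{\U}{\U}$ (equal to $S$ precisely when the left pairing is full) and observing that right fullness is a property of the underlying right $T$-set alone.
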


We give another example of non-degenerate inverse correspondence.
\begin{exam}
	For a semigroup homomorphism $\theta \: S \rightarrow T$, the subset $\U_\theta := \{ \theta(s)t \mid s \in S, t \in T \}$ of $T$ becomes a non-degenerate inverse correspondence from $S$ to $T$ with respect to the following structures:
	The right action of $T$ is defined as the multiplication from the right hand side.
	The right pairing is defined by $\rin{u_1}{u_2}{\U_\theta}: = u_1^*u_2 \in T$ for every $u_1,u_2\in\U_\theta$.
	The left action of $S$ is defined as $S\times \U_\theta \rightarrow \U_\theta; (s,u) \mapsto \theta(s)u$.
	Especially, taking $\theta$ as the identity homomorphism for $S$, we get a Morita equivalence $S$ from $S$ to $S$.
\end{exam}

We define tensor products of inverse correspondences.
This is a generalization of the tensor product of Morita contexts introduced in \cite[Proposition 2.5]{Ste11}.

Let $S_2$ and $S_3$ be inverse semigroups, $\U$ be a right inverse $S_2$-set, and $\V\: S_2 \rightarrow S_3$ be an inverse correspondence.
We define a set $\U \otimes \V$ as the quotient of $\U \times \V$ by the least equivalence relation $\sim$ such that $(us_2,v) \sim (u,s_2v)$ for all $u\in \U$, $v \in \V$ and $s_2 \in S_2$.
The equivalence class of $(u,v) \in \U\times\V$ is denoted as $u\otimes v \in \U\otimes\V$.
We define a right action of $S_3$ on $\U \otimes \V$ as 
\[
(u\otimes v)s_3 = u \otimes (vs_3),
\]
and a right pairing $(\U \otimes \V) \times (\U \otimes \V) \rightarrow S_3$ as 
\[
\rin{ u' \otimes v' }{ u \otimes v }{\U\otimes\V} := \rin{ v' }{ \rin{ u' }{ u }{\U} v }{\V}
\]
for all $u,u'\in \U$, $v,v' \in \V$, and $s_3 \in S_3$.

\begin{prop}
	The set $\U \otimes \V$ becomes an inverse $S_3$-set.
\end{prop}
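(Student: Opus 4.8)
The plan is to verify, in order, that the proposed right $S_3$-action and the right pairing descend to the quotient $\U\otimes\V$, and then that they satisfy the four conditions (R-i)--(R-iv) of Definition \ref{Definition: inverse set}. Two facts keep the bookkeeping manageable: every element of $\U\otimes\V$ is of the form $u\otimes v$, and $\V$, being an inverse correspondence from $S_2$ to $S_3$, satisfies the compatibility $s_2(vs_3)=(s_2v)s_3$ and the identity $\rin{v'}{s_2 v}{\V}=\rin{s_2^{*}v'}{v}{\V}$ from Lemma \ref{Lemma: left action basics}(ii),(iii). I will also use freely the right-hand analogues of Lemma \ref{Lemma: left pairing basics} and Lemma \ref{Lemma: non-degenerate}.

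First I would settle well-definedness. Since $\sim$ is generated by the single move $(us_2,v)\sim(u,s_2v)$, it suffices to check invariance under this move in each relevant slot. For the action, $(us_2)\otimes(vs_3)=u\otimes s_2(vs_3)=u\otimes(s_2v)s_3$ by Lemma \ref{Lemma: left action basics}(ii), so it descends. For the pairing, sliding $s_2$ across the second pair uses (R-i) for $\U$ and the action axiom on $\V$, while sliding it across the first pair uses (R-ii) and (R-i) for $\U$ together with Lemma \ref{Lemma: left action basics}(iii); in both cases the two candidate values collapse to the same element of $S_3$.

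Conditions (R-i) and (R-ii) then follow by short computations from the corresponding properties of $\V$ and the identity of Lemma \ref{Lemma: left action basics}(iii). For (R-iii) I would set $e:=\rin{u}{u}{\U}\in E(S_2)$, note that $\theta_{\V}(e)$ is an idempotent of $\adL(\V)$, and apply Lemma \ref{Lemma: u phi(u) u} to obtain $ev=v\rin{v}{ev}{\V}$; this collapses $(u\otimes v)\rin{u\otimes v}{u\otimes v}{\U\otimes\V}$ to $u\otimes ev=(u\rin{u}{u}{\U})\otimes v=u\otimes v$.

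The main obstacle is (R-iv), and I would not attack it directly but instead deduce it from Proposition \ref{Proposition: R-iv} by verifying the equivalent condition (iii) there, namely $x\rin{x}{y}{\U\otimes\V}=y\rin{y}{x}{\U\otimes\V}\rin{x}{y}{\U\otimes\V}$ for $x=u\otimes v$ and $y=u'\otimes v'$. Writing $s:=\rin{u}{u'}{\U}$, I would expand the left-hand side using condition (iii) of Proposition \ref{Proposition: R-iv} for $\V$, and then rewrite $u\otimes sv'$ as $u'\otimes s^{*}sv'$ using condition (iii) for $\U$ (which gives $u\rin{u}{u'}{\U}=u's^{*}s$). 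Both sides now have the form $u'\otimes(\,\cdot\,)$, and matters reduce to the single identity $s^{*}s\,v'\rin{sv'}{v}{\V}=v'\rin{v'}{s^{*}v}{\V}$ in $\V$. Using $\rin{sv'}{v}{\V}=\rin{v'}{s^{*}v}{\V}$, the fact that $\theta_{\V}(s^{*}s)$ is an idempotent of $\adL(\V)$ commuting with $\omega_{v',v'}$ (Theorem \ref{Theorem: L(U) is inverse}), and finally $s^{*}ss^{*}=s^{*}$ in $S_2$, this identity follows. The delicate point throughout is tracking which of the $S_2$- and $S_3$-actions hits which tensor factor as one slides elements across $\otimes$; once condition (iii) of Proposition \ref{Proposition: R-iv} is in hand, inverseness of $\U\otimes\V$ is immediate.
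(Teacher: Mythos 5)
Your proposal is correct and follows essentially the same route as the paper: well-definedness is checked on the generating relation, (R-i)--(R-iii) are verified directly, and (R-iv) is obtained by establishing condition (iii) of Proposition \ref{Proposition: R-iv} for $\U\otimes\V$, relying on Lemma \ref{Lemma: left action basics} and the commutation of idempotents in $\adL(\V)$. The only (harmless) deviations are cosmetic: you route (R-iii) through Lemma \ref{Lemma: u phi(u) u} and organize the (R-iv) computation by invoking condition (iii) for $\U$ and $\V$ separately, where the paper instead carries out one longer chain of manipulations with the rank-one operators $\omega_{v',v'}$ and $\omega_{\rin{u'}{u}{\U}v,\rin{u'}{u}{\U}v}$.
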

\begin{proof}
	We can easily check the well-definedness of the right action of $S_3$ on $\U\otimes\V$.
	For every $u,u' \in \U$, $v,v'\in \V$, $s_2\in S_2$, we get 
	\begin{align*}
	\rin{ v' }{ \rin{u's_2}{u}{\U} v }{\V} &= \rin{ v' }{ s_2^*\rin{ u' }{ u }{\U} v }{\V} = \rin{ s_2 v' }{ \rin{ u' }{ u }{\U} v }{\V}, \text{ and }\\
	\rin{ v' }{ \rin{ u' }{ u s_2 }{\U} v }{\V} &= \rin{ v' }{ \rin{ u' }{ u }{\U} s_2 v }{\V}.
	\end{align*}
	Thus the map $\rin{ \cdot }{ \cdot }{\U\otimes\V}\: (\U \otimes \V) \times (\U \otimes \V) \rightarrow S_3$ is well-defined.
	
	We can easily check the condition (R-i) in Definition \ref{Definition: inverse set}.
	
	For every $u,u'\in\U$ and $v,v'\in\V$,
	\begin{align*}
	\rin{ u \otimes v }{ u' \otimes v' }{\U\otimes\V}^* 
	&= \rin{ v }{ \rin{ u }{ u' }{\U} v' }{\V}^* \\
	&= \rin{ \rin{ u }{ u' }{\U}^* v }{ v' }{\V}^* \\
	&= \rin{ \rin{ u' }{ u }{\U} v }{ v' }{\V}^* \\
	&= \rin{ v' }{ \rin{ u' }{ u }{\U} v }{\V} \\
	&= \rin{ u' \otimes v' }{ u \otimes v }{\U\otimes\V}
	\end{align*}
	holds, where the second equal follows from Lemma \ref{Lemma: left action basics} (iii).
	Thus we see the condition (R-ii) in Definition \ref{Definition: inverse set}.
	
	For every $u\in \U$ and $v\in \V$, we get 
	\begin{align*}
	(u\otimes v) \rin{ u\otimes v }{ u\otimes v }{\U\otimes\V}
	&= u \otimes v \rin{ v }{ \rin{ u }{ u }{\U} v }{\V}\\
	&= u \otimes v \rin{ \rin{ u }{ u }{\U} v }{ \rin{ u }{ u }{\U} v }{\V}\\
	&= u \rin{ u }{ u }{\U} \otimes v \rin{ \rin{ u }{ u }{\U} v }{ \rin{ u }{ u }{\U} v }{\V}\\
	&= u \otimes \rin{ u }{ u }{\U} v \rin{ \rin{ u }{ u }{\U} v }{ \rin{ u }{ u }{\U} v }{\V}\\
	&= u \otimes \rin{ u }{ u }{\U} v \\
	&= u \rin{ u }{ u }{\U} \otimes v \\
	&= u \otimes v,
	\end{align*}
	where the second equal follows from Lemma \ref{Lemma: left action basics} (iii).
	Thus the condition (R-iii) in Definition \ref{Definition: inverse set} holds.
	Hence $\U\otimes\V$ becomes a right regular $S_3$-set.
	
	We see that the condition (iii) in Proposition \ref{Proposition: R-iv} holds for checking that the condition (R-iv) holds.
	For every $u,u' \in \U$ and $v,v' \in \V$, we get 
	\begin{align*}
	&(u'\otimes v') \rin{ u'\otimes v' }{ u\otimes v }{\U\otimes\V}\rin{ u\otimes v }{ u'\otimes v' }{\U\otimes\V} \\
	&= u' \otimes v' \rin{ v' }{ \rin{ u' }{ u }{\U} v }{\V}\rin{ v }{ \rin{ u }{ u' }{\U} v' }{\V} \\
	&= u' \otimes v' \rin{ v' }{ \rin{ u' }{ u }{\U} v }{\V}\rin{ \rin{ u' }{ u }{\U} v }{ v' }{\V} \\
	&= u' \otimes \omega_{v',v'}\, \omega_{\rin{ u' }{ u }{\U} v, \rin{ u' }{ u }{\U} v} (v')\\
	&= u' \otimes \omega_{\rin{ u' }{ u }{\U} v, \rin{ u' }{ u }{\U} v} \, \omega_{v',v'} (v')\\
	&= u' \otimes \rin{ u' }{ u }{\U} v \rin{ \rin{ u' }{ u }{\U} v }{ v' }{\V} \\
	&= u' \otimes \rin{ u' }{ u }{\U} v \rin{ v }{ \rin{ u }{ u' }{\U} v' }{\V} \\
	&= u' \rin{ u' }{ u }{\U} \otimes v \rin{ v }{ \rin{ u }{ u' }{\U} v' }{\V} \\
	&= u \rin{ u }{ u' }{\U} \rin{ u' }{ u }{\U} \otimes v \rin{ v }{ \rin{ u }{ u' }{\U} v' }{\V} \\
	&= u \otimes \rin{ u }{ u' }{\U} \rin{ u' }{ u }{\U} v \rin{ v }{ \rin{ u }{ u' }{\U} v' }{\V} \\
	&= u \otimes \rin{ u }{ u' }{\U} \rin{ u' }{ u }{\U} \omega_{v,v}( \rin{ u }{ u' }{\U} v' ) \\
	&= u \otimes \omega_{v,v}\left( \rin{ u }{ u' }{\U} \rin{ u' }{ u }{\U}\rin{ u }{ u' }{\U} v' \right) \\
	&= u \otimes \omega_{v,v}\left( \rin{ u }{ u' }{\U} v' \right) \\
	&= u \otimes v \rin{ v }{ \rin{ u }{ u' }{\U} v' }{\V} \\
	&= (u \otimes v) \rin{ u \otimes v }{ u' \otimes v' }{\U\otimes\V},
	\end{align*}
	where the second and sixth equals follow from Lemma \ref{Lemma: left action basics}(iii), the eleventh equal follows from Lemma \ref{Lemma: left action basics2}, and the fourth and eighth equals follow from Proposition \ref{Proposition: R-iv}.
	Thus $\U\otimes\V$ becomes an inverse $S_3$-set.
\end{proof}

\begin{lemm}\label{Lemma: full}
	If $\U\otimes \V$ is right full, then so is $\V$.
\end{lemm}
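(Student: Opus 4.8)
The plan is to unwind the definition of the tensor pairing and to observe that every generator of $S_3$ arising from $\U\otimes\V$ is in fact already a right pairing of two elements of $\V$; fullness of $\V$ then follows immediately. The content of the lemma is concentrated in a single observation, so I do not expect any real obstacle.

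First I would fix an arbitrary $s_3 \in S_3$. Since $\U\otimes\V$ is assumed to be right full, we have $\rin{\U\otimes\V}{\U\otimes\V}{\U\otimes\V} = S_3$, so there exist $u,u'\in\U$ and $v,v'\in\V$ with
\[
s_3 = \rin{ u'\otimes v' }{ u\otimes v }{\U\otimes\V}.
\]
Next I would substitute the defining formula for the right pairing on the tensor product, namely
\[
\rin{ u'\otimes v' }{ u\otimes v }{\U\otimes\V} = \rin{ v' }{ \rin{u'}{u}{\U}\,v }{\V}.
\]

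The key point is that $\rin{u'}{u}{\U}$ is an element of $S_2$, and since $\V\:S_2\rightarrow S_3$ is an inverse correspondence it carries a left $S_2$-action; hence $w := \rin{u'}{u}{\U}\,v$ is again an element of $\V$. Consequently $s_3 = \rin{v'}{w}{\V}$ with both $v'$ and $w$ lying in $\V$, so $s_3 \in \rin{\V}{\V}{\V}$. As $s_3 \in S_3$ was arbitrary, this gives $\rin{\V}{\V}{\V} = S_3$, that is, $\V$ is right full. The only step requiring care is recognizing that the inner factor $\rin{u'}{u}{\U}\,v$ of the tensor pairing lands back in $\V$ via the left $S_2$-action; once this is noted, the conclusion is forced.
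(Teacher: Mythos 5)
Your argument is correct and is essentially the paper's own proof, which records the same computation as the chain of inclusions $S_3 = \rin{\U\otimes\V}{\U\otimes\V}{\U\otimes\V} = \rin{\V}{\rin{\U}{\U}{\U}\V}{\V} \subset \rin{\V}{\V}{\V} \subset S_3$. Your elementwise unwinding, including the observation that $\rin{u'}{u}{\U}\,v$ lands back in $\V$ via the left $S_2$-action, is exactly the content of the middle inclusion.
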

\begin{proof}
	We can see
	\[
	S_3 = \rin{ \U\otimes\V }{ \U\otimes\V }{\U\otimes\V} = \rin{ V }{ \rin{ \U }{ \U }{\U}V }{\V} \subset \rin{ V }{ V }{\V} \subset S_3.\qedhere
	\]
\end{proof}

Let $S_1$, $S_2$, $S_3$ be inverse semigroups, and $\U\: S_1 \rightarrow S_2$, $\V\: S_2 \rightarrow S_3$ be inverse correspondences.
We define a left action of $S_1$ on the right inverse $S_3$-set $\U\otimes\V$ as 
\[
s_1(u\otimes v) = (s_1u) \otimes v
\]
for $u\in \U$, $v\in \V$, and $s_1 \in S_1$.

\begin{prop}\label{Proposition: tensor inverse corr}
	The right inverse $S_3$-set $\U\otimes\V$ becomes an inverse correspondence from $S_1$ to $S_3$ with respect to the left action of $S_1$ defined above.
	If $\U$ is non-degenerate, then so is $\U \otimes \V$.
\end{prop}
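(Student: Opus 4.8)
The plan is to deduce everything from Lemma \ref{Lemma: left action basics 1.5}, since the preceding proposition already establishes that $\U \otimes \V$ is a right inverse $S_3$-set. Thus it suffices to verify that $s_1(u \otimes v) = (s_1 u) \otimes v$ defines a left action of $S_1$ and that it satisfies the compatibility identity $\rin{x'}{s_1 x}{\U \otimes \V} = \rin{s_1^* x'}{x}{\U \otimes \V}$ for all $x, x' \in \U \otimes \V$ and $s_1 \in S_1$. First I would settle well-definedness on the quotient by checking that $(u, v) \mapsto (s_1 u) \otimes v$ respects the generating relation $(u s_2, v) \sim (u, s_2 v)$: by Lemma \ref{Lemma: left action basics}(ii) applied to the inverse correspondence $\U \: S_1 \rightarrow S_2$ we have $s_1(u s_2) = (s_1 u) s_2$, so $(s_1(u s_2)) \otimes v = ((s_1 u) s_2) \otimes v = (s_1 u) \otimes (s_2 v)$, which is the image of $(u, s_2 v)$. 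The left-action axiom $s_1'(s_1 x) = (s_1' s_1) x$ is then immediate from associativity of the left $S_1$-action on $\U$, since $s_1'((s_1 u) \otimes v) = (s_1'(s_1 u)) \otimes v = ((s_1' s_1) u) \otimes v$.

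The compatibility identity is the substantive point, but it reduces cleanly to the corresponding identity on $\U$. Expanding both sides with the definition of the right pairing on $\U \otimes \V$, the left-hand side becomes $\rin{v'}{\rin{u'}{s_1 u}{\U} v}{\V}$ and the right-hand side becomes $\rin{v'}{\rin{s_1^* u'}{u}{\U} v}{\V}$; these coincide precisely because Lemma \ref{Lemma: left action basics}(iii) gives $\rin{u'}{s_1 u}{\U} = \rin{s_1^* u'}{u}{\U}$. Invoking Lemma \ref{Lemma: left action basics 1.5} then yields that $\U \otimes \V$ is an inverse correspondence from $S_1$ to $S_3$.

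Finally, non-degeneracy is direct: every element of $\U \otimes \V$ has the form $u \otimes v$, and if $\U$ is non-degenerate I may write $u = s_1 u_0$ with $s_1 \in S_1$ and $u_0 \in \U$, whence $u \otimes v = (s_1 u_0) \otimes v = s_1(u_0 \otimes v) \in S_1(\U \otimes \V)$; the reverse inclusion being trivial, $S_1(\U \otimes \V) = \U \otimes \V$. I do not anticipate a genuine obstacle, as the whole argument simply transports the left $S_1$-structure of $\U$ through the tensor product; the only point requiring real care is the well-definedness check, which rests on the commutation of the $S_1$- and $S_2$-actions on $\U$ supplied by Lemma \ref{Lemma: left action basics}(ii).
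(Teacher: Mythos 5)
Your proposal is correct and follows essentially the same route as the paper: both verify the hypothesis of Lemma \ref{Lemma: left action basics 1.5} by expanding the pairing on $\U\otimes\V$ and invoking Lemma \ref{Lemma: left action basics}(iii), and both prove non-degeneracy by writing $u = s_1 u_0$. The only difference is that you spell out the well-definedness check (via Lemma \ref{Lemma: left action basics}(ii)) that the paper leaves to the reader.
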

\begin{proof}
	We can easily check the well-definedness of the left action of $S_1$ on $\U\otimes\V$ and that $\U\otimes\V$ becomes a $S_1\hi S_3$ biset.
	For every $u,u' \in \U$, $v,v'\in \V$, $s_1\in S_1$, we get 
	\begin{align*}
	\rin{ u' \otimes v' }{ s_1(u \otimes v) }{\U\otimes\V}
	&= \rin{ u' \otimes v' }{ (s_1u) \otimes v }{\U\otimes\V}\\
	&= \rin{ v' }{ \rin{u'}{s_1u}{\U} v }{\V} \\
	&= \rin{ v' }{ \rin{s_1^*u'}{u}{\U} v }{\V}\\
	&= \rin{ (s_1^*u') \otimes v' }{ u \otimes v }{\U\otimes\V}\\
	&= \rin{ s_1^*(u' \otimes v') }{ u \otimes v }{\U\otimes\V}.
	\end{align*}
	By Lemma \ref{Lemma: left action basics 1.5}, $\U\otimes\V$ becomes an inverse correspondence from $S_1$ to $S_3$.
	
	Assume that $\U$ is non-degenerate.
	Take $u\in\U$ and $v\in\V$ arbitrarily.
	There exists $s' \in S$ and $u' \in \U$ with $u = s'u'$.
	Hence we get $u \otimes v = s'u' \otimes v = s'(u'\otimes v) \in S(\U\otimes\V)$.
	Thus $\U\otimes\V$ is non-degenerate.
\end{proof}

\begin{exam}\label{Example: tensor Morita equiv}
	If two inverse correspondences $\U\:S_1 \rightarrow S_2$ and $\V\:S_2\rightarrow S_3$ come from partial Morita equivalences, then so does the inverse correspondence $\U\otimes\V\: S_1 \rightarrow S_3$.
	We can check this fact as follows:
	By Lemma \ref{Lemma: partial Morita iff...}, there exist the two-sided ideals $I, J$ of $S_1$, $S_2$ such that $\theta_{\U}|_{I}$, $\theta_{\V}|_{J}$ are isomorphisms onto $\cpK(\U)$, $\cpK(\V)$ respectively.
	We define a subset $W$ of $S_1$ as 
	\[
	W := \left\{ s\in I \, \middle| \, \rin{ u' }{ \theta_\U(s)u }{\U}\in J \text{ for all } u,u'\in\U \right\}.
	\]
	It is easy to see that this is a two-sided ideal of $S_1$.
	We can prove the restriction $\theta_{\U\otimes\V}|_W$ of $\theta_{\U\otimes\V}\:S_1\rightarrow\adL(\U\otimes\V)$ is an isomorphism onto $\cpK(\U\otimes\V)$ as follows:
	Take $s,s'\in W$ with $\theta_{\U\otimes\V}(s) = \theta_{\U\otimes\V}(s')$.
	For $u,u'\in\U$ and $v,v'\in\V$, we have 
	\begin{align*}
	\rin{ v' }{ \theta_\V\big(\rin{ u' }{ \theta_\U(s)(u) }{\U}\big)(v) }{\V} 
	&= \rin{ u' \otimes v' }{ \theta_{\U\otimes\V}(s)(u\otimes v) }{\U\otimes\V} \\
	&= \rin{ u' \otimes v' }{ \theta_{\U\otimes\V}(s')(u\otimes v) }{\U\otimes\V} \\
	&= \rin{ v' }{ \theta_\V\big(\rin{ u' }{ \theta_\U(s')(u) }{\U}\big)(v) }{\V}.
	\end{align*}
	By Lemma \ref{Lemma: non-degenerate} and the injectivity of $\theta_\V|_J$, we have 
	\[
	\rin{ u' }{ \theta_\U(s)(u) }{\U} = \rin{ u' }{ \theta_\U(s')(u) }{\U}.
	\]
	In a similar way, we obtain $s = s'$.
	Thus $\theta_{\U\otimes\V}$ is injective on $W$.
	
	Take $s \in W$. 
	There exist $u_1,u_2 \in \U$ with $\theta_\U(s) = \omega_{u_2,u_1}$, and $v_1,v_2 \in \V$ with $\theta_\V(\rin{ u_2 }{ \theta_\U(s)(u_1) }{\U}) = \omega_{v_2,v_1}$.
	We obtain $\theta_{\U\otimes\V}(s) = \omega_{u_2 \otimes v_2,u_1 \otimes v_1}$ by simple calculations.
	Thus $\theta_{\U\otimes\V}|_W$ is into $\cpK(\U\otimes\V)$.
	
	For $u_1,u_2\in\U$ and $v_1,v_2\in\V$, there exist $s_2 \in J$ with $\theta_\V(s_2) = \omega_{v_2,v_1}$, and $s_1 \in I$ with $\theta_\U(s_1) = \omega_{u_2s_2,u_1}$.
	For $u,u'\in\U$, we have
	\begin{align*}
	\rin{ u' }{ \theta_\U(s_1)(u) }{\U} &= \rin{ u' }{ \omega_{u_2s_2,u_1}u }{\U}\\
	&= \rin{ u' }{ u_2s_2\rin{ u_1 }{ u }{\U} }{\U}\\
	&= \rin{ u' }{ u_2 }{\U} s_2 \rin{ u_1 }{ u }{\U}.
	\end{align*}
	Since $s_2\in J$ and $J$ is a two-sided ideal, $\rin{ u' }{ \theta_\U(s_1)(u) }{\U} \in J$.
	Thus we have $s_1 \in W$.
	By simple calculations, we obtain $\omega_{u_2\otimes v_2, u_1\otimes v_1} = \theta_{\U\otimes\V}(s_1)$.
	This implies that $\theta_{\U\otimes\V}|_W$ is onto $\cpK(\U\otimes\V)$.
	
	By Lemma \ref{Lemma: left pairing unique}, the inverse correspondence $\U\otimes\V\: S_1 \rightarrow S_3$ has a unique left pairing which makes it a partial Morita equivalence.
	This left pairing satisfies that
	\[
	\lin{ u_2\otimes v_2 }{ u_1\otimes v_1 }{\U\otimes\V} = \lin{ u_2\lin{ v_2 }{ v_1 }{\V} }{ u_1 }{\U}
	\]
	for $u_1,u_2\in\U$ and $v_1,v_2\in\V$.
	This follows from the fact mentioned above such that $\omega_{u_2\otimes v_2, u_1\otimes v_1} = \theta_{\U\otimes\V}(s_1)$ holds.
	
	If $\U$ and $\V$ comes from Morita equivalences, then so does their tensor product $\U\otimes\V$.
	This is nothing but the tensor product of Morita contexts defined in \cite[Proposition 2.5]{Ste11}.
\end{exam}

\begin{exam}\label{Example: tensor semigrp hom}
	Let $S_i$ be inverse semigroups with $i =1,2,3$, and $\theta_i\: S_i \rightarrow S_{i+1}$ be semigroup homomorphisms with $ i =1,2$.
	We can check that the map 
	$\U_{\theta_1} \otimes \U_{\theta_2} \rightarrow \U_{\theta_2\theta_1}; u_1 \otimes u_2 \mapsto \theta_2(u_1)u_2$ 
	becomes bijective, right pairing preserving, and a left $S_1$-map (that is, this is an isomorphism between inverse correspondences mentioned in Subsection \ref{Subsection: A bicategory IScorr of inverse semigroups}).
	Thus the tensor product $\U_{\theta_1} \otimes \U_{\theta_2}$ is isomorphic to the inverse correspondence $\U_{\theta_2\theta_1}$.
\end{exam}

\section{A bicategory $\IScorr$ of inverse semigroups}
\label{Section: A bicategory IScorr of inverse semigroups}

In this section, we will see that inverse semigroups and non-degenerate inverse correspondences form a bicategory.
See \cite{Ben67} or \cite{Lei98} for more details of the bicategory theory.

\subsection{Definition and examples of bicategories}

We fix some conventions and notations in category theory before defining bicategories.
A category consists of objects, morphisms, compositions of morphisms, and identity morphisms.
For objects $x$ and $y$, we denote a morphism $f$ from $x$ to $y$ as $f \: x \rightarrow y$.
For morphisms $f \: x\rightarrow y$ and $g\: y \rightarrow z$, the composition of $f$ and $g$ is denoted as $g\cdot f \: x \rightarrow z$.
We denote the identity morphism for an object $x$ as $1_x\: x \rightarrow x$.
These satisfy the associative law and the unit law. 
A morphism $f\: x \rightarrow y$ is an \emph{isomorphism} if there exists a morphism $g \: y \rightarrow x$ such that $g \cdot f = 1_x$ and $f \cdot g = 1_y$. 
We say that $x$ and $y$ are \emph{isomorphic} if there exists an isomorphism between $x$ and $y$.

\begin{defi}\label{Definition: bicategory}
	A \emph{bicategory} $\biC$ consists of the following date;
	\begin{enumerate}[(i)]
		\item a collection of \emph{$0$-arrows},
		
		\item a category $\biC(x,y)$ for every $0$-arrows $x,y$; 
		an object $f$ of $\biC(x,y)$ is called a \emph{$1$-arrow from $x$ to $y$} and denoted as $f\: x \rightarrow y$; 
		a morphism $\sigma$ from a $1$-arrow $f\:x \rightarrow y$ to $g\:x \rightarrow y$ is called a \emph{$2$-arrow} from $f$ to $g$ and denoted as $\sigma\: f \Rightarrow g\: x \rightarrow y$ or $\sigma \: f \Rightarrow g$;
		the identity morphism for a $1$-arrow $f$ is called a \emph{unit $2$-arrow} and denoted as $1_f\: f \Rightarrow f$.
		
		\item a functor $\circ_{x,y,z}\: \biC(x,y) \times \biC(y,z) \rightarrow \biC(x,z)$ called a \emph{composition functor} for each triplet of $0$-arrows $x,y,z$; 
		the object $\circ_{x,y,z}(f,g)$ is denoted as $g \bullet f \: x \rightarrow z$ for $1$-arrows $f \: x \rightarrow y$, $g \: y \rightarrow z$;
		the morphism $\circ_{x,y,z}(\sigma, \tau)$ is denoted as $\tau \bullet \sigma \: g \bullet f \Rightarrow g' \bullet f' : x \rightarrow z$ for $2$-arrows $\sigma\: f \Rightarrow f'\: x \rightarrow y$, $\tau: g \Rightarrow g'\: y \rightarrow z$,
				
		\item an isomorphic $2$-arrow $\alpha_{f,g,h} \: h \bullet (g \bullet f) \Rightarrow (h \bullet g) \bullet f$ called an \emph{associator} for each triplet of $1$-arrows $f\: x \rightarrow y$, $g\: y \rightarrow z$, $h\: z \rightarrow w$; the associators make the following diagram commute;
		\[
		\begin{tikzcd}
		h \bullet (g \bullet f) & (h \bullet g) \bullet f \\
		h' \bullet (g' \bullet f') & (h' \bullet g') \bullet f',
		\arrow[from = 1-1, to = 1-2, Rightarrow, "\alpha_{f,g,h}"]
		\arrow[from = 2-1, to = 2-2, Rightarrow, "\alpha_{f',g',h'}"]
		\arrow[from = 1-1, to = 2-1, Rightarrow, "\nu \bullet (\tau \bullet \sigma)"']
		\arrow[from = 1-2, to = 2-2, Rightarrow, "(\nu \bullet \tau) \bullet \sigma"]
		\end{tikzcd}
		\]
		
		\item a $1$-arrow $1_x\: x \rightarrow x$ called \emph{unit $1$-arrow for $x$} for each $0$-arrow $x$,
		
		\item an isomorphic $2$-arrow $\lambda_f \: 1_y\bullet f \Rightarrow f$ called a \emph{left unitor}, 
		and an isomorphic $2$-arrow $\rho_f\: f \bullet 1_x \Rightarrow f$ called a \emph{right unitor} for each $1$-arrow $f\: x \rightarrow y$; the left and right unitors make the following diagrams commute;
		\[
		\begin{tikzcd}[ampersand replacement = \&]
		1_y\bullet f \& f\\
		1_y\bullet f' \& f',
		\arrow[from = 1-1, to = 1-2, Rightarrow, "\lambda_f"]
		\arrow[from = 2-1, to = 2-2, Rightarrow, "\lambda_{f'}"]
		\arrow[from = 1-1, to = 2-1, Rightarrow, "1_{1_y}\bullet \sigma"']
		\arrow[from = 1-2, to = 2-2, Rightarrow, "\sigma"]
		\end{tikzcd}
		\begin{tikzcd}[ampersand replacement = \&]
		f\bullet 1_x \& f\\
		f'\bullet 1_x \& f'.
		\arrow[from = 1-1, to = 1-2, Rightarrow, "\rho_f"]
		\arrow[from = 2-1, to = 2-2, Rightarrow, "\rho_{f'}"]
		\arrow[from = 1-1, to = 2-1, Rightarrow, "\sigma\bullet 1_{1_x}"']
		\arrow[from = 1-2, to = 2-2, Rightarrow, "\sigma"]
		\end{tikzcd}
		\]
	\end{enumerate}
	
	These make the following diagrams commute;
	\[
	\begin{tikzcd}[ampersand replacement = \&]
	g \bullet (1_y \bullet f) \&\& (g \bullet 1_y) \bullet f,\\
	\& g \bullet f \&
	\arrow[from = 1-1, to = 1-3, Rightarrow, "\alpha_{f,1_y,g}"]
	\arrow[from = 1-1, to = 2-2, Rightarrow, "1_g\bullet \lambda_f"']
	\arrow[from = 1-3, to = 2-2, Rightarrow, "\rho_g \bullet 1_f"]
	\end{tikzcd}
	\]
	\[
	\begin{tikzcd}[ampersand replacement = \&]
	f_4 \bullet  (f_3 \bullet (f_2 \bullet f_1)) \& (f_4 \bullet  f_3) \bullet (f_2 \bullet f_1) \& ((f_4 \bullet  f_3) \bullet f_2) \bullet f_1\\
	f_4 \bullet  ((f_3 \bullet f_2) \bullet f_1) \&\& (f_4 \bullet  (f_3 \bullet f_2)) \bullet f_1.
	\arrow[from = 1-1, to = 1-2, Rightarrow, "\alpha_{ (f_2 \bullet f_1), f_3, f_4 }"{above = 4pt}]
	\arrow[from = 1-2, to = 1-3, Rightarrow, "\alpha_{ f_1, f_2, (f_4 \bullet f_3) }"{above = 4pt}]
	\arrow[from = 1-1, to = 2-1, Rightarrow, "1_{f_4} \bullet \alpha_{f_1,f_2,f_3}"']
	\arrow[from = 2-3, to = 1-3, Rightarrow, "\alpha_{f_2,f_3,f_4} \bullet 1_{f_1}"']
	\arrow[from = 2-1, to = 2-3, Rightarrow, "\alpha_{f_1, (f_3 \bullet f_2), f_4}"]
	\end{tikzcd}
	\]
\end{defi}

\begin{defi}
	A $1$-arrow $f \: x \rightarrow y$ in a bicategory $\biC$ is said to be an \emph{equivalence} if there exists a $1$-arrow $g \: y \rightarrow x$ such that $g\bullet f$ is isomorphic to $1_x$ and $f \bullet g$ is isomorphic to $1_y$.
	Two $0$-arrows $x,y$ are \emph{equivalent} if there exists an equivalence from $x$ to $y$.
\end{defi} 

\begin{exam}
	The bicategory $\Gr$ of \'etale groupoids and groupoid correspondences is studied in \cite{Alb15} and \cite{AKM22}.
	The composition of groupoid correspondences $\X \: G \rightarrow H$ and $\Y \: H \rightarrow K$ is the groupoid correspondence $\X \circ_H \Y \: G \rightarrow K$ defined in \cite[Subsection 2.3]{Alb15} and \cite[Section 5]{AKM22}.
	
	For two groupoid correspondences $\X,\Y\: G \rightarrow H$, a $2$-arrow between them is a $G,H$-equivariant homeomorphism defined in \cite[p.23]{Alb15} (that is, a homeomorphism which is compatible with left and right actions of $G$ and $H$).
	Two \'etale groupoids are equivalent in this bicategory $\Gr$ if and only if they are Morita equivalent (see \cite[Theorem 2.30]{Alb15}).
	In \cite{AKM22}, the authors allowed injective $G,H$-equivariant continuous maps as $2$-arrows of $\Gr$.
\end{exam}

\begin{exam}
	The bicategory $\Corr$ of $C^*$-algebras and non-degenerate \Ccorrs is studied in \cite{BMZ13}.
	The composition of \Ccorrs $\E \: A \rightarrow B$ and $\F \: B \rightarrow C$ is defined by their interior tensor product $\E \otimes_B \F \: A \rightarrow C$ (see \cite[p.38-44]{Lan95}).
	For two \Ccorrs $\E,\F\: A \rightarrow B$, a $2$-arrow between them is a unitary $A,B$-bimodule map $\sigma \: \E \rightarrow \F$ in \cite{BMZ13}.
	Two $C^*$-algebras are equivalent in this bicategory $\Corr$ if and only if they are Morita equivalent (see \cite[Proposition 2.11]{BMZ13}).
	In \cite{AKM22}, the authors allowed isometric (not necessary invertible or adjointable) $A,B$-bimodule maps as $2$-arrows of $\Corr$.
\end{exam}

\subsection{A bicategory $\IScorr$ of inverse semigroups}
\label{Subsection: A bicategory IScorr of inverse semigroups}

Now we construct a bicategory $\IScorr$ consisting of inverse semigroups and non-degenerate inverse correspondences.
We set inverse semigroups as $0$-arrows.

Let $S$, $T$ be inverse semigroups and $\U$, $\U'$ be inverse correspondences from $S$ to $T$.

\begin{defi}
	A \emph{correspondence map} $\sigma\: \U \rightarrow \U'$ is a right pairing preserving left $S$-map from $\U$ to $\U'$.
\end{defi}

We can easily check that the composition of two correspondence maps is also a correspondence map.
Thus all non-degenerate inverse correspondences from $S$ to $T$ and all correspondence maps form a category with respect to the usual composition $\circ$ of maps and the identity maps $\id_{\U}$ on inverse correspondences $\U$.
We denote this category as $\IScorr(S,T)$.

\begin{lemm}\label{Lemma: bijective corr map}
	A correspondence map $\sigma\: \U \rightarrow \U'$ is an isomorphism in the category $\IScorr(S,T)$ if and only if $\sigma$ is surjective.
\end{lemm}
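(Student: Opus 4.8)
The plan is to reduce everything to the right-handed analogue of Corollary \ref{Corollary: isom}, which the excerpt has already asserted, and then to verify the single extra condition that an isomorphism in $\IScorr(S,T)$ demands beyond an isomorphism of right inverse $T$-sets: namely that the inverse map also respects the left $S$-action. The forward direction is immediate. If $\sigma$ is an isomorphism in $\IScorr(S,T)$, then there is a correspondence map $\tau\:\U'\rightarrow\U$ with $\sigma\circ\tau = \id_{\U'}$, so every $u'\in\U'$ equals $\sigma(\tau(u'))$ and hence $\sigma$ is surjective.

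For the converse, suppose $\sigma$ is surjective. Being a correspondence map, $\sigma$ is a right pairing preserving left $S$-map; in particular it is right pairing preserving and surjective. Since $\U$ and $\U'$ are right inverse $T$-sets, the right-handed version of Corollary \ref{Corollary: isom} applies and shows that $\sigma$ is an isomorphism of right inverse $T$-sets. This yields a right pairing preserving right $T$-map $\tau\:\U'\rightarrow\U$ with $\sigma\circ\tau = \id_{\U'}$ and $\tau\circ\sigma = \id_\U$; in particular $\sigma$ is a bijection.

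It then remains only to upgrade $\tau$ to a correspondence map, that is, to check that $\tau$ is a left $S$-map. This is the one place where the left $S$-structure genuinely enters, and it is where I would concentrate the argument, although it is short: for $s\in S$ and $u'\in\U'$, using that $\sigma$ is a left $S$-map together with $\sigma\circ\tau = \id_{\U'}$, we compute
\[
\sigma(\tau(su')) = su' = s\,\sigma(\tau(u')) = \sigma(s\,\tau(u')).
\]
Since $\sigma$ is injective, this forces $\tau(su') = s\,\tau(u')$, so $\tau$ is a left $S$-map and hence a correspondence map. Therefore $\sigma$ and $\tau$ are mutually inverse correspondence maps, and $\sigma$ is an isomorphism in $\IScorr(S,T)$.

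I do not expect a real obstacle here: the substantive input is the right-handed analogue of Corollary \ref{Corollary: isom} (already granted in the excerpt), and the transfer of left $S$-linearity from $\sigma$ to $\tau$ via the injectivity of $\sigma$. In writing this up I would invoke the right analogue first to produce $\tau$, and then record the one-line cancellation above to see that $\tau$ respects the left action.
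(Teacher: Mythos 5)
Your proof is correct and follows essentially the same route as the paper's: the paper's one-line argument ("a correspondence map is an isomorphism iff it is bijective, and every correspondence map is injective by the right version of Lemma \ref{Lemma: pairing preserving injective}") compresses exactly the two ingredients you spell out, namely the right-handed version of Corollary \ref{Corollary: isom} and the observation that the inverse map automatically respects the left $S$-action by cancellation. No gaps.
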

\begin{proof}
	We can easily check that a correspondence map is an isomorphism in this category $\IScorr(S,T)$ if and only if it is bijective.
	By Lemma \ref{Lemma: pairing preserving injective}, every correspondence map is injective.
\end{proof}

\begin{lemm}
	For an isomorphism $\iota\:\U\rightarrow\U'$ between inverse correspondences, if $\U$ and $\U'$ are partial Morita equivalences, then $\iota$ preserves the left pairings.
\end{lemm}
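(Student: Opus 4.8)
The plan is to reduce the claim to an identification of two ideals of $S$ and then to invoke Lemma \ref{Lemma: two-sided ideals coincide}. Write $\U$ and $\U'$ for the two partial Morita equivalences from $S$ to $T$ underlying the given inverse correspondences, and let $\iota\:\U\rightarrow\U'$ be the isomorphism. By Lemma \ref{Lemma: bijective corr map} the map $\iota$ is bijective, and as a correspondence map it is a left $S$-map, preserves the right pairings, and (by the right version of Lemma \ref{Lemma: pairing preserving S-map}, since $\U'$ is inverse) is a right $T$-map.

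First I would extract the key action-level identity. Using in turn that $\iota$ is a left $S$-map, the compatibility $\lin{u}{u'}{\U}u''=u\rin{u'}{u''}{\U}$ in $\U$, that $\iota$ is a right $T$-map, that $\iota$ preserves right pairings, and the compatibility in $\U'$, one computes for all $u,u',u''\in\U$
\begin{align*}
\lin{u}{u'}{\U}\iota(u'')
&= \iota\big(\lin{u}{u'}{\U}u''\big)
= \iota\big(u\rin{u'}{u''}{\U}\big)\\
&= \iota(u)\rin{u'}{u''}{\U}
= \iota(u)\rin{\iota(u')}{\iota(u'')}{\U'}
= \lin{\iota(u)}{\iota(u')}{\U'}\iota(u'').
\end{align*}
Since $\iota$ is surjective, $\iota(u'')$ ranges over all of $\U'$, so this says exactly $\theta_{\U'}(\lin{u}{u'}{\U})=\theta_{\U'}(\lin{\iota(u)}{\iota(u')}{\U'})$; the right-hand side equals the rank-one operator $\omega_{\iota(u),\iota(u')}$ on $\U'$.

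Next I would set $I:=\lin{\U}{\U}{\U}$ and $I':=\lin{\U'}{\U'}{\U'}$, which are two-sided ideals of $S$. By Lemma \ref{Lemma: partial Morita iff...} the restrictions $\theta_\U|_I$ and $\theta_{\U'}|_{I'}$ are isomorphisms onto $\cpK(\U)$ and $\cpK(\U')$ respectively. I then claim $\theta_{\U'}|_I$ is likewise injective with image $\cpK(\U')$. For injectivity, if $s_1,s_2\in I$ satisfy $\theta_{\U'}(s_1)=\theta_{\U'}(s_2)$, then $s_1\iota(w)=s_2\iota(w)$ for all $w\in\U$; applying that $\iota$ is an injective left $S$-map gives $s_1w=s_2w$ for all $w$, i.e.\ $\theta_\U(s_1)=\theta_\U(s_2)$, whence $s_1=s_2$ because $\theta_\U|_I$ is injective. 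For the image, the displayed identity shows $\theta_{\U'}(\lin{u}{u'}{\U})=\omega_{\iota(u),\iota(u')}$, and surjectivity of $\iota$ makes every $\omega_{v,v'}$ on $\U'$ arise this way, so $\theta_{\U'}(I)=\cpK(\U')=\theta_{\U'}(I')$. Lemma \ref{Lemma: two-sided ideals coincide}, applied to $\theta_{\U'}$ with the ideals $I$ and $I'$, then yields $I=I'$.

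Finally, $\lin{u}{u'}{\U}\in I=I'$ and $\lin{\iota(u)}{\iota(u')}{\U'}\in I'$ have equal images under the injective map $\theta_{\U'}|_{I'}$ by the displayed identity, so $\lin{u}{u'}{\U}=\lin{\iota(u)}{\iota(u')}{\U'}$, which is precisely left-pairing preservation. I expect the main obstacle to be exactly this last upgrade: the elementary computation only shows that the two candidate left pairings act identically on $\U'$ (equivalently, have equal images under $\theta_{\U'}$), and promoting this to equality of the semigroup elements themselves is what forces the identification $I=I'$ through Lemma \ref{Lemma: two-sided ideals coincide}. Everything else is a routine chase through the defining properties of correspondence maps and of partial Morita equivalences.
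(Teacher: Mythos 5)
Your argument is correct and is essentially the paper's proof unpacked: the paper cites Lemma \ref{Lemma: left pairing unique} (transport the left pairing along $\iota$ and invoke uniqueness of the left pairing on a fixed inverse correspondence), which itself rests on Lemma \ref{Lemma: two-sided ideals coincide}, exactly the identification $I=I'$ of the ideals $\lin{\U}{\U}{\U}$ and $\lin{\U'}{\U'}{\U'}$ that you carry out explicitly. You have correctly isolated the one non-routine point, namely that equality of the images under $\theta_{\U'}$ must be upgraded to equality in $S$ via the injectivity of $\theta_{\U'}$ on the (now identified) ideal.
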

\begin{proof}
	This follows from Lemma \ref{Lemma: left pairing unique}.
\end{proof}

Let $S_i$ be an inverse semigroup with $i = 1,2,3$, and $\U_i, \U'_i \: S_i \rightarrow S_{i+1}$ be inverse correspondences with $i = 1,2$.
For correspondence maps $\sigma_i\: \U_i \rightarrow \U'_i$ for $i =1,2$, the \emph{tensor product} $\sigma_1 \otimes \sigma_2 \: \U_1 \otimes \U_2 \rightarrow \U'_1 \otimes \U'_2$ of correspondence maps is defined as 
\[
(\sigma_1 \otimes \sigma_2)(u_1 \otimes u_2) := \sigma_1(u_1) \otimes \sigma_2(u_2)
\]
for every $u_i\in \U_i$ with $i = 1,2$.

\begin{lemm}
	For correspondence maps $\sigma_i\: \U_i \rightarrow \U'_i$ for $i =1,2$, $\sigma_1\otimes\sigma_2$ is well-defined and a correspondence map.
\end{lemm}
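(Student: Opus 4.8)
The plan is to verify, in order, that $\sigma_1 \otimes \sigma_2$ descends to a well-defined map on the quotient $\U_1 \otimes \U_2$, that it is a left $S_1$-map, and that it preserves the right pairing. Since $\U_1 \otimes \U_2$ is the quotient of $\U_1 \times \U_2$ by the least equivalence relation $\sim$ generated by $(u_1 s_2, u_2) \sim (u_1, s_2 u_2)$, for well-definedness it suffices to show that $\sigma_1 \otimes \sigma_2$ respects these generating relations; the universal property of the quotient then produces the map on all of $\U_1 \otimes \U_2$.

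For a generating relation I would compute
\[
\sigma_1(u_1 s_2) \otimes \sigma_2(u_2) = \sigma_1(u_1)s_2 \otimes \sigma_2(u_2) = \sigma_1(u_1) \otimes s_2\sigma_2(u_2) = \sigma_1(u_1) \otimes \sigma_2(s_2 u_2),
\]
where the first equality uses that $\sigma_1$ is a right $S_2$-map, the second is the defining tensor relation in $\U'_1 \otimes \U'_2$, and the third uses that $\sigma_2$ is a left $S_2$-map. The only input that is not immediate from the definition of a correspondence map is the right $S_2$-linearity of $\sigma_1$: a correspondence map is assumed only to be right pairing preserving and a left $S_1$-map. To supply it I would invoke the right-handed analogue of Lemma \ref{Lemma: pairing preserving S-map}, which applies since $\U'_1$ is a right inverse $S_2$-set, and conclude that the right pairing preserving map $\sigma_1$ is automatically a right $S_2$-map.

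Once the map is well defined, the left $S_1$-map property is immediate from the formula $s_1(u_1 \otimes u_2) = (s_1 u_1)\otimes u_2$ of Proposition \ref{Proposition: tensor inverse corr} together with the left $S_1$-linearity of $\sigma_1$. For pairing preservation I would expand, using the definition of the tensor pairing,
\[
\rin{ \sigma_1(u_1)\otimes\sigma_2(u_2) }{ \sigma_1(u_1')\otimes\sigma_2(u_2') }{\U'_1\otimes\U'_2} = \rin{ \sigma_2(u_2) }{ \rin{\sigma_1(u_1)}{\sigma_1(u_1')}{\U'_1}\,\sigma_2(u_2') }{\U'_2},
\]
then replace $\rin{\sigma_1(u_1)}{\sigma_1(u_1')}{\U'_1}$ by $\rin{u_1}{u_1'}{\U_1}$ since $\sigma_1$ preserves pairings, absorb this element of $S_2$ into $\sigma_2$ by left $S_2$-linearity, and apply pairing preservation of $\sigma_2$ to reach $\rin{u_2}{\rin{u_1}{u_1'}{\U_1} u_2'}{\U_2}$, which is exactly $\rin{u_1\otimes u_2}{u_1'\otimes u_2'}{\U_1\otimes\U_2}$. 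I do not expect any genuine obstacle: the argument is a matter of tracking which map absorbs which action, and the only subtle step is recognizing that it is the derived right $S_2$-linearity of $\sigma_1$ that lets the tensor relation pass through $\sigma_1 \otimes \sigma_2$.
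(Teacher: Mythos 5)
Your proof is correct and follows essentially the same route as the paper: well-definedness via the generating relation (using that $\sigma_1$ is a right $S_2$-map and $\sigma_2$ a left $S_2$-map), then the left $S_1$-map property from that of $\sigma_1$, then pairing preservation by absorbing $\rin{u_1}{u_1'}{\U_1}$ into $\sigma_2$. The one place you add value is making explicit that the right $S_2$-linearity of $\sigma_1$ is not part of the definition of a correspondence map but follows from the right-handed analogue of Lemma \ref{Lemma: pairing preserving S-map}; the paper uses this fact silently.
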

\begin{proof}
	For every $u_1\in \U_1$, $u_2 \in \U_2$, and $s_2 \in S_2$, 
	\[
	(\sigma_1 \otimes \sigma_2) ( u_1 \otimes s_2u_2 ) = (\sigma_1 \otimes \sigma_2)(u_1s_2 \otimes u_2)
	\]
	holds since $\sigma_1$ is a right $S_2$-map and $\sigma_2$ is a left $S_2$-map.
	Thus $\sigma_1 \otimes \sigma_2$ is well-defined.
	The map $\sigma_1 \otimes \sigma_2$ is a left $S_1$-map because $\sigma_1$ is a left $S_1$-map.
	This map is right pairing preserving because $\sigma_1$ and $\sigma_2$ are right pairing preserving and $\sigma_2$ is a left $S_2$-map.
\end{proof}

\begin{lemm}
	For inverse semigroups $S_1,S_2$, and $S_3$, the tensor product of inverse correspondences and the tensor product of correspondence maps form a functor 
	\[
	\otimes_{S_1,S_2,S_3} \: \IScorr(S_1,S_2) \times \IScorr(S_2,S_3) \rightarrow \IScorr(S_1,S_3).
	\]
\end{lemm}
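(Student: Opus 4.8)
The plan is to verify the three defining properties of a functor: that the rule lands in the target category on objects and on morphisms, and that it respects identities and composition. The first two properties have essentially been established in the preceding results, so the real content is the functoriality axioms, which reduce to a short computation on the elements of the tensor product.

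First I would record that the object assignment $(\U_1,\U_2) \mapsto \U_1 \otimes \U_2$ produces an object of $\IScorr(S_1,S_3)$. By Proposition \ref{Proposition: tensor inverse corr}, $\U_1 \otimes \U_2$ is an inverse correspondence from $S_1$ to $S_3$; since $\U_1$ is non-degenerate (being an object of $\IScorr(S_1,S_2)$), the same proposition shows that $\U_1 \otimes \U_2$ is non-degenerate. The morphism assignment $(\sigma_1,\sigma_2)\mapsto \sigma_1\otimes\sigma_2$ lands among the morphisms of $\IScorr(S_1,S_3)$ by the preceding lemma, which also supplies well-definedness.

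It then remains to check preservation of identities and of composition. The key simplification is that every element of $\U_1 \otimes \U_2$ is an elementary tensor $u_1 \otimes u_2$, since $\U_1 \otimes \U_2$ is by construction a quotient of $\U_1 \times \U_2$; hence a map out of $\U_1 \otimes \U_2$ is determined by its values on such elements. For identities, I would compute $(\id_{\U_1} \otimes \id_{\U_2})(u_1 \otimes u_2) = u_1 \otimes u_2$, giving $\id_{\U_1}\otimes\id_{\U_2} = \id_{\U_1 \otimes \U_2}$. For composition, given correspondence maps $\sigma_i \: \U_i \to \U_i'$ and $\tau_i \: \U_i' \to \U_i''$ for $i = 1,2$, I would unwind the definition of the tensor product of correspondence maps on $u_1 \otimes u_2$ and observe that both $(\tau_1 \otimes \tau_2)\circ(\sigma_1\otimes\sigma_2)$ and $(\tau_1\circ\sigma_1)\otimes(\tau_2\circ\sigma_2)$ send it to $\tau_1(\sigma_1(u_1)) \otimes \tau_2(\sigma_2(u_2))$. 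Since composition and identities in the product category $\IScorr(S_1,S_2) \times \IScorr(S_2,S_3)$ are defined componentwise, this is exactly the required compatibility.

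I do not expect a serious obstacle here: once the object and morphism assignments are known to be well-defined from the earlier results, the functor axioms are immediate from the fact that all computations take place on elementary tensors and that the tensor product of maps acts componentwise. The only point deserving a moment's care is the observation that every element of $\U_1 \otimes \U_2$ is an elementary tensor, which is precisely what allows the verifications on generators to suffice.
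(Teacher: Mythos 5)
Your proposal is correct and follows essentially the same route as the paper's proof: non-degeneracy of the tensor product comes from Proposition \ref{Proposition: tensor inverse corr}, and the identity and composition axioms are verified by direct computation on elementary tensors, which suffices because $\U_1\otimes\U_2$ is a quotient of $\U_1\times\U_2$ and hence every element is an elementary tensor. The paper leaves these verifications as ``easily checked,'' so your write-up simply makes the same argument explicit.
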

\begin{proof}
	If two inverse correspondences are non-degenerate, then so is their tensor product by Proposition \ref{Proposition: tensor inverse corr}.
	We can easily check that 
	\[
	(\sigma'_1 \otimes \sigma'_2) \circ (\sigma_1 \otimes \sigma_2) = (\sigma'_1 \circ \sigma_1) \otimes (\sigma'_2 \circ \sigma_2)
	\]
	for inverse correspondences $\U_i, \U'_i, \U''_i \: S_i \rightarrow S_{i+1}$, and correspondence maps $\sigma_i\: \U_i \rightarrow \U'_i$, $\sigma'_i\: \U'_i \rightarrow \U''_i$ for $i = 1,2$.
	The tensor product of the identity maps $1_{\U_1}$ and $1_{\U_2}$ is the identity map for $\U_1 \otimes \U_2$.
\end{proof}

\begin{lemm}
	For inverse correspondences $\U_i\: S_i \rightarrow S_{i+1}$ with $i=1,2,3$, a map defined as 
	\[
	\alpha_{\U_1,\U_2,\U_3} \: \U_1 \otimes (\U_2 \otimes \U_3) \rightarrow (\U_1 \otimes \U_2) \otimes \U_3 ; u_1\otimes (u_2 \otimes u_3) \mapsto (u_1\otimes u_2) \otimes u_3
	\]
	is well-defined and an isomorphic correspondence map.
	This map is natural for $\U_1$, $\U_2$, and $\U_3$, that is, for every correspondence maps $\sigma_i \: \U_i \rightarrow \U_i'$ for $i=1,2,3$, the following diagram commutes;
	\[
	\begin{tikzcd}
	\U_1 \otimes (\U_2 \otimes \U_3) & (\U_1 \otimes \U_2) \otimes \U_3\\
	\U'_1 \otimes (\U'_2 \otimes \U'_3) & (\U'_1 \otimes \U'_2) \otimes \U'_3.
	\arrow[from = 1-1, to = 1-2, rightarrow, "\alpha_{\U_1,\U_2,\U_3}"]
	\arrow[from = 2-1, to = 2-2, rightarrow, "\alpha_{\U'_1,\U'_2,\U'_3}"]
	\arrow[from = 1-1, to = 2-1, rightarrow, "\sigma_1 \otimes (\sigma_2 \otimes \sigma_3)"']
	\arrow[from = 1-2, to = 2-2, rightarrow, "(\sigma_1 \otimes \sigma_2) \otimes \sigma_3"]
	\end{tikzcd}
	\]
\end{lemm}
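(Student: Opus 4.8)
The plan is to establish the four assertions in turn: that $\alpha_{\U_1,\U_2,\U_3}$ is well-defined, that it is a correspondence map, that it is an isomorphism, and that it is natural. Everything reduces to unwinding the definitions of the left action, the right action, and the right pairing on an iterated tensor product; the only genuinely delicate point is well-definedness, since each of the source and target is a two-layer quotient.

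For well-definedness I would not manipulate the class $u_1 \otimes (u_2 \otimes u_3)$ directly but build the map in two stages. First define a map $\U_1 \times \U_2 \times \U_3 \to (\U_1 \otimes \U_2) \otimes \U_3$ by $(u_1,u_2,u_3) \mapsto (u_1 \otimes u_2) \otimes u_3$. Fixing $u_1$, this descends over the inner relation $(u_2 s_3, u_3) \sim (u_2, s_3 u_3)$ of $\U_2 \otimes \U_3$: indeed $(u_1 \otimes u_2 s_3)\otimes u_3 = \big((u_1 \otimes u_2)s_3\big)\otimes u_3 = (u_1 \otimes u_2)\otimes s_3 u_3$, where the first equality is the right $S_3$-action rule $(u_1\otimes u_2)s_3 = u_1 \otimes u_2 s_3$ on $\U_1 \otimes \U_2$ and the second is the defining relation of $(\U_1 \otimes \U_2)\otimes \U_3$. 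This yields a map $\U_1 \times (\U_2 \otimes \U_3) \to (\U_1 \otimes \U_2)\otimes \U_3$, which in turn descends over the outer relation $(u_1 s_2, \xi) \sim (u_1, s_2 \xi)$ because $u_1 s_2 \otimes u_2 = u_1 \otimes s_2 u_2$ already holds in $\U_1 \otimes \U_2$. This produces $\alpha_{\U_1,\U_2,\U_3}$.

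For the correspondence-map property, that $\alpha_{\U_1,\U_2,\U_3}$ is a left $S_1$-map is immediate from $s_1(u_1 \otimes u_2) = (s_1 u_1)\otimes u_2$ and the analogous rule on the triple tensor product. For right-pairing preservation I would apply the defining formula $\rin{u'\otimes v'}{u\otimes v}{\U\otimes\V} = \rin{v'}{\rin{u'}{u}{\U}v}{\V}$ twice on each side. The source pairing unwinds, using that $\rin{u_1}{u_1'}{\U_1}$ acts on $\U_2 \otimes \U_3$ through its first factor, to
\[
\rin{u_3}{\rin{u_2}{\rin{u_1}{u_1'}{\U_1} u_2'}{\U_2} u_3'}{\U_3},
\]
and the target pairing unwinds to the very same expression; hence the two agree. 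Surjectivity is immediate, since every element of $(\U_1 \otimes \U_2)\otimes \U_3$ has the form $(u_1 \otimes u_2)\otimes u_3 = \alpha_{\U_1,\U_2,\U_3}\big(u_1 \otimes (u_2 \otimes u_3)\big)$, so $\alpha_{\U_1,\U_2,\U_3}$ is an isomorphism by Lemma \ref{Lemma: bijective corr map}. (Alternatively, the evidently well-defined map $(u_1\otimes u_2)\otimes u_3 \mapsto u_1 \otimes (u_2\otimes u_3)$ is a two-sided inverse.)

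Finally, naturality is a direct check on generators: both composites send $u_1 \otimes (u_2 \otimes u_3)$ to $\big(\sigma_1(u_1)\otimes \sigma_2(u_2)\big)\otimes \sigma_3(u_3)$, so the square commutes. The main obstacle is purely organizational: because the source and target are each obtained by two successive quotients, well-definedness must be verified one layer at a time, and throughout one must track carefully which semigroup acts on which factor; once this bookkeeping is secured, every remaining identity is a formal consequence of the definitions.
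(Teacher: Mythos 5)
Your proof is correct: the two-stage descent for well-definedness, the double application of the pairing formula, surjectivity plus Lemma \ref{Lemma: bijective corr map}, and the check of naturality on representatives are exactly the routine verifications the paper leaves implicit (its proof reads only ``Straightforward''). No gaps.
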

\begin{proof}
	Straightforward.
\end{proof}

As seen in Example \ref{Example: tensor semigrp hom}, an inverse semigroup $S$ can be regarded as a non-degenerate inverse correspondence from $S$ to $S$.
We set this correspondence $S$ as a unit $1$-arrow for $S$.

\begin{lemm}
	For an inverse correspondence $\U$ from an inverse semigroup $S$ to $T$, the map
	\[
	\rho_{\U} \: \U \otimes T \rightarrow \U; u \otimes t \mapsto ut
	\]
	is an isomorphic correspondence map.
	If $\U$ is non-degenerate, then the map 
	\[
	\lambda_{\U} \: S\otimes \U \rightarrow \U; s \otimes u \mapsto su
	\]
	is an isomorphic correspondence map.
	These maps make the following diagrams commute;
	\[
	\begin{tikzcd}
	\U\otimes T & \U \\
	\U'\otimes T & \U',
	\arrow[r,from = 1-1, to = 1-2, "\rho_{\U}"]
	\arrow[r,from = 2-1, to = 2-2, "\rho_{\U'}"]
	\arrow[r,from = 1-1, to = 2-1, "\sigma\otimes\id_T"']
	\arrow[r,from = 1-2, to = 2-2, "\sigma"]
	\end{tikzcd}
	\begin{tikzcd}
	S\otimes\U & \U \\
	S\otimes\U'& \U',
	\arrow[r,from = 1-1, to = 1-2, "\lambda_{\U}"]
	\arrow[r,from = 2-1, to = 2-2, "\lambda_{\U'}"]
	\arrow[r,from = 1-1, to = 2-1, "\id_S\otimes\sigma"']
	\arrow[r,from = 1-2, to = 2-2, "\sigma"]
	\end{tikzcd}
	\]
	for every inverse correspondences $\U,\U'\: S \rightarrow T$, and every correspondence map $\sigma\:\U\rightarrow\U'$.
\end{lemm}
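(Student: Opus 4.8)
The plan is to verify, in turn, that $\rho_{\U}$ (and, under the non-degeneracy hypothesis, $\lambda_{\U}$) is well-defined on the tensor product, is a correspondence map, and is bijective; bijectivity will then be upgraded to ``isomorphic correspondence map'' via the right-handed versions of the earlier isomorphism lemmas. Recall that $T$ and $S$ are the unit correspondences of Example \ref{Example: tensor semigrp hom}, so that $\U \otimes T$ carries the right pairing $\rin{u'\otimes t'}{u\otimes t}{\U\otimes T} = t'^*\rin{u'}{u}{\U}t$ and the left action $s(u\otimes t) = (su)\otimes t$, with the analogous structures on $S \otimes \U$.

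First I would check well-definedness: $\rho_{\U}$ respects the defining relation $(ut',t)\sim(u,t't)$ because $(ut')t = u(t't)$ by associativity of the right $T$-action, so $\rho_{\U}$ descends to $\U \otimes T$. Next, $\rho_{\U}$ is a left $S$-map by Lemma \ref{Lemma: left action basics} (ii), since $\rho_{\U}(s(u\otimes t)) = (su)t = s(ut)$, and it is right pairing preserving by a short computation using (R-i) and (R-ii): $\rin{u't'}{ut}{\U} = t'^*\rin{u'}{u}{\U}t$. Hence $\rho_{\U}$ is a correspondence map.

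For bijectivity, injectivity is automatic from the right-handed form of Lemma \ref{Lemma: pairing preserving injective}, while surjectivity would follow from (R-iii): every $u\in\U$ satisfies $u = u\rin{u}{u}{\U} = \rho_{\U}\big(u\otimes\rin{u}{u}{\U}\big)$. A bijective correspondence map is then an isomorphism, since the right-handed version of Corollary \ref{Corollary: isom} makes the set-theoretic inverse right pairing preserving and a right $T$-map, and the inverse of a bijective left $S$-map is again a left $S$-map; equivalently one may cite Lemma \ref{Lemma: bijective corr map}. The argument for $\lambda_{\U}$ is identical, except that surjectivity now needs $S\U = \U$, i.e.\ non-degeneracy (Definition \ref{Definition: non-deg corr}): given $u = su'$ one has $\lambda_{\U}(s\otimes u') = su' = u$. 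This is exactly why $\lambda_{\U}$, unlike $\rho_{\U}$, requires the non-degeneracy hypothesis.

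Finally, the two naturality squares are a direct check on elementary tensors. For the $\rho$-square, $\rho_{\U'}\big((\sigma\otimes\id_T)(u\otimes t)\big) = \sigma(u)t = \sigma(ut) = \sigma\big(\rho_{\U}(u\otimes t)\big)$, where $\sigma(ut) = \sigma(u)t$ holds because $\sigma$ is a right $T$-map by the right-handed version of Lemma \ref{Lemma: pairing preserving S-map}; the $\lambda$-square is the same computation using that $\sigma$ is a left $S$-map. The only genuinely delicate points are the surjectivity step, where (R-iii) and non-degeneracy respectively enter, and the observation that a bijective correspondence map is automatically an isomorphism; all the remaining verifications are routine bookkeeping on elementary tensors.
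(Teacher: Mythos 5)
Your proposal is correct and follows essentially the same route as the paper: reduce to showing that $\rho_{\U}$ and $\lambda_{\U}$ are surjective correspondence maps (Lemma \ref{Lemma: bijective corr map}), obtain surjectivity from (R-iii) and from non-degeneracy respectively, and verify naturality on elementary tensors using that $\sigma$ is a right $T$-map and a left $S$-map. The only additions are the explicit well-definedness check and the spelled-out pairing computations, which the paper leaves implicit.
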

\begin{proof}
	By Lemma \ref{Lemma: bijective corr map}, it is enough to check that $\lambda_{\U}$ and $\rho_{\U}$ are surjective correspondence maps.
	
	The map $\rho_{\U} \: \U \otimes T \rightarrow \U$ is right pairing preserving by (R-i) in \ref{Definition: inverse set} and the right version of Lemma \ref{Lemma: left pairing basics} (ii), and is a left $S$-map by Lemma \ref{Lemma: left action basics} (ii).
	The surjectivity of $\rho_{\U}$ follows from (R-iii) in Definition \ref{Definition: inverse set}.
	 
	 The map $\lambda_{\U}$ is right pairing preserving since $\theta_{\U}(s^*) = \theta_{\U}(s)\d$ holds for every $s \in S$, and is a left $S$-map clearly.
	 The surjectivity of $\lambda_{\U}$ follows from non-degeneracy of $\U$.
	 
	 The diagrams commute because $\sigma$ is a correspondence map.
\end{proof}

\begin{theo}\label{Theorem: IScorr}
	The above date form a bicategory $\IScorr$ of inverse semigroups and non-degenerate inverse correspondences.
\end{theo}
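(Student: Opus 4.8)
All the data required by Definition \ref{Definition: bicategory} have already been assembled in the lemmas of this subsection: the inverse semigroups are the $0$-arrows, the categories $\IScorr(S,T)$ are the hom-categories, the tensor-product functors $\otimes_{S_1,S_2,S_3}$ are the composition functors, the correspondence $S$ is the unit $1$-arrow for $S$, and the maps $\alpha_{\U_1,\U_2,\U_3}$, $\lambda_{\U}$, $\rho_{\U}$ furnish the associator and the two unitors. Each of these has been shown to be an isomorphic correspondence map, natural in its arguments, invertibility being supplied by Lemma \ref{Lemma: bijective corr map} and non-degeneracy of tensor products by Proposition \ref{Proposition: tensor inverse corr}. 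Hence the only content left to prove is that these data satisfy the two coherence diagrams at the end of Definition \ref{Definition: bicategory}: the triangle identity relating the unitors to the associator, and the pentagon identity for the associators. When matching the two diagrams to the maps above, one must keep track of the reversal between the bicategorical composition $\bullet$ and the tensor product, under which $g\bullet f$ corresponds to $\U_f\otimes\U_g$; thus the bicategorical left unitor is realised by $\rho$, the right unitor by $\lambda$, and the bicategorical associator by the inverse of $\alpha_{\U_1,\U_2,\U_3}$. These identifications affect neither invertibility nor naturality.

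The plan is to verify both diagrams by evaluating on elementary tensors. Every element of $\U\otimes\V$ has the form $u\otimes v$, so every element of an iterated tensor product is a fully bracketed elementary tensor $u_1\otimes\cdots\otimes u_n$; since a correspondence map is in particular a map of underlying sets, two correspondence maps sharing such a domain are equal once they agree on all elementary tensors. It therefore suffices to carry one elementary tensor around each coherence diagram. On elementary tensors the structural maps act transparently: the associator only reassociates brackets, sending $(u_1\otimes u_2)\otimes u_3$ to $u_1\otimes(u_2\otimes u_3)$ without disturbing the entries, while the unitors absorb the unit through the actions via $u\otimes t\mapsto ut$ and $s\otimes u\mapsto su$.

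For the triangle identity one traces the elementary tensor $(u\otimes s_2)\otimes u'$, where $s_2$ lies in the unit correspondence $S_2$ sitting in the middle factor: one leg absorbs $s_2$ into $u$ on the right to give $(us_2)\otimes u'$, while the other leg first reassociates and then absorbs $s_2$ into $u'$ on the left to give $u\otimes(s_2u')$, and these coincide by the defining relation $us_2\otimes u' = u\otimes s_2u'$ of the tensor product. For the pentagon identity one traces $u_1\otimes u_2\otimes u_3\otimes u_4$; each of the five edges merely rebrackets, so both routes carry a fixed initial bracketing to the same terminal bracketing and the pentagon commutes. No step involves any genuine algebraic difficulty, since all the substantive work---well-definedness and the isomorphy and naturality of the structural maps---has been carried out in the preceding lemmas; the only real obstacle is the bookkeeping of the $\bullet$-versus-$\otimes$ conventions, so that units and brackets are inserted and removed in the right places.
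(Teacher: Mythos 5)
Your proposal is correct and takes essentially the same route as the paper: the paper's proof simply observes that all the data and their naturality have been established in the preceding lemmas and that only the triangle and pentagon coherence diagrams remain, which it declares easy to check. You carry out exactly that check (on elementary tensors, where the triangle reduces to the defining relation $us_2\otimes u' = u\otimes s_2u'$ and the pentagon to pure rebracketing), and your bookkeeping of the reversal between $\bullet$ and $\otimes$ is accurate.
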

\begin{proof}
	We need to see that the triangle diagram and the pentagon diagram in Definition \ref{Definition: bicategory} commute.
	It is easy to check these.
\end{proof}

We investigate equivalences in the bicategory $\IScorr$.
Let $S,T$ be inverse semigroups.

For a right regular $T$-set $\U$, we can obtain a left regular $T$-set $\widetilde{\U}$ as follows:
We define a set $\widetilde{\U}$ as the set of all symbols $\widetilde{u}$ running over all $u \in \U$.
The left action of $T$ and the left pairing is defined as 
\[
t\widetilde{u} := \widetilde{ut^*}, \text{ and } \lin{ \widetilde{u_1} }{ \widetilde{u_2} }{\widetilde{\U}} := \rin{ u_1 }{ u_2 }{\U}
\]
for every $t \in T$ and $u,u_1,u_2\in\U$.
The set $\widetilde{\U}$ becomes a left regular $T$-set with respect to the above structures.
If $\U$ is a right inverse $T$-set, then $\widetilde{\U}$ is a left inverse $T$-set.

For a partial Morita equivalence $\U$ from $S$ to $T$, we obtain a right action of $S$ and a right pairing on the left inverse $T$-set $\widetilde{\U}$ as follows:
For every $u,u_1,u_2\in\U$ and $s \in S$,
\[
\widetilde{u}s := \widetilde{s^*u}, \text{ and } \rin{ \widetilde{u_1} }{ \widetilde{u_2} }{\widetilde{\U}} := \lin{ u_1 }{ u_2 }{\U}.
\]
The left inverse $T$-set $\widetilde{\U}$ becomes a partial Morita equivalence from $T$ to $S$ with respect to these structures.
If $\U$ is a Morita equivalence, then so is $\widetilde{\U}$.

\begin{rema}
	An inverse correspondence $\U\: S \rightarrow T$ becomes a generalized heap with respect to a ternary operation $\{,,\} \: \U\times\U\times\U\rightarrow\U$ defined by $\{u_1,u_2,u_3\}_{\U} := u_1\rin{ u_2 }{ u_3 }{\U}$ for $u_1,u_2,u_3 \in \U$ (see \cite[p.318]{Law11} for the definition of generalized heaps). 
	This ternary operation satisfy that $\{ u_1, su_2, u_3 \}_{\U} = \{ u_1, u_2, s^*u_3 \}_{\U}$ and $\{ u_1 , u_2t , u_3 \}_{\U} = \{ u_1t^* , u_2 , u_3 \}_{\U}$ for $u_1,u_2,u_3 \in \U$, $s \in S$ and $t \in T$.
	These equations are analogue to the axioms of generalized correspondences in the $C^*$-algebra theory (see \cite[p.5]{Exe07}).
	The left inverse $T$-set $\widetilde{\U}$ also becomes a generalized heap with respect to a ternary operation $\{,,\}_{\widetilde{\U}} \: \widetilde{\U} \times \widetilde{\U} \times \widetilde{\U} \rightarrow \widetilde{\U}$ defined by $\{ \widetilde{u_1}, \widetilde{u_2}, \widetilde{u_3} \}_{\widetilde{\U}} := \lin{ \widetilde{u_1} }{ \widetilde{u_2} }{\widetilde{\U}}\widetilde{u_3} $.
	We can define a right action of $S$ on the generalized heap $\widetilde{\U}$ in the same way as the case of partial Morita equivalence mentioned above.
	The ternary operation satisfy that $\{ \widetilde{u_1}, t\widetilde{u_2}, \widetilde{u_3}\} = \{ \widetilde{u_1}, \widetilde{u_2}, t^*\widetilde{u_3}\}$ and $\{ \widetilde{u_1}, \widetilde{u_2}s, \widetilde{u_3}\} = \{ \widetilde{u_1}s^*, \widetilde{u_2}, \widetilde{u_3}\}$ for $\widetilde{u_1},\widetilde{u_2},\widetilde{u_3}\in\widetilde{\U}$, $t \in T$, and $s \in S$.
\end{rema}

Let $\U$ be a partial Morita equivalence from $S$ to $T$.
We recall that $\lin{ \U }{ \U }{\U}$ is a two-sided ideal of $S$.
Hence this becomes a partial Morita equivalence from $S$ to $S$.
The two-sided ideal $\rin{ \U }{\U }{\U}$ of $T$ becomes a partial Morita equivalence from $T$ to $T$.
\begin{prop}\label{Proposition: U and Utilde}
	For a partial Morita equivalence $\U$ from $S$ to $T$, we have 
	\[
	\U \otimes \widetilde{\U} \simeq \lin{ \U }{ \U }{\U} \text{ and }\ \widetilde{\U} \otimes \U \simeq \rin{ \U }{ \U }{\U},
	\]
	as partial Morita equivalences.
\end{prop}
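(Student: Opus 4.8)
The plan is to exhibit explicit maps and verify the hypotheses of the right-hand version of Corollary \ref{Corollary: isom}, then upgrade the result to an isomorphism of partial Morita equivalences via Lemma \ref{Lemma: left pairing unique}. For the first relation I would define $\Phi\: \U\otimes\widetilde{\U}\rightarrow\lin{\U}{\U}{\U}$ on representatives by $\Phi(u\otimes\widetilde{u'}):=\lin{u}{u'}{\U}$, and for the second the mirror-image map $\widetilde{\U}\otimes\U\rightarrow\rin{\U}{\U}{\U}$, $\widetilde{u}\otimes u'\mapsto\rin{u}{u'}{\U}$. Since the two statements are interchanged by swapping the roles of $S$ and $T$, left and right, and $\U$ and $\widetilde{\U}$, I would prove the first in full and obtain the second by symmetry.

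The technical backbone of every verification is the observation that an element $a$ of the ideal $\lin{\U}{\U}{\U}$ is determined by its action on $\U$: if $ax=bx$ for all $x\in\U$ with $a,b\in\lin{\U}{\U}{\U}$, then $a=b$. This holds because the restriction of $\theta_{\U}$ to the two-sided ideal $\lin{\U}{\U}{\U}$ is injective, exactly as established inside the proof of Lemma \ref{Lemma: partial Morita iff...} by way of Lemma \ref{Lemma: right cancel}. Using this, I would first check that $\Phi$ respects the generating relation $(ut,\widetilde{u'})\sim(u,t\widetilde{u'})$, that is, that $\lin{ut}{u'}{\U}=\lin{u}{u't^*}{\U}$: applying each side to an arbitrary $x\in\U$ and rewriting with (R-i) and (R-ii) collapses both to $u\bigl(t\rin{u'}{x}{\U}\bigr)$, so the desired equality follows from faithfulness and $\Phi$ descends to the quotient.

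Next I would show $\Phi$ is a correspondence map. That it is a left $S$-map is immediate from (L-i), since $\lin{su}{u'}{\U}=s\lin{u}{u'}{\U}$ and $S$ acts on the ideal $\lin{\U}{\U}{\U}$ by multiplication. For right-pairing preservation I would compute both sides. The right pairing on $\U\otimes\widetilde{\U}$ unwinds, using $t\widetilde{w}=\widetilde{wt^*}$ and $\rin{\widetilde{w}}{\widetilde{x}}{\widetilde{\U}}=\lin{w}{x}{\U}$, to $\lin{w}{w'\rin{u'}{u}{\U}}{\U}$, while the right pairing on $\lin{\U}{\U}{\U}$ gives $\lin{u}{w}{\U}^*\lin{u'}{w'}{\U}=\lin{w}{u}{\U}\lin{u'}{w'}{\U}$ by (L-ii). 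Applying both elements to an arbitrary $x\in\U$ reduces each to $w\rin{u}{u'}{\U}\rin{w'}{x}{\U}$, so they agree by faithfulness. Surjectivity of $\Phi$ is clear, since every element of $\lin{\U}{\U}{\U}$ has the form $\lin{u}{u'}{\U}=\Phi(u\otimes\widetilde{u'})$.

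Finally I would assemble the pieces. By the right-hand version of Corollary \ref{Corollary: isom}, a surjective right-pairing-preserving map between right inverse $S$-sets is an isomorphism; combined with the left $S$-map property this makes $\Phi$ an isomorphism of inverse correspondences from $S$ to $S$. Both $\U\otimes\widetilde{\U}$ and $\lin{\U}{\U}{\U}$ carry partial Morita equivalence structures (the former by Example \ref{Example: tensor Morita equiv}), and after transporting the structure of $\U\otimes\widetilde{\U}$ along $\Phi$ the two partial Morita equivalences on $\lin{\U}{\U}{\U}$ have the same underlying inverse correspondence; hence Lemma \ref{Lemma: left pairing unique} forces $\Phi$ to intertwine the left pairings as well, so $\Phi$ is an isomorphism of partial Morita equivalences. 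The main point to get right is the faithfulness of the $\lin{\U}{\U}{\U}$-action: both the well-definedness and the pairing computation reduce to equalities of actions on $\U$ rather than equalities in $S$ directly, and once that is in hand the remaining steps are routine rewritings with the pairing axioms.
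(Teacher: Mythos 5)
Your proposal is correct and follows essentially the same route as the paper: the same explicit maps $u\otimes\widetilde{u'}\mapsto\lin{u}{u'}{\U}$ and $\widetilde{u}\otimes u'\mapsto\rin{u}{u'}{\U}$, the same surjectivity-plus-pairing-preservation criterion from Corollary \ref{Corollary: isom} (equivalently Lemma \ref{Lemma: bijective corr map}), and the same appeal to uniqueness of left pairings to upgrade to an isomorphism of partial Morita equivalences. The only real difference is in how the intermediate identities are checked: the paper computes directly in $S$ and $T$ via the compatibility axiom $\lin{u}{u'}{\U}u''=u\rin{u'}{u''}{\U}$ and the pairing axioms, whereas you let both sides act on an arbitrary $x\in\U$ and invoke the injectivity of $\theta_\U$ on the ideal $\lin{\U}{\U}{\U}$, which you correctly source from the proof of Lemma \ref{Lemma: partial Morita iff...}; both verifications are valid.
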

\begin{proof}
	We define a map $\iota\: \widetilde{\U} \otimes \U \rightarrow \rin{ \U }{ \U }{\U}$ as 
	\[
	\iota( \widetilde{u} \otimes u' ) := \rin{ u }{ u' }{\U}
	\]
	for $u,u'\in\U$.
	It is clear that $\iota$ is surjective.
	For $t \in T$ and $u,u'\in\U$, we have
	\begin{align*}
	\iota( t(\widetilde{u} \otimes u') ) 
	&= \iota( (t\widetilde{u}) \otimes u' )\\
	&= \iota\big( \widetilde{ut^*} \otimes u' \big)\\
	&= \rin{ ut^* }{ u' }{\U}\\
	&= t\rin{ u }{ u' }{\U}\\
	&= t\iota( \widetilde{u} \otimes u' ).
	\end{align*}
	Thus $\iota$ is a left-$T$ map.
	For $u_1,u_1',u_2,u_2'\in\U$, we have
	\begin{align*}
	\rin{ \widetilde{u_1} \otimes u_1' }{ \widetilde{u_2} \otimes u_2' }{\widetilde{\U}\otimes\U} 
	&= \rin{ u_1' }{ \rin{ \widetilde{u_1} }{ \widetilde{u_2} }{\widetilde{\U}}u_2' }{\U}\\
	&= \rin{ u_1' }{ \lin{ u_1 }{ u_2 }{\U}u_2' }{\U}\\
	&= \rin{ u_1' }{ u_1\rin{ u_2 }{ u_2' }{\U} }{\U}\\
	&= \rin{ u_1' }{ u_1 }{\U}\rin{ u_2 }{ u_2' }{\U}\\
	&= \rin{ u_1 }{ u_1' }{\U}^*\rin{ u_2 }{ u_2' }{\U}\\
	&= \rin{ \iota(\widetilde{u_1} \otimes u_1') }{ \iota(\widetilde{u_2} \otimes u_2') }{\rin{\U}{\U}{\U}}.
	\end{align*}
	Thus $\iota$ is right pairing preserving.
	By Lemma \ref{Lemma: bijective corr map}, $\iota$ is an isomorphism.
	
	In a similar way, we obtain that the map $\U\otimes\widetilde{\U} \rightarrow \lin{ \U }{ \U }{\U}; u' \otimes \widetilde{u} \mapsto \lin{ u' }{ u }{\U}$ is an isomorphism. 
\end{proof}

The following proposition is analogues to \cite[Lemma 2.4]{EKQR06}.
\begin{prop}\label{Proposition: equiv is Morita}
	Let $\U\: S \rightarrow T$ and  $\V\: T\rightarrow S$ be non-degenerate inverse correspondences.
	If $\U\otimes \V$ is isomorphic to $S$ and $\V \otimes \U$ is isomorphic to $T$ as inverse correspondences, then $\U$ and $\V$ are Morita equivalence.
\end{prop}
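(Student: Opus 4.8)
The plan is to show that $\U$ comes from a Morita equivalence; the argument for $\V$ will be symmetric. By Corollary~\ref{Corollary: partial Morita iff...}(ii) it suffices to prove that $\theta_{\U}\colon S\to\adL(\U)$ is an isomorphism onto $\cpK(\U)$ and that $\U$ is right full. I fix isomorphisms of inverse correspondences $\Phi\colon\U\otimes\V\to S$ and $\Psi\colon\V\otimes\U\to T$, and abbreviate $\langle u,v\rangle:=\Phi(u\otimes v)\in S$ and $[v,u]:=\Psi(v\otimes u)\in T$ for $u\in\U$, $v\in\V$.

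First I would dispose of the two routine reductions. The unit $1$-arrow $S$ is a Morita equivalence from $S$ to $S$ (Example~\ref{Example: tensor semigrp hom}), hence right full; since correspondence maps preserve the right pairing and $\Phi$ is a bijection, $\rin{\U\otimes\V}{\U\otimes\V}{\U\otimes\V}=\rin{S}{S}{S}=S$, so $\U\otimes\V$ is right full, and likewise $\V\otimes\U\cong T$ is right full. By Lemma~\ref{Lemma: full} this forces $\V$ and $\U$ to be right full. For the left action, because $S$ is a Morita equivalence, Corollary~\ref{Corollary: partial Morita iff...}(ii) gives that $\theta_{S}\colon S\to\adL(S)$ is an isomorphism onto $\cpK(S)$. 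Now $\Phi$ induces an isomorphism $\mathrm{Ad}_{\Phi}\colon\adL(\U\otimes\V)\to\adL(S)$, $\varphi\mapsto\Phi\varphi\Phi^{-1}$, which carries $\cpK(\U\otimes\V)$ onto $\cpK(S)$ (indeed $\mathrm{Ad}_{\Phi}(\omega_{a,b})=\omega_{\Phi(a),\Phi(b)}$) and satisfies $\mathrm{Ad}_{\Phi}\circ\theta_{\U\otimes\V}=\theta_{S}$ since $\Phi$ is a left $S$-map. Therefore $\theta_{\U\otimes\V}\colon S\to\adL(\U\otimes\V)$ is an isomorphism onto $\cpK(\U\otimes\V)$, and symmetrically $\theta_{\V\otimes\U}\colon T\to\adL(\V\otimes\U)$ is an isomorphism onto $\cpK(\V\otimes\U)$.

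Injectivity of $\theta_{\U}$ is then immediate: by the definition of the left $S$-action on $\U\otimes\V$ (Proposition~\ref{Proposition: tensor inverse corr}) one has $\theta_{\U\otimes\V}(s)(u\otimes v)=(\theta_{\U}(s)u)\otimes v$, so $\theta_{\U}(s)=\theta_{\U}(s')$ forces $\theta_{\U\otimes\V}(s)=\theta_{\U\otimes\V}(s')$ on every (simple) tensor, whence $s=s'$ by the injectivity just obtained. The crux, and the step I expect to be the main obstacle, is to prove $\theta_{\U}(S)=\cpK(\U)$. The naive attempt to read $\omega_{u_1,u_2}\in\cpK(\U)$ off a single $\omega_{u_1\otimes v,\,u_2\otimes v}\in\cpK(\U\otimes\V)$ fails, since that only recovers $\theta_{\U}$ of an element compressed by $\omega_{v,v}$ on the $\V$-factor, and $\V$ carries no unit to remove the compression. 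To overcome this I would invoke the standard fact that any equivalence in a bicategory can be upgraded to an adjoint equivalence, and thereby choose $\Phi$ and $\Psi$ so that the triangle identities hold. These yield the compatibility $\langle u,v\rangle\,u'=u\,[v,u']$ for all $u,u'\in\U$ and $v\in\V$, so that $\theta_{\U}(\langle u,v\rangle)$ acts by $u'\mapsto u\,\Psi(v\otimes u')$. Combined with right fullness of $\V$, which supplies for each pair $u_1,u_2$ the $\V$-data realizing the functional $u'\mapsto\rin{u_2}{u'}{\U}$ through $\Psi$, this exhibits every $\omega_{u_1,u_2}$ as $\theta_{\U}(s)$ for a suitable $s\in S$ built from $\Phi$, and dually gives the inclusion $\theta_{\U}(S)\subseteq\cpK(\U)$; injectivity is already in hand, so $\theta_{\U}$ is an isomorphism onto $\cpK(\U)$.

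Finally, with $\theta_{\U}$ an isomorphism onto $\cpK(\U)$ and $\U$ right full, Corollary~\ref{Corollary: partial Morita iff...}(ii) shows that $\U$ comes from a Morita equivalence; moreover the reconstructed left $S$-pairing is automatically full, precisely because $\theta_{\U}$ is onto $\cpK(\U)$. Running the same argument with the roles of $\U$ and $\V$ (and of $\Phi$ and $\Psi$) interchanged shows that $\V$ is a Morita equivalence as well, completing the proof.
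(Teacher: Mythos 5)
Your skeleton matches the paper's: reduce to Corollary \ref{Corollary: partial Morita iff...}(ii), get right fullness of $\U$ and $\V$ from Lemma \ref{Lemma: full}, and get injectivity of $\theta_\U$ from the isomorphism $\U\otimes\V\simeq S$ (the paper does this directly via Lemma \ref{Lemma: right cancel} rather than by transporting $\theta_{\U\otimes\V}$ along $\mathrm{Ad}_\Phi$, but the content is the same). The problem is that the step you yourself flag as the crux is not actually carried out. The assertion that right fullness of $\V$ ``supplies for each pair $u_1,u_2$ the $\V$-data realizing the functional $u'\mapsto\rin{u_2}{u'}{\U}$ through $\Psi$'' is precisely the statement that the map $\V\rightarrow\U$, $v\mapsto\overline{v}$, characterized by $\rin{\overline{v}}{u}{\U}=\Psi(v\otimes u)$ for all $u\in\U$, is well defined and surjective; that is the technical heart of the paper's proof, and citing right fullness does not supply it. Well-definedness (every $v$ yields a \emph{representable} functional) is obtained by writing $v=\Psi(v'\otimes u')^*v''$ using non-degeneracy of $\V$ and surjectivity of $\Psi$, setting $\overline{v}:=\rin{v''}{v'}{\V}u'$, and verifying the defining identity by a pairing computation; surjectivity is obtained by writing an arbitrary element of $\U$ as $\rin{v}{v'}{\V}u'$ using non-degeneracy of $\U$ and right fullness of $\V$. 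Neither verification appears in your proposal, and without the well-definedness half you also cannot obtain the inclusion $\theta_\U(S)\subseteq\cpK(\U)$, which is not formally ``dual'' to the other inclusion: it requires that \emph{every} relevant functional coming from $\V$ be representable, not merely that enough of them are.

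Separately, the detour through upgrading $(\Phi,\Psi)$ to an adjoint equivalence is avoidable and adds its own unproved obligations (that the triangle identities, unwound through the specific associators and unitors of $\IScorr$, yield $\Phi(u\otimes v)u'=u\Psi(v\otimes u')$ on elements). The paper never needs this compatibility: once the map $v\mapsto\overline{v}$ is in hand, the single computation
\[
\rin{u_1}{\omega_{\overline{v_1},\overline{v_2}}u_2}{\U}
=\rin{u_1}{\overline{v_1}}{\U}\rin{\overline{v_2}}{u_2}{\U}
=\Psi(v_1\otimes u_1)^*\Psi(v_2\otimes u_2)
=\rin{v_1\otimes u_1}{v_2\otimes u_2}{\V\otimes\U}
=\rin{u_1}{\rin{v_1}{v_2}{\V}u_2}{\U}
\]
together with Lemma \ref{Lemma: non-degenerate}(ii) gives $\omega_{\overline{v_1},\overline{v_2}}=\theta_\U(\rin{v_1}{v_2}{\V})$, and both inclusions then fall out: $\cpK(\U)\subseteq\theta_\U(S)$ from surjectivity of $v\mapsto\overline{v}$, and $\theta_\U(S)\subseteq\cpK(\U)$ from right fullness of $\V$. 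I would recommend replacing the adjoint-equivalence argument by this construction and supplying the two omitted verifications.
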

\begin{proof}
	By Lemma \ref{Lemma: full}, $\U$ and $\V$ are right full.	
	By symmetry, we only see that $\U$ becomes a Morita equivalence.
	To prove this, it is enough to show that $\theta_\U\: S \rightarrow \adL(\U)$ is an isomorphism onto $\cpK(\U)$ by Corollary \ref{Corollary: partial Morita iff...} (ii).
	
	For $s_1,s_2\in S$ with $\theta_{\U}(s_1) = \theta_{\U}(s_2)$, we get $s_1(u\otimes v) = s_2(u\otimes v)$ for all $u \in \U$ and $v \in \V$.
	This implies that $s_1s = s_2s$ for all $s \in S$ because $\U\otimes\V$ is isomorphic to $S$ as inverse correspondences.
	By Lemma \ref{Lemma: right cancel}, we have $s_1 = s_2$.
	Thus $\theta_{\U} \: S \rightarrow \adL(\U)$ is injective.
	
	We put the isomorphism from $\V \otimes \U$ to $T$ as $\iota$.
	To show that $\theta_{\U}(S) = \cpK(\U)$, we construct a surjection from $\V$ to $\U$.
	We define a map $\Phi\: \V \rightarrow \U$ as the unique map which satisfies that 
	\[
	\rin{ \Phi(v) }{ u }{\U} = \iota(v\otimes u)
	\]
	for every $u \in \U$.
	The uniqueness of $\Phi$ is clear by Lemma \ref{Lemma: non-degenerate}.
	We can construct $\Phi$ as follows:
	Every element of $\V$ is in the form of $\iota(v'\otimes u')^*v$ with some $u' \in \U$ and $v,v'\in V$ because $\V$ is non-degenerate and $\iota$ is surjective.
	We define $\Phi(\iota(v'\otimes u')^*v) := \rin{v}{v'}{} u'$ for every $v,v'\in\V$ and $u' \in \U$.
	We get 
	\begin{align*}
	\rin{ \rin{ v }{ v' }{\V}u' }{ u }{\U} 
	&= \rin{ u' }{ \rin{ v' }{ v }{\V} u }{\U}\\
	&= \rin{ v' \otimes u' }{ v\otimes u }{\V\otimes\U}\\
	&= \iota(v' \otimes u')^*\iota(v \otimes u)\\
	&= \iota( \iota(v' \otimes u')^* v \otimes u )
	\end{align*}
	for every $u \in \U$.
	This implies that $\Phi$ is well-defined and satisfies $\rin{ \Phi(v) }{ u }{\U} = \iota(v\otimes u)$ for every $u \in \U$ and $v \in \V$.
	Every element of $\U$ is in the form of $\rin{ v }{ v' }{\V}u'$ with some $v,v' \in \V$ and $u'\in\U$ because $\U$ is non-degenerate and $\V$ is right full.
	Thus we get that $\Phi$ is surjective.
	
	Take an element $k$ of $\cpK(\U)$.
	There exist $v_1,v_2\in\V$ with $k = \omega_{\Phi(v_1),\Phi(v_2)}$ because $\Phi$ is surjective.
	For every $u_1,u_2\in\U$, we get 
	\begin{align*}
		\rin{ u_1 }{ \omega_{\Phi(v_1),\Phi(v_2)} u_2 }{\U} 
		&= \rin{ u_1 }{ \Phi(v_1)\rin{\Phi(v_2)}{u_2}{\U} }{\U} \\
		&= \rin{ u_1 }{ \Phi(v_1) }{\U} \rin{\Phi(v_2)}{u_2}{\U} \\
		&= \iota( v_1 \otimes u_1)^* \iota( v_2 \otimes u_2 ) \\
		&= \rin{ v_1 \otimes u_1 }{ v_2 \otimes u_2 }{\V\otimes\U}\\
		&= \rin{ u_1 }{ \rin{ v_1 }{ v_2 }{\V} u_2 }{\U}.
	\end{align*}
	Thus $k = \theta_{\U}( \rin{ v_1 }{v_2}{\V} ) \in \theta_{\U}(S)$ holds.
	
	Take an element $s \in S$.
	There exists $v_1,v_2 \in \V$ with $s = \rin{v_1}{v_2}{\V}$ because $\V$ is right full.
	By the above discussion, we get $\theta_\U(s) = \omega_{\Phi(v_1),\Phi(v_2)} \in \cpK(\U)$.
	Thus we get $\theta_\U(S) = \cpK(\U)$.
\end{proof}

\begin{theo}\label{Theorem: Morita iff equiv}
	For a non-degenerate inverse correspondence $\U\:S\rightarrow T$, $\U$ is a Morita equivalence if and only if $\U$ is an equivalence in the bicategory $\IScorr$.
\end{theo}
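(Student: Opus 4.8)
The plan is to reduce both implications to the two structural results already proved, namely Proposition~\ref{Proposition: U and Utilde} and Proposition~\ref{Proposition: equiv is Morita}, after unwinding what ``equivalence in $\IScorr$'' means. In $\IScorr$ the composition $\bullet$ is the tensor product composed in the opposite order, and the unit $1$-arrow at an inverse semigroup $S$ is the correspondence $S$ itself. Concretely, for $\U\: S \rightarrow T$ and $\V\: T \rightarrow S$ one has $\V \bullet \U = \U \otimes \V$ and $\U \bullet \V = \V \otimes \U$. Hence, spelling out the definition of equivalence, $\U$ is an equivalence in $\IScorr$ exactly when there is a non-degenerate inverse correspondence $\V\: T \rightarrow S$ with $\U \otimes \V \simeq S$ and $\V \otimes \U \simeq T$ as inverse correspondences.

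For the ``only if'' direction I would take the dual correspondence $\widetilde{\U}$ as the candidate inverse. If $\U$ is a Morita equivalence, then it is full on both sides, so $\lin{\U}{\U}{\U} = S$ and $\rin{\U}{\U}{\U} = T$; moreover $\widetilde{\U}$ is again a Morita equivalence from $T$ to $S$, hence a non-degenerate inverse correspondence and a legitimate $1$-arrow of $\IScorr$. Proposition~\ref{Proposition: U and Utilde} then supplies isomorphisms $\U \otimes \widetilde{\U} \simeq \lin{\U}{\U}{\U} = S$ and $\widetilde{\U} \otimes \U \simeq \rin{\U}{\U}{\U} = T$, which are precisely the data witnessing that $\U$ is an equivalence with $\V := \widetilde{\U}$.

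For the ``if'' direction, suppose $\U$ is an equivalence and let $\V\: T \rightarrow S$ be a non-degenerate inverse correspondence realizing it, so that $\U \otimes \V \simeq S$ and $\V \otimes \U \simeq T$. These are exactly the hypotheses of Proposition~\ref{Proposition: equiv is Morita}, which then directly yields that $\U$ is a Morita equivalence (and incidentally that $\V$ is one too).

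The only points requiring care are bookkeeping rather than new mathematics. First, one must match the composition order $\bullet$ with the tensor order $\otimes$ and identify $1_S$ with the correspondence $S$, so that the definition of equivalence lines up verbatim with the hypotheses of the two propositions; this is where a direction-flip error could easily creep in, but the two propositions are mutually consistent and both read $\U \otimes (\,\cdot\,) \simeq S$ on one side and $(\,\cdot\,) \otimes \U \simeq T$ on the other, which is reassuring. Second, one should note that the isomorphisms of Proposition~\ref{Proposition: U and Utilde}, stated there as isomorphisms of partial Morita equivalences, are in particular bijective right-pairing-preserving left $S$-maps and hence isomorphic correspondence maps, i.e.\ genuine isomorphic $2$-arrows of $\IScorr$, by Lemma~\ref{Lemma: bijective corr map}. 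No further computation is needed.
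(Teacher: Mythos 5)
Your proposal is correct and follows essentially the same route as the paper: the ``only if'' direction takes $\V := \widetilde{\U}$ and invokes Proposition~\ref{Proposition: U and Utilde} together with two-sided fullness, and the ``if'' direction is exactly Proposition~\ref{Proposition: equiv is Morita}. Your extra bookkeeping about the order of $\bullet$ versus $\otimes$ and about the isomorphisms being genuine $2$-arrows is accurate and merely makes explicit what the paper leaves implicit.
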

\begin{proof}
	For a Morita equivalence $\U\: S \rightarrow T$, the Morita equivalence $\widetilde{\U}\: T \rightarrow S$ satisfies $\U\otimes\widetilde{\U} \simeq S$ and $\widetilde{\U}\otimes\U\simeq T$ by Proposition \ref{Proposition: U and Utilde}.
	Thus $\U$ is an equivalence in the bicategory $\IScorr$.
	The if part follows from Proposition \ref{Proposition: equiv is Morita}.
\end{proof}

\section{Multiplier semigroups}
\label{Section: Multiplier semigroups}
In the $C^*$-algebra theory, the \emph{multiplier algebras} $M(A)$ of $C^*$-algebras $A$ are studied well; see \cite{Bus68}, \cite{APT73} or \cite{Lan95} for example.
In this section, we define the multiplier semigroups $M(S)$ of inverse semigroups $S$ as an analogy of the multiplier algebras, and show that every inverse semigroup has its multiplier semigroup.

Let $S$ be an inverse semigroup.

\begin{defi}\label{Definition: multiplier}
	The \emph{multiplier semigroup} $M(S)$ of $S$ is an inverse semigroup which includes $S$ as a two-sided ideal and satisfies the following universality: 
	For every inverse semigroup $\widetilde{S}$ which includes $S$ as a two-sided ideal, there exists a unique semigroup homomorphism $\theta \: \widetilde{S} \rightarrow M(S)$ such that the following diagram commutes;
	\[
	\begin{tikzcd}
	\widetilde{S} &\\
	S & M(S).
	\arrow[from = 2-1, to = 1-1, "\hookrightarrow", sloped, phantom]
	\arrow[from = 2-1, to = 2-2, "\hookrightarrow"', phantom]
	\arrow[from = 1-1, to = 2-2, "\theta", dashed]
	\end{tikzcd}
	\]
\end{defi}
We can check easily that the multiplier semigroup is unique up to isomorphism if it exists.
Before proving that there exist the multiplier semigroups for all inverse semigroups, we investigate the universality of the inverse semigroup of adjointable maps.

Let $S, T$ be inverse semigroups, and $\U$ be an inverse $T$-set.
We say that a semigroup homomorphism $\theta \: S \rightarrow \adL(\U)$ is \emph{non-degenerate} if every element $u \in \U$ is in the form of $\theta(s')u'$ with some $s' \in S$ and $u'\in \U$ (that is, $\U$ and $\theta$ form a non-degenerate inverse correspondence from $S$ to $T$).

\begin{prop}\label{Proposition: the universality of L(U)}
	Let $\theta\: S \rightarrow \adL(\U)$ be a non-degenerate semigroup homomorphism.
	For every inverse semigroup $\widetilde{S}$ which includes $S$ as a two-sided ideal, there exists a unique semigroup homomorphism $\widetilde{\theta} \: \widetilde{S} \rightarrow \adL(\U)$ such that the following diagram commutes;
	\[
	\begin{tikzcd}
	\widetilde{S} &\\
	S & \adL(\U).
	\arrow[from = 2-1, to = 1-1, "\hookrightarrow", sloped, phantom]
	\arrow[from = 2-1, to = 2-2, "\theta"']
	\arrow[from = 1-1, to = 2-2, "\widetilde{\theta}", dashed]
	\end{tikzcd}
	\]
\end{prop}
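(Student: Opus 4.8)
The plan is to extend $\theta$ through the only formula that non-degeneracy leaves available. Since $\theta$ is non-degenerate, every $u\in\U$ can be written as $u=\theta(s')u'$ with $s'\in S$ and $u'\in\U$, and since $S$ is a two-sided ideal of $\widetilde S$ we have $\tilde s s'\in S$ for all $\tilde s\in\widetilde S$. I would therefore try to define, for each $\tilde s\in\widetilde S$, a map $\widetilde\theta(\tilde s)\:\U\rightarrow\U$ by
\[
\widetilde\theta(\tilde s)\big(\theta(s')u'\big):=\theta(\tilde s s')u'.
\]
The commutativity of the diagram and the uniqueness of $\widetilde\theta$ are then forced: any homomorphism $\widetilde\theta$ restricting to $\theta$ on $S$ must send $\theta(s')u'=\widetilde\theta(s')u'$ to $\widetilde\theta(\tilde s)\widetilde\theta(s')u'=\widetilde\theta(\tilde s s')u'=\theta(\tilde s s')u'$, so the displayed formula is the only candidate, and taking $\tilde s=s\in S$ gives $\widetilde\theta(s)u=\theta(s s')u'=\theta(s)\theta(s')u'=\theta(s)u$, i.e.\ $\widetilde\theta|_S=\theta$. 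The real content is to show that this formula defines an adjointable map and that $\widetilde\theta$ is multiplicative.

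The main obstacle is well-definedness: if $\theta(s_1')u_1'=\theta(s_2')u_2'=:u$, I must show $\theta(\tilde s s_1')u_1'=\theta(\tilde s s_2')u_2'$. By Lemma~\ref{Lemma: non-degenerate}~(ii) it suffices to pair both sides against an arbitrary $w\in\U$, which by non-degeneracy I write as $w=\theta(s_0)w_0$. Using that the generalized inverse in $\adL(\U)$ is the adjoint (Theorem~\ref{Theorem: L(U) is inverse}), so that $\theta(s_0)\d=\theta(s_0^*)$, together with the homomorphism property of $\theta$, I would compute
\[
\rin{\theta(s_0)w_0}{\theta(\tilde s s_i')u_i'}{\U}=\rin{w_0}{\theta(s_0^*\tilde s)\,\theta(s_i')u_i'}{\U},
\]
where the key point is that $s_0^*\tilde s\in S$, so that $\theta(s_0^*\tilde s s_i')=\theta(s_0^*\tilde s)\theta(s_i')$ legitimately factors through $\theta$ on $S$. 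The right-hand side equals $\rin{w_0}{\theta(s_0^*\tilde s)u}{\U}$ and is therefore independent of $i$. This proves well-definedness, and the very same manipulation shows that $\widetilde\theta(\tilde s)u$ depends only on $u$.

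Adjointability then comes essentially for free from the symmetry of this computation: I would verify that $\widetilde\theta(\tilde s^*)$ is an adjoint of $\widetilde\theta(\tilde s)$ by checking, on elements $x=\theta(s_1')u_1'$ and $y=\theta(s_2')u_2'$, the identity $\rin{\widetilde\theta(\tilde s^*)x}{y}{\U}=\rin{x}{\widetilde\theta(\tilde s)y}{\U}$. Both sides reduce, again via $\theta(s)\d=\theta(s^*)$ and the homomorphism property, to the common expression $\rin{u_1'}{\theta(s_1'^*\tilde s s_2')u_2'}{\U}$. Hence $\widetilde\theta(\tilde s)\in\adL(\U)$ with $\widetilde\theta(\tilde s)\d=\widetilde\theta(\tilde s^*)$.

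Finally, that $\widetilde\theta$ is a semigroup homomorphism I would check by evaluating both $\widetilde\theta(\tilde s\tilde t)$ and $\widetilde\theta(\tilde s)\widetilde\theta(\tilde t)$ on a typical $u=\theta(s')u'$: the first gives $\theta(\tilde s\tilde t s')u'$, while the second is $\widetilde\theta(\tilde s)\big(\theta(\tilde t s')u'\big)=\theta(\tilde s\tilde t s')u'$, where I use that $\tilde t s'\in S$ so that $\theta(\tilde t s')u'$ is a legitimate representative to which the defining formula applies. Combined with the uniqueness and restriction observations of the first paragraph, this completes the construction. I expect the only genuinely delicate step to be the well-definedness argument, where the ideal property $s_0^*\tilde s\in S$ is exactly what lets one reduce every computation to the already-established homomorphism $\theta$ on $S$.
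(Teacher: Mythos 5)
Your proposal is correct and follows essentially the same route as the paper: the same defining formula $\widetilde\theta(\tilde s)(\theta(s')u'):=\theta(\tilde s s')u'$, with well-definedness and adjointability both extracted from the single identity $\rin{\theta(s)u}{\theta(\tilde s s')u'}{\U}=\rin{\theta(\tilde s^*s)u}{\theta(s')u'}{\U}$, which is exactly the displayed computation in the paper's proof. You merely spell out the uniqueness, multiplicativity, and restriction checks that the paper leaves as ``easy to see.''
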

\begin{proof}
	For every $s_0 \in \widetilde{S}$, we define a map $\widetilde{\theta}(s_0)\: \U \rightarrow \U$ as 
	\[
	\widetilde{\theta}(s_0)(\theta(s)u) := \theta(s_0s)(u)
	\]
	for $s \in S$ and $u \in \U$.
	We can see that this map $\widetilde{\theta}(s_0)$ is well-defined and adjointable by the following calculation:
	For $s,s' \in S$ and $u,u' \in \U$, we have
	\begin{align*}
	\rin{ \theta(s)u }{ \theta(s_0s')(u') }{\U} 
	&= \rin{ u }{ \theta(s)^*\theta(s_0s')(u') }{\U}\\
	&= \rin{ u }{ \theta(s^*s_0s')(u') }{\U}\\
	&= \rin{ u }{ \theta(s_0^*s)^*\theta(s')(u') }{\U}\\
	&= \rin{ \theta(s_0^*s)(u) }{ \theta(s')(u') }{\U}.
	\end{align*}
	It is easy to see that the map $\widetilde{\theta} \: \widetilde{S} \rightarrow \adL(\U)$ is a semigroup homomorphism, and that the restriction of $\widetilde{\theta}$ to $S$ coincides with $\theta$.
\end{proof}

We recall that $S$ produces the inverse $S$-set $S$ and the inverse semigroup $\adL(S)$.
\begin{lemm}\label{Lemma: lambda}
	\begin{enumerate}[(i)]
		\item For $s \in S$, the map $\lambda_s\: S \rightarrow S; s' \mapsto ss'$ is adjointable, and $\lambda_s\d = \lambda_{s^*}$ holds.
		\item The map $\lambda\: S \rightarrow \adL(S) ; s \mapsto \lambda_s$ is an injective semigroup homomorphism.
		\item For $s \in S$ and $\varphi\in\adL(S)$, we have $\varphi\lambda_s = \lambda_{\varphi(s)}$ and $\lambda_s\varphi = \lambda_{\varphi\d(s^*)^*}$.
		\item For $\varphi, \varphi'\in\adL(S)$, if $\varphi\lambda_s = \varphi\lambda_s'$ for all $s\in S$, then $\varphi = \varphi'$ holds.
	\end{enumerate}
\end{lemm}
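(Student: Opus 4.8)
The plan is to prove the four parts in the order listed, since each relies on the previous ones, throughout treating $S$ as the right inverse $S$-set of Example~\ref{Example: inverse semigroup}, where the right action is right multiplication and $\rin{s}{s'}{S} = s^*s'$. For (i), I would simply propose $\lambda_{s^*}$ as the adjoint of $\lambda_s$ and verify the defining identity of Definition~\ref{Definition: adjointable} by a one-line computation: for $s',s''\in S$,
\[
\rin{\lambda_{s^*}(s')}{s''}{S} = (s^*s')^* s'' = s'^* s s'' = \rin{s'}{\lambda_s(s'')}{S}.
\]
Thus $\lambda_s$ is adjointable, and since adjoints on an inverse set are unique (the lemma preceding the definition of $\varphi\d$), we get $\lambda_s\d = \lambda_{s^*}$. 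For (ii), the homomorphism property $\lambda_{ss'} = \lambda_s\lambda_{s'}$ is just associativity in $S$, and injectivity is immediate from Lemma~\ref{Lemma: right cancel}~(ii): if $\lambda_s = \lambda_{s'}$ then $ss'' = s's''$ for all $s''\in S$, so $s = s'$.

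For (iii), the first identity is essentially the content of Lemma~\ref{Lemma: adj T-map}, namely that every adjointable map is a right $S$-map: because the right action is multiplication, $\varphi(ss') = \varphi(s)s'$ for all $s'$, which says precisely $\varphi\lambda_s = \lambda_{\varphi(s)}$. Rather than recompute the second identity directly from $s\varphi(s')$, I would derive it formally from the first. Applying the first identity to $\varphi\d$ and $s^*$ gives $\varphi\d\lambda_{s^*} = \lambda_{\varphi\d(s^*)}$, and taking adjoints, using Proposition~\ref{Proposition: (psi phi)d = phid psid} together with $\lambda_{s^*}\d = \lambda_s$ and $(\varphi\d)\d = \varphi$ from (i) and Proposition~\ref{Proposition: phidd = phi}, yields
\[
\lambda_s\varphi = (\varphi\d\lambda_{s^*})\d = \lambda_{\varphi\d(s^*)}\d = \lambda_{\varphi\d(s^*)^*},
\]
where the last step is again (i).

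Finally, (iv) (whose hypothesis I read as $\varphi\lambda_s = \varphi'\lambda_s$ for all $s$, the prime being misplaced in the displayed statement) falls out once (ii) and (iii) are available: the first identity of (iii) rewrites the hypothesis as $\lambda_{\varphi(s)} = \lambda_{\varphi'(s)}$ for every $s$, and the injectivity of $\lambda$ from (ii) forces $\varphi(s) = \varphi'(s)$ for all $s$, i.e.\ $\varphi = \varphi'$. I expect no genuine obstacle here; the only step that is not a single-line verification is the second identity of (iii), and the point worth care is to obtain it by the adjoint argument above rather than by manipulating $s\varphi(s')$ by hand, together with correctly invoking the uniqueness of adjoints for inverse sets in (i).
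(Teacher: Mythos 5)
Your proposal is correct and follows essentially the same route as the paper: adjointability of $\lambda_s$ by direct verification, injectivity via Lemma~\ref{Lemma: right cancel}, the first identity of (iii) from Lemma~\ref{Lemma: adj T-map}, and (iv) from (ii) and (iii). The only (harmless) variation is that you derive $\lambda_s\varphi = \lambda_{\varphi\d(s^*)^*}$ by taking daggers of the first identity, whereas the paper computes it pointwise via $\lambda_s\varphi(s') = s\varphi(s') = \rin{s^*}{\varphi(s')}{S} = \rin{\varphi\d(s^*)}{s'}{S} = \varphi\d(s^*)^*s'$; both are valid one-line arguments, and your reading of the misprint in (iv) matches the paper's intent.
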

\begin{proof}
	It is easy to show (i) and that the map $\lambda\: S \rightarrow \adL(S)$ is a semigroup homomorphism.
	The map $\lambda$ is injective by Lemma \ref{Lemma: right cancel}.
	For $s,s' \in S$ and $\varphi\in\adL(S)$, we have 
	\begin{align*}
	\varphi\lambda_s(s') &= \varphi(ss') = \varphi(s)s' = \lambda_{\varphi(s)}(s')\\
	\lambda_s\varphi(s') &= s\varphi(s') = \rin{ s^* }{ \varphi(s') }{T} \\
	&= \rin{ \varphi\d(s^*) }{ s' }{T} = \varphi\d(s^*)^*s' = \lambda_{\varphi\d(s^*)^*}(s'),
	\end{align*}
	where the second equal follows from Lemma \ref{Lemma: adj T-map}.
	Thus $\varphi\lambda_s = \lambda_{\varphi(s)}$ and $\lambda_s\varphi = \lambda_{\varphi\d(s^*)^*}$ hold.
	For $\varphi, \varphi'\in\adL(S)$ such that $\varphi\lambda_s = \varphi\lambda_s'$ for all $s\in T$, we have 
	$\lambda_{\varphi(s)} = \varphi\lambda_s = \varphi'\lambda_s = \lambda_{\varphi'(s)}$.
	The injectivity of $\lambda$ implies $\varphi(s) = \varphi'(s)$. 
	Thus we have $\varphi = \varphi'$.
\end{proof}

\begin{prop}
	For every inverse semigroup $S$, there exists the multiplier semigroup $M(S)$ of $S$.
\end{prop}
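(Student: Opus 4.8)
The plan is to take $M(S) := \adL(S)$, where $S$ is regarded as the right inverse $S$-set of Example \ref{Example: inverse semigroup}, and to use the map $\lambda\: S \rightarrow \adL(S)$ from Lemma \ref{Lemma: lambda} as the embedding of $S$ into $M(S)$. By Theorem \ref{Theorem: L(U) is inverse} the semigroup $\adL(S)$ is inverse, and by Lemma \ref{Lemma: lambda} (ii) the map $\lambda$ is an injective semigroup homomorphism; since a semigroup homomorphism between inverse semigroups automatically preserves generalized inverses, $\lambda$ is an isomorphism onto its image $\lambda(S)$, and we identify $S$ with $\lambda(S) \subset \adL(S)$.

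First I would check that $\lambda(S)$ is a two-sided ideal of $\adL(S)$. This is immediate from Lemma \ref{Lemma: lambda} (iii): for $s \in S$ and $\varphi \in \adL(S)$ we have $\varphi\lambda_s = \lambda_{\varphi(s)} \in \lambda(S)$ and $\lambda_s\varphi = \lambda_{\varphi\d(s^*)^*} \in \lambda(S)$, so $\lambda(S)$ absorbs products on both sides. Together with the previous paragraph this realizes the requirement in Definition \ref{Definition: multiplier} that $M(S)$ contain $S$ as a two-sided ideal.

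Next I would verify that $\lambda$ is non-degenerate as a semigroup homomorphism into $\adL(S)$. For every $u \in S$ we have $u = (uu^*)u = \lambda_{uu^*}(u)$, so every element of the inverse $S$-set $S$ is of the form $\lambda_{s'}(u')$ with $s',u' \in S$; this is exactly non-degeneracy in the sense preceding Proposition \ref{Proposition: the universality of L(U)}.

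Finally, the universal property follows directly by applying Proposition \ref{Proposition: the universality of L(U)} with $\U = S$ and $\theta = \lambda$: for every inverse semigroup $\widetilde{S}$ containing $S$ (hence $\lambda(S)$) as a two-sided ideal, there is a \emph{unique} semigroup homomorphism $\widetilde{\lambda}\: \widetilde{S} \rightarrow \adL(S) = M(S)$ whose restriction to $S$ is $\lambda$, which is precisely the factorization demanded in Definition \ref{Definition: multiplier}. Uniqueness of $M(S)$ up to isomorphism was already noted after that definition. Essentially all of the real content sits in Lemma \ref{Lemma: lambda} and Proposition \ref{Proposition: the universality of L(U)}, so the only point requiring care is the bookkeeping that the inclusion $S \hookrightarrow M(S)$ is realized by $\lambda$ and that the extension from Proposition \ref{Proposition: the universality of L(U)} restricts to it; the rest is a direct citation of those two results.
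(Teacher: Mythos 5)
Your proposal is correct and is essentially the paper's own proof: the paper also sets $M(S) := \adL(S)$, embeds $S$ via $\lambda$ as a two-sided ideal using Lemma \ref{Lemma: lambda}, and invokes Proposition \ref{Proposition: the universality of L(U)} for the universal property. The only difference is that you spell out the non-degeneracy of $\lambda$ (via $u = \lambda_{uu^*}(u)$) needed to apply that proposition, which the paper leaves implicit.
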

\begin{proof}
	The inverse semigroup $\adL(S)$ includes $S$ as a two-sided ideal through the semigroup homomorphism $\lambda \: S \rightarrow \adL(S)$ by Lemma \ref{Lemma: lambda}.
	By Proposition \ref{Proposition: the universality of L(U)}, $\adL(S)$ has the universality in Definition \ref{Definition: multiplier}.
\end{proof}

\begin{rema}
	An \emph{identity element} of an inverse semigroup $S$ is an element $1\in S$ such that $1s = s = s1$ for all $s \in S$.
	This is unique if it exists.
	For every inverse semigroup $S$, $M(S)$ has the identity element because $\adL(S)$ has the identity map on $S$ as the identity element. 
	We can easily check that $S = M(S)$ if and only if $S$ has the identity element.
\end{rema}

\begin{rema}
	For a semigroup $S$, the semigroup of double centralizers on $S$ are introduced in \cite{Joh64}:
	The \emph{double centralizer} on $S$ is a couple of maps $\lambda\:S\rightarrow S$ and $\rho\:S\rightarrow S$ with $s_1\lambda(s_2) = \rho(s_1)s_2$ for $s_1,s_2\in S$.
	The set of all double centralizers $D(S)$ on $S$ becomes a semigroup with respect to the multiplication $(\lambda_1,\rho_1)(\lambda_2,\rho_2) = (\lambda_1\lambda_2,\rho_2\rho_1)$ for $(\lambda_1,\rho_1),(\lambda_2,\rho_2)\in D(S)$.
	We can easily check that a couple of maps $\lambda_s\: S \rightarrow S; s' \mapsto ss'$ and $\rho_s\: S \rightarrow S; s' \mapsto s's$ becomes a double centralizer for $s \in S$.
	The map $S \rightarrow D(S); s \mapsto (\lambda_s,\rho_s)$ is a semigroup homomorphism.
	If $S$ is inverse, this map becomes injective by Lemma \ref{Lemma: right cancel}.
	We can see that for an inverse semigroup $S$ the semigroup $D(S)$ is isomorphic to the inverse semigroup $\adL(S)$ as follows:
	For every $(\lambda,\rho) \in M(S)$, a map $S \ni s \mapsto \rho(s^*)^* \in S$ between the inverse $S$-set $S$ is the adjoint of $\lambda$ because
	\[
	\rin{ s_1 }{ \lambda(s_2) }{S} = s_1^*\lambda(s_2) = \rho(s_1^*)s_2 = \rin{ \rho(s_1^*)^* }{ s_2 }{S}
	\]
	for $s_1,s_2\in S$.
	Thus a map $\iota\:D(S)\ni (\lambda,\rho) \mapsto \lambda \in \adL(S)$ is well-defined.
	This map is a semigroup homomorphism obviously.
	We can see that for $\varphi \in \adL(S)$, a couple of $\varphi$ and a map $\varphi'\: S \rightarrow S; s \mapsto \varphi\d(s^*)^*$ is a double centralizer on $S$, and that a map $\adL(S) \rightarrow D(S); \varphi \mapsto (\varphi,\varphi')$ becomes the inverse of $\iota$.
	Thus we have $D(S) \simeq \adL(S)$.
	Especially, for every inverse semigroup $S$, double centralizers $D(S)$ becomes an inverse semigroup.
	This is another description of the multiplier semigroup of $S$.
	The fact that $D(S)$ becomes an inverse semigroup is analogue to the fact that the set of all double centralizers on a $C^*$-algebra $A$ becomes a $C^*$-algebra proved in \cite{Bus68}.
\end{rema}

We give the condition such that $\widetilde{\theta}$ in Proposition \ref{Proposition: the universality of L(U)} becomes injective.

\begin{defi}
	A two-sided ideal $I$ of $S$ is \emph{essential} if for every $s,s' \in S$, $st = s't$ for all $t \in I$ implies $s = s'$.
\end{defi}

\begin{lemm}
	For a two-sided ideal $I$ of $S$, the following are equivalent:
	\begin{enumerate}[(i)]
		\item $I$ is essential.
		\item For every $s,s' \in S$, $ts = ts'$ for all $t \in I$ implies $s = s'$.
	\end{enumerate}
\end{lemm}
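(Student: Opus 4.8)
The plan is to exploit the involution $s \mapsto s^*$, which converts right-multiplication statements into left-multiplication statements, together with the fact recalled in the preliminaries that a two-sided ideal $I$ of an inverse semigroup is itself an inverse subsemigroup. The latter means that $t \in I$ if and only if $t^* \in I$, so the map $t \mapsto t^*$ is a bijection of $I$ onto itself; this is exactly what lets me translate the quantifier ``for all $t \in I$'' between the two conditions. I would also use the anti-homomorphism property $(st)^* = t^*s^*$ and the relation $s^{**} = s$, both noted at the start of Section \ref{Section: Preliminaries}.

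First I would prove (i) $\Rightarrow$ (ii). Suppose $I$ is essential and take $s,s' \in S$ with $ts = ts'$ for all $t \in I$. Applying the involution to both sides and using $(ts)^* = s^*t^*$ gives $s^*t^* = s'^*t^*$ for every $t \in I$. Since $t \mapsto t^*$ runs over all of $I$, this says $s^*u = s'^*u$ for all $u \in I$, which is precisely the hypothesis of essentiality applied to the pair $s^*, s'^*$; hence $s^* = s'^*$, and taking adjoints once more yields $s = s'$. The converse (ii) $\Rightarrow$ (i) is the mirror image: starting from $st = s't$ for all $t \in I$ and applying the involution produces $t^*s^* = t^*s'^*$, i.e.\ $us^* = us'^*$ for all $u \in I$; condition (ii) then forces $s^* = s'^*$, so $s = s'$.

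I do not expect a genuine obstacle here, since the argument is a direct duality via the anti-automorphism $*$. The only point requiring any care is the legitimacy of the reindexing $u = t^*$, which rests on the two-sided ideal $I$ being closed under $*$ so that $*$ restricts to a bijection of $I$ onto itself; this is exactly the fact that a two-sided ideal of an inverse semigroup is an inverse subsemigroup. Because the two implications are perfectly symmetric under $*$, writing out one direction in full and remarking that the other is obtained ``in a similar way'' (as is done elsewhere in the paper) will suffice.
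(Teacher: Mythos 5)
Your proof is correct. It differs slightly from the paper's, so let me compare. The paper proves the implication by fixing an auxiliary element $t_0 \in I$, right-multiplying the hypothesis $ts = ts'$ by $t_0$ to get $t(st_0) = t(s't_0)$ with $st_0, s't_0 \in I$, cancelling the $t$ by means of (the left-handed form of) Lemma \ref{Lemma: right cancel} applied inside the inverse semigroup $I$, and then invoking essentiality on the resulting family of equations $st_0 = s't_0$, $t_0 \in I$. Your argument bypasses both the auxiliary element and the cancellation lemma: applying the involution converts the left-multiplication hypothesis for the pair $s, s'$ directly into the right-multiplication hypothesis of essentiality for the pair $s^*, s'^*$. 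Both proofs hinge on the same structural fact, namely that a two-sided ideal of an inverse semigroup is an inverse subsemigroup and hence stable under $*$; in your version this is what legitimizes the reindexing $u = t^*$, while in the paper's version it is what puts $st_0$ and $s't_0$ inside $I$ so that the cancellation lemma can be applied there. Your route is the more economical of the two, and the symmetry of the two implications under $*$ is handled exactly as the paper handles its own converse (``we can prove the converse implication similarly'').
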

\begin{proof}
	Let $I$ be an essential two-sided ideal of $S$.
	Since $I$ is a two-sided ideal, $I$ becomes an inverse subsemigroup of $S$.
	Take $t_0 \in T$ and $s,s' \in S$ such that $ts = ts'$ for all $t$ arbitrarily.
	We have $tst_0 = ts't_0$ for all $t \in T$.
	By Lemma \ref{Lemma: right cancel}, we get $st_0 = s't_0$.
	Since $I$ is essential, $s = s'$ holds.
	We can prove the converse implication similarly.
\end{proof}

\begin{lemm}\label{Lemma: ess inj}
	Let $T$ be an inverse semigroup, $\widetilde{S}$ be an inverse semigroup which includes $S$ as an essential two-sided ideal, $\theta\: S \rightarrow T$ be a semigroup homomorphism, and $\widetilde{\theta} \: \widetilde{S} \rightarrow T$ be a semigroup homomorphism whose restriction to $S$ coincides with $\theta$;
	\[
	\begin{tikzcd}
	\widetilde{S} &\\
	S & T.
	\arrow[from = 2-1, to = 1-1, "\hookrightarrow", sloped, phantom]
	\arrow[from = 2-1, to = 2-2, "\theta"']
	\arrow[from = 1-1, to = 2-2, "\widetilde{\theta}"]
	\end{tikzcd}
	\]
	The semigroup homomorphism $\widetilde{\theta}$ is injective if and only if so is $\theta$.
\end{lemm}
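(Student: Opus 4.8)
The statement is an equivalence, and one implication is immediate: since $\theta$ is literally the restriction of $\widetilde{\theta}$ to $S$, injectivity of $\widetilde{\theta}$ forces injectivity of $\theta$. So the plan is to concentrate on the converse, assuming $\theta$ is injective and deducing that $\widetilde{\theta}$ is injective.

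For that, I would take $s_1,s_2 \in \widetilde{S}$ with $\widetilde{\theta}(s_1) = \widetilde{\theta}(s_2)$ and aim to show $s_1 = s_2$. The key idea is to pin down $s_1$ and $s_2$ by testing them against elements of the ideal $S$. Fix $t \in S$. Since $S$ is a two-sided ideal of $\widetilde{S}$, both $s_1 t$ and $s_2 t$ lie in $S$, so $\theta$ is defined on them and $\widetilde{\theta}(s_i t) = \theta(s_i t)$. Using that $\widetilde{\theta}$ is a semigroup homomorphism,
\[
\theta(s_1 t) = \widetilde{\theta}(s_1)\widetilde{\theta}(t) = \widetilde{\theta}(s_2)\widetilde{\theta}(t) = \theta(s_2 t).
\]
Since $\theta$ is injective and $s_1 t, s_2 t \in S$, this yields $s_1 t = s_2 t$. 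As $t \in S$ was arbitrary, $s_1 t = s_2 t$ holds for all $t \in S$.

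Finally I would invoke essentiality of $S$ in $\widetilde{S}$: by definition, $s_1 t = s_2 t$ for all $t$ in the ideal $S$ forces $s_1 = s_2$. This closes the converse and hence the equivalence. There is no genuine obstacle here beyond organizing these steps; the only point worth flagging is that essentiality is exactly what the argument consumes—without it the extension $\widetilde{\theta}$ could collapse elements of $\widetilde{S}$ that act identically on $S$—and that both the ideal property (to land $s_i t$ inside $S$, where $\theta$ sees it) and the homomorphism property are used in tandem to transfer injectivity from $\theta$ up to $\widetilde{\theta}$.
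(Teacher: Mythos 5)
Your proposal is correct and follows essentially the same argument as the paper: the forward direction is immediate from restriction, and for the converse you test $s_1,s_2$ against elements of $S$, use the ideal property and injectivity of $\theta$ to get $s_1t=s_2t$ for all $t\in S$, and conclude by essentiality. No gaps.
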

\begin{proof}
	The only if part is clear.
	Take $s_0,s_0'\in \widetilde{S}$ with $\widetilde{\theta}(s_0) = \widetilde{\theta}(s_0')$.
	For all $s \in S$, we have $\theta(s_0s) = \widetilde{\theta}(s_0)\theta(s) = \widetilde{\theta}(s_0')\theta(s) = \theta(s_0's)$.
	By the injectivity of $\theta$, $s_0s = s_0's$ holds.
	We obtain $s_0 = s_0'$ because $S$ is an essential two-sided ideal of $\widetilde{S}$.
	Thus $\widetilde{\theta}$ is injective.
\end{proof}

\begin{lemm}\label{Lemma: M(S) ess}
	An inverse semigroup $S$ is an essential two-sided ideal of $M(S)$.
\end{lemm}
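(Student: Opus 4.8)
The plan is to recall the explicit model of $M(S)$ furnished by the preceding results and then read off essentiality almost directly from Lemma \ref{Lemma: lambda}. Concretely, $M(S)$ is realized as $\adL(S)$, and $S$ is identified with its image $\lambda(S)\subset\adL(S)$ under the injective semigroup homomorphism $\lambda\:S\rightarrow\adL(S)$, $s\mapsto\lambda_s$, from Lemma \ref{Lemma: lambda} (ii). So the statement to be proved splits into two parts: that $\lambda(S)$ is a two-sided ideal of $\adL(S)$, and that this ideal is essential.

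First I would note that the ideal property is immediate from Lemma \ref{Lemma: lambda} (iii): for every $\varphi\in\adL(S)$ and every $s\in S$ we have $\varphi\lambda_s = \lambda_{\varphi(s)}$ and $\lambda_s\varphi = \lambda_{\varphi\d(s^*)^*}$, and both right-hand sides lie in $\lambda(S)$. Hence $\lambda(S)$ absorbs multiplication from either side, so it is a two-sided ideal of $M(S)=\adL(S)$. (This is already implicitly used when $M(S)$ is constructed, so I would merely recall it.)

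Next I would check essentiality by unwinding the definition with ambient semigroup $M(S)=\adL(S)$ and ideal $I=\lambda(S)$. In that definition the roles of the elements $s,s'$ are now played by arbitrary multipliers $\varphi,\varphi'\in\adL(S)$, while the test elements $t$ range over the copy $\lambda(S)$ of $S$ inside $M(S)$. Thus the condition to verify is that whenever $\varphi,\varphi'\in\adL(S)$ satisfy $\varphi t=\varphi' t$ for all $t\in\lambda(S)$---equivalently $\varphi\lambda_s=\varphi'\lambda_s$ for all $s\in S$---then $\varphi=\varphi'$. This is precisely Lemma \ref{Lemma: lambda} (iv), so the essentiality follows at once.

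I do not expect a genuine obstacle here, since all the real content has already been isolated in Lemma \ref{Lemma: lambda}. The only point requiring care is the bookkeeping in the definition of essential ideal: one must keep straight that the ``test'' elements $t$ range over $S\cong\lambda(S)$ and the elements being compared range over the larger semigroup $M(S)=\adL(S)$, and that the (one-sided) cancellation statement needed is exactly the one recorded in Lemma \ref{Lemma: lambda} (iv) rather than the plain cancellation of Lemma \ref{Lemma: right cancel}.
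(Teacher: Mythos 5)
Your proof is correct and follows the same route as the paper: identify $M(S)$ with $\adL(S)$ containing $S$ as $\lambda(S)$, and deduce essentiality directly from Lemma \ref{Lemma: lambda} (iv). The paper's own proof is exactly this, stated more tersely.
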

\begin{proof}
	By Lemma \ref{Lemma: lambda} (iv), the inverse semigroup $\adL(S)$ which is isomorphic to the multiplier semigroup $M(S)$ of $S$ includes $S$ as an essential two-sided ideal. 
\end{proof}

\begin{coro}\label{Corollary: M(S)}
	The multiplier semigroup $M(S)$ of $S$ is the largest inverse semigroup in which $S$ is an essential two-sided ideal, where ``largest'' means that for every inverse semigroup $\widetilde{S}$ which includes $S$ as an essential two-sided ideal, there exists a unique injective semigroup homomorphism $\theta \: \widetilde{S} \rightarrow M(S)$ such that the diagram in Definition \ref{Definition: multiplier} commutes.
\end{coro}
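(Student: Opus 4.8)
The plan is to assemble the statement from three facts already available in the text: Lemma~\ref{Lemma: M(S) ess}, the defining universality of $M(S)$ in Definition~\ref{Definition: multiplier}, and Lemma~\ref{Lemma: ess inj}. First I would record that $M(S)$ genuinely is an inverse semigroup in which $S$ sits as an essential two-sided ideal; this is precisely Lemma~\ref{Lemma: M(S) ess}, so that $M(S)$ is a legitimate candidate for the ``largest'' such object and the phrasing of the corollary makes sense.

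Next, let $\widetilde{S}$ be an arbitrary inverse semigroup containing $S$ as an essential two-sided ideal. Since an essential two-sided ideal is in particular a two-sided ideal, the universality of $M(S)$ (Definition~\ref{Definition: multiplier}) supplies a unique semigroup homomorphism $\theta\:\widetilde{S}\rightarrow M(S)$ whose restriction to $S$ is the inclusion $S\hookrightarrow M(S)$. This single application already yields both the existence and the uniqueness of $\theta$ as a semigroup homomorphism making the diagram in Definition~\ref{Definition: multiplier} commute; no separate uniqueness argument is needed.

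It then remains only to upgrade $\theta$ to an \emph{injective} homomorphism, and here I would invoke Lemma~\ref{Lemma: ess inj} with $T:=M(S)$, taking the base map $S\rightarrow M(S)$ to be the inclusion and taking $\widetilde{\theta}:=\theta$ to be the extension just produced. The inclusion $S\hookrightarrow M(S)$ is injective, and $S$ is essential in $\widetilde{S}$ by hypothesis, so Lemma~\ref{Lemma: ess inj} forces $\theta$ to be injective. Combining the three steps gives exactly the asserted maximality.

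There is no real obstacle here, since everything reduces to invoking results proved earlier. The only point requiring care is the bookkeeping in the application of Lemma~\ref{Lemma: ess inj}: one must identify the base map with the inclusion $S\hookrightarrow M(S)$ (which is the map that is injective) rather than with $\theta$ itself, and one must note at the outset that an essential two-sided ideal is a two-sided ideal so that Definition~\ref{Definition: multiplier} applies at all.
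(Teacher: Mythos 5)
Your proof is correct and follows essentially the same route as the paper, whose proof simply cites Lemma~\ref{Lemma: M(S) ess} and Lemma~\ref{Lemma: ess inj}; you have merely spelled out the bookkeeping (universality of $M(S)$ for existence and uniqueness, then Lemma~\ref{Lemma: ess inj} with the inclusion $S\hookrightarrow M(S)$ as the injective base map) that the paper leaves implicit.
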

\begin{proof}
	This follows from Lemma \ref{Lemma: M(S) ess} and \ref{Lemma: ess inj}.
\end{proof}

We now characterize the multiplier semigroup in terms of the notion of idealizers in the inverse semigroup of adjointable maps.
Let $U$ be a subsemigroup of $S$.
We define the \emph{idealizer} $I(U)$ of $U$ in $S$ as the largest subsemigroup of $S$ in which $U$ is a two-sided ideal.
It is easy to see the following;
\[
I(U) = \{ s\in S \mid sU \subset U, Us \subset U \}.
\]
\begin{prop}\label{Proposition: idealizer}
	Let $\theta\: S \rightarrow \adL(\U)$ be an injective non-degenerate semigroup homomorphism.
	The semigroup homomorphism in Proposition \ref{Proposition: the universality of L(U)} becomes an isomorphism from the multiplier semigroup $M(S)$ to the idealizer $I(\theta(S))$ of $\theta(S)$ in $\adL(\U)$;
	\[
	\begin{tikzcd}
	M(S) &\\
	S & \adL(\U).
	\arrow[from = 2-1, to = 1-1, "\hookrightarrow", sloped, phantom]
	\arrow[from = 2-1, to = 2-2, "\theta"']
	\arrow[from = 1-1, to = 2-2, "\widetilde{\theta}", dashed]
	\end{tikzcd}
	\]
\end{prop}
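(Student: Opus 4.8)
The plan is to realize both $M(S)$ and the idealizer $I(\theta(S))$ as solutions to the same extension problem and then match them up via the uniqueness clauses of the universal properties already available. First I would apply Proposition \ref{Proposition: the universality of L(U)} with $\widetilde{S} = M(S)$ to obtain the extension $\widetilde{\theta}\: M(S) \rightarrow \adL(\U)$ of $\theta$. Injectivity of $\widetilde{\theta}$ is then immediate: $S$ is an essential two-sided ideal of $M(S)$ by Lemma \ref{Lemma: M(S) ess}, and $\theta$ is injective by hypothesis, so Lemma \ref{Lemma: ess inj} applies. To see that the image lands in $I(\theta(S))$, I would compute, for $m \in M(S)$ and $s \in S$, that $\widetilde{\theta}(m)\theta(s) = \widetilde{\theta}(ms) = \theta(ms) \in \theta(S)$, using that $ms \in S$ (since $S$ is a two-sided ideal of $M(S)$) and that $\widetilde{\theta}$ restricts to $\theta$ on $S$; the symmetric computation gives $\theta(s)\widetilde{\theta}(m) \in \theta(S)$, so $\widetilde{\theta}(m) \in I(\theta(S))$ by the explicit description of the idealizer.

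The heart of the argument is surjectivity onto $I(\theta(S))$, and for this I would first check that $I(\theta(S))$ is itself an inverse semigroup. The key point is closure under the involution $\d$ of $\adL(\U)$: if $\varphi \in I(\theta(S))$, then for every $s \in S$ we have $\theta(s^*)\varphi\d = (\varphi\theta(s))\d \in \theta(S)$ by Proposition \ref{Proposition: (psi phi)d = phid psid} together with the facts that $\theta$ preserves generalized inverses and that $\theta(S)$ is closed under $\d$; since $s^*$ ranges over all of $S$, this gives $\theta(S)\varphi\d \subset \theta(S)$, and the other inclusion is symmetric, so $\varphi\d \in I(\theta(S))$. As $\adL(\U)$ is inverse by Theorem \ref{Theorem: L(U) is inverse}, its idempotents commute, and closure under $\d$ supplies a generalized inverse of each element inside $I(\theta(S))$; hence $I(\theta(S))$ is regular with commuting idempotents and therefore inverse by Theorem \ref{Theorem: regular and inverse}. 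Moreover $\theta(S)$ is a two-sided ideal of $I(\theta(S))$ by the very definition of the idealizer.

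With $I(\theta(S))$ shown to be an inverse semigroup containing $\theta(S) \cong S$ as a two-sided ideal, I would invoke the universal property of $M(S)$ in Definition \ref{Definition: multiplier} to produce a semigroup homomorphism $\kappa\: I(\theta(S)) \rightarrow M(S)$ restricting to the identification $\theta(S) \cong S \hookrightarrow M(S)$. Then both $\widetilde{\theta}\circ\kappa$ and the inclusion $I(\theta(S)) \hookrightarrow \adL(\U)$ are homomorphisms $I(\theta(S)) \rightarrow \adL(\U)$ extending $\theta$ on the ideal $\theta(S)$, so they coincide by the uniqueness clause of Proposition \ref{Proposition: the universality of L(U)}; hence $\widetilde{\theta}(\kappa(\varphi)) = \varphi$ for every $\varphi \in I(\theta(S))$, giving surjectivity. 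Combined with the injectivity above, $\widetilde{\theta}\: M(S) \rightarrow I(\theta(S))$ is a bijective semigroup homomorphism, hence an isomorphism. (Non-degeneracy of $\theta$ additionally shows $\theta(S)$ is essential in $I(\theta(S))$: if $\varphi\theta(s) = \varphi'\theta(s)$ for all $s$, then writing an arbitrary $u \in \U$ as $\theta(s')u'$ forces $\varphi = \varphi'$; so by Corollary \ref{Corollary: M(S)} one may also take $\kappa$ injective and read off a genuine two-sided inverse.) I expect the main obstacle to be exactly the middle step — verifying closure of $I(\theta(S))$ under the involution so that it qualifies as an inverse semigroup — and then correctly aligning the uniqueness clauses of the two universal properties to deduce surjectivity.
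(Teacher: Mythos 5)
Your proof is correct and follows essentially the same strategy as the paper's: both realize $I(\theta(S))$ as an inverse semigroup containing $\theta(S)\cong S$ as a two-sided ideal and play the two universal properties against each other, the only difference being that the paper verifies $\iota\circ\widetilde{\theta}=\id_{M(S)}$ (with $\iota\: I(\theta(S))\rightarrow M(S)$ injective via essentiality of $\theta(S)$ in the idealizer and Corollary \ref{Corollary: M(S)}), whereas you verify $\widetilde{\theta}\circ\kappa=\id_{I(\theta(S))}$ via the uniqueness clause of Proposition \ref{Proposition: the universality of L(U)} and obtain injectivity of $\widetilde{\theta}$ separately from Lemmas \ref{Lemma: M(S) ess} and \ref{Lemma: ess inj}. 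Your write-up is in fact slightly more complete than the paper's: you check explicitly that $\widetilde{\theta}(M(S))\subset I(\theta(S))$, and, more substantively, that the idealizer is closed under the adjoint operation $\varphi\mapsto\varphi\d$ and hence is an inverse semigroup --- a hypothesis the paper tacitly assumes when it treats $I(\theta(S))$ as an inverse semigroup containing $S$ as an (essential) two-sided ideal.
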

\begin{proof}
	We first show that $I(\theta(S))$ includes $\theta(S)$ as an essential two-sided ideal.
	Take $\varphi,\varphi' \in I(\theta(S))$ such that $\varphi\theta(s) = \varphi'\theta(s)$ for all $s \in S$.
	This implies $\varphi(\theta(s)u) = \varphi'(\theta(s)u)$ for all $u \in \U$.
	Since $\theta$ is non-degenerate, $\varphi = \varphi'$ holds.
	Thus $\theta(S)$ is an essential two-sided ideal in $I(\theta(S))$.
	The idealizer $I(\theta(S))$ includes $S$ as an essential ideal through the injective semigroup homomorphism $\theta$.
 	
	By Corollary \ref{Corollary: M(S)}, we obtain the injective semigroup homomorphism $\iota$ from $I(\theta(S))$ to $M(S)$ whose restriction to $S$ is the inclusion map from $S$ to $M(S)$.
	
	The restriction of the composition $\iota\widetilde{\theta}$ to $S$ is the identity map on $S$.
	By the universality of $M(S)$, we see that $\iota\widetilde{\theta}$ is the identity map on $M(S)$.
	 
	 Since $\iota$ is injective and $\iota\widetilde{\theta}$ is the identity map on $M(S)$, we obtain that $\widetilde{\theta}$ is an isomorphism from $M(S)$ to $I(\theta(S))$.
\end{proof}

As an example of multiplier semigroup, we calculate the multiplier semigroup of the inverse semigroup $\cpK(\U)$ for a right inverse $T$-set $\U$.
This is an analogy of Kasparov's Theorem (see \cite[Theorem 2.4]{Lan95}).

\begin{theo}\label{Theorem: L(U) = L(K(U))}
	For a right inverse $T$-set $\U$, the multiplier semigroup $M(\cpK(\U))$ of $\cpK(\U)$ is isomorphic to $\adL(\U)$.
\end{theo}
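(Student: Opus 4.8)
The plan is to apply Proposition~\ref{Proposition: idealizer} to the inclusion homomorphism $\iota\: \cpK(\U) \hookrightarrow \adL(\U)$, and then to identify the relevant idealizer with all of $\adL(\U)$. Recall that by Corollary~\ref{Corollary: K(U) ideal} the set $\cpK(\U)$ is a two-sided ideal of $\adL(\U)$, and by Theorem~\ref{Theorem: K(U) is inverse} it is an inverse semigroup, so the inclusion makes sense as a map of inverse semigroups. Proposition~\ref{Proposition: idealizer} is tailored to compute the multiplier semigroup of an inverse semigroup realized inside some $\adL(\U)$, and $\cpK(\U)$ comes to us already so realized, acting on $\U$ by evaluation.

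First I would verify that $\iota$ is an injective, non-degenerate semigroup homomorphism, which are exactly the hypotheses of Proposition~\ref{Proposition: idealizer}. Injectivity is immediate since $\iota$ is an inclusion. For non-degeneracy I would use condition (R-iii): for every $u \in \U$ we have
\[
u = u\rin{u}{u}{\U} = \omega_{u,u}(u),
\]
so every element of $\U$ is of the form $k(u')$ with $k = \omega_{u,u} \in \cpK(\U)$ and $u' = u \in \U$. Thus $\iota$ is non-degenerate in the sense preceding Proposition~\ref{Proposition: the universality of L(U)}.

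With these hypotheses in hand, Proposition~\ref{Proposition: idealizer} yields that the canonical homomorphism $\widetilde{\iota}$ is an isomorphism from $M(\cpK(\U))$ onto the idealizer $I(\cpK(\U))$ of $\cpK(\U)$ inside $\adL(\U)$. It then remains to identify this idealizer. Here I would invoke Corollary~\ref{Corollary: K(U) ideal} once more: since $\cpK(\U)$ is a two-sided ideal of the \emph{entire} semigroup $\adL(\U)$, and the idealizer is by definition the largest subsemigroup of $\adL(\U)$ in which $\cpK(\U)$ is a two-sided ideal, we must have $I(\cpK(\U)) = \adL(\U)$. Combining the two steps gives the desired isomorphism $M(\cpK(\U)) \cong \adL(\U)$.

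I do not expect a genuine obstacle in this argument, as all the substantive work has already been done in Corollary~\ref{Corollary: K(U) ideal}, Theorem~\ref{Theorem: K(U) is inverse}, and Proposition~\ref{Proposition: idealizer}; the proof is essentially an assembly of these facts. The only point requiring care is confirming that the inclusion $\iota$ genuinely satisfies the non-degeneracy hypothesis, for which the identity $u = \omega_{u,u}(u)$ is the key observation. This also explains the stated analogy with Kasparov's theorem: there $\adL(\E) \cong M(\cpK(\E))$ follows from $\cpK(\E)$ being an essential ideal of $\adL(\E)$, and the role of essentiality is played here by the non-degeneracy of the action of $\cpK(\U)$ on $\U$.
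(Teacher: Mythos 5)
Your proposal is correct and follows exactly the paper's own route: apply Proposition~\ref{Proposition: idealizer} to the inclusion $\cpK(\U)\hookrightarrow\adL(\U)$ and observe that the idealizer is all of $\adL(\U)$ because $\cpK(\U)$ is already a two-sided ideal there. The only difference is that you spell out the non-degeneracy check via $u=\omega_{u,u}(u)$, which the paper leaves implicit.
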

\begin{proof}
	Apply Proposition \ref{Proposition: idealizer} to the case such that $S=\cpK(\U)$ and $\theta$ is the inclusion map.
	The idealizer of $\cpK(\U)$ in $\adL(\U)$ is nothing but $\adL(\U)$.
\end{proof}

\section{Relation to inverse Rees matrix semigroups}
\label{Section: Relation to inverse Rees matrix semigroups}

In the semigroup theory, the \emph{Rees matrix semigroups} are studied well (see \cite{McA81} or \cite{McA83}, for example).
For an inverse semigroup $T$ and a set $I$, Afara and Lawson introduced a \emph{McAlister function} $p\: I \times I \rightarrow T$ and an inverse semigroup $IM(T,I,p)$ called the \emph{inverse Rees matrix semigroup over $T$} in \cite{AL13}.
Using this inverse semigroup, they characterized the inverse semigroups which are Morita equivalent to given inverse semigroup (\cite[Theorem 3.5]{AL13}).
We reprove this in Corollary \ref{Corollary: AL} in terms of our inverse set theory.
In this section, we see McAlister functions from the perspective of inverse sets.
We first recall the definition of McAlister functions and inverse Rees matrix semigroups.

\begin{defi}
	For a set $I$ and an inverse semigroup $T$, a map $p \: I \times I \rightarrow T$ is a \emph{partial McAlister function} if the following conditions hold: 
	For all $i,j,k\in I$,
	\begin{enumerate}[(MF1)]
		\item $p_{i,i} \in E(T)$,
		\item $p_{i,i} p_{i,j} p_{j,j} = p_{i,j}$,
		\item $p_{i,j}^* = p_{j,i}$, and
		\item $p_{i,j} p_{j,k} \leq p_{i,k}$.
	\end{enumerate}
	If a map $p \: I \times I \rightarrow T$ satisfies (MF1)-(MF4) and 
	\begin{enumerate}
		\item[(MF5)] For every $e \in E(T)$, there exists $i \in I$ such that $e \leq p_{i,i}$,
	\end{enumerate}
	we call it a \emph{McAlister function}.
\end{defi}

McAlister studied functions of this kind in \cite{McA83}.
Afara and Lawson introduced the name ``McAlister function'' in \cite{AL13}.
The name ``partial McAlister function'' is introduced in this paper.

The inverse Rees matrix semigroup $IM(T,I,p)$ over $T$ is constructed as follows:
Let $T$ be an inverse semigroup, $I$ be a set, and $p \: I \times I \rightarrow T$ be a partial McAlister function.
The set 
\[
RM(T,I,p) := \{ (j, t, i ) \in I \times T \times I \mid p_{j,j} t p_{i,i} = t \}.
\]
becomes a regular semigroup with respect to the multiplication defined as
\[
(j_2,t_2,i_2)(j_1,t_1,i_1) := (j_2, t_2 p_{i_2,j_1} t_1, i_1)
\]
for $(j_1,t_1,i_1), (j_2,t_2,i_2) \in RM(T,I,p)$.
See \cite[Lemma 2.1 and 2.3]{AL13} for the proof of this fact.
This semigroup is called the \emph{regular Rees matrix semigroup}.

We define an equivalence relation $\gamma$ on $RM(T,I,p)$ by declaring that 
\[
(j_1,t_1,i_1) \gamma (j_2,t_2,i_2) :\Leftrightarrow p_{j_1,j_2} t_2 p_{i_2,i_1} = t_1 \text{ and } p_{j_2,j_1} t_1 p_{i_1,i_2} = t_2.
\]
We denote the quotient $RM(T,I,p)/\gamma$ as $IM(T,I,p)$ and the equivalence class of $(j,t,i)$ as $[j,t,i]$.
The set $IM(T,I,p)$ becomes an inverse semigroup with respect to the induced multiplication by $RM(T,I,p)$, that is,  
\[
[j_2,t_2,i_2][j_1,t_1,i_1] :=  [j_2, t_2 p_{i_2,j_1} t_1, i_1]
\]
for every $[j_1,t_1,i_1], [j_2,t_2,i_2] \in IM(T,I,p)$.
The generalized inverse of $[j,t,i]$ is $[i,t^*,j]$.
We remark that \cite[Lemma 2.6]{AL13} claims that $\gamma$ is the minimum inverse congruence on $RM(T,I,p)$.

We show that for every partial McAlister function $p\: I \times I \rightarrow T$, there exists an inverse right $T$-set $\U_p$ such that the inverse Rees matrix semigroup $IM(T,I,p)$ is isomorphic to the inverse semigroup $\cpK(\U_p)$. 

For a partial McAlister function $p \: I \times I \rightarrow T$, we define a set 
\[
\U_p' := \{ (j,t) \in I \times T \mid p_{j,j}t = t \}
\]
and a relation $\sim$ on $\U_p'$ by 
\[
(j_1,t_1) \sim (j_2,t_2) :\Leftrightarrow t_1 = p_{j_1,j_2}t_2 \text{ and } t_2 = p_{j_2,j_1}t_1.
\]

\begin{lemm}
	The relation $\sim$ on $\U_p'$ is an equivalence relation.
\end{lemm}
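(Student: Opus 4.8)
The plan is to verify the three defining properties of an equivalence relation in turn; only transitivity requires genuine work, while reflexivity and symmetry are immediate.

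For reflexivity, note that for $(j,t) \in \U_p'$ the defining membership condition is exactly $p_{j,j}t = t$, which is precisely the pair of equations needed for $(j,t) \sim (j,t)$. Symmetry is built into the definition, since the two conditions defining $(j_1,t_1) \sim (j_2,t_2)$ are manifestly invariant under interchanging the two pairs. Neither step uses more than the definitions.

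For transitivity, I would assume $(j_1,t_1)\sim(j_2,t_2)$ and $(j_2,t_2)\sim(j_3,t_3)$, so that the four equations
\[
t_1 = p_{j_1,j_2}t_2, \quad t_2 = p_{j_2,j_1}t_1, \quad t_2 = p_{j_2,j_3}t_3, \quad t_3 = p_{j_3,j_2}t_2
\]
hold, and aim to prove $t_1 = p_{j_1,j_3}t_3$ and $t_3 = p_{j_3,j_1}t_1$. Substituting the third equation into the first gives $t_1 = p_{j_1,j_2}p_{j_2,j_3}\,t_3$, so I must compare $p_{j_1,j_2}p_{j_2,j_3}\,t_3$ with $p_{j_1,j_3}\,t_3$. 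The key input is (MF4), which yields $p_{j_1,j_2}p_{j_2,j_3} \leq p_{j_1,j_3}$ in the natural order on $T$. Writing $a := p_{j_1,j_2}p_{j_2,j_3}$ and $b := p_{j_1,j_3}$, Lemma \ref{Lemma: order} lets me rewrite $a = b\,a^*a$, whence $a\,t_3 = b\,(a^*a)\,t_3$. It therefore suffices to show that the idempotent $a^*a = p_{j_3,j_2}p_{j_2,j_1}p_{j_1,j_2}p_{j_2,j_3}$ fixes $t_3$, and here I would telescope the four hypotheses reading right to left: $p_{j_2,j_3}t_3 = t_2$, then $p_{j_1,j_2}t_2 = t_1$, then $p_{j_2,j_1}t_1 = t_2$, then $p_{j_3,j_2}t_2 = t_3$, collapsing $a^*a\,t_3$ back to $t_3$. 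This gives $t_1 = a\,t_3 = b\,t_3 = p_{j_1,j_3}t_3$; the symmetric computation with $a' := p_{j_3,j_2}p_{j_2,j_1}\leq p_{j_3,j_1}$ yields $t_3 = p_{j_3,j_1}t_1$, so $(j_1,t_1)\sim(j_3,t_3)$.

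The only real obstacle is the transitivity step, and within it the verification that $a^*a$ acts as the identity on $t_3$; once the correct order relation coming from (MF4) is in hand, the argument reduces to a single telescoping substitution through the four hypotheses, together with the order characterization of Lemma \ref{Lemma: order}.
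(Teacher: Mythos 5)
Your proof is correct, and the overall skeleton (reflexivity and symmetry are immediate from the definition of $\U_p'$ and of $\sim$; transitivity is the only substantive point and hinges on (MF4) applied to $p_{j_1,j_2}p_{j_2,j_3}\leq p_{j_1,j_3}$) matches the paper. Where you diverge is in how you close the transitivity step. The paper runs a squeeze: starting from $t_1 = p_{j_1,j_2}p_{j_2,j_3}t_3 \leq p_{j_1,j_3}t_3$, it then rewrites $p_{j_1,j_3}t_3 = p_{j_1,j_3}p_{j_3,j_2}t_2 \leq p_{j_1,j_2}t_2 = t_1$ and concludes $t_1 = p_{j_1,j_3}t_3$ by antisymmetry of the natural partial order (together with the compatibility of $\leq$ with right multiplication). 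You instead unwind the inequality $a\leq b$ into the equation $a = ba^*a$ from Lemma \ref{Lemma: order} and verify directly that the idempotent $a^*a = p_{j_3,j_2}p_{j_2,j_1}p_{j_1,j_2}p_{j_2,j_3}$ fixes $t_3$ by telescoping through the four hypothesis equations; the symmetric computation for $t_3 = p_{j_3,j_1}t_1$ checks out the same way. Your version is slightly longer but purely equational: it needs only the characterization of $\leq$ in Lemma \ref{Lemma: order}, not antisymmetry of the order nor the (stated but unproved) fact that $s\leq t$ implies $ss'\leq ts'$. The paper's version is shorter but leans on those order-theoretic facts. Both are complete proofs.
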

\begin{proof}
	For every $(j,t)\in\U_p'$, $(j,t) \sim (j,t)$ holds by $t = p_{j,j} t$.
	It is clear that the relation $\sim$ is symmetric.
	Take elements $(j_k,t_k) \in \U_p'$ for $k=1,2,3$ with $(j_k,t_k) \sim (j_{k+1},t_{k+1})$ for $k = 1,2$.
	By (MF4), we get 
	\begin{align*}
	t_1 
	&= p_{j_1,j_2}t_2\\
	&= p_{j_1,j_2}p_{j_2,j_3}t_3\\
	&\leq  p_{j_1,j_3}t_3\\
	&= p_{j_1,j_3}p_{j_3,j_2}t_2\\
	&\leq p_{j_1,j_2}t_2\\
	&= t_1.
	\end{align*}
	Thus we get $t_1  = p_{j_1,j_3}t_3$. 
	In a similar way, $t_3  = p_{j_3,j_1}t_1$ holds.
	Thus $(j_1,t_1) \sim (j_3,t_3)$.
\end{proof}

We denote the quotient set $\U_p'/\sim$ as $\U_p$ and the equivalence class of $(j,t)$ as $[j,t]$.
We define a right action of $T$ on $\U_p$ as 
\[
[ j_1, t_1 ]t := [ j_1, t_1t ]
\]
and a right pairing on $\U_p$ as
\[
\rin{ [ j_2, t_2 ] }{ [ j_1, t_1 ] }{\U_p} := t_2^*p_{j_2,j_1}t_1
\]
for $[j_1,t_1], [j_2,t_2] \in \U_p$ and $t \in T$.

\begin{lemm}\label{Lemma: Up is regular}
	The set $\U_p$ becomes a right regular $T$-set.
\end{lemm}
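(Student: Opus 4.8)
The plan is to check, for the structures just defined, the well-definedness of the right action and of the right pairing together with axioms (R-i)–(R-iii) of Definition~\ref{Definition: inverse set}; the well-definedness of the pairing is the only nontrivial point. First I would dispose of the action $[j_1,t_1]t:=[j_1,t_1t]$. That $(j_1,t_1t)\in\U_p'$ is immediate, since $p_{j_1,j_1}(t_1t)=(p_{j_1,j_1}t_1)t=t_1t$; and if $(j_1,t_1)\sim(j_1',t_1')$, then multiplying the two defining relations $t_1=p_{j_1,j_1'}t_1'$ and $t_1'=p_{j_1',j_1}t_1$ on the right by $t$ shows $(j_1,t_1t)\sim(j_1',t_1't)$. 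Hence the action descends to $\U_p$.

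The main obstacle is the well-definedness of $\rin{[j_2,t_2]}{[j_1,t_1]}{\U_p}:=t_2^*p_{j_2,j_1}t_1$, because (MF4) only supplies the inequalities $p_{i,j}p_{j,k}\le p_{i,k}$ rather than equalities. I would isolate this difficulty in the claim that, whenever $(j_1,t_1)\sim(j_1',t_1')$, one has $p_{k,j_1}t_1=p_{k,j_1'}t_1'$ for every $k\in I$. To prove the claim I exploit the antisymmetry of the order on $T$: using $t_1'=p_{j_1',j_1}t_1$, (MF4), and the fact that $s\le s''$ implies $ss'\le s''s'$, I get $p_{k,j_1'}t_1'=p_{k,j_1'}p_{j_1',j_1}t_1\le p_{k,j_1}t_1$; symmetrically, using $t_1=p_{j_1,j_1'}t_1'$ I get $p_{k,j_1}t_1=p_{k,j_1}p_{j_1,j_1'}t_1'\le p_{k,j_1'}t_1'$. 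Since $\le$ is a partial order, these two inequalities force $p_{k,j_1}t_1=p_{k,j_1'}t_1'$. Taking $k=j_2$ and left-multiplying by $t_2^*$ then shows the pairing does not depend on the representative of its second argument. For the first argument, I apply $*$ to the claim (with roles interchanged) and invoke (MF3) to obtain $t_2^*p_{j_2,k}=(t_2')^*p_{j_2',k}$ whenever $(j_2,t_2)\sim(j_2',t_2')$; taking $k=j_1$ and right-multiplying by $t_1$ gives independence of the first representative as well. Thus the pairing is well-defined.

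The remaining axioms are routine. Axiom (R-i) holds since $\rin{[j_2,t_2]}{[j_1,t_1]t}{\U_p}=t_2^*p_{j_2,j_1}(t_1t)=(t_2^*p_{j_2,j_1}t_1)t$. Axiom (R-ii) follows from (MF3), as $(t_2^*p_{j_2,j_1}t_1)^*=t_1^*p_{j_1,j_2}t_2=\rin{[j_1,t_1]}{[j_2,t_2]}{\U_p}$. For (R-iii), using the membership relation $p_{j,j}t=t$ I compute $[j,t]\,\rin{[j,t]}{[j,t]}{\U_p}=[j,t\,t^*p_{j,j}t]=[j,tt^*t]=[j,t]$. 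Together with the well-definedness verified above, this shows that $\U_p$ is a right regular $T$-set, which is exactly the assertion of the lemma. The only genuinely delicate step is the antisymmetry argument for the pairing; everything else is a direct unwinding of the definitions.
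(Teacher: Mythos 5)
Your proposal is correct and follows essentially the same route as the paper: the key point in both is that (MF4) plus the antisymmetry of the natural partial order forces the two candidate values of the pairing to coincide, and the remaining axioms are checked by the same direct computations. The only difference is organizational — you treat the two arguments of the pairing separately via an auxiliary claim, whereas the paper combines both substitutions into a single chain of inequalities.
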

\begin{proof}
	We first show that the above structures are well-defined.
	Take elements $[ j_1, t_1 ]$ and $[ j'_1, t'_1 ]$ of $\U_p$ with $[ j_1, t_1 ] = [ j'_1, t'_1 ]$, that is, $t_1' = p_{j'_1,j_1}t_1$ and $t_1 = p_{j_1,j'_1}t'_1$.
	For every $t\in T$, these imply $t_1't = p_{j'_1,j_1}t_1t$ and $t_1t = p_{j_1,j'_1}t'_1t$.
	Thus we get $[ j_1, t_1t ] = [ j'_1, t'_1t ]$.
	This implies that the right action is well-defined.
	Take $[ j_k, t_k ], [ j'_k, t'_k ]\in \U_p$ with $[ j_k, t_k ] = [ j'_k, t'_k ]$ for $k = 1,2$.
	We get 
	\begin{align*}
	{t'_2}^*p_{j'_2,j'_1}t'_1 &= t_2^*p_{j_2,j_2'} p_{j'_2,j'_1} p_{j_1',j_1}t_1\\
	&\leq t_2^*p_{j_2,j_1}t_1\\
	&= {t'_2}^*p_{j_2',j_2} p_{j_2,j_1} p_{j_1,j_1'}t_1'\\
	&\leq {t'_2}^*p_{j_2',j_1'}t_1'.
	\end{align*}
	Thus $t_2^*p_{j_2,j_1}t_1 = {t'_2}^*p_{j'_2,j'_1}t'_1$ holds.
	This implies that the right pairing is well-defined.
	
	We can check easily that $\U_p\times T \rightarrow \U_p; ([j,t],t') \mapsto [j,tt']$ becomes a right action, and that (R-i) and (R-ii) holds for $\rin{\cdot}{\cdot}{\U_p}$.
	We get (R-iii) because 
	\[
	[j,t]\rin{ [j,t] }{ [j,t] }{\U_p} = [j, tt^*p_{j,j}t] = [j,t]
	\]
	holds for every $[j,t]\in \U_p$.
\end{proof}

We define a left action of $IM(T,I,p)$ on $\U_p$ as 
\[
[ j_2, t_2, i_2 ] [ j_1, t_1 ] := [ j_2 , t_2p_{i_2,j_1}t_1 ]
\]
for every $[ j_1, t_1 ] \in \U_p$ and $ [ j_2, t_2, i_2 ] \in IM(T,I,p)$, and a left pairing as 
\[
\lin{ [ j_2, t_2 ] }{ [ j_1, t_1 ] }{\U_p} := [ j_2, t_2t_1^*, j_1 ]
\]
for every $[ j_1, t_1 ], [j_2, t_2] \in \U_p$.
We can check that these structures are well-defined and form a left regular $IM(T,I,p)$-set in a similar way to Lemma \ref{Lemma: Up is regular}.
We can also check that the left and right actions and the left and right pairings are compatible respectively.
Hence $\U_p$ becomes a partial Morita equivalence from $IM(T,I,p)$ to $T$.

\begin{lemm}\label{Lemma: Up full}
	For a partial McAlister function $p\: I \times I \rightarrow T$, $\U_p$ becomes a partial Morita equivalence from $IM(T,I,p)$ to $T$ which is left full.
	If $p$ is a McAlister function, then $\U_p$ becomes a Morita equivalence.
\end{lemm}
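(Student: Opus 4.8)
The plan is to reduce both assertions to surjectivity statements about the two pairings. Since $\lin{\U_p}{\U_p}{\U_p}$ and $\rin{\U_p}{\U_p}{\U_p}$ are automatically two-sided ideals of $IM(T,I,p)$ and $T$ respectively, and since the preceding discussion already establishes that $\U_p$ is a partial Morita equivalence, the only thing left to verify is that each pairing map is onto. For left fullness I must realize an arbitrary $[j,t,i] \in IM(T,I,p)$ as a left pairing; for the Morita equivalence statement I must, assuming (MF5), additionally realize an arbitrary $t \in T$ as a right pairing. Throughout I will use the elementary fact that for a pair $(j,t)$ the membership $(j,t) \in \U_p'$, i.e.\ $p_{j,j}t = t$, is equivalent to $tt^* \leq p_{j,j}$, which follows directly from the order characterization in Lemma \ref{Lemma: order}.

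For left fullness, given $[j,t,i]$ with $p_{j,j}\,t\,p_{i,i} = t$, I would guess the representatives $[j,t]$ and $[i, p_{i,i}]$. First I check that both lie in $\U_p$: the pair $(i, p_{i,i})$ satisfies $p_{i,i} p_{i,i} = p_{i,i}$ by (MF1), while $(j,t)$ satisfies $p_{j,j} t = p_{j,j}(p_{j,j}\,t\,p_{i,i}) = p_{j,j}\,t\,p_{i,i} = t$ since $p_{j,j}$ is idempotent. Then, using $p_{i,i}^* = p_{i,i}$ together with $t p_{i,i} = (p_{j,j}\,t\,p_{i,i})p_{i,i} = t$, the left pairing evaluates to $\lin{[j,t]}{[i,p_{i,i}]}{\U_p} = [j, t p_{i,i}^*, i] = [j,t,i]$, which gives the claim.

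For the Morita equivalence statement it remains only to prove right fullness, and this is exactly where (MF5) enters, which I expect to be the conceptual heart of the argument. Given $t \in T$, I would apply (MF5) to the idempotent $tt^*$ to obtain an index $j \in I$ with $tt^* \leq p_{j,j}$; equivalently $p_{j,j} t = t$, so $(j,t) \in \U_p'$, and likewise $(j, tt^*) \in \U_p'$ since $tt^* \leq p_{j,j}$. Computing the right pairing then yields $\rin{[j, tt^*]}{[j, t]}{\U_p} = (tt^*)^* p_{j,j}\, t = tt^*\, t = t$, so every element of $T$ lies in $\rin{\U_p}{\U_p}{\U_p}$ and $\U_p$ is right full; combined with left fullness this shows $\U_p$ is a Morita equivalence. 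The only genuine obstacle is locating these representatives: once the choices $[i, p_{i,i}]$ and $[j, tt^*]$ are made, every verification reduces to a short idempotent manipulation, and the essential content is simply that (MF5) supplies precisely the index needed to hit an arbitrary element of $T$.
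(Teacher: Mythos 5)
Your proposal is correct and follows essentially the same route as the paper: both defer the partial-Morita-equivalence part to the preceding discussion, exhibit an explicit preimage under the left pairing for left fullness (the paper uses $\lin{[j,t]}{[i,t^*t]}{\U_p}=[j,t,i]$ where you use $[i,p_{i,i}]$ as the second entry, an inessential difference), and invoke (MF5) on $tt^*$ to produce $\rin{[j,tt^*]}{[j,t]}{\U_p}=t$ for right fullness, exactly as in the paper.
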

\begin{proof}
	As mentioned above, $\U_p$ is a partial Morita equivalence from $IM(T,I,p)$ to $T$.
	For every $[j,t,i]\in IM(T,I,p)$, $\lin{ [j,t] }{ [i,t^*t] }{\U_p} = [j,t,i]$ holds.
	Thus $\U_p$ is left full.
	Assume $p$ is a McAlister function.
	Take $t \in T$.
	By the axiom (MF5), there exists $i \in I$ with $tt^* \leq p_{ii}$.
	Hence we have $\rin{ [i,tt^*] }{ [i,t] }{\U_p} = tt^*p_{ii}t = t$.
	Thus $\U_p$ is right full.
\end{proof}

\begin{coro}\label{Corollary: Up}
	For a partial McAlister function $p\: I \times I \rightarrow T$, $IM(T,I,p)$ is isomorphic to the inverse semigroup $\cpK(\U_p)$.
\end{coro}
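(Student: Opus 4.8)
The plan is to obtain the isomorphism directly from Corollary \ref{Corollary: partial Morita iff...} applied to the partial Morita equivalence $\U_p$ constructed above, rather than by building an explicit isomorphism by hand. By Lemma \ref{Lemma: Up full}, the set $\U_p$ is a left full partial Morita equivalence from $IM(T,I,p)$ to $T$. Forgetting its left pairing turns $\U_p$ into an inverse correspondence from $IM(T,I,p)$ to $T$ whose left action is encoded by the semigroup homomorphism $\theta_{\U_p}\: IM(T,I,p) \rightarrow \adL(\U_p)$ sending $[j,t,i]$ to the adjointable map $[j_1,t_1] \mapsto [j,tp_{i,j_1}t_1]$.

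First I would invoke Corollary \ref{Corollary: partial Morita iff...} (i), which says that an inverse correspondence arises from a left full partial Morita equivalence exactly when its structure homomorphism $\theta_{\U}$ is an isomorphism onto the inverse semigroup $\cpK(\U)$ of rank-one maps. Since $\U_p$ is precisely such a correspondence, this yields at once that $\theta_{\U_p}$ is an isomorphism from $IM(T,I,p)$ onto $\cpK(\U_p)$, which is the assertion of the corollary.

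In effect there is no genuine obstacle remaining at this stage: the substantive work has already been carried out in verifying that $\U_p$ is a well-defined left full partial Morita equivalence (Lemma \ref{Lemma: Up full}) and in proving the general equivalence of Corollary \ref{Corollary: partial Morita iff...}. The one point I would record explicitly, for the reader's reassurance, is that the rank-one map $\omega_{[j_2,t_2],[j_1,t_1]}$ agrees with $\theta_{\U_p}([j_2,t_2t_1^*,j_1])$: both send $[j,t]$ to $[j_2,t_2t_1^*p_{j_1,j}t]$, so the left pairing reconstructed by the correspondence machinery coincides with the one built into $\U_p$, and the isomorphism $\theta_{\U_p}$ is exactly left multiplication by $IM(T,I,p)$.
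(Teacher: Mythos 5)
Your proposal is correct and is exactly the paper's own argument: the paper proves this corollary by citing Lemma \ref{Lemma: Up full} together with Corollary \ref{Corollary: partial Morita iff...} (i), just as you do. Your added check that $\omega_{[j_2,t_2],[j_1,t_1]} = \theta_{\U_p}([j_2,t_2t_1^*,j_1])$ is accurate and a harmless extra reassurance, but not needed beyond what the cited results already give.
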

\begin{proof}
	This follows from Lemma \ref{Lemma: Up full} and Corollary \ref{Corollary: partial Morita iff...} (i).
\end{proof}

Conversely, for a right inverse $T$-set $\U$, we obtain a partial McAlister function $p_\U$ as follows:
The following proposition is similar to \cite[Lemma 3.3]{AL13}.
\begin{prop}\label{Proposition: pairing MF}
	For a right inverse $T$-set $\U$, a map $p_{\U} \: \U \times \U \rightarrow T; (u,v) \mapsto \rin{ u }{ v }{\U}$ is a partial McAlister function.
	If the right pairing $\rin{\cdot}{\cdot}{\U}$ of $\U$ is right full, then $p_\U$ is a McAlister function.
\end{prop}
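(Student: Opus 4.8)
The plan is to verify the four axioms (MF1)--(MF4) for $p_{\U}(u,v) = \rin{u}{v}{\U}$ one at a time, relying on the right-hand analogues of the structural results of Section~\ref{Section: Inverse sets and Partial Morita equivalences}, and then to bootstrap (MF5) out of (MF4) under the fullness hypothesis. Axioms (MF1) and (MF3) are immediate: $\rin{u}{u}{\U}\in E(T)$ is the right analogue of Lemma~\ref{Lemma: left pairing basics}(i) (equivalently $\rin{u}{u}{\U}\rin{u}{u}{\U}=\rin{u}{u\rin{u}{u}{\U}}{\U}=\rin{u}{u}{\U}$ by (R-i) and (R-iii)), and $\rin{u}{v}{\U}^{*}=\rin{v}{u}{\U}$ is exactly (R-ii). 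For (MF2) I would first collapse the right factor, $\rin{u}{v}{\U}\rin{v}{v}{\U}=\rin{u}{v\rin{v}{v}{\U}}{\U}=\rin{u}{v}{\U}$ by (R-i) and (R-iii), and then collapse the left factor by passing to adjoints: since $\rin{u}{u}{\U}$ is a self-adjoint idempotent, $\rin{u}{u}{\U}\rin{u}{v}{\U}=(\rin{v}{u}{\U}\rin{u}{u}{\U})^{*}=\rin{v}{u}{\U}^{*}=\rin{u}{v}{\U}$, again using only (R-i)--(R-iii).

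The substantive step, and the one I expect to be the main obstacle, is (MF4): $\rin{u}{v}{\U}\rin{v}{w}{\U}\leq\rin{u}{w}{\U}$. Here the order on $T$ has to be brought into contact with the order on $\U$ developed just before this proposition. The key observation is that $\omega_{v,v}$ is an idempotent of $\cpK(\U)$, so by Proposition~\ref{Proposition: order} (condition (iv), applied with $k=\omega_{v,v}$) the element $\omega_{v,v}(w)=v\rin{v}{w}{\U}$ satisfies $v\rin{v}{w}{\U}\leq w$ in $\U$. Applying the monotonicity of the right pairing recorded just after Definition~\ref{Definition: order}, with the first entry held fixed at $u$ and using reflexivity $u\leq u$, I obtain $\rin{u}{v\rin{v}{w}{\U}}{\U}\leq\rin{u}{w}{\U}$; since $\rin{u}{v\rin{v}{w}{\U}}{\U}=\rin{u}{v}{\U}\rin{v}{w}{\U}$ by (R-i), this is precisely (MF4). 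The delicate point is to transport the order relation correctly through the pairing, which is why I route the argument through Proposition~\ref{Proposition: order} and the monotonicity statement rather than trying to manipulate the inequality directly inside $T$.

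Finally, for (MF5) I would exploit (MF4) itself. Assume the right pairing is full, so $T=\rin{\U}{\U}{\U}$, and fix $e\in E(T)$. Writing $e=\rin{u}{v}{\U}$ for suitable $u,v\in\U$ and using that idempotents of an inverse semigroup are self-adjoint, we have $e=ee^{*}=\rin{u}{v}{\U}\rin{v}{u}{\U}$. Applying (MF4) to the triple $(u,v,u)$ then yields $e=\rin{u}{v}{\U}\rin{v}{u}{\U}\leq\rin{u}{u}{\U}=p_{\U}(u,u)$, which is exactly the condition (MF5). I would close by remarking that every identity used rests only on (R-i)--(R-iii), the idempotency of $\omega_{v,v}$, Proposition~\ref{Proposition: order}, and the monotonicity of the pairing, so no computation beyond these ingredients is required.
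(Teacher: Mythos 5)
Your proof is correct and takes essentially the same route as the paper's: for (MF4) the paper substitutes $v\rin{v}{w}{\U} = w\rin{w}{v}{\U}\rin{v}{w}{\U}$ (Proposition \ref{Proposition: R-iv}(iii)) directly into $\rin{u}{v\rin{v}{w}{\U}}{\U}$ and observes the last factor is an idempotent, while you package the very same fact as $\omega_{v,v}(w)\leq w$ and invoke the monotonicity of the pairing recorded after Definition \ref{Definition: order} --- a purely cosmetic difference. Your derivation of (MF5) from (MF4) via $e = ee^{*} = \rin{u}{v}{\U}\rin{v}{u}{\U}\leq\rin{u}{u}{\U}$ is exactly the computation the paper leaves as ``obvious''.
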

\begin{proof}
	It is easy to check that (MF1)-(MF3) hold. 
	For every $u,v,w\in\U$, we have
	\begin{align*}
	\rin{ u }{ v }{ \U }\rin{ v }{ w }{ \U } &= \rin{ u }{ v \rin{ v }{ w }{ \U } }{ \U } \\
	&=  \rin{ u }{ w \rin{ w }{ v }{ \U } \rin{ v }{ w }{ \U } }{ \U }\\
	&=  \rin{ u }{ w }{ \U } \rin{ w }{ v }{ \U } \rin{ v }{ w }{ \U }\\
	&\leq \rin{ u }{ w }{\U}.
	\end{align*}
	Thus $p_{\U}$ is a partial McAlister function.
	If the right pairing $\rin{\cdot}{\cdot}{\U}$ of $\U$ is full, then the $p_\U$ satisfies (MF5) obviously.
\end{proof}

\begin{lemm}\label{Lemma: UpU}
	For a right inverse $T$-set $\U$, the right inverse $T$-set $\U_{p_\U}$ associated with the partial McAlister function $p_\U$ defined in Proposition \ref{Proposition: pairing MF} is isomorphic to $\U$.
\end{lemm}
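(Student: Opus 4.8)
The plan is to exhibit the isomorphism explicitly by the evaluation map
\[
\Psi \: \U_{p_\U} \rightarrow \U, \qquad \Psi([u,t]) := ut,
\]
and then to invoke the right-hand analogue of Corollary \ref{Corollary: isom}: a surjective right pairing preserving map between right inverse $T$-sets is automatically an isomorphism (injectivity coming from the right version of Lemma \ref{Lemma: pairing preserving injective}, and the right $T$-map property from the right version of Lemma \ref{Lemma: pairing preserving S-map}, since $\U$ is inverse). First I would record that $ut$ makes sense whenever $(u,t)\in\U'_{p_\U}$, and that $\Psi$ is surjective: for $u\in\U$ the element $\rin{u}{u}{\U}$ is idempotent by the right version of Lemma \ref{Lemma: left pairing basics} (i), so $(u,\rin{u}{u}{\U})\in\U'_{p_\U}$, and $\Psi([u,\rin{u}{u}{\U}]) = u\rin{u}{u}{\U} = u$ by (R-iii).

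For the pairing, a direct computation using (R-i), (R-ii) and the resulting identity $\rin{ut}{v}{\U} = t^*\rin{u}{v}{\U}$ gives
\[
\rin{ \Psi([u_2,t_2]) }{ \Psi([u_1,t_1]) }{\U} = \rin{ u_2 t_2 }{ u_1 t_1 }{\U} = t_2^* \rin{ u_2 }{ u_1 }{\U} t_1,
\]
which is exactly $\rin{ [u_2,t_2] }{ [u_1,t_1] }{\U_{p_\U}} = t_2^* p_{u_2,u_1} t_1$ by the definition of $p_\U$ and the right pairing on $\U_{p_\U}$. Hence $\Psi$ is right pairing preserving.

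The one genuine verification, and the step I expect to be the main obstacle, is well-definedness: if $(u_1,t_1)\sim(u_2,t_2)$, that is $t_1 = \rin{u_1}{u_2}{\U}t_2$ and $t_2 = \rin{u_2}{u_1}{\U}t_1$, I must show $u_1 t_1 = u_2 t_2$. My plan is to set $a := u_1 t_1$ and $b := u_2 t_2$ and to compute their three right pairings, showing all equal $t_1^* t_1$. Using the constraints $\rin{u_i}{u_i}{\U} t_i = t_i$ one gets $\rin{a}{a}{\U} = t_1^* t_1$ and, since $\rin{u_1}{u_2}{\U} t_2 = t_1$, also $\rin{a}{b}{\U} = t_1^*\rin{u_1}{u_2}{\U}t_2 = t_1^* t_1$; finally $\rin{b}{b}{\U} = t_2^* t_2 = t_1^*\rin{u_1}{u_2}{\U}\rin{u_2}{u_1}{\U}t_1 = t_1^* t_1$, the last equality because $\rin{u_1}{u_2}{\U}\rin{u_2}{u_1}{\U}t_1 = \rin{u_1}{u_2}{\U}t_2 = t_1$. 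As all three pairings coincide, Proposition \ref{Proposition: R-iv} (the equivalence of (i) and (ii)) forces $a = b$. With well-definedness, surjectivity, and the pairing property established, the right analogue of Corollary \ref{Corollary: isom} yields that $\Psi$ is an isomorphism, completing the proof.
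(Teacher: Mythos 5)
Your proposal is correct and uses exactly the map the paper uses (the evaluation $[u,t]\mapsto ut$); the paper simply asserts that one "can check easily" that this is an isomorphism, and your verification of well-definedness via Proposition \ref{Proposition: R-iv}~(ii), surjectivity via $(u,\rin{u}{u}{\U})$, and pairing preservation, followed by the right-hand analogue of Corollary \ref{Corollary: isom}, supplies precisely the omitted details correctly.
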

\begin{proof}
	For an inverse right $T$-set $\U$, we can check easily that the map $\U_{p_\U} \rightarrow \U; [u,t] \mapsto ut$ is an isomorphism between right inverse $T$-sets.
\end{proof}

\begin{coro}[{\cite[Theorem 3.5]{AL13}}]\label{Corollary: AL}
	Let $T$ be an inverse semigroup. 
	For every McAlister function $p\: I \times I \rightarrow T$, the inverse Rees matrix semigroup $IM(T,I,p)$ is Morita equivalent to $T$, and every inverse semigroup $S$ Morita equivalent to $T$ is isomorphic to one of this form.
\end{coro}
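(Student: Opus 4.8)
The plan is to prove the two assertions separately, assembling the machinery built up in this section. The first assertion is essentially immediate: given a McAlister function $p\: I \times I \rightarrow T$, Lemma \ref{Lemma: Up full} shows that $\U_p$ is a Morita equivalence from $IM(T,I,p)$ to $T$, and hence $IM(T,I,p)$ and $T$ are Morita equivalent by Definition \ref{Definition: Morita equiv}. Nothing more is needed here.

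For the second assertion, suppose $S$ is Morita equivalent to $T$, so that there is a Morita equivalence $\U$ from $S$ to $T$. First I would regard $\U$ as an inverse correspondence from $S$ to $T$ by forgetting its left pairing; since it comes from a Morita equivalence, Corollary \ref{Corollary: partial Morita iff...}(ii) tells us that $\theta_{\U}\: S \rightarrow \adL(\U)$ is an isomorphism onto $\cpK(\U)$ and that $\U$ is right full. In particular $S \cong \cpK(\U)$. Because $\U$ is right full, Proposition \ref{Proposition: pairing MF} guarantees that the map $p_{\U}\: \U\times\U \rightarrow T$, $(u,v)\mapsto \rin{u}{v}{\U}$, is a McAlister function. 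Taking $I := \U$ and $p := p_{\U}$, I would then invoke Corollary \ref{Corollary: Up} to get $IM(T,\U,p_{\U}) \cong \cpK(\U_{p_{\U}})$, together with Lemma \ref{Lemma: UpU}, which supplies an isomorphism $\sigma\: \U_{p_{\U}} \rightarrow \U$ of right inverse $T$-sets.

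It then remains to turn $\sigma$ into an isomorphism $\cpK(\U_{p_{\U}}) \cong \cpK(\U)$ of inverse semigroups. The natural candidate is conjugation $\varphi \mapsto \sigma\varphi\sigma\inv$ (note $\sigma$ is adjointable with adjoint $\sigma\inv$, since both are right pairing preserving), and a short computation using that $\sigma$ is a right $T$-map and is right pairing preserving shows $\sigma\,\omega_{u,v}\,\sigma\inv = \omega_{\sigma(u),\sigma(v)}$, so this conjugation restricts to an isomorphism onto $\cpK(\U)$. Chaining the identifications yields $IM(T,\U,p_{\U}) \cong \cpK(\U_{p_{\U}}) \cong \cpK(\U) \cong S$, as required. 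I expect the only point requiring genuine care to be this last lifting of an inverse-set isomorphism to the associated compact-operator semigroups, as it is the single step not already packaged as a cited result; every other step is a direct appeal to the preceding lemmas and corollaries.
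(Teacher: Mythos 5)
Your proposal is correct and follows essentially the same route as the paper's proof: Lemma \ref{Lemma: Up full} for the first assertion, and the chain $S \cong \cpK(\U) \cong \cpK(\U_{p_\U}) \cong IM(T,\U,p_\U)$ via Corollary \ref{Corollary: partial Morita iff...}(ii), Lemma \ref{Lemma: UpU}, and Corollary \ref{Corollary: Up} for the second. The only difference is that you spell out two details the paper leaves implicit --- that $p_\U$ is a genuine McAlister function via Proposition \ref{Proposition: pairing MF}, and that the inverse-set isomorphism $\sigma$ induces $\cpK(\U_{p_\U}) \cong \cpK(\U)$ by conjugation --- both of which are correct and worthwhile additions.
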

\begin{proof}
	For every McAlister function $p \: I \times I \rightarrow T$, $\U_p$ becomes a Morita equivalence from $IM(T,I,p)$ to $T$ by Lemma \ref{Lemma: Up full}.
	
	For a Morita equivalence $\U$ from $S$ to $T$, $S$ is isomorphic to $\cpK(\U)$ by Corollary \ref{Corollary: partial Morita iff...} (ii).
	The inverse semigroup $\cpK(\U)$ is isomorphic to $\cpK(\U_{p_\U})$ by Lemma \ref{Lemma: UpU}.
	This is isomorphic to $IM(T,\U,p_\U)$ by Corollary \ref{Corollary: Up}.
\end{proof}

\bibliography{reference}
\bibliographystyle{alpha}

\end{document}